\documentclass[12pt]{amsart}


\usepackage{
	amssymb,
	bww,
	tikz,
	bbm,
	etoolbox,
	mathtools,
      }

\numberwithin{equation}{section}

\usetikzlibrary{cd,svg.path,decorations.pathreplacing,decorations.markings}

\newtoggle{comments}

\togglefalse{comments}      

\iftoggle{comments}{%
	\usepackage[notref,notcite]{showkeys}   
        \input rem.tex
        \newcommand{\bcomments}[2][red]{\comments[#1]{BWW: #2}}
        \newcommand{\pcomments}[2][cyan!60!black]{\comments[#1]{PZJ: #2}}
}{%
	\newcommand{\bcomments}[2][]{}
	\newcommand{\pcomments}[2][]{}
}

\newcommand\Irr{\mathit{Irr}}

\newcommand\Dcat{\mathpzc{D}}

\newcommand\Icat{\mathpzc{Inv}\index{invariant tensors}}
\newcommand\Pcat{\mathpzc{P}}
\newcommand\Wcat{\Intcat} 
\newcommand\Intcat{\mathpzc{L}}\index{interpolating category}
\newcommand\threecat{\Intcat_{\le \obj3}}
\newcommand\Rcat{\mathpzc{R}\index{ribbon category}}
\newcommand\Vcat{\mathpzc{V}}
\newcommand\ev{\mathrm{ev}}
\newcommand\co{\mathrm{co}}
\newcommand\unit{\mathbbm{1}}
\DeclareMathOperator{\Aut}{Aut}

\DeclareMathOperator{\id}{id}

\newcommand\kk{\Bbbk}

\newcommand\Rot{\mathrm{Rot}}
\newcommand\Tr{\mathrm{Tr}\index{trace}}
\definecolor{sienna}{rgb}{0.91, 0.45, 0.32}
\newcommand\bB{\mathcal{B}}
\newcommand\osp{\mathfrak{osp}}
\newcommand\fso{\mathfrak{so}}
\newcommand\cO{\mathcal{O}}

\newcommand\obj[1]{\mathit{#1}}
\newcommand\casimir{\mathbf{c}}
\newcommand\fU{\mathfrak{U}}
\newcommand\OSp{\mathit{OSp}_{1|2}}

\input{pictures}

\makeatletter
\newcommand{\IfInTikzPic}[2]{
  \ifx\pgfpictureid\@undefined#2\else#1\fi%
}
\makeatother
\iftoggle{comments}{
\newcommand{\tikzset{every picture/.style={line cap=round,sharp corners,scale=2}}\input [.tex\relax}.3]{%
{\tikzset{every picture/.style={line cap=round,sharp corners,remember picture,scale=#1,execute at end picture={\coordinate (X) at (current bounding box.north);}}}\input #2.tex%
\IfInTikzPic{}{\tikzset{every picture/.style={}}\tikz[remember picture,overlay]{\path (X) node[draw,inner sep=1pt,font=\tiny,scale=.8,align=none] {\tt\nolinkurl{#2}};}}%
\relax}}
}
{
\newcommand{\tikzset{every picture/.style={line cap=round,sharp corners,scale=2}}\input [.tex\relax}.3]{{\tikzset{every picture/.style={line cap=round,sharp corners,scale=#1}}\input #2.tex\relax}}
}

\newcommand\posdot[1]{
	\filldraw[black] (#1) -- +(0.09192,0.09192) arc(45:135:0.13) -- cycle;
	\filldraw[black] (#1) -- +(0.09192,-0.09192) arc(-45:-135:0.13) -- cycle;
}
\newcommand\negdot[1]{
	\filldraw[black] (#1) -- +(0.09192,0.09192) arc(45:-45:0.13) -- cycle;
	\filldraw[black] (#1) -- +(-0.09192,0.09192) arc(135:225:0.13) -- cycle;
}

\def\myfont{\mathsf}

\newcommand\qq{\myfont{q}}          
\newcommand\pp{\myfont{p}}          
\newcommand\tv{\myfont{t}}          
\newcommand\nv{\myfont{n}} 
\newcommand\uu{\myfont{u}} 
\newcommand\vv{\myfont{v}} 
\newcommand\xx{\myfont{x}} 
\newcommand\yy{\myfont{y}} 

\newcommand\qint[2]{[#1 \,\nv+#2]} 
\newcommand\coeffs{\index{coefficients}\kk_{\pp,\qq}}
\newcommand\Ideal{\mathcal{I}}

\newcommand\dilute[1]{#1^{\square}}
\newcommand\dilutep[1]{#1^{\square'}}

\makeindex

\allowdisplaybreaks[1]

\begin{document}

\title{A uniform trigonometric \texorpdfstring{$R$}{R}-matrix for the exceptional series}

\author{Bruce W.\ Westbury}
\address{Bruce W.\ Westbury}
\email{brucewestbury@gmail.com}
\author{Paul Zinn-Justin}
\address{Paul Zinn-Justin, School of Mathematics and Statistics, The University of Melbourne, 
Victoria 3010, Australia}
\email{pzinn@unimelb.edu.au}

\begin{abstract}
	The exceptional series is a finite list of points on a projective line with a simple Lie algebra
	attached to each point. This list of Lie algebras includes the five exceptional Lie algebras.
	We give a uniform trigonometric $R$-matrix for the exceptional series in the representation $L\oplus I$,
        where $L$ is the quantum deformation of the adjoint representation and $I$ is the trivial representation.
        We construct a sixteen dimensional algebra, $\dilute A(\obj2)$, which interpolates the algebras
	$\End(\otimes^2(L\oplus I))$ and a 287 dimensional algebra, $\dilute A(\obj3)$,
        which interpolates the algebras
	$\End(\otimes^3(L\oplus I))$. The $R$-matrix lives in
	$\dilute A(\obj2)$ and satisfies the Yang--Baxter equation in $\dilute A(\obj3)$;
        it interpolates the trigonometric $R$-matrices for the points in the exceptional series.
\end{abstract}

\subjclass[2020]{17B38, 18M30, 81R50}

\keywords{Ribbon category, exceptional series, Yang--Baxter equation}

\maketitle
\thispagestyle{empty}

\tableofcontents


\section{Introduction} This paper connects two topics in the representation theory of quantum groups.
One topic is the Yang--Baxter equation which is a parametrised version of the braid relation. 
The other topic is the web categories introduced in \cite{kuperberg1996}.
We want to apply these ideas to the exceptional series.
We expand on these various points below.

\subsection{The Yang--Baxter equation}
The Yang--Baxter equation originated in the study of integrable models in statistical mechanics and quantum field theory;
an expository article is \cite{jimbo1989}.
The data for an $R$-matrix is a vector space, $V$, and an operator $\check R(\uu_1,\uu_2)\colon V\otimes V\to V\otimes V$
where $\uu_1$, $\uu_2$ are parameters.
We say that this data is an $R$-matrix if $\check R(\uu_1,\uu_2)$ satisfies the Yang--Baxter equation
\begin{equation}\label{eq:YB}
\check R_1(\uu_1,\uu_2)\check R_2(\uu_1,\uu_3)\check R_1(\uu_2,\uu_3) = \check R_2(\uu_2,\uu_3)\check R_1(\uu_1,\uu_3)\check R_2(\uu_1,\uu_2)
\end{equation}
for all $\uu_1,\uu_2,\uu_3$,
where $\check R_1(\uu_1,\uu_2) = \check R(\uu_1,\uu_2) \otimes \id_V$ and $\check R_2(\uu_1,\uu_2) = \id_V \otimes \check R(\uu_1,\uu_2)$.
If $\check R(\uu_1,\uu_2)$  is constant this is the braid relation. The $R$-matrices we consider have the property that $\check R_2(\uu_1,\uu_2)$ only depends on $\uu_1/\uu_2$;
these are known as trigonometric $R$-matrices.
$R$-matrices with the property that $\check R_2(\uu_1,\uu_2)$ only depends on $\uu_1-\uu_2$
are known as rational $R$-matrices; typically, trigonometric $R$-matrices depend on a quantum parameter $q$
in such a way that taking the limit $q\to 1$ appropriately results in a rational $R$-matrix.

Let $C^{(1)}$ be an affine Cartan matrix (which for simplicity only we choose to be untwisted),
and $U_q(C^{(1)})$ the quantised enveloping algebra. Given a representation $V$ of $U_q(C^{(1)})$,
there is a natural construction of a trigonometric $R$-matrix which is a $U_q(C^{(1)})$-intertwiner
$\check R(\uu_1,\uu_2)\colon V_{\uu_1}\otimes V_{\uu_2}\to V_{\uu_2}\otimes V_{\uu_1}$ where $V_{\uu_1}$ is $V$ composed with a certain family of automorphisms, see \cite{delius2006}.
Let $C$ be the finite type Cartan matrix given by removing the 0 node from $C^{(1)}$;
as a $U_q(C)$-module, $V_{\uu_1} \cong V_{\uu_2}\cong V$, and
$\check R(\uu_1,\uu_2)\in \End_{U_q(C)}(\otimes^2V)$, so we can view the Yang--Baxter as an equation in $\End_{U_q(C)}(\otimes^3V)$,
which is much smaller than the matrix algebra and has a nice diagrammatic description, as we discuss in \S\ref{ssec:introweb}.

Let $L$ be the quantised adjoint representation and $I$ be the trivial representation of $U_q(C)$.
In general the action on $L$ does not extend to an action of $U_q(C^{(1)})$.
However the action $L\oplus I$ does extend, so this construction gives an $R$-matrix on $V=L\oplus I$,
$\check R(\uu_1,\uu_2)\in \End_{U_q(C)}(\otimes^2(L\oplus I))$. Our goal is to construct these $R$-matrices explicitly 
for the exceptional simple Lie algebras. Our main result is much stronger in that instead of treating each case separately
we give a single formula. The idea is that this formula
involves an additional parameter and the individual cases corresponding to each specific Cartan type $C$ are given by specialising this parameter.
The complication is that each $R$-matrix is an element of a different algebra, so we first need to interpolate the algebras,
which we discuss in \S\ref{ssec:introinterp}.

\subsection{Web categories}\label{ssec:introweb}
Given a Cartan type $C$, its quantised enveloping algebra $U_q(C)$ and a $U_q(C)$-module $V$, let $\Icat(V)$ be the category of invariant tensors. A web category is then a construction of $\Icat(V)$ as a
monoidal category defined by generators and relations. In this paper we are interested in the cases where $C$ is an exceptional finite type Cartan type and $V$ is the quantised adjoint representation $L$, or as above $L\oplus I$.
Previous work on web categories, \cite{kuperberg1996}, \cite{Cautis2014}, has concentrated on fundamental representations and so has not covered adjoint representations (with the exception of $G_2$).

We follow the conventional approach to web categories using string diagrams, see \cite{savage2020}, \cite{east2023}.
We draw tensors as string diagrams in a rectangle, with composition (resp.\ tensor product) being horizontal (resp.\ vertical) concatenation.
The use of string diagrams gained acceptance with the application of the representation theory of
quantised enveloping algebras to knot theory. It provides an abstraction of tensor calculus in the
sense that we work with the tensor operations of tensor product, contractions, raising and lowering operators
(and forming linear combinations).

The starting point for the construction of a web category for an adjoint representation are the basic invariant tensors given by the Lie bracket as a $(2,1)$-tensor, the Killing form as a $(2,0)$-tensor and the quadratic Casimir as a $(0,2)$-tensor. These are generators in the sense that any invariant tensor can be built
from these using tensor operations, \cite{etingof2019}. There are also relations, for example the Jacobi relation,
which arise since the same tensor may be built in many ways. One of the central problems is to determine a full set of relations for $\Icat(V)$.


\subsection{Interpolation}\label{ssec:introinterp}
The problem of constructing web categories can be extended to series.
We shall not formalise this idea yet, and instead give some examples. The first ones are: the Birman--Wenzl category (a.k.a.\ the $q$-Brauer category),
the oriented Brauer category and its quantisation and the partition category. These examples are discussed in
\cite{Deligne2007}. Two further examples are series of Lie superalgebras, The quantum queer (a.k.a.\ isomeric) supercategory, \cite{savage2024} and the periplectic $q$-Brauer category, \cite{rui2025}. Another example is \cite{mcnamara2024}.

If we are given a $U_q(C^{(1)})$-module, $V_{\uu}$, such that there is a web category for the $U_q(C)$-module, $V$,
then the $R$-matrix can be written in the graphical calculus. If we are given a
web category interpolating a series of representations such that each representation in the series admits an $R$-matrix
then we expect that there is an $R$-matrix in the web category which specialises to these $R$-matrices.
This is known for the seminal examples consisting of the defining representations of classical groups.

\subsubsection{Birman--Wenzl--Murakami category} Let us briefly describe the case of the vector representations of the orthogonal and symplectic groups.
The Brauer category, \cite{Brauer1937} is constructed combinatorially as the cobordism category of unoriented 1-manifolds and algebraically as the
free rigid symmetric monoidal category on an object with a non-degenerate symmetric inner product. It follows from the algebraic description that there 
is a functor from the Brauer category to $\Icat(V)$ for $V$ the vector representation of an orthogonal or symplectic group. The quantum analogue is the
Birman--Wenzl--Murakami category whose endomorphism algebras are the Birman--Wenzl--Murakami algebras.

There are two parameters, the deformation parameter $q$, and $z$, a coordinate function for the series. The Birman--Wenzl--Murakami algebras have a well-known finite presentation.
The $k$-strand algebra has generators $U_i$ and $S_i$, $i=1,\ldots,k-1$, and the relations which depend on the parameters are:
\begin{gather*}
	U_i^2 = \delta\,U_i \\
	U_iS_i^{\pm 1} = z^{\mp 2}U_i \\
	S_i-S_i^{-1} = (q^2-q^{-2})(1-U_i) 
\end{gather*}
The remaining relations are the tangle relations and the commuting relations. The quantum dimension, $\delta$, is
\begin{equation*}
\delta \coloneqq \left(\frac{z\,q-z^{-1}q^{-1}}{q^2-q^{-2}}\right)(z^{-1}q+z\,q^{-1}) = 1 + \left(\frac{z^2-z^{-2}}{q^2-q^{-2}}\right)
\end{equation*}

For the coefficient ring $\bQ(z,q)$ each Birman--Wenzl--Murakami algebra is a finite dimensional multi-matrix algebra.
However we can also take the coefficient ring to be the subring of $\bQ(z,q)$ generated by $q^{\pm 1}$, $z^{\pm 1}$, $(q+q^{-1})^{-1}$ and the elements
	$\frac{z^rq^s - z^{-r}q^{-s}}{q-q^{-1}}$
for $r,s\in\bZ$.
Doing so allows us to define for each $n\in\bZ$ a homomorphism to the Laurent polynomial ring $\bQ[q,q^{-1}]$
given by $z\mapsto q^{n-1}$.
For $n>2$ there is a full functor to the category of invariant tensors
of the vector representation of $\fso(n)$, see \cite{Wenzl1990a}, and similarly for $n$
negative even and $\mathfrak{sp}(-n)$.

There is also a homomorphism to the polynomial ring $\bQ[\nv]$ given by $q\mapsto 1$, $z\mapsto 1$ and
$ \frac{z^rq^s - z^{-r}q^{-s}}{q-q^{-1}} \mapsto r\nv + s $
for $r,s\in\bZ$. For example, $\delta\mapsto \nv$. This change of base gives the Brauer category, using
$	\frac{S-S^{-1}}{q^2-q^{-2}} \mapsto 1-U $.

The $R$-matrix is defined by
\begin{align*}
\check R(\uu_1) &\coloneqq
\left(\frac{u-u^{-1}}{q-q^{-1}}\right)\left(\frac{u^{-1}z\,q^{-1}S-u\,z^{-1}qS^{-1}}{q-q^{-1}}\right) + (q+q^{-1})\left(\frac{u\,z^{-1}q-u^{-1}z\,q^{-1}}{q-q^{-1}}\right)
\end{align*}
The specialisation above $z\mapsto q^{n-1}$ gives the trigonometric $R$-matrices in types $B$, $C$ and $D$ given in \cite{Jimbo1986}.
By taking $q\to1$ appropriately, one obtains the rational $R$-matrix in \cite[\S3.2]{MacKay_2007}.

\subsubsection{Interpolating series}
The idea that $R$-matrices in a series can be interpolated originated in \cite{Westbury2003} which 
considered the rows of the Freudenthal magic square, \cite{Freudenthal1964}. The representations in the
first three rows are given by the tensor product graph method, \cite{Delius1994}. This method only applies
to a representation $V$ such that $V\otimes V$ is multiplicity free and so does not apply to $L\oplus I$.
If $V\otimes V$ is multiplicity free then the $R$-matrix is determined by its eigenvalues and
the tensor product graph method determines these eigenvalues. The observation in \cite{Westbury2003}
is that for two of the rows there is a simple interpolation of these eigenvalues. The string diagrams
for these two interpolating $R$-matrices are given in \cite{MacKay_2007}.
The remaining case is the exceptional series. Although the tensor product graph method does not apply
to $L\oplus I$, the rational $R$-matrices where given on a case by case basis in \cite{chari1991a}
and an interpolating formula was given in  \cite{Westbury2003}.

\subsection{The exceptional series}
The proposal that the exceptional simple Lie algebras should form a series originated in a 1995 preprint by Pierre Vogel which was eventually published in \cite{Vogel2011}.
The exceptional series was then investigated in \cite{deligne1996b}, \cite{deligne1996a} which presented uniform data for the Lie algebras
in the exceptional series. This data includes uniform decompositions of the tensor powers of the adjoint representation up to the fourth tensor power (and more generally,
uniform decompositions of Schur functors applied to the adjoint representation). This data also includes linear functions giving the values of the quadratic Casimir
on these composition factors and rational functions giving the dimensions of the composition factors.
(We present this data in \S\ref{numerology} but only up to the third tensor power as this is all we will need.) 
The points on the exceptional line are given in Table~\ref{fig:nvalues} below; throughout the paper we say that
$C$ is an exceptional Cartan type if it appears in the first row of Table~\ref{fig:nvalues}.


This data is character theory. Replacing the Lie algebras by their quantised enveloping algebras, the only modification
need is to replace the dimensions by the quantum dimensions. It is not known if this data can be extended to higher tensor
powers.

We now give a more detailed overview of our approach to this problem.
We work with quantised enveloping algebras over
a certain coefficient ring $\kk_q\subset \bQ(q)$. $\kk_q$ admits a homomorphism $q\mapsto 1$
and this makes precise the slogan that the representation theory of the quantum group $U_q(C)$
associated to the Cartan type $C$ is the same as that of the universal enveloping algebra $U(C)$ provided
$q$ is not a root of unity. We also include $\Gamma$, the group of Dynkin diagram automorphisms of $C$. If $\Gamma$ is non trivial
then the tensor powers of the adjoint representation have too many composition factors and do not fit a
uniform pattern: this is remedied by considering the smash product $\fU_q(C)$ of $\Gamma$ and $U_q(C)$.

In order to deal with the whole of the exceptional series, we work with a coefficient ring, $\coeffs$ 
which is the subring of the field $\bQ(\pp,\qq)$ such that the following specialisations exists:
a homomorphism $\psi\colon \coeffs \to \bQ(\nv)$ (a precise version of the limit $q\to 1$)
and homomorphisms $\theta_C\colon\coeffs\to\kk_q$ 
corresponding to the point of the exceptional line associated to the Cartan type $C$.

An interpolating category, $\Intcat$, is a $\coeffs$-linear ribbon category
whose monoid of objects is $\bN$, with $\End_{\Intcat}(\obj0)\cong\coeffs$,
and that admits full ribbon functors
$\Psi_C\colon \Intcat\otimes_{\theta_C}\kk_q \to \Icat_{\fU_q(C)}(L_C)$ for the (deformed) adjoint representation $L_C$ of each exceptional type $C$.
(See also \cite{deligne1996b} and in \cite{morrison2024} for similar definitions.)
Although the idea of an interpolating category informs this paper, it is unknown if it exists. 
Instead we construct a ``truncated'' web category $\threecat$ whose objects are $\{\obj0,\obj1,\obj2,\obj3\}$.
Our first main result is to show that $\threecat$ satisfies the expected properties of the full subcategory of $\Wcat$ with those objects.
The endomorphism algebras are interpolating algebras:
\begin{itemize}
	\item There are surjective homomorphisms $\End_{\threecat}(i)\otimes_{\theta_C}\kk_q\to \End_{\fU_q(C)}(\otimes^i L_C)$.
	\item The $\bQ(\pp,\qq)$-algebras $\End_{\threecat}(i)\otimes_{\coeffs}\bQ(\pp,\qq)$ are multi-matrix algebras.
	\item The branching rules for the inclusion $\End_{\threecat}(\obj i)\otimes_{\coeffs}\bQ(\pp,\qq) \subset \End_{\threecat}(i+1)\otimes_{\coeffs}\bQ(\pp,\qq)$
	are the branching rules for the exceptional series.
\end{itemize}

Once we have constructed the interpolating algebras for the adjoint representation $L$,
it is straightforward to construct analogues for $L\oplus I$,  together with
surjective homomorphisms to $\End_{\fU_q(C)}(\otimes^i(L_C\oplus I_C))$, $0\le i\le 3$,
for each exceptional Cartan type $C$.  These are the interpolating algebras that are needed to define
the interpolating $R$-matrix and to show that it satisfies the Yang--Baxter equation.


\bcomments{I wanted to have some mention of what our contributions are beyond the main results. The length of the paper is due
	us taking seriously several technical points. I wanted to advertise this as well as the main results.}

\subsection{Plan of the paper}
The organisation of the paper is as follows:

In \S\ref{numerology} we summarise the background on the exceptional series, we give the uniform representations for each point in the exceptional series and the
uniform Bratteli diagram. We then give the linear functions which interpolate the values of the Casimir and the rational functions which interpolate the dimensions.

In \S\ref{sec:ribbon} we construct the free ribbon category on a trivalent vertex invariant under rotation. The morphisms in this category
are trivalent graphs drawn in a rectangle. We also allow crossings and each crossing is either an over or under crossing.

In \S\ref{sec:coeffs} we first introduce the quantum groups and module categories associated to an exceptional Cartan type. Then we construct the quantum
analogue of the adjoint representation as an object of the module category. Then we introduce the ring of scalars with the properties needed for interpolation categories.
This includes an algebraic definition of the $q\to 1$ limit.

In \S\ref{sec:twostring} we find relations in the rank 5 module of diagrams with four boundary points by two different approaches.
In particular, this constructs a rank 5 commutative algebra which interpolates the algebras $\End(\otimes^2L_C)$ for $C$ in the exceptional series.

In \S\ref{sec:threestring} we find a new relation for diagrams with five boundary points, and then proceed to
build the algebra which interpolates the $\End(\otimes^3L_C)$; it is a multi-matrix algebra of rank 80 whose irreducible representations we give explicitly.
Putting together all the constructions above, we define the category with objects  $\obj0\leqslant i\leqslant \obj3$,
which interpolates the categories with objects $\otimes^i(L_C)$ for $C$ an exceptional Cartan type.

In \S\ref{sec:dilute} we construct the category with objects $\obj0\leqslant i\leqslant \obj3$
which interpolates the categories with objects $\otimes^i(L_C\oplus I_C)$,
$C$ an exceptional Cartan type. We refer to this as the dilute category of the category constructed in the previous section.
The algebra $\End(\obj3)$ is a multi-matrix algebra of rank 287 and we give the irreducible representations explicitly.

Finally, in \S\ref{sec:yangbaxter}, we construct the $R$-matrix. First, this has the properties of an $R$-matrix, namely it satisfies the Yang--Baxter equation, crossing symmetry
and unitarity. Second, it interpolates the $R$-matrix for the points in the exceptional series. Thirdly, in the limit $q\to1$, it gives the rational $R$-matrix
proposed in \cite[\S2.3]{Westbury2003}.

In Appendix~\ref{app:skein} we give details of the relations.

\subsection*{SageMath and Macaulay2 computations}
In this paper, we have tried to make the logic of all our proofs understandable to a human reader;
however, some of the computations may require the use of a computer.
We have performed these computations using the open-source mathematics software systems SageMath~\cite{sagemath} and Macaulay2~\cite{M2},
see in particular the Macaulay2 tutorial\\
\url{https://www.unimelb-macaulay2.cloud.edu.au/#tutorial-ExceptionalDiagrams}.



\bcomments{I was thinking of giving the reader the option to skip sections, on a first reading. The reason for the length of the paper is that we deal with several technical issues, which a reader may or may not care about:
	
Most readers won't care about the technical issues with diagrams and can skip this section.

Many readers will be comfortable including the group of diagram automorphisms.

A reader who just wants the $R$-matrix can take the coefficients to be the field of rational functions.

I would recommend a reader on first reading to follow the construction of the $R$-matrix over the field of rational functions.
Then on second reading to go back and follow the specialisations.}

\section{Background}\label{numerology}
The exceptional series is a projective line over $\bQ$ with 10 distinguished points, which are
shown in Table~\ref{fig:nvalues}; this list is also given in \cite[Table~1]{morrison2024}.
Eight of these points are labelled by a simple Lie algebra of the given Cartan type, and this list of simple Lie
algebras includes all five exceptional simple Lie algebras. There are numerous choices
of parameterisations on the line. Our parameter is $\nv$ and the label of a point is $n$.

\begin{table}
  \[
    \begin{array}{c|cccccccccc}
      C & \varnothing & \OSp & A_1 & A_2 & G_2 & D_4 & F_4 & E_6 & E_7 & E_8 \\ \hline
      n & 1/5 & 1/4 & 1/3 & 1/2 & 2/3 & 1 & 3/2 & 2 & 3 & 5 \\
    \end{array}
  \]
  \caption{The exceptional Cartan types}\label{fig:nvalues}
\end{table}
\pcomments[gray]{we could call $OSp_{1|2}$ ``$C_{1/2}$'' but I won't insist. actually we could use Kac's notation or a variation thereof:
he writes $A(n)$, $B(n)$ for Lie algebras, $A(m,n)$, $B(m,n)$ for superLie algebras, so $OSp_{1|2}$ would be rewritten $B_{0|1}$.}
\bcomments[gray]{Do we know what this is as a Lie group with involution (as in Deligne)? PZJ: it's not a Lie group with involution...}

The parameter $n$ is related to other parameters by
\begin{align*}
	n &= m/2+1 & n &= -1/\lambda & n &= \check{h}/6 \\
	m &= 2n-2 & \lambda &= -1/n & \check{h} &= 6n 
\end{align*}
The parameter $m$ is the dimension of the composition algebra in the Freudenthal--Tits construction of the exceptional
simple Lie algebras (except $G_2$). The parameter $\check{h}$ is the dual Coxeter number and $\lambda$ is the parameter in
\cite{cohen1996} and \cite{morrison2024}. The parameter $a$ in \cite{cohen1996} is $a=1/(6\nv)$.

There is a $\bC$-linear category associated to each term in the exceptional series. Each category is semisimple abelian 
and also rigid symmetric monoidal. For each Cartan type, $C$, in the exceptional series, let $\fg_C$ be the associated simple Lie algebra
and denote the automorphism group by $\Aut(\fg_C)$. The category associated to $C$ is the category of
finite dimensional $\mathcal{O}(G_C)$-comodules where $\mathcal{O}(G_C)$ is the coordinate ring of the algebraic group $\Aut(\fg_C)$.
The connected component of $\Aut(\fg_C)$, $\Aut_0(\fg_C)$, has Lie algebra $\fg_C$ and trivial centre.

The two points which are not labelled by a simple Lie algebra are $n=1/5$ which is the zero Lie algebra, $n=1/4$ which is labelled by the simple super Lie algebra
$\osp(1|2)$. The category associated to $n=1/5$ is the category of finite dimensional complex vector spaces.
The category associated to $n=1/4$ is the category of finite dimensional graded representations of the Lie superalgebra $\osp(1|2)$.

\begin{rem} The exceptional series has an unexpected symmetry. This symmetry is the involution $n\mapsto 1/n$.
It was observed in \cite{DeligneGross} that the
exceptional series (excluding $\osp(1|2)$) is an increasing sequence of Lie subalgebras of $\mathfrak{e}_8$. Moreover,
each pair $(1/n,n)$ gives a dual reductive pair of Lie subalgebras of $\mathfrak{e}_8$. This suggests that there should be an additional
point on the exceptional series corresponding to $n=4$. This point also has the property that the evaluations of the dimension formulae
are positive integers. This point is discussed in \cite{landsberg2006} and \cite{Westbury2006} but since the representation theory
is not understood we do not include it in this paper.
\end{rem}


\subsection{Numerology}
In this section we review the numerology of the exceptional series presented in \cite{cohen1996}.
We only present the results up to the third tensor power as this is all we need.
We write $L$ for the adjoint representation, $I$ for the trivial representation, but otherwise we adhere to the notation in \cite{cohen1996}.

The decomposition of $\otimes^2L$ is
\begin{equation}\label{eq:tensorsquare}
\otimes^2L \cong I \oplus L \oplus Y_2 \oplus Y_2^* \oplus X_2
\end{equation}

For the Cartan types in Table~\ref{fig:nvalues} the highest weights of these representations are
\begin{equation}\label{eq:weights}
	\begin{array}{c|cccccccccc}
	C & \varnothing & \mathit{OSp}_{1|2} & A_1 & A_2 & G_2 & D_4 & F_4 & E_6 & E_7 & E_8 \\ \hline
	L & & & 2\omega_1 & \omega_1+\omega_2 & \omega_2 & \omega_2 & \omega_1 & \omega_2 & \omega_1 & \omega_8 \\
	Y_2 & & & 4\omega_1 & 2\omega_1+2\omega_2 & 2\omega_2 & 2\omega_2 & 2\omega_1 & 2\omega_2 & 2\omega_1 & 2\omega_8 \\
	Y_2^* & & & - & \omega_1+\omega_2 & 2\omega_1 & 2\omega_1^\ast & 2\omega_4 & \omega_1+\omega_6 & \omega_6 & \omega_1 \\
	X_2 & & & - & 3\omega_1^\ast & 3\omega_1 & \omega_1+\omega_3+\omega_4 & \omega_2 & \omega_4 & \omega_3 & \omega_7
\end{array}
\end{equation}

The following table gives the decompositions of $V\otimes L$ for $V$ a composition factor of $\otimes^3L$
which is not a composition factor of $\otimes^2L$. These can also be displayed as edges in the Bratteli diagram.
\begin{equation}\label{eq:bratteli}
	\begin{array}{c|c|c|ccc|ccccccc}
		& I & L & X_2 & Y_2 & Y_2^\ast & A & Y_3 & Y_3^\ast & C & C^\ast & X_3 \\ \hline
		I & & 1 & & & & & & & & & & \\
		L & 1 & 1 & 1 & 1 & 1 & & & & & & & \\
		Y_2 & & 1 & 1 & 1 & & 1 & 1 & & 1 & & \\
		X_2 & & 1 & 1 & 1 & 1 & 1 & & & 1 & 1 & 1\\
		Y^\ast_2 & & 1 & 1 & & 1 & 1 & & 1 & & 1 &
	\end{array}
\end{equation}

The rank of the free $\coeffs$-module $\Hom_{\Intcat}(r,s)$ only depends on $r+s$ since we have raising and lowering operators.
Based on the Bratteli diagram in \cite{cohen1996}, we know that the ranks of these $\coeffs$-modules for $0\leqslant r+s \leqslant 9$ should be
\begin{equation}\label{eq:invdims}
	\begin{tabular}{cccccccccc}
		0 & 1 & 2 & 3 & 4 & 5 & 6 & 7 & 8 & 9 \\ \hline
		1 & 0 & 1 & 1 & 5 & 16 & 80&  436 &  2891 &  22248
	\end{tabular}
\end{equation}

In particular, $\Hom_{\threecat}(r,s)$ should reproduce these numbers up to $r+s = 6$.

The values of the Casimir are computed using \cite[Proposition~11.36]{carter2005}.
\begin{equation}\label{eq:casimir}
	\casimir(\lambda) = \langle \lambda,\lambda+2\rho\rangle
\end{equation}
where the Killing form is normalised so that $\langle \theta,\theta\rangle = 2$, where
$\theta$ is the highest root. \pcomments[gray]{check that this is the correct normalisation for the ribbon twist}

This gives the following values on the composition factors of $\otimes^2L$:
\begin{equation}\label{eq:cas_values}
	\begin{array}{c|cccccc}
		V & I & L & Y_2 & Y_2^* & X_2 \\ \hline
		\casimir(V) & 0 & 12\,\nv & 24\,\nv+4 & 20\,\nv-4 & 24\,\nv
	\end{array}
\end{equation}

Although we will not make use of these, we record the values on the remaining composition factors of $\otimes^3L$
The values of the quadratic Casimir, $\casimir$ are:
\begin{equation}\label{eq:cvalues}
	\begin{tabular}{c|cccccc}
		$V$ & $A$ & $Y_3$ & $Y_3^\ast$ & $C$ & $C^\ast$ & $X_3$ \\ \hline
		$\casimir(V)$  & $32\nv$ & $36\nv+12$ & $24\nv-12$ & $36\nv+6$ & $30\nv-6$ & $36\nv$
	\end{tabular}
\end{equation}

For the Cartan types $\OSp$, $A_1$, $A_2$, $G_2$, $D_4$, $F_4$, $E_6$, $E_7$, $E_8$, these values interpolate the values of the quadratic Casimir in the sense that: for each such $C$ 
and each $V$ in \eqref{eq:cas_values} and \eqref{eq:cvalues}, the quadratic Casimir acts on $V(C)$ by the scalar given by substituting $\nv=n_C$ in $\casimir(V)$. This is proved by case by case calculations.

\subsection{Quantum dimensions}\label{sec:qdims}
The following are the quantum dimension formulae. There are (at least) three methods for computing these.
The first method, used in \cite{cohen1996}, is to use the branching rules and the rules (1) and (2) in \cite{deligne1996b}
to obtain linear relations for the quantum dimensions. The second method is to use the results of \cite[\S3]{Landsberg2002}
which are obtained from the quantum Weyl dimension formula. The approach in \S\ref{sec:qdimsid} is to compute traces of idempotents.
It is remarkable that all three methods give the same results.

The quantum dimensions of the composition factors of $L\otimes L$ were first given in \cite{Tuba2001} and are:
\begin{gather}\label{eq:qdims2}
	\dim_q(I) = 1 \\
	\dim_q(L) = \frac{\qint 61 [5\,\nv-1][4\,\nv]}{[\nv+1][2\,\nv]} \\
	\dim_q(X_2) = \frac{\qint 61 \qint 62 [5\,\nv] [4\,\nv-2] \qint 41 [5\,\nv-1] [3\,\nv-1]}%
	{[\nv] [2] \qint 22 \qint 31 [2\,\nv-1] [\nv+1]} \\
	\dim_q(Y_2) = \frac{[6\,\nv] \qint 63 [5\,\nv] [4\,\nv] \qint 41 [5\,\nv-1]}%
	{[2] [\nv+1] \qint 21 [2\,\nv] [\nv+2]} \\
	\dim_q(Y_2^\ast) = \frac{[6\,\nv] \qint 61 [5\,\nv] [4\,\nv] [3\,\nv-1] [3\,\nv-3]}%
	{\qint 22 [2\,\nv] [\nv-1] [\nv+2] [\nv+1]}
\end{gather}

The quantum dimensions of the composition factors of $\otimes^3 L$ were computed by taking the trace of an idempotent in $A(\obj3)$, see \S\ref{sec:threestring}.
These quantum dimensions have also been obtained in \cite[Table~4]{morrison2024} and most of these are given in \cite[\S3]{Landsberg2002}.
The quantum dimensions of composition factors which don't appear in $L\otimes L$ are:
\begin{gather}\label{eq:qdims3}
	\dim_q(X_3) = \frac{\qint 61 \qint 62 \qint 63 \qint 51 [4\,\nv-1] [4\,\nv] [4\,\nv-2] [5\,\nv-1][5\,\nv][3\,\nv-3]}%
	{[3][2]\qint 21 \qint 22 [2\,\nv] [\nv-1]\qint 33 [\nv+1] [\nv]}\!\!\!\!\\
	\dim_q(Y_3) = \frac{[6\,\nv] \qint 61 \qint 65 \qint 51 \qint 41 [4\,\nv] \qint 42 [5\,\nv-1] [5\,\nv]}%
	{[2][\nv+1] [3] \qint 21 \qint 22 [2\nv] [2\,\nv] [3\,\nv]} \\
	\dim_q(Y^\ast_3) = -\frac{[6\,\nv] \qint 61 [5\,\nv] [5\,\nv-1] [4\,\nv-1] [4\,\nv] [3\,\nv-1] [2\,\nv-2] [\nv-5] }%
	{[2][\nv-1] [\nv+1] [\nv+2] \qint 23 \qint 22 [2\nv] \qint 33} \\
	\dim_q(A) = \frac{[6\,\nv] \qint 61 \qint 63 \qint 51 [4\,\nv-1] \qint 41 [5\,\nv-1] [3\,\nv-1] [3\,\nv-3]}%
	{ [\nv+1]^2 \qint 23 \qint 21 [2\,\nv] [\nv-1] [\nv+3]} \\
	\dim_q(C) = \frac{[6\,\nv] \qint 62 \qint 64 \qint 51 [4\,\nv] [4\,\nv-2] \qint 42 [5\,\nv-1] [5\,\nv] [3\,\nv-1]  }%
	{[3] [\nv] \qint 24 \qint 21 \qint 32 [\nv+2]  [2\,\nv-1] [\nv+1]  } \\
	\dim_q(C^\ast) = \frac{[6\,\nv] \qint 61 \qint 62 [5\,\nv] [4\,\nv] [4\,\nv-2] [4\,\nv-1] \qint 41 [2\,\nv-2] [2\,\nv-4] }%
	{[3] [\nv] \qint 24 \qint 21 \qint 32 [\nv+2]  [2\,\nv-1] [\nv+1]  }
\end{gather}


\section{Ribbon category}\label{sec:ribbon}
Tortile categories were introduced in \cite{joyal1991} and the definition is algebraic. Ribbon categories were introduced in \cite{Reshetikhin1990} and the definition is topological (or diagrammatic). It is folklore that these two definitions are equivalent.
It is also shown in \cite{Reshetikhin1990} that the category of finite dimensional type I representations of a quantised
enveloping algebra is a linear ribbon category.

In this section we construct a free ribbon category. The result that our construction is a free ribbon category is also
folklore but related results are Theorem~2.3, Theorem~3.7 and Theorem~4.5 in \cite{Joyal1991a} which give diagrammatic constructions of the free monoidal category, the free braided category and the free balanced category
and \cite{Shum1994} which shows that the category of framed tangles is the free ribbon category on one object.

Our convention is to write maps on the right. Diagrams are read from left to right.

\subsection{Pivotal category}\label{ssec:pivot}
A \Dfn{rectangle} will mean a rectangle embedded in the Euclidean plane with horizontal
and vertical edges.
\begin{defn} A \Dfn{planar diagram} is a closed subset $D$ of a rectangle such that
	every point in $D$ which is in the boundary of the rectangle has an open neighbourhood
	equivalent to one of the open neighbourhoods in \eqref{eq:edge}
\begin{equation}\label{eq:edge}
	\topvertex \qquad \botvertex
\end{equation}
and every point of
$D$ in the interior of the rectangle has an open neighbourhood equivalent
to one of the open neighbourhoods in \eqref{eq:interior}.
\begin{equation}\label{eq:interior}
	\linecirca \qquad \overcirca \qquad \trivalenta
\end{equation}

\end{defn}
	The equivalence relation is graph isomorphism which preserves
\begin{itemize}
	\item the cyclic ordering of the edges incident to each vertex
	\item the over/under marking at each crossing
	\item the ordered set of input vertices and the ordered set of output vertices
\end{itemize}

Planar diagrams are the morphisms of a category, $\Pcat$.
\begin{defn}\label{defn:pivotal}
	The set of objects of $\Pcat$ is $\bN$. A morphism $r\to s$ of the category $\Pcat$ is an equivalence class of planar diagrams
	with $r$ boundary points on the left hand edge and $s$ boundary points on the right hand edge.
Composition of morphisms is given by juxtaposing horizontally rectangles. The identity morphisms are rectangles with horizontal lines.
\end{defn}

The category $\Pcat$ is strict monoidal with tensor product given by stacking of rectangles.
The category $\Pcat$ is strict pivotal since every object is self-dual and
the functor $\ast\colon \Pcat\to \Pcat^{\mathrm{op}}$ rotates a diagram through a half revolution.
The category $\Pcat$ is also strict spherical and spacial
and is the free spacial spherical category on a rotationally invariant trivalent vertex, \cite[Conjecture~4.16]{selinger2010}
and \cite[Theorem~7.8]{delpeuch2022}.

The \Dfn{generators} of $\Pcat$ are the diagrams in the  first row \eqref{eq:catgens} and their names are given in the second row. 
\begin{equation}\label{eq:catgens}
\arraycolsep=16pt
  \begin{array}{cccc}
    \cuprect  &
    \caprect  &
    \overrect  &
    \trivalentrect \\
    \ev & \co & \sigma & \mu
  \end{array}
\end{equation}

Let $\sigma$ be the braid generator in \eqref{eq:catgens}. The braid generator, $\sigma$, is invertible and the inverse, $\sigma^{-1}$,
is given by rotating $\sigma$ through a quarter turn (in either direction).

Recall that a braided monoidal category is a monoidal category with an isomorphism $\sigma_{U\,V} \colon U\otimes V\to V\otimes U$
for all objects $U,V$. These morphisms are required to make the following diagram commute
\begin{equation}\label{eq:braid}
	\begin{tikzcd}[black]
		U\otimes V \arrow[r , "f\otimes 1_V"] \arrow[d , "\sigma_{W\, V}"'] & W\otimes V \arrow[d , "\sigma_{U\,V}"] \\
		V\otimes U \arrow[r , "1_V\otimes f"'] & V \otimes W
	\end{tikzcd}
\end{equation}
for all objects $U,V,W$ and all morphisms $f\colon U\to V$.
It is sufficient to require that this holds for each of the generating morphisms in \eqref{eq:catgens} and for $V$ the generating object.

The conditions that the relations \eqref{eq:braid} hold for the generators, \eqref{eq:catgens} are shown in \eqref{eq:braidrelations1}:
\begin{equation}\label{eq:braidrelations1}
\begin{split}
&	\begin{tikzpicture}[bwwrect={(-1,-1)}{(1,1)},yscale=1.2]
		\draw (0.5,1) to[out=225,in=90] (-0.5,0) to[out=270,in=135] (0.5,-1);
		\draw[wipe] (-0.5,1) to[out=315,in=90] (0.5,0) to[out=270,in=45] (-0.5,-1);
	\end{tikzpicture}
	=
	\begin{tikzpicture}[bwwrect={(-1,-1)}{(1,1)},yscale=1.2]
		\draw (0.5,1) -- (0.5,-1);
		\draw (-0.5,1) -- (-0.5,-1);
	\end{tikzpicture}	
	\qquad
	\begin{tikzpicture}[bwwrect={(-1,-1)}{(1,1)},yscale=1.2]
          \begin{scope}[rounded corners=10pt]
          \draw (0.7,1) -- (-.3,0) -- (-0.7,-1);
          \draw[wipe] (0,1) -- (.3,0) -- (0,-1);
          \draw[wipe] (-0.7,1) -- (-.3,0) -- (0.7,-1);          
        \end{scope}
      \end{tikzpicture}
	=
	\begin{tikzpicture}[bwwrect={(-1,-1)}{(1,1)},yscale=1.2]
          \begin{scope}[rounded corners=10pt]
          \draw (0.7,1) -- (.3,0) -- (-0.7,-1);
          \draw[wipe] (0,1) -- (-.3,0) -- (0,-1);
          \draw[wipe] (-0.7,1) -- (.3,0) -- (0.7,-1);          
        \end{scope}
      \end{tikzpicture}
\\
&	\begin{tikzpicture}[bwwrect={(-1,-1)}{(1,1)},yscale=1.2]
		\draw  (-0.2,-0.2) -- (-0.5,-1);
		\draw  (-0.2,-0.2) -- (0.7,1);
		\draw  (-0.2,-0.2) -- (0,1);
		\draw[wipe,rounded corners=10pt]  (-0.7,1) -- (0.3,0.3) -- (0.5,-1);
	\end{tikzpicture}
	=
	\begin{tikzpicture}[bwwrect={(-1,-1)}{(1,1)},yscale=1.2]
		\draw  (0.2,0.3) -- (-0.5,-1);
		\draw  (0.2,0.3) -- (0.7,1);
		\draw  (0.2,0.3) -- (0,1);
		\draw[wipe,rounded corners=10pt]  (-0.7,1) -- (-0.3,-0.3) -- (0.5,-1);
	\end{tikzpicture}
      \end{split}
    \end{equation}
The quotient of $\Pcat$ by the monoidal ideal generated by the relations \eqref{eq:braidrelations1} is a braided category.

The ribbon relation is shown in \eqref{eq:ribbon}:
\begin{equation}\label{eq:ribbon}
\ribbon
=
\begin{tikzpicture}[bwwrect={(-1.25,-2)}{(1.25,2)},yscale=0.75]
	\draw (-1,2) to[out=270,in=90] (1,-2);
\end{tikzpicture}
\end{equation}
\begin{defn} The category $\Rcat$ is the quotient of $\Pcat$ by the monoidal ideal generated by the relations \eqref{eq:braidrelations1} and \eqref{eq:ribbon}.
\end{defn}

The \Dfn{bar involution} on $\Rcat$ is the involution on morphisms which switches over and under crossings.

The category $\Rcat$ is a ribbon category but is not the free ribbon category on self-dual object with a trivalent vertex
as the relation \eqref{eq:invariance}
holds in $\Pcat$ (from its definition), and this condition is not required for a ribbon category:
\begin{equation}\label{eq:invariance}
	\invlft = \invrht
\end{equation}
\bcomments[gray]{It does not really make sense to draw this as a circle diagram, although it is a relation. That's the point!}
However, the category $\Rcat$ is the free ribbon category on a self-dual object with a trivalent vertex which is invariant under rotation.

\begin{rem}
  The existence of $\co$ and $\ev$ allows to move boundary points from one side to the other.
 It is therefore sometimes convenient
  to draw diagrams in a {\em disk}\/ rather than a rectangle. The interpretation is that the diagram can be deformed into a rectangle,
  but one can freely choose any sequence of adjacent boundary points
  and declare them to be left endpoints, the complement being the right endpoints. Of course such a choice should be made
  consistently when writing an equality between diagrams; for example, the relations \eqref{eq:braidrelations1} can be rewritten
\begin{equation}\label{braidrelations2}
	\begin{tikzpicture}[bwwcircle]
		\draw (45:1) to[out=225,in=90] (-0.5,0) to[out=270,in=135] (315:1);
		\draw[wipe] (135:1) to[out=315,in=90] (0.5,0) to[out=270,in=45] (225:1);
	\end{tikzpicture}
	=
	\begin{tikzpicture}[bwwcircle]
		\draw (45:1) to[out=225] (315:1);
		\draw (135:1) to[out=315,in=45] (225:1);
	\end{tikzpicture}	
	,\ 
	\begin{tikzpicture}[bwwcircle]
		\def\x{0.45}
		\draw (30:1) to[out=210,in=15] (60:\x) to[out=195,in=45] (180:\x) to[out=225,in=30] (210:1);
		\draw [blue!20, line width =3mm] (150:1) to[out=330,in=135] (180:\x) to[out=315,in=165] (300:\x) to[out=345,in=150] (330:1);
		\draw (150:1) to[out=330,in=135] (180:\x) to[out=315,in=165] (300:\x) to[out=345,in=150] (330:1);
		\draw [wipe] (90:1) to[out=270,in=105] (60:\x) to[out=285,in=75] (300:\x) to[out=255,in=90] (270:1);
	\end{tikzpicture}
	=
	\begin{tikzpicture}[bwwcircle]
		\def\x{0.45}
		\draw (30:1) to[out=210,in=45] (0:\x) to[out=225,in=15] (240:\x) to[out=195,in=30] (210:1);
		\draw [blue!20, line width =3mm] (150:1) to[out=330,in=165] (120:\x) to[out=345,in=135] (0:\x) to[out=315,in=150] (330:1);
		\draw (150:1) to[out=330,in=165] (120:\x) to[out=345,in=135] (0:\x) to[out=315,in=150] (330:1);
		\draw [wipe] (90:1) to[out=270,in=75] (120:\x) to[out=255,in=105] (240:\x) to[out=285,in=90] (270:1);
	\end{tikzpicture}
,\ 
	\begin{tikzpicture}[bwwcircle]
		\draw  (0,0) -- (270:1);
		\draw  (0,0) -- (30:1);
		\draw  (0,0) -- (150:1);
		\draw[wipe]  (180:1) to[out=0,in=180] (0,0.3) to[out=0,in=180] (0:1);
	\end{tikzpicture}
	=
	\begin{tikzpicture}[bwwcircle]
		\draw  (0,0) -- (270:1);
		\draw  (0,0) -- (30:1);
		\draw  (0,0) -- (150:1);
		\draw[wipe]  (180:1) to[out=0,in=180] (0,-0.3) to[out=0,in=180] (0:1);
	\end{tikzpicture}
\end{equation}  
\end{rem}

Recall that a self-dual object in a monoidal category is a triple $(V,\ev_V,\co_V)$ where $V$ is an object of $\Vcat$
and $\ev_V\colon V\otimes V \to \unit$, $\co_V\colon \unit \to V\otimes V$ satisfy the zig-zag relations.
\begin{defn}\label{defn:free}
	Let $\Vcat$ be a ribbon category. The category $T(\Vcat)$ has objects pairs consisting of
	a self-dual object $(V,\ev_V,\co_V,\mu_V)$ and a morphism $\mu_V\colon V\otimes V\to V$ 
	such that the following diagram commutes:
	\begin{center}
		\begin{tikzcd}[black]
			V\otimes V \otimes V \arrow[r,"\mu_V\otimes 1_V"] \arrow[d,"1_V\otimes\mu_V"'] & V\otimes V \arrow[d,"\ev_V"] \\
			V\otimes V \arrow[r,"\ev_V"'] & V
		\end{tikzcd}
	\end{center}
	A morphism $(V,\ev_V,\co_V,\mu_V) \to (W,\ev_W,\co_W,\mu_W)$ is a morphism $\phi\colon V\to W$ such that
	\[ (\phi\otimes\phi)\circ \ev_W = \ev_V \ ,\quad \co_V\circ (\phi\otimes\phi) =\co_W \ ,\quad
	(\phi\otimes\phi)\circ\mu_W = \mu_W\circ \phi  \]
\end{defn}

The tuple $(\obj1,\ev,\co,\mu)$ is an object of $A(\Rcat)$ where $\ev,\co,\mu$ are the generators in \eqref{eq:catgens}.

Let $\Vcat$ be a ribbon category. Define $\Hom(\Rcat,\Vcat)$ to be the category whose objects are ribbon functors $\Rcat \to \Vcat$
and whose morphisms are natural transformations of ribbon functors.

The universal property of $\Rcat$ is:
\begin{prop}\label{prop:free}
For any ribbon category $\Vcat$, the  functor $\Hom(\Rcat,\Vcat) \to T(\Vcat)$ which sends $\Phi\colon \Rcat\to\Vcat$
to the tuple $(\Phi(\obj1),\Phi(\ev),\Phi(\co),\Phi(\mu))$ is an equivalence of categories.
\end{prop}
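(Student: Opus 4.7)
The plan is to establish that the evaluation functor is essentially surjective and fully faithful, building on the cited result of \cite[Conjecture~4.16]{selinger2010} and \cite[Theorem~7.8]{delpeuch2022} that $\Pcat$ is the free spacial spherical category on a rotationally invariant trivalent vertex. Thus $\Pcat$ already has the right universal property for the pivotal substructure, and I need only check that the further relations defining $\Rcat$ are precisely what is needed so that $\Rcat$ deserves to be called the free ribbon category on $(\obj 1, \ev, \co, \mu)$.

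For essential surjectivity, fix $(V,\ev_V,\co_V,\mu_V)\in T(\Vcat)$. I would put $\Phi(\obj n)=V^{\otimes n}$, assign the generators by $\Phi(\ev)=\ev_V$, $\Phi(\co)=\co_V$, $\Phi(\mu)=\mu_V$, and $\Phi(\sigma)=c_{V,V}$, the self-braiding of $V$ in $\Vcat$. By the universal property of $\Pcat$ cited above, the restriction to $\ev,\co,\mu$ extends uniquely to a pivotal functor $\Pcat\to\Vcat$; it is here that the rotation invariance \eqref{eq:invariance} of the trivalent vertex in $\Pcat$ matches the rotation invariance built into the $T(\Vcat)$ condition on $\mu_V$. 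It remains to check that $\Phi$ factors through the quotient defining $\Rcat$, i.e., that the relations \eqref{eq:braidrelations1} and \eqref{eq:ribbon} are sent to identities. The first Reidemeister-II relation is invertibility of $c_{V,V}$; the second is a hexagon instance; the third (crossing through a trivalent vertex) is naturality of $c_{V,-}$ applied to $\mu_V$; and the ribbon relation \eqref{eq:ribbon} translates into the defining compatibility of the twist $\theta_V$ with the duality $(\ev_V,\co_V)$ in a ribbon category.

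For full faithfulness, a natural transformation $\eta\colon\Phi\Rightarrow\Phi'$ between two ribbon functors $\Rcat\to\Vcat$ is determined by its value $\eta_{\obj 1}$, since every object of $\Rcat$ is a tensor power $\obj 1^{\otimes n}$ and the tensorator of a monoidal natural transformation forces $\eta_{\obj n}=\eta_{\obj 1}^{\otimes n}$. Naturality of $\eta$ with respect to the generators $\ev,\co,\mu$ unfolds exactly into the three conditions listed in Definition~\ref{defn:free}, and naturality with respect to $\sigma$ is automatic, since the braiding in a ribbon category is natural in both arguments. So the assignment on Hom-sets is a bijection.

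The hard part will not be any individual relation, each of which falls out of the ribbon axioms of $\Vcat$; rather, the main obstacle is the bookkeeping required to combine the three distinct ``free construction'' results cited --- for the pivotal, braided, and ribbon structures respectively --- into a single coherent statement tailored to the presentation of $\Rcat$. The point is that $\Rcat$ has been engineered so that the rotation invariance \eqref{eq:invariance} is inherited from $\Pcat$ rather than being imposed as an additional relation after the braiding has been adjoined; consequently no further relations beyond \eqref{eq:braidrelations1} and \eqref{eq:ribbon} are needed, and the universal property matches $T(\Vcat)$ on the nose.
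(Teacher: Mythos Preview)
Your argument is sound and considerably more detailed than what the paper offers. The paper's proof consists of a single sentence deferring to folklore: it asserts that the result ``can be proved using the method of proof of Reidemeister's theorem'' and points to similar results in \cite{Freyd1989} and \cite{yetter1989}. In other words, the paper gestures at a \emph{topological} approach---show that two diagrams define the same morphism in $\Rcat$ if and only if they are related by a sequence of local moves, and identify those moves with the ribbon axioms---rather than the categorical bootstrapping you carry out from the universal property of $\Pcat$.

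One small point of friction in your write-up: the category $\Pcat$ as defined in the paper already carries the crossing $\sigma$ as a generator, so the cited universal property of $\Pcat$ (free spacial spherical on a rotationally invariant trivalent vertex) is, strictly read, either slightly imprecise or implicitly includes the crossing as additional free data. Your assignment $\Phi(\sigma)=c_{V,V}$ is correct, but the sentence ``the restriction to $\ev,\co,\mu$ extends uniquely to a pivotal functor $\Pcat\to\Vcat$'' elides this; you are really invoking freeness on the quadruple $(\ev,\co,\mu,\sigma)$. This does not affect the validity of the argument, only its phrasing. Your approach has the advantage of making explicit exactly which axiom of a ribbon category discharges each relation in \eqref{eq:braidrelations1}--\eqref{eq:ribbon}; the paper's Reidemeister-style approach, had it been spelled out, would instead produce a normal form for diagrams and is what one typically needs if one also wants an explicit basis for each Hom-space.
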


\begin{proof} It is folklore that this can be proved using the method of proof of Reidemeister's theorem.
There are similar results in \cite{Freyd1989} and \cite{yetter1989}.
\end{proof}

\subsection{Rotation, trace, inner product}\label{ssec:rottrace}
Various basic operations can be performed on diagram categories.

\begin{defn}\label{defn:rot}
  The rotation map is the linear map, $\Rot\colon \Hom_\Dcat(\obj r,\obj s)\to \Hom_\Dcat(\obj r,\obj s)$, given by
  \begin{equation}\label{eq:rot}
    \Rot\colon\generalrect\:\mapsto\:\rotgeneralrect
  \end{equation}
\end{defn}

Note that if we think of the diagram as drawn on a disk, then this operation is simply a rotation by $2\pi/\obj k$,
where $\obj k=\obj r+\obj s$ is the number of external endpoints (spaced regularly).

\begin{defn}\label{defn:trace}
  The trace map is the linear map, $\Tr\colon \End_\Dcat(\obj r)\to \End_\Dcat(\obj 0)$, given by
  \begin{equation}\label{eq:trace}
    \Tr\colon\generalrecta\:\mapsto\:\tracegeneralrect
  \end{equation}

  \bcomments[gray]{Should we have the conditional expectation as well, \eqref{fig:eps}?
    PZJ: not super easy to do in the general setting.}
  The inner product is the map $\left<\cdot,\cdot\right>\colon\Hom_\Dcat(\obj r,\obj s)\otimes \Hom_\Dcat(\obj s,\obj r)\to \End_\Dcat(\obj 0)$ given by
  \begin{equation}\label{eq:inner}
    \left<a,b\right> \coloneqq \Tr(ab)
  \end{equation}
\end{defn}
One has the trace property $\Tr(ab)=\Tr(ba)$.

In the categories of interest, we expect $\End_\Dcat(\obj 0)$ to be isomorphic to the base ring,
and the inner product to be a perfect pairing
between $\Hom_\Dcat(\obj r,\obj s)$ and $\Hom_\Dcat(\obj s,\obj r)$.


\section{Quantum groups and coefficient rings}\label{sec:coeffs}

\subsection{Quantised enveloping algebras}\label{sec:qea}
The standard set-up for the representation theory of quantised enveloping algebras is to start with the Laurent polynomial ring, $\bZ[q^{\pm 1}] $.
The quantum integers $[n]_q\in \bZ[q^{\pm 1}]$ for $n\in \bZ$ are defined by
\begin{equation*}
	[n]_q = \frac{q^n-q^{-n}}{q-q^{-1}}
\end{equation*}

\begin{defn} The ring $\kk_q$ is constructed from $\bQ[q^{\pm 1}]$ by adjoining a multiplicative inverse of $[n]_q$ for $n>1$.
\end{defn}

The homomorphism $\psi\colon \kk_q \to \bQ$ is defined by
\begin{equation}\label{eq:q1}
	q\mapsto 1 \quad , \quad [n]_q \mapsto n
\end{equation}

\begin{rem} It is more common to use the ring
\begin{equation}\label{defnA}
	\{f/g \in \bQ(q) : g(1) \ne 0 \}
      \end{equation}
This is a local ring with field of fractions $\bQ(q)$ and residue field $\bQ$.
However, the representation theory is very similar for these two rings, \cite[Chapter~5]{jantzen1996}.
\end{rem}

The bar involution on $\kk_q$ is defined by
\begin{equation}\label{eq:barq}
	q^{\pm 1} \mapsto q^{\mp 1} \quad , \quad [n]_q \mapsto [n]_q \text{ for $n\in\bZ$}
\end{equation}

The following is essentially the Definition in \cite[\S 4.2]{jantzen1996}. A minor innovation is that we have introduced $[H_i]$ generators so that we can specialise to $q=1$.

Let $C$ be a Cartan type with $\Phi$ the finite root system and $\alpha_i$, $i \in I$, a set of simple roots.  There is a unique inner product $(\ ,\ )$ on the real vector space $\bR \Phi$ generated by the root system $\Phi$
such that $(\alpha,\alpha)=2$ for all short roots. For $i,j \in I$, we have the Cartan matrix
\begin{equation} \label{Cartan}
	\frac{2(\alpha_i, \alpha_j)}{(\alpha_i, \alpha_i)} = a_{ij}
\end{equation}
In what follows, we will write $(i,j)$ for $(\alpha_i, \alpha_j)$, $i,j \in I$.  For each $i \in I$, define
\[ d_i = \frac{(i,i)}{2} \]

For $i \in I$, $a,n \in \bZ$, with $n>0$, we define
\begin{gather*}
	q_i := q^{d_i},\qquad
	[a]_i := \frac{q_i^a - q_i^{-a}}{q_i - q_i^{-1}} = \frac{q^{ad_i} - q^{-ad_i}}{q^{d_i} - q^{-d_i}},
	\\
	[n]_i^! := [n]_i [n-1]_i \dotsb [1]_i,\qquad
	{\qbinom{a}{n}}_i := \frac{[a]_i [a-1]_i \dotsb [a-n+1]_i}{[n]_i^!},\qquad
	{\qbinom{a}{0}}_i := 1.
\end{gather*}

The quantized enveloping algebra $U_q(C)$ is the $\kk_q$-algebra with generators $E_i, F_i, [H_i], K_i^{\pm 1}$, $i \in I$, and relations (for $i,j \in I$)
\begin{align*}
	K_i K_i^{-1} &= 1 = K_i^{-1} K_i,&
	K_i K_j &= K_j K_i, \\
	K_i E_j K_i^{-1} &= q^{(i,j)} E_j,&
	K_i F_j K_i^{-1} &= q^{-(i,j)} F_j, \\
	E_i F_j - F_j E_i &= \delta_{ij} [H_i],&
	(q_i-q_i^{-1})[H_i] &= K_i - K_i^{-1},
\end{align*}
where $\delta_{ij}$ is the Kronecker delta, and (for $i \ne j$),
\[
\sum_{r=0}^{1-a_{ij}} (-1)^r {\qbinom{1-a_{ij}}{r}}_i E_i^{1-a_{ij}-r} E_j E_i^r = 0, \qquad
\sum_{r=0}^{1-a_{ij}} (-1)^r {\qbinom{1-a_{ij}}{r}}_i F_i^{1-a_{ij}-r} F_j F_i^r = 0.
\]

\begin{rem} Substituting $q=1$ gives the Serre relations for $U(C)$ together with the relations that $K_i^2=1$ and $K_i$ is central. The type I representations
	have $K_i=1$ for all i when $q=1$.
\end{rem}

For $\nu = \sum_{i \in I} n_i \alpha_i$, $n_i \in \bZ$, we define $K_\nu = \prod_{i \in I} K_i^{n_i}$.  There is a unique Hopf algebra structure on $U_q(C)$ such that, for all $i \in I$,
\begin{align} \label{HopfE}
	\Delta(E_i) &= E_i \otimes 1 + K_i \otimes E_i,&
	\varepsilon(E_i) &= 0,&
	S(E_i) &= - K_i^{-1} E_i,
	\\ \label{HopfF}
	\Delta(F_i) &= F_i \otimes K_i^{-1} + 1 \otimes F_i,&
	\varepsilon(F_i) &=0,&
	S(F_i) &= - F_i K_i,
	\\ \label{HopfK}
	\Delta(K_i) &= K_i \otimes K_i,&
	\varepsilon(K_i) &=1,&
	S(K_i) &= K_i^{-1}.
\end{align}

The \Dfn{bar involution} is the involution of $U_q(C)$ as a ring given by
\begin{equation}\label{eq:barqg}
	q \mapsto q^{-1} \quad,\quad E_i\mapsto E_i \quad,\quad F_i\mapsto F_i \quad,\quad K_i\mapsto K_i^{-1} \quad,\quad [H_i]\mapsto [H_i]
\end{equation}
Let $\gamma\colon I\to I$ be a diagram automorphism. This defines an automorphism of $U_q(C)$, $\gamma\colon a\mapsto a\triangleleft \gamma$ for $a\in U_q(C)$,  by
\begin{equation}\label{eq:semidirect}
	E_i\triangleleft \gamma = E_{\gamma(i)}, F_i\triangleleft \gamma = F_{\gamma(i)}, [H_i]\triangleleft \gamma = [H_{\gamma(i)]}, K_i^{\pm 1}\triangleleft \gamma = K^{\pm 1}_{\gamma(i)}, 
\end{equation}
The algebra $ \Gamma\sharp U_q(C)$ is the vector space $\bQ\Gamma\otimes_\bQ U_q(C)$ with multiplication
\begin{equation}
	(\gamma\otimes a)(\gamma'\otimes a') = \gamma\gamma' \otimes (a\triangleleft \gamma')a'
\end{equation}
Denote the group of diagram automorphisms by $\Gamma$.
for $\gamma,\gamma'\in\Gamma$ and $a,a'\in U_q(C)$.
The coproduct is $\Delta(\gamma\otimes a)=(\gamma\otimes\gamma)\Delta(a)$. The unit is $1\otimes 1$. The counit is $\varepsilon\otimes\varepsilon$.

Define $\fU_q(C)$ to be the smash product $\Gamma\sharp U_q(C)$.
The category $\fU_q(C)$-mod is the full subcategory of the category of right $\fU_q(C)$-modules whose objects are type I $U_q(C)$-modules and finitely generated free $\kk_q$-modules.
The \Dfn{bar involution} extends to an involution on $\fU_q(C)$ by $\gamma\mapsto\gamma$ for $\gamma\in\Gamma$.

\begin{prop} The category $\fU_q(C)$-mod is a ribbon category.
\end{prop}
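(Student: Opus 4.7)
The plan is to proceed in four steps, first establishing the monoidal, then rigid, then braided, and finally ribbon structure, at each stage bootstrapping from the analogous (well-known) result for $U_q(C)$-mod.

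First, I would observe that $\fU_q(C) = \Gamma\sharp U_q(C)$ is a Hopf algebra with coproduct $\Delta(\gamma\otimes a)=(\gamma\otimes\gamma)\Delta(a)$, counit $\varepsilon\otimes\varepsilon$, and antipode compatible with that of $U_q(C)$ (the standard smash-product Hopf algebra construction; this uses the fact that $\Gamma$ acts on $U_q(C)$ by Hopf algebra automorphisms, which is immediate from \eqref{eq:semidirect}). Hence $\fU_q(C)$-mod inherits a monoidal structure where the tensor product is the $\kk_q$-tensor product with diagonal action. Type I and finite generation are preserved under tensor products, so the full subcategory is closed under $\otimes$.

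Second, I would handle rigidity. Since our objects are finitely generated free $\kk_q$-modules, $V^\ast=\Hom_{\kk_q}(V,\kk_q)$ is again free of the same rank, and the antipode $S$ of $\fU_q(C)$ equips $V^\ast$ with a $\fU_q(C)$-module structure. The usual evaluation and coevaluation morphisms from the rigid structure on $U_q(C)$-mod are $\Gamma$-equivariant (because $\gamma\in\Gamma$ acts diagonally on $V\otimes V^\ast$ and the evaluation is a natural pairing), so they lift to morphisms in $\fU_q(C)$-mod, yielding rigidity.

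Third, for the braiding, I would invoke the standard fact (Drinfeld, Jimbo, Lusztig; see \cite[Chap.~7]{jantzen1996} or Chari--Pressley) that $U_q(C)$ admits a universal $R$-matrix $\mathcal R$ in a suitable completion, and that $\mathcal R$ acts well-defined on tensor products of type I modules, making $U_q(C)$-mod (restricted to type I) braided. The key new point is to verify that $\mathcal R$ is $\Gamma$-invariant, i.e.\ $(\gamma\otimes\gamma)\cdot\mathcal R=\mathcal R$ for each $\gamma\in\Gamma$. This follows because $\mathcal R$ is built uniformly from the root system data (the diagonal factor $q^{\sum (\alpha_i,\alpha_j^\vee) H_i\otimes H_j}$ and the Cartan-part exponentials of root vectors) which is preserved by the diagram automorphism $\gamma$. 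Consequently, the braiding $c_{V,W}=\tau\circ\mathcal R|_{V\otimes W}$ on $U_q(C)$-mod commutes with the $\Gamma$-action and therefore descends to a braiding on $\fU_q(C)$-mod.

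Fourth, the ribbon twist is induced by the standard ribbon element $\nu\in U_q(C)$ (satisfying $\Delta(\nu)=(\mathcal R_{21}\mathcal R)^{-1}(\nu\otimes\nu)$ and $S(\nu)=\nu$), which likewise is $\Gamma$-invariant because it is canonically determined by the $R$-matrix and the pivotal element $K_{2\rho}$, both of which are $\Gamma$-invariant ($2\rho$ being a fixed vector under any diagram automorphism that preserves the chosen simple roots). Hence $\theta_V=\nu^{-1}\cdot(-)$ gives a ribbon twist on $\fU_q(C)$-mod satisfying the usual ribbon axioms, since those axioms already hold on $U_q(C)$-mod and are preserved by passing to $\Gamma$-equivariant morphisms.

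The main obstacle I expect is the $\Gamma$-invariance of $\mathcal R$: one must work in the appropriate completion of $U_q(C)\otimes U_q(C)$, check that $\Gamma$ extends continuously to this completion, and verify invariance factor by factor in the quasi-$R$-matrix expansion. Everything else is essentially formal once this equivariance is in hand.
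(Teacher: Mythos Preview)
Your proposal is correct and follows essentially the same strategy as the paper: start from the known ribbon structure on $U_q(C)$-mod and verify that the universal $R$-matrix and ribbon element are $\Gamma$-invariant, so that the structure descends to $\fU_q(C)$-mod. The only notable difference is in the ribbon-twist step: you argue $\Gamma$-invariance of $\nu$ structurally via its expression in terms of $\mathcal R$ and $K_{2\rho}$ (using that $\rho$ is fixed by diagram automorphisms), whereas the paper checks it by quoting the scalar action $q^{-\langle\lambda,\lambda+\rho\rangle}$ on each irreducible and observing that this scalar is $\Gamma$-invariant; both arguments are valid and amount to the same thing.
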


\begin{proof}
The category $U_{q}(C)$-mod is the category of right type I $U_q(C)$-modules.
This is a ribbon category, \cite{Reshetikhin1990}, following \cite{drinfeld1989}.

 The braiding on $U_q(C)$-mod is given by the universal $R$-matrix.
This is an element of a completed tensor product $U_q(C)\widehat{\otimes} U_q(C)$.
The construction is given in \cite[Chapter~7]{jantzen1996}. An inspection of this construction
reveals that it is invariant under the action of $\Gamma$ and hence gives a braiding on $\fU_q(C)$-mod.

The standard ribbon element lies in a completion of $U_q(C)$. By \cite[(2.15)~Proposition]{Leduc1997}, this ribbon element acts on the irreducible
module with highest weight $\Lambda$ by the constant
\begin{equation*}
	q^{-\langle \lambda,\lambda+\rho\rangle}
\end{equation*}
where $\rho$ is half the sum of the positive roots. It is clear that this is invariant under $\Gamma$.
\end{proof}

Every $\Hom$ space in this category is a finitely generated free $\kk_q$-module. Taking the specialisation $\kk_q\to \bQ$, $q\mapsto 1$, gives the category
of finite dimensional representations of the semidirect product algebra $\bQ\Gamma \ltimes U(C)$  extended by central elements $K_i$.

The quantum analogue of the adjoint representation is given in \cite[\S 5A.5]{jantzen1996} (also \cite{lusztig2016}).
Let $\Phi$ be the set of roots and let $\{\alpha_i:i\in I\}$ be the simple roots. A basis is the set
\begin{equation}
	\{v_\alpha:\alpha\in\Phi\} \amalg \{h_i:i\in I\}
\end{equation}

For $i\in I$ and $\beta\in\Phi$ with $\beta+\alpha_i\notin\Phi$ and $\beta\ne \alpha_i$; the $i$-string of roots through $\beta$ is the set
\begin{equation}
	\{\beta -k\,\alpha_i: 0\leqslant k\leqslant \left\langle \beta,\check\alpha_i\right\rangle\}
\end{equation}

For $\alpha\in\Phi$, $v_\alpha$ has weight $\alpha$; and for $i\in I$, $h_i$ has weight $0$.

For $\beta -k\,\alpha_i$ in the $i$-string through $\beta$, put
\begin{align}
	F_i\,v_{\beta -k\,\alpha_i} &= \begin{cases}
		[k+1]_i\,v_{\beta -(k+1)\,\alpha_i} & \text{if $k\ne \left\langle \beta,\check\alpha_i\right\rangle$} \\
		0 & \text{if $k= \left\langle \beta,\check\alpha_i\right\rangle$} \\
	\end{cases} \\
	E_i\,v_{\beta -k\,\alpha_i} &= \begin{cases}
		[\left\langle \beta,\check\alpha_i\right\rangle+1-k]_i\,v_{\beta -(k-1)\,\alpha_i} & \text{if $k\ne 0$} \\
0 & \text{if $k= 0$} \\
\end{cases}
\end{align}

For $i\in I$, put
\begin{align}
F_i\,v_{\alpha_i} &= h_i  & F_i\,h_i &= [2]_i v_{-\alpha_i}  & F_i\,v_{\alpha_i} &= 0 \\
E_i\,v_{\alpha_i} &= 0  & E_i\,h_i &= [2]_i v_{\alpha_i}  & E_i\,v_{\alpha_i} &= h_i
\end{align}

and finally, put
\begin{equation}
	E_i\,h_j = [-\left\langle \alpha_i,\check\alpha_j\right\rangle]_j\,v_{\alpha_i}
	\quad,\quad
	F_i\,h_j = [\left\langle \alpha_i,\check\alpha_j\right\rangle]_j\,v_{-\alpha_i}
\end{equation}

The action of $\Gamma$ on the $q$-deformed adjoint representation goes as follows:
if $\theta$ is the highest root, then $\gamma v_\theta=\epsilon(\gamma)v_\theta$ for $\gamma\in\Gamma$,
where $\epsilon$ is a character of $\Gamma$ 
which is the sign representation for $C=A_2,D_4,E_6$ (these are the only nontrivial cases of $\Gamma$ that we consider below).
$v_\theta$ is a cyclic vector for the action of $U_q(C)$, so the action of $\Gamma$ is extended to the whole representation
by use of \eqref{eq:semidirect}.

\pcomments{one should add the action of the affine generators, cf section 4.1 of \cite{ZinnJustin2020}}

\subsection{Lie superalgebra}\label{ssec:osp12}
\newcommand{\fsl}{\mathfrak{sl}}

Our approach to the Lie superalgebra follows \cite{xu2018} but this simplifies in the case $\osp(1|2)$.
The details in this case are known to experts but since we did not find a reference we include a summary
here for the convenience of the reader.

\begin{defn} The quantum group $U_q(\fsl(2))$ is the $\bZ[q,q^{-1}]$-algebra with generators
	\begin{equation}
		E',F',[H'],K',K'^{-1}
	\end{equation} and defining relations
	\begin{gather*}
		K'\,K'^{-1} = 1 = K'^{-1}\,K' \\
		K'\,E' = q^2\,E'\,K' \quad,\quad K'\,F' = q^{-2}\,F'\,K' \\
		E'\,F' - F'\,E' = [H'] \\
		(q-q^{-1})[H'] = K'-K'^{-1}
	\end{gather*}

	The coproduct, $\Delta$, antipode, $S$, and counit, $\varepsilon$ are:
	\begin{gather*}
		\Delta(E') = E'\otimes 1+K'\otimes E',\quad \Delta(F') = F'\otimes K'^{-1}+1\otimes F', \quad \Delta(K') = K'\otimes K' \\
		S(E') = -K'^{-1}E',\quad S(F') = -F'K',\quad S(K') = K'^{-1} \\
		\varepsilon(E')=0,\quad \varepsilon(F')=0,\quad \varepsilon(K')=1
	\end{gather*}
\end{defn}

\begin{defn} The quantum group $U_q(\osp(1|2))$ is the $\bZ[q,q^{-1}]$-algebra with even generators $[H],K,K^{-1},\sigma$,
	odd generators $E,F$ and defining relations
	\begin{gather*}
		\sigma^2 = 1 \\
		\sigma\,E\,\sigma = -E \quad,\quad \sigma\,F\,\sigma = -F \\
		\sigma\,K^{\pm 1}\,\sigma = K^{\pm 1}  \quad,\quad \sigma\,[H]\,\sigma = [H] \\
		K\,K^{-1} = 1 = K^{-1}\,K \\
		K\,E = q\,E\,K \quad,\quad K\,F = q^{-1}F\,K \\
		E\,F + F\,E = [H] \\
		(q-q^{-1})[H] = K-K^{-1}
	\end{gather*}

	The coproduct, $\Delta$, antipode, $S$, and counit, $\varepsilon$, are:
	\begin{gather*}
		\Delta(\sigma) = \sigma\otimes\sigma,\quad \Delta(K) = K\otimes K \\
		\Delta(E) = E\otimes \sigma+K\otimes E,\quad \Delta(F) = F\otimes K^{-1}\sigma+1\otimes F \\
		S(E) = -K^{-1}E,\quad S(F) = -FK,\quad S(K) = K^{-1}, \quad S(\sigma)=\sigma \\
		\varepsilon(E)=0,\quad \varepsilon(F)=0,\quad \varepsilon(K)=1,\quad \varepsilon(\sigma)=1 \\
	\end{gather*}
\end{defn}

\begin{prop}\label{prop:osp12}
  There is a Hopf algebra homomorphism $\theta\colon U_{q'}(\fsl(2))
	\to U_{q}(\osp(1|2))$, $q=-q'^2$, given by
	\begin{gather*}\label{eq:iso}
		\theta\colon E'\mapsto -(q'+q'^{-1})\sigma E ,\quad  \theta\colon F'\mapsto F,\\  \theta\colon [H']\mapsto -(q'+q'^{-1})\sigma [H] , \quad \theta\colon K'^{\pm 1}\mapsto \sigma K^{\pm 1}
	\end{gather*}
\end{prop}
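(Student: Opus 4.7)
The plan is to verify that $\theta$ preserves the defining relations of $U_{q'}(\fsl(2))$, making it an algebra homomorphism, and then that it intertwines the coproducts and counits, making it a bialgebra map. Compatibility with the antipode is then automatic since both source and target are Hopf algebras and an antipode on a bialgebra is unique. Throughout, the key identities I will use in $U_q(\osp(1|2))$ are $\sigma^2=1$, $\sigma K^{\pm1}=K^{\pm1}\sigma$, $\sigma[H]=[H]\sigma$, $\sigma E=-E\sigma$, and $\sigma F=-F\sigma$, together with $KEK^{-1}=qE$ and $KFK^{-1}=q^{-1}F$.

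The algebra relations involving only $K'$ are immediate from $\sigma^2=1$ and the centrality of $\sigma$ relative to $K$. For $K'E'K'^{-1}=q'^2E'$, pushing $\sigma$ through using the rules above produces $\theta(K')\theta(E')\theta(K')^{-1}=(q'+q'^{-1})q\,\sigma E$, which equals $q'^2\theta(E')$ precisely because $q=-q'^2$; the $F'$-computation is analogous. The central relation $E'F'-F'E'=[H']$ is where the construction becomes interesting: one computes
\[
\theta(E')\theta(F')-\theta(F')\theta(E')=-(q'+q'^{-1})\bigl(\sigma EF-F\sigma E\bigr),
\]
and pushing $\sigma$ past $F$ via $F\sigma=-\sigma F$ converts the commutator on the right into $-(q'+q'^{-1})\sigma(EF+FE)$; the anticommutator is $[H]$ by the $\osp(1|2)$ relation, so the result equals $\theta([H'])$. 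Thus the $\fsl(2)$-commutator transforms into the $\osp(1|2)$-anticommutator, which is the heart of the map. The linear relation $(q'-q'^{-1})[H']=K'-K'^{-1}$ follows by comparing the two sides after applying $\theta$ and using $(q-q^{-1})[H]=K-K^{-1}$: the substitution $q=-q'^2$ gives $q-q^{-1}=-(q'^2-q'^{-2})$, producing the required equality.

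For the coalgebra structure, $\Delta(\theta(K'))=(\sigma\otimes\sigma)(K\otimes K)=\theta(K')\otimes\theta(K')$ is immediate. For $E'$, I would expand
\[
\Delta(\theta(E'))=-(q'+q'^{-1})(\sigma\otimes\sigma)\bigl(E\otimes\sigma+K\otimes E\bigr)
\]
and use $\sigma\sigma=1$ to see this equals $\theta(E')\otimes 1+\theta(K')\otimes\theta(E')=(\theta\otimes\theta)\Delta(E')$. The verification for $F'$ uses only $K^{-1}\sigma=\sigma K^{-1}$ to match $\Delta(F)=F\otimes K^{-1}\sigma+1\otimes F$ with $(\theta\otimes\theta)(F'\otimes K'^{-1}+1\otimes F')$. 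Preservation of $\varepsilon$ follows from $\varepsilon(\sigma)=\varepsilon(K)=1$ and $\varepsilon(E)=\varepsilon(F)=0$.

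The main obstacle is not conceptual but bookkeeping: one has to track all the parity signs coming from the odd generators $E$, $F$ through their interaction with $\sigma$, and make sure the comultiplication conventions used for $U_q(\osp(1|2))$ (an ordinary Hopf algebra structure, as opposed to a super/braided one) are consistent with those of $U_{q'}(\fsl(2))$. Once the sign conventions are fixed, the appearance of the scalar $-(q'+q'^{-1})$ in $\theta(E')$ and $\theta([H'])$ is forced by the two defining relations $K'E'K'^{-1}=q'^2E'$ and $(q'-q'^{-1})[H']=K'-K'^{-1}$, and these are the only checks that genuinely depend on the choice of scalar.
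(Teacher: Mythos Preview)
Your proof is correct and follows the same direct-verification approach as the paper; the paper's proof displays only the algebra relation checks (for $K'E'$, $E'F'-F'E'$, $F'K'$, and $(q'-q'^{-1})[H']$) and leaves the coalgebra compatibility implicit, whereas you spell out the coproduct and counit checks as well, which is a welcome addition given that the statement asserts a Hopf algebra homomorphism.
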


\begin{proof}
This is a direct calculation.
	\begin{multline*}
		\theta(K'E') = -(q'+q'^{-1})\sigma K \sigma E = - (q'+q'^{-1})K E \\ = -q (q'+q'^{-1})E K = q (q'+q'^{-1})\sigma E \sigma K = \theta(-q E'K' ) 
	\end{multline*}
	\begin{multline*}
		\theta(E'F'-F'E') =-(q'+q'^{-1})(\sigma E F - F \sigma E) \\ = -(q'+q'^{-1})\sigma(E F + F E) = -(q'+q'^{-1})\sigma[H] = \theta([H']) 
	\end{multline*}
	\begin{gather*}	
		\theta(F'K') = F \sigma K = -\sigma F K = -q\, \sigma K F = \theta(-q\, K'F') \\
		\theta((q'-q'^{-1})[H']) = -(q'^2-q'^{-2})\sigma [H] = \sigma(K-K^{-1}) = \theta(K'-K'^{-1})
	\end{gather*}
\end{proof}

\subsection{Blow-up}\label{sec:blowup}
The exceptional series is a set of points on the projective line with coordinate $\nv$.
The quantum parameter in \S\ref{sec:qea} is $q$. For the quantum analogue of the
exceptional series we need two parameters which, heuristically, are $q$ and $q^\nv$.
In this section we define a ring $\coeffs$ which will be our ring of coefficients.

The ring $A_{\pp,\qq}$ is a blow-up of the Laurent polynomial ring $\bZ[\pp^{\pm 1},\qq^{\pm 1}]$:
\begin{defn}\label{defn:qz}
	The ring $A_{\pp,\qq}$ is the quotient of $\bZ[\pp^{\pm 1},\qq^{\pm 1},[\nv]]$ by the relation
	\begin{equation*}
		(\qq-\qq^{-1})[\nv] = (\pp-\pp^{-1})
	\end{equation*}
\end{defn}

For $s\in\bZ$, the quantum integer $[s]$ is the Laurent polynomial
\begin{equation*}
	[s] = \frac{\qq^s-\qq^{-s}}{\qq-\qq^{-1}}	
\end{equation*}
For $r\in\bZ$, define $[r\,\nv]\in A_{\pp,\qq}$ by
\begin{equation*}
	[r\,\nv] = [\nv]\left(\frac{\pp^r-\pp^{-r}}{\pp-\pp^{-1}}\right)
\end{equation*}

\begin{lemma}
	\begin{equation*}
		(\qq^s-\qq^{-s})[r\,\nv] = [s](\pp^r-\pp^{-r})
	\end{equation*}
\end{lemma}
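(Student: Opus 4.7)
The plan is a direct computation using the defining relation $(\qq-\qq^{-1})[\nv] = \pp-\pp^{-1}$ of the blow-up ring $A_{\pp,\qq}$. The key observation is that both $\qq^s-\qq^{-s}$ and $\pp^r-\pp^{-r}$ are divisible (as Laurent polynomials) by $\qq-\qq^{-1}$ and $\pp-\pp^{-1}$ respectively, giving the $\qq$-integer $[s]$ and the $\pp$-integer $\frac{\pp^r-\pp^{-r}}{\pp-\pp^{-1}}$. So the identity should follow by factoring these quantities out and invoking the defining relation once.

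Concretely, I would start from the left-hand side and substitute the definition
$$[r\,\nv] = [\nv]\,\frac{\pp^r-\pp^{-r}}{\pp-\pp^{-1}}$$
to obtain
$$(\qq^s-\qq^{-s})[r\,\nv] = (\qq^s-\qq^{-s})\,[\nv]\,\frac{\pp^r-\pp^{-r}}{\pp-\pp^{-1}}.$$
Next, I would rewrite $\qq^s-\qq^{-s} = [s](\qq-\qq^{-1})$ (valid as an identity of Laurent polynomials, hence in $A_{\pp,\qq}$) to get
$$(\qq^s-\qq^{-s})[r\,\nv] = [s]\,(\qq-\qq^{-1})[\nv]\,\frac{\pp^r-\pp^{-r}}{\pp-\pp^{-1}}.$$
Finally, applying the defining relation $(\qq-\qq^{-1})[\nv] = \pp-\pp^{-1}$ to the middle factor collapses the expression to $[s](\pp^r-\pp^{-r})$, which is the right-hand side.

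There is essentially no obstacle: the identity is a two-line manipulation, and the only thing to be mindful of is that one is working in the quotient ring $A_{\pp,\qq}$ rather than a localisation, so one must avoid dividing by $\qq-\qq^{-1}$ or $\pp-\pp^{-1}$ (which may be zero divisors or at least non-units). This is why I phrase the argument as multiplying out and substituting, rather than cancelling. The factorisations $\qq^s-\qq^{-s}=[s](\qq-\qq^{-1})$ and $\pp^r-\pp^{-r}=(\pp-\pp^{-1})\cdot\frac{\pp^r-\pp^{-r}}{\pp-\pp^{-1}}$ are identities of Laurent polynomials, and the single application of the defining relation is what bridges the $\qq$-side and the $\pp$-side.
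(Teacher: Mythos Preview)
Your proposal is correct and follows essentially the same approach as the paper: substitute the definition of $[r\,\nv]$, then use the defining relation $(\qq-\qq^{-1})[\nv]=\pp-\pp^{-1}$. The paper's proof is slightly terser, displaying only the case $s=1$ (i.e.\ $(\qq-\qq^{-1})[r\,\nv]=\pp^r-\pp^{-r}$) and leaving the multiplication by $[s]$ implicit, whereas you carry the factor $[s]$ through from the start; the underlying computation is identical.
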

\begin{proof}
	\begin{equation*}
		(\qq-\qq^{-1})[r\,\nv] =
		(\qq-\qq^{-1})[\nv]\left(\frac{\pp^r-\pp^{-r}}{\pp-\pp^{-1}}\right) =
		(\pp^r-\pp^{-r})
	\end{equation*}
\end{proof}
For $r,s\in\bZ$, define $\qint rs\in A_{\pp,\qq}$ by
\begin{equation*}
	\qint rs = \qq^s[r\,\nv] + \pp^{-r}[s] = \qq^{-s}[r\,\nv] + \pp^r[s]
\end{equation*} 

Note that
\begin{equation*}
	\qq^s[r\,\nv] + \pp^{-r}[s] - \qq^{-s}[r\,\nv] - \pp^r[s] 
	= (\qq^s-\qq^{-s})[r\,\nv] -[s](\pp^r-\pp^{-r}) 
	= 0
\end{equation*}

\subsection{Specialisations}\label{sec:spec}
The ring $A_{\pp,\qq}$ is an integral domain and the field of fractions is the field
of rational functions $\bQ(\pp,\qq)$. The homomorphism $A_{\pp,\qq}\to \bQ(\pp,\qq)$ is given by
\begin{equation*}
	\pp\mapsto \pp \quad,\quad \qq\mapsto \qq \quad,\quad [r\,\nv+s] \mapsto \frac{\pp^r\qq^s-\pp^{-r}\qq^{-s}}{\qq-\qq^{-1}}
\end{equation*}

We also have the following homomorphisms:
\begin{itemize}
	\item For $r,s\in\bZ$ there is a homomorphism $\theta_{r,s}\colon A_{\pp,\qq}\to \kk_q$ given by
	\begin{equation*}
		\pp\mapsto q^r , \qq\mapsto q^s , \qint uv \mapsto \frac{[ur+vs]_q}{[s]_q}
	\end{equation*}
	\item There is a homomorphism $\psi\colon A_{\pp,\qq}\to\bZ[\nv]$ given by
	\begin{equation*}
		\pp\mapsto 1 , \qq\mapsto 1 , \qint uv \mapsto (u\nv+v)
	\end{equation*}
\end{itemize}

For $r,s\in\bZ$, the following diagram is commutative
\begin{equation*}
	\begin{tikzcd}[black]
		A_{\pp,\qq} \arrow[r,"\theta_{r,s}"] \arrow[d,"\psi"']  & \kk_q \arrow[d, "q\mapsto 1"] \\
		\bZ[\nv] \arrow[r,"\nv\mapsto r/s"'] & \bQ
	\end{tikzcd}
\end{equation*}
since both composites are the homomorphism $\pp,\qq\mapsto 1$ and $\qint uv \mapsto (ur+vs)/s$.

For $n>0$, the cyclotomic polynomial, $\Phi_n(x)\in\bZ[x]$, is an irreducible polynomial of degree $\varphi(n)$ where $\varphi$ is Euler's totient function.
For $n>0$, $q^{-\varphi(n)}\Phi_n(q^2) \in \bZ[q,q^{-1}]$ is invariant under the bar involution, \eqref{eq:bar}. An explicit formula for $q^{-\varphi(n)}\Phi_n(q^2)$ is
\begin{equation*}
	q^{-\varphi(n)}\Phi_n(q^2) = \prod_{d|n} (q^d - q^{-d})^{\mu(\frac nd)}
\end{equation*}
where $\mu$ is the M\"obius function. We will use this explicit expression even it is, a priori, a rational function.
For example, for $r,s\in\bZ$,
\begin{equation*}
	\frac{[2r\,\nv+2s]}{[r\,\nv+s]} = \pp^r \qq^s + \pp^{-r} \qq^{-s}
	,\quad
	\frac{[6\,\nv][\nv]}{[3\,\nv][2\,\nv]} = \pp^2 - 1 + \pp^{-2}
\end{equation*}
The \Dfn{bar involution} is the involution of $A_{\pp,\qq}$ given by
\begin{equation}\label{eq:bar}
	\pp\mapsto \pp^{-1} , \qq\mapsto \qq^{-1} , \qint rs \mapsto \qint rs 
\end{equation}
For $r,s\in\bZ$, the homomorphism $\theta_{r,s}$ intertwines the bar involutions.

It is straightforward to check that applying the homomorphism $\psi\colon\kk_{\pp,\qq}\to\bQ(\nv)$
to the quantum dimensions in \eqref{eq:qdims2} and \eqref{eq:qdims3} give the dimension formulae in
\cite{cohen1996} and \cite[Chapter~17]{Cvitanovic2008}.

For a Cartan type $C$ we have normalised so all short roots have squared length 2. Let $d_C$ be half of the squared length of all long roots.
For $C$ an exceptional Cartan type we have $d_C=3$ for $C=G_2$, $d_C=2$ for $C=\OSp$, $F_4$ and $d_C=1$ otherwise.
Let $s_C$ be the denominator of $d_C n_C$. Then we define $\theta_C\colon A_{\pp,\qq} \to \kk_{q^{1/s_C}}$ by $\pp \mapsto q^{d_C n_C}$ and $\qq \mapsto q^{d_C}$.

\begin{ex}
The key property of $\dim_q(L)$ in \ref{eq:qdims2} is that it interpolates the quantum dimensions in the following sense.
For each exceptional Cartan type, $C$, substituting $\nv=n_C$ (from Table~\ref{fig:nvalues}) in $\dim_q(L)$ gives 
	
\begin{equation*}
	\begin{array}{cccccccccc}
	\varnothing & \OSp & A_1 & A_2 & G_2 & D_4 & F_4 & E_6 & E_7 & E_8 \\ \hline
	0 & \scriptstyle{[3]-[2]} & \scriptstyle{[3]} & \scriptstyle{[2][4]} & \frac{[7][8][15]}{[3][4][5]} & \frac{[4][4][7]}{[2][2]} & \frac{[12][13][20]}{[2][5][6]} & \frac{[8][9][13]}{[3][4]} & \frac{[12][14][19]}{[4][6]} & \frac{[20][24][31]}{[6][10]}
\end{array}
\end{equation*}

These agree with the quantum dimensions calculated from the Weyl dimension formula.
\end{ex}

Define $\Sigma\subset A_{\pp,\qq}$ to be
\begin{equation*}
	\Sigma = \{\qint uv : \theta_C(\qint uv) \ne 0 \text{ for $C$ an exceptional Cartan type} \}
      \end{equation*}
Define $\coeffs:=\Sigma^{-1}A_{\pp,\qq}$ to be the ring obtained by adjoining a multiplicative inverse for each element of $\Sigma$.
Each element of $\Sigma$ is fixed by the bar involution, \eqref{eq:bar}, so the bar involution
is defined on $\coeffs$.

For each exceptional Cartan type, $C$, the homomorphism $\theta_C\colon A_{\pp,\qq}\to\kk_{q^{1/s_C}}$ extends to a homomorphism 
$\theta_C\colon \coeffs \to \kk_{q^{1/s_C}}$ and the homomorphism $\psi\colon A_{\pp,\qq}\to\bZ[\nv]$ extends to a homomorphism $\psi\colon \coeffs 
\to\bQ(\nv)$.

For each exceptional Cartan type $C$, the following diagram is commutative
\begin{equation*}
	\begin{tikzcd}[black]
        \coeffs \arrow[r,"\theta_{C}"] \arrow[d,"\psi"']  & \kk_{q^{1/s_C}} \arrow[d, "q\mapsto 1"] \\
		\mathrm{Im}(\psi)\subset\bQ(\nv) \arrow[r,"\nv\mapsto n_C"'] & \bQ
	\end{tikzcd}
\end{equation*}

\subsection{Multi-variable}\label{ssec:blowups}
In \S\ref{sec:yangbaxter} we will need additional parameters.

Given variables $\tv_1,\ldots,\tv_k$ and $\qq$, we write $A_{\tv_1,\ldots,\tv_k,\qq}$ for the subring of $\bQ(\tv_1,\ldots,\tv_k,\qq)$
generated by $\tv_1^{\pm},\ldots,\tv_k^{\pm},\qq^{\pm}$ and
\begin{equation*}
	\frac{\tv_1^{a_1}\dotsb \tv_k^{a_k}\qq^a - \tv_1^{-a_1}\dotsb \tv_k^{-a_k}\qq^{-a}}{\qq-\qq^{-1}}
\end{equation*}
for $a_1,\dotsc,a_k,a\in\bZ$.

There is a homomorphism $A_{\tv_1,\ldots,\tv_k,\qq}\to \bQ[\xx_1,\ldots,\xx_k]$ given by $\tv_i\mapsto 1$ for $1\leqslant i\leqslant k$ and
\begin{equation*}
	\frac{\tv_1^{a_1}\dotsb \tv_k^{a_k}\qq^a - \tv_1^{-a_1}\dotsb \tv_k^{-a_k}\qq^{-a}}{\qq-\qq^{-1}} \mapsto a_1\,\xx_1+\dotsb + a_k\,\xx_k + a
\end{equation*}
for $a_1,\dotsc,a_k,a\in\bZ$.

The bar involution inverts {\em all}\/ variables of $A_{\tv_1,\ldots,\tv_k,\qq}$.

\subsection{Functors}\label{sec:functors}
For $C$ a Cartan type, the quantum analogue of the adjoint representation is an object $L_C\in \fU_q(C)$-mod.
Let $\Icat(L_C)\subset \fU_q(C)$-mod be the full subcategory on the objects $\otimes^k L_C$ for $K\geqslant 0$.
In this section we construct, for each exceptional Cartan type, $C$, an $\kk_q$-linear ribbon functor $\Psi_C\colon\kk_q\Rcat\to \Icat(L_C)$.

The category $\Icat(L_C)$ inherits the structure of a ribbon category from $\fU_q(C)$-mod. It follows from Proposition~\ref{prop:free}
that, to construct the functor $\theta_C$ it is sufficient to construct an element of the category $T(\Icat(L_C))$, defined in Definition~\ref{defn:free}.
Over $\bC$, the tuple is $(L_C,\ev_C,\co_C,\mu_C)$ where $\ev_C$ is the Killing form, $\co_C$ is determined by $\ev_C$ since the Killing form is
nondegenerate and $\mu_C$ is the Lie bracket. Taking the split real form of $L_C$ gives a tuple defined over $\bR$ and the real form has a rational form
(which can be defined by the Serre presentation) to give a tuple defined over $\bQ$. By averaging over $\Gamma$, if necessary, we can take this tuple
to be invariant under $\Gamma$. Specialising using the homomorphism $\psi$ defined in \eqref{eq:q1} is a functor $\fU_q(C)\text{-mod} \to \fU(C)\text{-mod}$.
The tuple defined over $\bQ$ can be lifted to a tuple defined over $\kk_q$. This can be seen by an explicit construction, or alternatively,
it can be deduced from the result that $\Hom$-spaces in $\fU_q(C)\text{-mod}$ are finitely generated free $\kk_q$-modules.

For any exceptional Cartan type, $C$, the functor $\Psi_C$ is full. This is proved in \cite[Proposition~3.4]{ETINGOF2006}.
This is a version of the first fundamental theorem of tensor invariant theory originally
stated (without proof) in \cite{Littlewood1944}. The idea that informs this paper is that the interpolating category, if it exists, can be defined
as a quotient of $\coeffs\,\Rcat$ using the homomorphisms $\theta_C\colon \coeffs \to \kk_{q^{1/s_C}}$. In \S\ref{sec:twostring} and \S\ref{sec:threestring}
we find relations on $\coeffs\,\Rcat$ such that the quotient of $\coeffs\,\Rcat$ by the monoidal ideal generated by these relations approximates
an interpolation category. It is an open problem to determine if there is a finite set of relations such that any closed diagram can be evaluated.



\section{Two string relations}\label{sec:twostring}
In this section we construct a commutative $\coeffs$-algebra, $A(\obj2)$, which is a free $\coeffs$-module of rank five.
This algebra is constructed to interpolate the algebras $\End_{\fU_q(C)}(L_C\otimes L_C)$ for $C$ an exceptional Cartan type.
One can think of $A(\obj2)$ as a subquotient of $\End_{\coeffs \Rcat}(\obj 2)$, and so we use diagrams to describe its elements.

\bcomments[gray]{
Let $\coeffs\Rcat$ be the free $\coeffs$-linear category on $\Rcat$. Then, for each exceptional $C$ we have a full
$\bQ(q)$-linear ribbon functor $\Psi_C\colon \bQ(q)\,\Rcat \to \Icat_{\cO(L)}(L_C)$. We define relations in $\kk\,\Rcat$
and denote the monoidal ideal generated by these relations by $\mathcal I$. These relations are constructed to interpolate relations
in the categories in the sense that, for each exceptional $L$, the functor $\Psi_C\colon \bQ(q)\Rcat \to \Icat_{\cO(L)}(L_C)$
factors through the quotient to give a functor $\Psi_C\colon (\coeffs\Rcat/\mathcal I)\otimes_\kk\bQ(q) \to \Icat_{\cO(L)}(L_C)$.
Then we define $A(\obj2)$ to be $\End_{\coeffs\Rcat/\mathcal I}(\obj2)$. PZJ: that's in an ideal world... in practice we do this with $\Rcat/\mathcal I$ replaced
by $\threecat$.
}

These relations are a specialisation of relations given in \cite{Westbury2015},\cite[\S5.4]{westbury2022} and are also given in \cite{morrison2024}.

The following relation follows from $\Hom(\obj0,\obj1)=0$ which is a consequence of the assumption that $\obj1$ is irreducible.
\begin{equation}\label{eq:tadpole}
	\tadpole = 0
\end{equation}

\begin{defn}\label{def:basicrels}
  The  scalars $\pp$, $\delta$, $\phi$ and $\tau$ are defined by the relations:
	\begin{equation*}
		\circle	= \delta\:	\empty
		\qquad
		\bigon	= \phi\: \linecirc
	\end{equation*}
	
	\begin{equation*}
		\twistover =-\pp^6 \: \trivalent
		\qquad
		\twistunder = -\pp^{-6} \: \trivalent
	\end{equation*}
	
	\begin{equation}\label{triangle}
		\trianglecirc = \tau \: \trivalent
	\end{equation}
\end{defn}

The motivation for these relations comes the observation that, for an exceptional Cartan type, $C$,
the spaces $\Hom(L_I,\otimes^iL_C)=0$, for $i=0,2,3$, are free $\kk_q$-modules of rank one with bases
\begin{equation*}
	\empty \qquad \linecirc \qquad \trivalent
\end{equation*}
Hence there are relations of the form Definition~\ref{def:basicrels} for each exceptional Cartan type and we
are introducing interpolating relations.

These take the values
\begin{align}\label{eq:values}
	\delta &\coloneqq \frac{\qint 61 [5\,\nv-1][4\,\nv]}{[\nv+1][2\,\nv]} \\
	\phi &\coloneqq \frac{\left[3\,\nv\right]\ \left[6\,\nv+2\right]\ \left[4\,\nv-2\right]}{\left[\nv\right]\ \left[3\,\nv+1\right]\ \left[2\,\nv-1\right]} \\
	\tau &\coloneqq \frac{[4\,\nv]}{[2\,\nv]}\left(\frac{[6\,\nv+2]}{[3\,\nv+1]}\frac{[4\,\nv-2]}{[2\nv-1]}\frac{[3\,\nv]}{[\nv]}+(\qq-\qq^{-1})^2[\nv+1]\frac{[5\nv]}{[\nv]}\right)\\
	&= \frac{\left[2\,\nv\right]}{\left[\nv\right]}\left(\pp^4 + \pp^2\qq^2 - \pp^2 + \qq^2 - 1 + \qq^{-2} - \pp^{-2} + \pp^{-2}\qq^{-2} + \pp^{-4}\right)
\end{align}
The scalar $\delta$ is the quantum dimension $\dim_q(L)$ in \eqref{eq:qdims2}.

We can also  apply the homomorphism $\psi\colon \coeffs\to \bQ(\nv)$. This gives
\begin{equation}
	\delta \mapsto 2\frac{(6\,\nv+1) (5\,\nv-1)}{(\nv+1)} \quad,\quad
	\phi \mapsto 12 \quad,\quad
	\tau \mapsto 6
\end{equation}
This dimension formula is given in \cite{cohen1996} and \cite[Chapter~17]{Cvitanovic2008}.

The constants $\phi$ and $\tau$ depend on a choice of normalisation. However they both scale by the same factor so the ratio
$\tau/\phi$ is independent of the choice of normalisation, see \eqref{eq:ratio}. Our choice of normalisation is dictated (up to sign) by
the requirements that $\phi,\tau\in \bZ[\pp^{\pm 1},\qq^{\pm 1}]$ and $\phi$, $\tau$ are coprime and invariant under the bar involution, \eqref{eq:bar}.
We shall return to this issue in \S\ref{ssec:norm},
where we shall discuss what happens when we do not impose such requirements on $\tau$, $\phi$, leading to a free normalisation
parameter in their definition.

\begin{prop}\label{prop:skein} The skein relation is
\begin{multline}\label{eq:skein}
	\frac{[6\nv]}{[3\nv][2\nv]}\:
	\left(\overcirc \ -\ 
          \undercirc\right)
        \ =
\\
	-(\qq-\qq^{-1})^3[\nv+1]\: \left(\Icirc
          \ - \ 
        \Ucirc
	\right) \\
	-(\qq-\qq^{-1})\:
	\left(\Kcirc 
          \ - \
          \Hcirc \right)
\end{multline}
\end{prop}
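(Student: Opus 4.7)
The plan is to exploit that $\End(\obj 2)$ has $\coeffs$-rank $5$ (consistent with \eqref{eq:invdims} via raising/lowering operators), with $\{I, U, K, H, \sigma\}$ as a natural basis, where $\sigma = \overcirc$ is the positive braiding and $I, U, K, H$ are the four no-crossing diagrams on the right-hand side of \eqref{eq:skein}. Then $\sigma^{-1} = \undercirc$ is a $\coeffs$-linear combination of these five, and the skein relation \eqref{eq:skein} is precisely the statement that $\sigma - \sigma^{-1}$ lies in the two-dimensional subspace spanned by $I - U$ and $K - H$ with scalar coefficients of the form $(\qq-\qq^{-1}) \cdot (\text{bar-invariant})$. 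Under the bar involution $\sigma \leftrightarrow \sigma^{-1}$, the four no-crossing diagrams are fixed, $[r\nv+s]$ is bar-invariant, and $\qq-\qq^{-1}$ is negated, so both sides of \eqref{eq:skein} are bar-anti-invariant, which is at least consistent with the claim.

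First I would establish a polynomial identity for $\sigma$ as follows. By \eqref{eq:tensorsquare}, $L \otimes L$ decomposes into five simple summands on which $\sigma$ acts as a scalar $\pm \pp^r \qq^s$, read off from the ribbon twist together with the Casimir values \eqref{eq:cas_values}. Since $\sigma$ has five distinct eigenvalues, it satisfies an explicit degree-$5$ minimal polynomial over $\coeffs$, and this translates to a relation in $\End(\obj 2)$ once one identifies the projectors onto each summand as polynomials in $I, U, K, H, \sigma$. Combining that polynomial identity with its image under the bar involution eliminates the higher powers of $\sigma$ and, after rearrangement, produces \eqref{eq:skein}.

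To pin the coefficients down directly, a cleaner route is to pair both sides of \eqref{eq:skein} against each of $I, U, K, H$ using the trace $\Tr$ of Definition \ref{defn:trace}. Concretely, compute $\Tr(\sigma \cdot D)$ and $\Tr(\sigma^{-1} \cdot D)$ for each $D \in \{I, U, K, H\}$ using Definition \ref{def:basicrels}: closing $\sigma$ into a curl gives $-\pp^6$ times the appropriate scalar ($\delta$ or $\phi$); composing $\sigma$ with $K$ or $H$ and tracing produces twisted triangular fragments that reduce via \eqref{triangle} to $\tau$ times simpler diagrams. Subtracting yields a bar-anti-invariant linear system in the four unknown coefficients whose unique solution should reproduce \eqref{eq:skein} after substituting the explicit formulae \eqref{eq:values} for $\delta, \phi, \tau$.

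The main obstacle will be the sheer volume of diagrammatic bookkeeping needed to evaluate $\Tr(\sigma \cdot K)$ and $\Tr(\sigma \cdot H)$ cleanly, and to verify that the resulting rational combinations in $\pp, \qq$ collapse into the compact coefficients $-(\qq-\qq^{-1})^3[\nv+1]$ and $-(\qq-\qq^{-1})$. A cross-check, and perhaps the cleanest proof, is to invoke the more general skein relations of \cite{Westbury2015} and \cite[\S5.4]{westbury2022}, of which \eqref{eq:skein} is said in the text to be a specialisation, and then verify that the present normalisations of $\pp, \qq$ and of $\delta, \phi, \tau$ recover the stated form. A final sanity check would be to specialise via $\theta_C$ for a small exceptional type such as $G_2$ or $D_4$ and compare with the braiding on $L_C \otimes L_C$ computed directly.
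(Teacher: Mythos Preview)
Your plan contains the right ingredients but misses the one structural observation that makes the proof clean. You correctly note that $\sigma-\sigma^{-1}$ is bar-anti-invariant, but this is only a consistency check: it does \emph{not} force $\sigma-\sigma^{-1}$ to lie in the span of $I-U$ and $K-H$, since bar acts nontrivially on scalars. In your third paragraph you then silently assume there are only four unknown coefficients, which is the very thing that needs justifying.

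The paper's argument (\S\ref{sec:diagrams}) uses instead the $\coeffs$-linear rotation map $\Rot$ of Definition~\ref{defn:rot}, the quarter-turn rotation of the disc. On $A(\obj2)$ it is an involution acting by $1\leftrightarrow U$, $K\leftrightarrow H$, $S\leftrightarrow S^{-1}$. Hence its $(-1)$-eigenspace has rank~$2$, and the three elements $S-S^{-1}$, $1-U$, $H-K$ all lie in it. Linear dependence then gives the form $S-S^{-1}=z(1-U)+w(H-K)$ for some $z,w$ with no further work.

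For the coefficients, the paper's route is also lighter than your trace pairing. Rather than closing up the diagrams, one simply multiplies the relation on one side by $U$ and by $K$, using the eigenvalue relations $U S^{\pm1}=\pp^{\pm12}U$, $K S^{\pm1}=-\pp^{\pm6}K$ together with $U^2=\delta U$, $UH=\phi U$, $UK=0$, $K^2=\phi K$, $KH=\tau K$. This yields the $2\times 2$ linear system
\[
\pp^{12}-\pp^{-12}=z(1-\delta)+w\phi,\qquad -\pp^{6}+\pp^{-6}=z+w(\tau-\phi),
\]
which is solved for $z,w$ and then reduced via \eqref{eq:values} to the stated form. Your minimal-polynomial route (second paragraph) is closer in spirit to the paper's alternative proof in \S\ref{sec:representation}, which builds an explicit $5\times5$ matrix for $S$ from the eigenvalues \eqref{eq:eigenvalues}; that works too, but is more computational.
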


\begin{rem} All the coefficients are in $A_{\pp,\qq}$. Applying the homomorphism $\psi\colon A_{\pp,\qq}\to \bQ(\nv)$ gives the relation
\begin{equation}
	\overcirc \ -\ \undercirc \ =\ 0
\end{equation}
\end{rem}

\begin{prop}\label{prop:jacobi} The deformed Jacobi relation is
	\begin{multline*}
\pp^{-3}\:\botover = (\qq-\qq^{-1})\frac{[\nv+1]}{[\nv]}\left(\pp^{-4}\,\Icirc + \pp^{4}\Ucirc + \overcirc\right) \\
-\pp\: \Kcirc + \pp^{-1}\:\Hcirc 
	\end{multline*}
\end{prop}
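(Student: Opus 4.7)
The plan is to work inside the rank-5 commutative algebra $A(\obj 2)$ with basis $\{\Icirc, \Ucirc, \Kcirc, \Hcirc, \overcirc\}$, whose construction is part of the main content of \S\ref{sec:twostring}. Given this basis, $\pp^{-3}\botover$ is a uniquely determined $\coeffs$-linear combination of the five basis elements, and the proof reduces to identifying those coefficients.

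A natural first step is to apply the skein relation of Proposition~\ref{prop:skein} to the crossing in $\botover$. Interpreting $\botover$ as the composition $\Hcirc\cdot\overcirc$ (maps-on-right: $\Hcirc$ first, then $\overcirc$), pre-composing the skein with $\Hcirc$ expresses $\Hcirc\cdot\overcirc - \Hcirc\cdot\undercirc$ in terms of $\Hcirc$, $\Hcirc\cdot\Ucirc$, $\Hcirc\cdot\Kcirc$ and $\Hcirc^2$. Two of these reduce at once using Definition~\ref{def:basicrels}: the bigon relation gives $\Hcirc\cdot\Ucirc = \phi\,\Ucirc$ (capping off the outputs of $\Hcirc$ creates a bigon between its two vertices), and the triangle relation gives $\Hcirc\cdot\Kcirc = \tau\,\Kcirc$ (the top vertex of $\Kcirc$ with the two vertices of $\Hcirc$ forms a triangle). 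The main obstacle is the square-ladder diagram $\Hcirc^2$, which does not simplify through the basic relations alone; to evaluate it and to separate $\Hcirc\cdot\overcirc$ from $\Hcirc\cdot\undercirc$, the bar involution (which swaps $\overcirc \leftrightarrow \undercirc$ and $\pp\leftrightarrow\pp^{-1}$ while negating $\qq-\qq^{-1}$ and fixing $\Icirc, \Ucirc, \Kcirc, \Hcirc$) supplies a second linear constraint, and the commutativity of $A(\obj 2)$ forces $\topover = \botover$ and yields the remaining independent equations.

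As an equivalent and often more efficient verification, one may check the identity in each of the ten exceptional specialisations $\Psi_C$: each side of the claimed equality lands in $\End_{\fU_q(C)}(L_C\otimes L_C)$, which by \eqref{eq:tensorsquare} decomposes into five irreducible summands on which every basis element acts by a scalar determined by the ribbon twist and the quantum Casimir values \eqref{eq:cas_values}. Matching these scalars across the ten specialisations uniquely pins down the coefficients of $\pp^{-3}\botover$ as rational functions in $\pp, \qq$ and confirms the stated formula.
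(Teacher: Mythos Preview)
Your first approach does not close as stated. After multiplying the skein relation by $\Hcirc$ you obtain one linear relation among $X_v:=\pp^{-3}\botover$, $X_h:=\pp^{3}\topoverbar$, $H$, $U$, $K$ and the undetermined element $H^2$. But applying the bar involution to that equation returns \emph{the same} equation with both sides negated: every coefficient on the right is of the form $(\qq-\qq^{-1})^{\text{odd}}\times(\text{bar-invariant})$, and on the left $HS\leftrightarrow HS^{-1}$. So the bar involution does not ``supply a second linear constraint''. Commutativity of $A(\obj2)$ gives $\topover=\botover$, i.e.\ $SH=HS$, but this only identifies two presentations of $X_v$ itself and says nothing about $X_h$ or $H^2$. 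You are left without enough independent equations.

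Your second approach has a genuine gap. The ten maps $\theta_C\colon\coeffs\to\kk_{q^{1/s_C}}$ evaluate along ten curves in the $(\pp,\qq)$-plane, and a nonzero element of $\coeffs$ can vanish under all of them (any multiple of $\prod_C[\,b_C\nv-a_C\,]$ does). So verifying the identity after each $\Psi_C$ does not pin down the coefficients as elements of $\coeffs$, and hence does not prove the relation in $A(\obj2)$.

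The paper's diagrammatic argument (Proposition~\ref{HSrelation}) avoids both $H^2$ and the skein relation. One writes $X_v=a_I\cdot1+a_U\,U+a_K\,K+a_H\,H+a_S\,S$, multiplies by $S^{-1}$ (using the eigenvalue relations $US^{-1}=\pp^{-12}U$, $KS^{-1}=-\pp^{-6}K$ and $HS^{-1}=\pp^{-3}X_h$), and then applies the \emph{rotation} map $\Rot$ of Definition~\ref{defn:rot}, which swaps $1\leftrightarrow U$, $K\leftrightarrow H$, $S\leftrightarrow S^{-1}$ and sends $X_h\mapsto -X_v$. Comparing the resulting second expression for $a_H X_v$ coefficient by coefficient with $a_H$ times the original ansatz forces $a_K=-\pp$, $a_H=\pp^{-1}$, $a_U=\pp^8a_I$, $a_S=\pp^4a_I$; multiplying the ansatz by $U$ then fixes $a_I=(\qq-\qq^{-1})\frac{[\nv+1]}{[\nv]}\pp^{-4}$. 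The paper's alternative proof in \S\ref{sec:representation} is a single matrix computation inside $M_5(\coeffs)$ using the explicit braid matrix \eqref{eq:braidmatrix}: this is the correct way to realise your ``check in representations'' idea, done once over $\coeffs$ rather than at ten specialisations.
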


\begin{rem} Applying the homomorphism $\psi\colon \coeffs\to \bQ(\nv)$ gives the Jacobi relation.
\end{rem}

\begin{cor}
	\begin{multline}
	\pp^{3}\:\topoverbar = -(\qq-\qq^{-1})\frac{[\nv+1]}{[\nv]}\left(\pp^{4}\,\Icirc + \pp^{-4}\Ucirc + \undercirc\right) \\
	-\pp^{-1}\: \Kcirc + \pp\:\Hcirc 
\end{multline}
\end{cor}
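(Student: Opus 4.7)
The plan is to deduce this corollary from Proposition~\ref{prop:jacobi} by applying the bar involution to both sides of the deformed Jacobi relation. Recall from \S\ref{ssec:pivot} that the bar involution on $\Rcat$ is the involution on morphisms that switches over- and under-crossings, while \eqref{eq:bar} defines the bar involution on $\coeffs$ as $\pp \mapsto \pp^{-1}$, $\qq \mapsto \qq^{-1}$, with each $\qint rs$ fixed. Together these extend to a semilinear involution on $\coeffs\,\Rcat$ that carries any relation to a valid relation.

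First I would track the effect of the bar involution on the scalar coefficients appearing in Proposition~\ref{prop:jacobi}. Using \eqref{eq:bar}: $\pp^{-3} \mapsto \pp^{3}$, the prefactor $(\qq-\qq^{-1}) \mapsto -(\qq-\qq^{-1})$, the ratio $[\nv+1]/[\nv]$ is fixed, the exponents $\pp^{\pm 4}$ swap, and $\pp \leftrightarrow \pp^{-1}$. On the diagrammatic side, the cap/cup-type diagrams $\Icirc$, $\Ucirc$, $\Kcirc$, $\Hcirc$ contain no crossings and are therefore fixed, while $\overcirc \mapsto \undercirc$.

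Next I would identify the image of $\botover$ under the bar involution. Inspecting the TikZ definitions of $\botover$ and $\topoverbar$, both diagrams have the same underlying planar shadow (a crossing attached to a trivalent vertex merging into a single strand), and their over/under data at the crossing differ by a swap. Hence $\topoverbar$ is precisely the bar-image of $\botover$.

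Assembling these observations, the bar involution applied to Proposition~\ref{prop:jacobi} gives
\[
\pp^{3}\:\topoverbar = -(\qq-\qq^{-1})\frac{[\nv+1]}{[\nv]}\left(\pp^{4}\,\Icirc + \pp^{-4}\Ucirc + \undercirc\right) -\pp^{-1}\: \Kcirc + \pp\:\Hcirc,
\]
which is exactly the claimed identity. There is no real obstacle here; the only point requiring a careful check is the diagrammatic identification of $\topoverbar$ as the crossing-reversal of $\botover$.
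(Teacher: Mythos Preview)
Your proposal is correct and follows exactly the paper's own argument: the paper's proof consists of the single line ``Apply the bar involution, \eqref{eq:bar}.'' You have simply unpacked that instruction carefully, tracking the effect on each scalar and each diagram, including the identification of $\topoverbar$ as the crossing-reversal of $\botover$.
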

\begin{proof}
  Apply the bar involution, \eqref{eq:bar}.
\end{proof}

\begin{prop}\label{prop:crossing} The relation that expresses the braid element in terms of a basis of planar diagrams is
\begin{multline*}
	\left(\frac{[6\,\nv]}{[3\,\nv]}\right)^2\frac{[\nv+1]}{[2\,\nv]}\,\overcirc
        \ = \ S_I\ 
        \Icirc
        \ +\ S_U\ 
        \Ucirc \\
        \ +\ S_K\ 
        \Kcirc
        \ +\ S_H\ 
        \Hcirc
        \ +\ S_{H^2}\
        \squarecirc
\end{multline*}
\end{prop}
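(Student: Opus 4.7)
The plan is to establish that the five planar diagrams on the right-hand side form a $\coeffs$-basis of $\End_{\threecat}(\obj 2)$, and then to compute the expansion coefficients of $\overcirc$ in this basis.

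First, for the basis claim: by the numerology \eqref{eq:invdims} the algebra $\End_{\threecat}(\obj 2)$ should have rank $5$, matching the number of proposed basis vectors, so it suffices to prove linear independence. This I would do by computing the $5\times 5$ Gram matrix of inner products $G_{ij}=\Tr(D_i D_j)$ from Definition~\ref{defn:trace}: each entry is a closed planar trivalent graph, which reduces via Definition~\ref{def:basicrels} and \eqref{eq:tadpole} to an explicit element of $\coeffs$, after which one checks that $\det G \neq 0$.

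Next, with $\Icirc$, $\Ucirc$, $\Kcirc$, $\Hcirc$, $\squarecirc$ confirmed as a basis, $\overcirc \in \End_{\threecat}(\obj 2)$ has a unique expansion, and its coefficients can be read off by taking traces against basis elements: $\Tr(\overcirc\, D_i) = \sum_j S_j G_{ji}$. Each left-hand side is a closed diagram containing a single crossing, and can be evaluated using the skein relation \eqref{eq:skein} followed by further planar reductions via Definition~\ref{def:basicrels} and \eqref{eq:tadpole}. Inverting $G$ then yields each $S_*$ uniquely in the fraction field of $\coeffs$. As a consistency check, under each specialisation $\theta_C$ the expansion must recover the $R$-matrix on $\End_{\fU_q(C)}(L_C\otimes L_C)$: by \eqref{eq:tensorsquare} this is a product of five one-dimensional blocks on which the braiding acts by ribbon scalars determined from the Casimir values \eqref{eq:cas_values}, and the $S_*$ can be pinned down by interpolation across these ten exceptional points.

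Finally, one must verify that the resulting coefficients admit the stated scalar $\left(\tfrac{[6\,\nv]}{[3\,\nv]}\right)^2 \tfrac{[\nv+1]}{[2\,\nv]}$ as common denominator, and that each $S_*$ lies in $\coeffs$ (i.e., only the allowed denominators $\qint{r}{s}$ appear). The main obstacle is precisely this explicit computation: inverting a $5\times 5$ matrix of quantum-integer rational functions and simplifying the resulting $S_*$ to match the claimed prefactor. This is finite but heavy enough to warrant the SageMath/Macaulay2 assistance noted in the introduction.
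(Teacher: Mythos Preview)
Your approach is correct in principle but differs from both routes the paper takes, and it has one technical wrinkle worth flagging.

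The paper's two proofs are more direct. In \S\ref{sec:representation}, the eigenvalues of the braid element are fixed from the Casimir data \eqref{eq:cas_values} as in \eqref{eq:eigenvalues}, and $S$ is realised as an explicit $5\times 5$ matrix \eqref{eq:braidmatrix}; $U$, $K$, $H$ are then built from the spectral decomposition. Expressing $S$ in the planar basis $\{1,U,K,H,H^2\}$ is then just a $5\times 5$ linear system (equivalently, matching eigenvalues, since all these elements are simultaneously diagonal in the idempotent basis with the eigenvalues of $H$ recorded in \eqref{eq:Heigenvalues}). In \S\ref{sec:diagrams}, one instead derives the skein relation \eqref{eq:skeinb} and the $X_v$ relation of Proposition~\ref{HSrelation} from the ribbon/rotation structure, after which the full multiplication table \eqref{eq:table} is known and the expansion of $S$ follows algebraically.

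Your Gram-matrix-and-trace method is a valid third route, but note a gap in your description: the skein relation \eqref{eq:skein} alone only controls $S-S^{-1}$, so it cannot by itself evaluate a closed diagram with a \emph{single} crossing such as $\Tr(S)$ or $\Tr(SH^2)$. You need one more piece of input --- either the ribbon curl relation (giving $\Tr(S)=\pp^{-12}\delta$), or equivalently the eigenvalue relations $US=\pp^{12}U$, $KS=-\pp^6 K$, which come from the ribbon structure and the twist relations in Definition~\ref{def:basicrels}, not from \eqref{eq:skein}. Once that is supplied, your computation goes through; but at that point you are effectively re-deriving the eigenvalue data that makes the paper's direct linear-algebra approach immediate, so the Gram inversion adds work without adding insight. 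Your ``consistency check'' via interpolation across exceptional points is essentially the content of \S\ref{sec:representation}.
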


\bcomments[gray]{Why should this be invariant under the bar involution? we didn't say functor is compatible with bar involution}
Recall from \S\ref{ssec:pivot} that the bar involution on $\Rcat$ is the involution on morphisms which switches over and under crossings
and the bar involution on $\coeffs$ is defined in \eqref{eq:bar}.
Since $S^{-1}$ is given by rotating $S$ and $S^{-1}=\overline{S}$ we have
\begin{equation*}
	\overline{S_I} = S_U \quad \overline{S_K} = S_H \quad \overline{S_{H^2}} = S_{H^2}
\end{equation*}
where the bar involution is defined in \eqref{eq:bar}.

The coefficients $S_I,S_K,S_{H^2}$ are:
\begin{align}\label{eq:coeffsS}
	S_I &= \frac{[\nv+1]}{[\nv]} (-\pp^{5}- \pp^{-1} \qq^{2} + \pp^{-1} + \pp^{-3} \qq^{2} - \pp^{-3}  - \pp^{-3}\qq^{-2} - \pp^{-5} + \pp^{-5}\qq^{-2}) \\	
  S_K &= \frac 1{\qq-\qq^{-1}}  ( -\pp^{4} \qq^{2} - \pp^{2} \qq^{2} + 3\, \pp^{2} + \qq^{2} - \pp^{2}\qq^{-2} - \qq^{-2} - 2\, \pp^{-2}  + \pp^{-2}\qq^{-2} + \pp^{-4} ) \\\notag
  &=\qq+\qq^{-1}-\qq\,\pp^{2}+(2-\qq^{-2})\left[2\,\nv\right]-\qq\,\left[4\,\nv+1\right]
\\\label{eq:coeffsSb}
	S_{H^2} &= [\nv]
\end{align}

We give two proofs of Proposition~\ref{prop:skein}, Proposition~\ref{prop:jacobi} and Proposition~\ref{prop:crossing}. The two proofs differ in their initial assumptions. The first proof is in \S\ref{sec:diagrams} and uses
the values of the coefficients $\delta$,$\phi$,$\tau$ and is based on ribbon relations.
The second proof is in \S\ref{sec:representation} and uses the values of the quadratic Casimir in 
\eqref{eq:cas_values} to determine a five dimensional representation of the
three string braid group. The second proof is conceptually simpler as it is based on linear algebra but 
requires a computer algebra system for the calculations.

\subsection{Representation}\label{sec:representation}
We introduce six elements of $\End_\Rcat(\obj2)$ together with their names in Figure~\ref{fig:elements}.
\begin{figure}[ht]
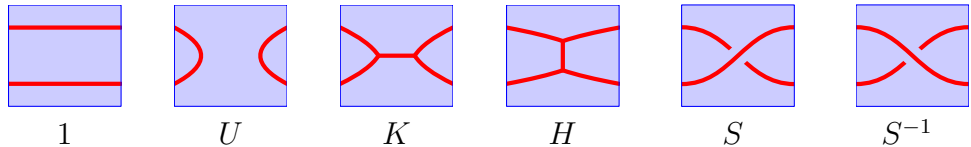

	\begin{tabular}{cccccc}
		\onegen	& \Ugen	& \Kgen	& \Hgen	& \overgen	& \undergen	 \\[5mm]
		1 & $U$ & $K$ & $H$ & $S$ & $S^{-1}$ 
	\end{tabular}
	\caption{Elements of two point algebra}\label{fig:elements}
\end{figure}

Our assumption is that these six elements span $A(\obj2)$. The justification is that, for each exceptional Cartan type, $C$,
the image of this set under the functor $\Psi_C$, defined in \S\ref{sec:functors}, spans $\End_{\fU_q(C)}(L_C\otimes L_C)$. 

For each exceptional Cartan type, $C$, we can identify the vector spaces $\End_{\fU_{q}}(C)(L_C\otimes L_C)$ and $\Hom_{\fU_{q}}(C)(L_C,\otimes^3L_C)$
since $L_C$ is self-dual. The $\kk_{q^{1/s_C}}$-module $\Hom_{\fU_{q}(C)}(L_C,\otimes^3L_C)$ has a right action of the three string braid group, $B_3$.
In this section we construct a representation $B_3 \to M_5(\coeffs)$ where $M_5(\coeffs)$ is the $5\times 5$ matrix algebra with entries in $\coeffs$
which interpolates the actions on $\Hom_{\fU_{q}(C)}(L_C,\otimes^3L_C)$.

\begin{lemma} The table \eqref{eq:eigenvalues} interpolates the eigenvalues of the braid matrices acting on $\otimes^2L_C$ for $C$ an exceptional Cartan type.
\begin{equation}\label{eq:eigenvalues}
	\begin{array}{c|ccccc}
		V & I & L & X_2 & Y_2 & Y_2^* \\ \hline
		\lambda_V & \pp^{12} & -\pp^{6} & -1 & \qq^{-2} & \pp^2\qq^2
	\end{array}
\end{equation}
\end{lemma}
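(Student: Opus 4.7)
The plan is to use the standard ribbon-category formula for the braid eigenvalues on simple summands of $V\otimes V$. In any ribbon category, the squared braiding $\check R_{V,V}^2$ acts on a simple summand $W\subseteq V\otimes V$ as the scalar $\theta_W\,\theta_V^{-2}$; consequently
\[
\check R|_W \;=\; \epsilon_W\,\sqrt{\theta_W\,\theta_V^{-2}},
\]
where $\epsilon_W=+1$ if $W\subseteq S^2V$ and $\epsilon_W=-1$ if $W\subseteq\Lambda^2V$. For $V=L_C$, the ribbon twist on an irreducible $V_\lambda$ is controlled by the quadratic Casimir via the formula recalled in \S\ref{sec:qea}, and so the eigenvalue simplifies to
\[
\lambda_W \;=\; \epsilon_W\,q^{(2\casimir(L)-\casimir(W))/2}.
\]

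The first ingredient I need is the parities $\epsilon_W$. The splitting $\otimes^2L=S^2L\oplus\Lambda^2L$ identifies the summands of \eqref{eq:tensorsquare} uniformly across the exceptional series: $I$ arises from the (symmetric) Killing form, $L$ from the (antisymmetric) Lie bracket, $X_2$ is antisymmetric, and $Y_2$, $Y_2^\ast$ are both symmetric. Hence $\epsilon_I=\epsilon_{Y_2}=\epsilon_{Y_2^\ast}=+1$ and $\epsilon_L=\epsilon_{X_2}=-1$. These parities are classical Deligne data and are independent of $C$.

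The second ingredient is the arithmetic: substituting the values $\casimir(I)=0$, $\casimir(L)=12\nv$, $\casimir(Y_2)=24\nv+4$, $\casimir(Y_2^\ast)=20\nv-4$, $\casimir(X_2)=24\nv$ from \eqref{eq:cas_values} into the formula above gives exponents $12\nv,\,6\nv,\,-2,\,2\nv+2,\,0$ for $I,L,Y_2,Y_2^\ast,X_2$ respectively. Under the identifications $\pp\leftrightarrow q^{\nv}$, $\qq\leftrightarrow q$ made precise by $\theta_C$, these powers of $q$ read $\pp^{12}$, $\pp^{6}$, $\qq^{-2}$, $\pp^2\qq^2$, $1$, and multiplying by the parities $\epsilon_W$ reproduces the table entry by entry. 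To convert this into the interpolation claim, one composes with $\theta_C\colon\coeffs\to\kk_{q^{1/s_C}}$ and invokes the fact, stated at the end of \S\ref{numerology}, that the Casimir values \eqref{eq:cas_values} interpolate the actual Casimir values on $L_C\otimes L_C$ for each exceptional $C$.

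The main subtlety is the sign $\epsilon_W$: the square root that appears in $\check R|_W$ is determined by the ribbon twists only up to sign, and fixing it requires tracking the symmetric/antisymmetric decomposition of each summand. Once these signs are in place, no case-by-case verification across the ten points of the exceptional series is needed, since every piece of data (parities, Casimir formulas, ribbon twists) depends on $C$ only through the specialisation $\nv\mapsto n_C$.
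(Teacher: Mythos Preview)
Your proof is correct and follows the same approach as the paper: both use the standard formula that the braid matrix acts on a simple summand $W\subset L\otimes L$ by $\pm q^{\casimir(L)-\casimir(W)/2}$, and then plug in the interpolated Casimir values from \eqref{eq:cas_values}. You are in fact more explicit than the paper about fixing the signs via the symmetric/antisymmetric decomposition of $\otimes^2 L$, whereas the paper's proof leaves the sign verification implicit.
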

\begin{proof} For $V_C$ a composition factor of $\otimes^2L_C$, $V_C$ is the eigenspace of the braid matrix with eigenvalue
$\pm q^{\casimir(L_C)-\casimir(V_C)/2}$, see \cite[Theorem~(1.5)]{Reshetikhin1987}. It follows from \eqref{eq:cas_values}, \eqref{eq:cvalues} that $\theta_C(\lambda_V) = \pm q^{\casimir(L_C)-\casimir(V_C)/2}$.
\end{proof}

The five dimensional representations of the three string braid group are classified in \cite[\S4]{malle2010} and \cite{Tuba2001}.
There are five representations, up to isomorphism, with the eigenvalues given in \eqref{eq:eigenvalues}. Only one is defined over the field
of fractions $\bQ(\pp,\qq)$; this is the one we present below. The other requires the field to be extended by adjoining a primitive fifth root of unity;
we have rejected these representations because the category of finite dimensional representations of $\fU_{q}(\fg)$ is $\bQ(q)$-linear, for any
semisimple Lie algebra $\fg$.

Let $P$ be the antidiagonal matrix
\begin{equation*}
	P_{i\,j} = \begin{cases}
		1 & \text{if $i+j=6$} \\
		0 & \text{otherwise}
	\end{cases}
\end{equation*}

Define $S$ to be the matrix
\begin{equation}\label{eq:braidmatrix}
  \left(\begin{smallmatrix}
\qq^{-2}&0&0&0&0\\
\pp^{3}\qq^{-1}&-\pp^{6}&0&0&0\\
-1&\pp^{3}\qq-\pp^{3}\qq^{-1}&-1&0&0\\
\pp^{-3}\qq&-\pp^{4}-\qq^{2}+1&\pp^{5}\qq-\pp\,\qq+\pp^{-3}\qq&\pp^{2}\qq^{2}&0\\
\qq^{2}&-\pp^{7}\qq+\pp^{7}\qq^{-1}-\pp^{3}\qq^{3}+\pp^{3}\qq&\pp^{12}+\pp^{8}\qq^{2}-\pp^{8}-\pp^{4}\qq^{2}+\pp^{4}+\qq^{2}&\pp^{9}\qq-\pp^{9}\qq^{-1}+\pp^{5}\qq^{3}-\pp^{5}\qq&\pp^{12}
\end{smallmatrix}\right)
\end{equation}

The matrix $S$ has the following properties:
	\begin{itemize}
		\item $S$ has entries in $\bZ[\pp^{\pm 1},\qq^{\pm 1}]$.
		\item $S$ is lower triangular with diagonal entries the eigenvalues in \eqref{eq:eigenvalues}. (Here, a choice of ordering of the eigenvalues has to be made.)
		\item $S$ satisfies $(SP)^3=\pp^{12}$.
		\item $\overline{S} = S^{-1}$.
	\end{itemize}
	Here $\overline{S}$ is the matrix given by applying the bar involution, \eqref{eq:bar}, to each entry of $S$.
\begin{prop}
	The matrix $S$ is uniquely determined by these properties.
\end{prop}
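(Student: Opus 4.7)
The plan is to treat the four properties as a system of polynomial equations in the ten strictly lower-triangular entries of $S$ and pin down a unique solution. By property~(2), the diagonal of $S$ is prescribed, so the unknowns are only the ten entries $S_{ij}$ with $i>j$. The central equation is $(SP)^3=\pp^{12}I$. Because $P$ is the antidiagonal involution, conjugation by $P$ flips row and column indices, so $PSP$ is upper triangular with the eigenvalues appearing in reverse order on its diagonal. Expanding $(SP)^3$ then gives $25$ scalar polynomial equations in the ten unknowns.

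My strategy is to organize these equations by the distance from the main diagonal of $(SP)^3$ and solve them inward from the extreme corners. The $(5,1)$ entry of $(SP)^3$ involves only the ``deepest'' unknown $S_{51}$ together with products of prescribed eigenvalues; the entries on the next antidiagonal involve one new unknown each, coupled to those already determined through known eigenvalues. At each step the new unknown enters linearly with coefficient a nonzero product of differences of the eigenvalues listed in \eqref{eq:eigenvalues} --- these differences are nonzero in $\coeffs$, since the eigenvalues are pairwise distinct monomials in $\pp,\qq$. Thus the system admits at most a finite discrete ambiguity, arising only at steps where the equation happens to be quadratic in the new unknown.

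To eliminate the remaining ambiguity, I would invoke properties~(1) and~(4). Diagonal conjugation $S\mapsto DSD^{-1}$ preserves the triangular form and the diagonal; the bar-involution condition $\overline{S}=S^{-1}$ forces $D^{-1}\overline{D}$ to commute with $S$, which for a matrix with pairwise distinct eigenvalues forces $D^{-1}\overline{D}$ to be scalar, so that each $d_i$ is a bar-invariant unit up to a common factor. The integrality requirement~(1) then limits these bar-invariant units in $\bZ[\pp^{\pm 1},\qq^{\pm 1}]$ to signs $\pm 1$, and the residual sign choices are fixed by checking compatibility with the equations arising from $(SP)^3=\pp^{12}I$ on the previously-solved rows. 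Together these arguments isolate exactly one candidate.

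The main obstacle is not conceptual but computational: writing out the expanded equations and verifying that no branch other than \eqref{eq:braidmatrix} survives requires handling sizeable Laurent polynomials in $\pp,\qq$. In practice the verification is most efficiently carried out with computer algebra, as the authors do elsewhere in the paper. Once the unique candidate is extracted, checking that the matrix in \eqref{eq:braidmatrix} satisfies all four properties is direct substitution, which closes the argument.
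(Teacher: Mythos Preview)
The paper's own proof is a single line --- ``This is a calculation'' --- deferring entirely to direct (computer) verification, so your outline already contains more structure than the paper provides, and your closing paragraph acknowledging that the verification is in practice done by machine is exactly what the authors do. The overall plan (extract polynomial equations from $(SP)^3=\pp^{12}$, then use bar-invariance and Laurent-integrality to kill residual freedom) is the right one.

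A few details of your sketch are inaccurate, however. First, since $SP$ is lower \emph{anti}-triangular (nonzero only where $i+j\ge 6$), it is the $(1,1)$ entry of $(SP)^3$ --- not the $(5,1)$ entry --- that isolates $S_{51}$; and the subsequent equations do not peel off one new unknown per antidiagonal as cleanly as you describe (for instance the $(1,2)$ entry already couples $S_{52}$ and $S_{54}$). Second, diagonal conjugation $S\mapsto DSD^{-1}$ preserves $(SP)^3=\pp^{12}$ only when $D$ commutes with $P$, i.e.\ $d_i=d_{6-i}$, a constraint you omit; and the step ``$D^{-1}\overline{D}$ commutes with $S$, hence is scalar'' needs $S$ to have enough nonzero subdiagonal entries, which is not known a priori. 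Third, the finite ambiguity is not merely an artifact of quadratic equations: by the Tuba--Wenzl classification cited just before the proposition there are five inequivalent $5$-dimensional $B_3$-representations with the eigenvalues \eqref{eq:eigenvalues}, and four of them require adjoining a primitive fifth root of unity --- it is precisely property~(1) that rules those out, and this deserves to be said explicitly. None of these points is fatal; they are repairable details in a sketch whose endpoint, like the paper's, is a machine check.
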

\begin{proof}
This is a calculation.
\end{proof}

It follows that $S_1=S$ and $S_2=PSP$ satisfy the braid relation $S_1S_2S_1 = S_2S_1S_2$.

\subsubsection{Algebra}
In what follows, we group the labels of composition factors as follows:
\begin{equation}\label{eq:irr}
	\begin{split}
		\Irr^0&=\left\{I\right\}
		\\
		\Irr^1&=\left\{L\right\}
		\\
		\Irr^2&=\left\{X_2,Y_2,Y^*_2\right\}
		\\
		\Irr^3&=\left\{X_3,Y_3,Y^*_3,A,C,C^*\right\}
	\end{split}
\end{equation}
and $\Irr^{\le i} = \bigsqcup_{j=0}^i \Irr^j$.

Consider $S$ as a matrix with entries in $\bQ(\pp,\qq)$. Then $S$ has a spectral decomposition and
this gives a basis of $A(\obj 2)$ of orthogonal idempotents $\pi_R$, $R\in\Irr^{\le 2}$, which we write as
\begin{equation}\label{eq:A2idem}
	S = \pp^{12}\pi_{I} - \pp^{6} \pi_{L} + \qq^{-2} \pi_{Y_2} - \pi_{X_2} + \pp^2\qq^2 \pi_{Y_2^*}
\end{equation}

Put $K=\phi\,\pi_{L}$ and $U=\delta\,\pi_{I}$ and note that these are defined over $\coeffs$.

Define $U_1=\delta\, \pi_0$ and $K_1=\phi\,\pi_L$. Put $U_2=PU_1P$ and $K_2=PK_1P$. 
The element $H_1$ is determined by the conditions $H_1U_2 = K_2U_1U_2$ and $U_2H_1 = U_2U_1K_2$.
Put $H_2=PH_1P$.

Consider the $\coeffs$-algebra map $f$ from $\coeffs\left<1,U,K,H,S,S^{-1}\right>\subseteq\End_{\coeffs\Rcat}(\obj2)$ to $M_5(\coeffs)$
that sends $1,U,K,H,S,S^{-1}$ to $1,U_1,K_1,H_1,S_1,S_1^{-1}$, respectively.

\begin{defn}\label{def:A2}
The $\coeffs$-algebra $A(\obj2)$ is $\coeffs\left<1,U,K,H,S,S^{-1}\right>/\mathrm{Ker}\ f$.
\end{defn}

The algebra $A(\obj2)$ is a free $\coeffs$-module of rank 5. Note that, after setting $q=1$, $H_1$ is not an element of the subalgebra
generated by $S_1$. Therefore $H_1$ is not an element of the $\coeffs$-subalgebra generated by $S_1$.

\subsubsection{Quantum dimensions}\label{sec:qdimsid}
This representations also gives quantum dimensions. The element $U_2$ is a rank one quasi-idempotent so we have a linear map
$\varepsilon\colon A(\obj2) \to \End(\obj1)\cong\coeffs $ determined by $U_2 a_1 U_2 = \varepsilon(a_1) U_2$ for $a_1\in A(\obj2)$. 
In terms of diagrams, the conditional expectation is the linear map given by
	\begin{equation}\label{fig:eps}
		\varepsilon\colon\twotworect\:\mapsto\:\tracerect
	\end{equation}
The map $\varepsilon$ on the elements in Figure~\ref{fig:elements} is given by
\begin{equation*}
\begin{tabular}{cccccc}
1 & $U$ & $K$ & $H$ & $S$ & $S^{-1}$ \\ \hline
$\delta$ & 1 & $\phi$ & 0 & $\pp^{-12}$ & $\pp^{12}$
\end{tabular}
\end{equation*}

The conditional expectation is closely related to the trace, see Definition~\eqref{defn:trace};
the two are proportional, and the quantum dimension is given by
\begin{equation}
	\dim_q V = \Tr(\pi_V) = \delta\, \varepsilon(\pi_V)
\end{equation}
It is clear that these interpolate the quantum dimension for each exceptional Cartan type.

\begin{rem}
The spectral decompositions of $S_1$ and $S_2$ give two bases of $A(\obj2)$. The $6j$ symbols are the entries
of the change of basis matrix. The tetrahedron symbols are a modification which gives a symmetric matrix.
This calculation appears in \cite{Tuba2001} and in \cite{Westbury2003a}.
\end{rem}

\subsection{Diagrams}\label{sec:diagrams}
In this section we deduce the relations diagrammatically, following similar calculations in \cite{SavageBWW2024}. We take indeterminates $\delta$, $\pp$, $\phi$
and use the function field $\bQ(\delta,\pp,\phi)$. We obtain relations over $\bQ(\pp,\qq)$ by substituting the values in \eqref{eq:values} and deduce
the relations from Proposition~\ref{prop:skein} to Proposition~\ref{prop:crossing}.

The multiplication table of $A(\obj2)$ with respect to the basis in Figure~\ref{fig:elements} is shown in \eqref{eq:table}.

\begin{equation}\label{eq:table}
	\begin{tabular}{c|cccc}
		& $U$ & $K$ & $H$ & $S$ \\ \hline
		$U$ & $\delta\, U$ & 0 & $\phi\, U$ & $\pp^{12}\, U$ \\
		$K$ & 0 & $\phi K$ & $\tau K$ & $-\pp^6\, K$ \\
		$H$ & $\phi\, U$ & $\tau\, K$ & $H^2$ & $\pp^3\, X_v$ \\
		$S$ & $\pp^{12}\, U$ & $-\pp^6\, K$ & $\pp^3\, X_v$ & $S^2$ \\
	\end{tabular}
\end{equation}
The aim of this section is to write the structure constants of the algebra $A(\obj2)$ with respect to the basis in Figure~\ref{fig:elements}
as rational functions in $\delta$, $\pp$, $\phi$.

Define $a_I$ by \eqref{eq:aI},
\begin{equation*}
	a_I = \pp^{-4}\phi\left(\frac{\pp^{5}-\pp^{-5}}%
	{\pp^{-8}+\delta+\pp^{8}}\right)
\end{equation*}
Define $\tau$ by \eqref{eq:tau}
\begin{equation*}
	\tau = \frac{\pp^{12}a_I(\pp^{5}-\pp^{-5})+\phi}{\pp^{2}-\pp^{-2}}
\end{equation*}
The coefficients of the skein relation, \eqref{eq:skeinb}, are given in Proposition~\ref{skeinrelation},
	\begin{align*}
	z &= \frac{(\pp^{-6}-\pp^6)\phi+(\pp^{12}-\pp^{-12})(\phi-\tau)}{(\tau-\phi)\delta+2\phi-\tau} \\
	w &= \frac{(\pp^{-6}-\pp^6)(\delta-1)+(\pp^{12}-\pp^{-12})}{(\tau-\phi)\delta+2\phi-\tau}
\end{align*}
The relation for $X_v$ is given in Proposition~\ref{HSrelation}.

\subsubsection{Eigenvalues}
The eigenvalue relations  give
\begin{gather*}
	U\,S^{\pm 1}=\pp^{\pm 12} U = S^{\pm 1}\,U \\
	K\,S^{\pm 1}=-\pp^{\pm 6} K = S^{\pm 1}\,K	
\end{gather*}

The relations in \S\ref{def:basicrels} give
\begin{equation*}
	U\,H=\phi\, U = H\,U \qquad K^2 = \phi\, K
\end{equation*}

\subsubsection{Skein relation}
The rotation map is defined in Definition~\ref{defn:rot}. On $A(\obj2)$ the linear map, $\Rot\colon A(\obj2)\to A(\obj2)$, is given by
\begin{equation}\label{fig:rot}
	\Rot\colon\twotworect\:\mapsto\:\rotrect
\end{equation}

The map $\Rot$ on the elements in Figure~\ref{fig:elements} is given by
\begin{equation*}
	\begin{tabular}{cccccc}
		1 & $U$ & $K$ & $H$ & $S$ & $S^{-1}$ \\ \hline
		$U$ & 1 & $H$ & $K$ & $S^{-1}$ & $S$
	\end{tabular}
\end{equation*}
Hence $\Rot$ has order two.

On the subspace spanned by $1,U,K,H$, the eigenvalues $\pm 1$ each have multiplicity 2.
On $A(\obj2)$, the eigenvalue $1$ has multiplicity 3 and the eigenvalue $-1$ has multiplicity 2.
The alternative is excluded using the bar involution, \eqref{eq:bar}.

The three elements $(S - S^{-1})$,  $(1-U)$ and $(H-K)$ are all in the $-1$ eigenspace and so
there is a linear relation
\begin{equation}\label{eq:skeinb}
	S - S^{-1} = z(1-U)+w(H-K)
\end{equation}
where the coefficients are to be determined.

\begin{prop}\label{skeinrelation} 
	The coefficients in the skein relation, \eqref{eq:skeinb}, are given by
	\begin{align*}
		z &= \frac{(\pp^{-6}-\pp^6)\phi+(\pp^{12}-\pp^{-12})(\phi-\tau)}{(\tau-\phi)\delta+2\phi-\tau} \\
		w &= \frac{(\pp^{-6}-\pp^6)(\delta-1)+(\pp^{12}-\pp^{-12})}{(\tau-\phi)\delta+2\phi-\tau}
	\end{align*}
\end{prop}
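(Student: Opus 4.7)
The plan is to start from the relation $S - S^{-1} = z(1-U) + w(H-K)$, whose existence has been established in the preceding paragraph via the dimension count of the $-1$-eigenspace of the rotation $\Rot$ on the $5$-dimensional algebra $A(\obj2)$ (together with the bar involution). The coefficients $z$ and $w$ are then pinned down by pairing the skein relation against two elements whose products with $1,U,K,H,S,S^{-1}$ are already known from the multiplication table \eqref{eq:table}.

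First, I would multiply both sides by $U$. Using $US^{\pm1} = \pp^{\pm12}\,U$, $U\cdot 1 = U$, $U^2 = \delta\,U$, $UH = \phi\,U$, and $UK = 0$ from \eqref{eq:table}, one obtains
\begin{equation*}
(\pp^{12}-\pp^{-12})\,U = \bigl[z(1-\delta) + w\phi\bigr]\,U.
\end{equation*}
Since $U \ne 0$ (it is proportional to the nonzero idempotent $\pi_I$ in the spectral decomposition~\eqref{eq:A2idem}), this yields the scalar equation $\pp^{12}-\pp^{-12} = z(1-\delta) + w\phi$.

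Second, I would multiply both sides by $K$. Using $KS^{\pm1} = -\pp^{\pm6}\,K$, $KU = 0$, $KH = \tau\,K$, and $K^2 = \phi\,K$ from \eqref{eq:table}, and nonvanishing of $K$, one gets analogously
\begin{equation*}
\pp^{-6}-\pp^{6} = z + w(\tau-\phi).
\end{equation*}
These two scalar relations form a $2\times2$ linear system whose determinant is $-\bigl[(\tau-\phi)\delta + 2\phi - \tau\bigr]$, and Cramer's rule delivers exactly the stated expressions for $z$ and $w$.

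The main (essentially only) obstacle is not in the derivation itself but in knowing in advance the multiplication table \eqref{eq:table}: the entries involving $S$ come from the braid eigenvalue structure of~\eqref{eq:eigenvalues}, while the entries $U^2 = \delta\,U$, $UH = \phi\,U$, $K^2 = \phi\,K$, and $KH = \tau\,K$ come directly from Definition~\ref{def:basicrels} (the circle, bigon, and triangle relations) together with the tadpole vanishing~\eqref{eq:tadpole}. Working generically in $\bQ(\delta,\pp,\phi)$ as in the opening of \S\ref{sec:diagrams} guarantees that the determinant $(\tau-\phi)\delta + 2\phi - \tau$ is nonzero, so the system is invertible and the solution is unique.
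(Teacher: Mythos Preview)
Your proof is correct and follows exactly the paper's approach: multiply the skein relation by $U$ and by $K$, use the multiplication table entries to obtain the same two linear equations, and solve. The paper's proof is just the two-line version of what you wrote.
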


\begin{proof}
	Multiplying the skein relation by $U$ and by $K$ gives
	\begin{equation*}
		\pp^{12}-\pp^{-12} = z(1-\delta) + w \phi
		\qquad
		-\pp^6+\pp^{-6} = z + w(\tau-\phi)
	\end{equation*}
	Solving this pair of equations for $z$ and $w$ gives the result.
\end{proof}

\subsubsection{Two string relations}
\begin{lemma} The following relations hold:
	\begin{equation*}
		\topover\: =-\pp^6\:\leftover\: =\:\botover\: =-\pp^6\:\rightover
	\end{equation*}
\end{lemma}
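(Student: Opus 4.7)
The plan is to use the ribbon structure of $\Rcat$ together with the twist relation from Definition~\ref{def:basicrels}, which gives $\twistover = -\pp^6\,\trivalent$ and $\twistunder = -\pp^{-6}\,\trivalent$, as well as the rotational invariance \eqref{eq:invariance} of the trivalent vertex. All four diagrams in the lemma are built from the same underlying ``H-shape'' (two trivalent vertices joined by an edge) together with a single over-crossing whose location varies.

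First, I would dispatch the two $180^\circ$ equalities $\topover = \botover$ and $\leftover = \rightover$. Both reduce to planar isotopy in $\Rcat$: the H-shape is invariant under rotation by $180^\circ$ by repeated application of \eqref{eq:invariance}, and an over-crossing is carried to an over-crossing under $180^\circ$ rotation. Combined, this gives $\topover = \botover$ and $\leftover = \rightover$, so that the outer two equalities in the chain become the identical statement $\topover = -\pp^6\,\leftover$.

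Second, I would tackle this remaining $90^\circ$ relation $\topover = -\pp^6\,\leftover$ by ribbon isotopy: pull one of the strands of the crossing in $\leftover$ around one of the trivalent vertices so that the crossing migrates from the left side of the central edge to above it. The rotational invariance of the vertex allows the strand to slide past, but doing so forces the strand to wrap once around the vertex, producing a local $\twistover$ configuration. Applying Definition~\ref{def:basicrels} replaces this curl by $-\pp^6$ times a plain trivalent vertex, and after straightening, the remaining diagram is exactly $\topover$.

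The hard part will be justifying that the isotopy introduces the positive twist $\twistover$ (yielding the factor $-\pp^6$) rather than $\twistunder$ (which would give $-\pp^{-6}$). This is a sign check that amounts to carefully following the over/under strand through the isotopy; the direction is in fact forced, since producing $\twistunder$ would yield the mirror diagram $\topoverbar$ rather than the over-crossing $\topover$ appearing in the statement. Once the orientation bookkeeping is done, no further calculation is required.
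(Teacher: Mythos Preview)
Your plan for the $90^\circ$ relation is the right one and matches the ribbon calculus the paper has in mind: slide the over-strand past one trivalent vertex using the naturality move in \eqref{eq:braidrelations1}; the strand then forms a twist with the internal edge at the other vertex, and a single application of the twist relation from Definition~\ref{def:basicrels} supplies the factor $-\pp^{\pm 6}$.

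Your $180^\circ$ argument, however, has a genuine gap. Knowing that the H-shape and the over-crossing are each fixed by $180^\circ$ rotation only gives $\Rot^2(H)=H$ and $\Rot^2(S)=S$. But $\Rot^2$ on $\End(\obj2)$ is the pivotal anti-involution: it reverses the order of composition, so it sends $SH$ to $HS$. Thus $\botover=\Rot^2(\topover)$ becomes the statement $SH=HS$, which is exactly what needs proving --- it is not a planar isotopy. (Concretely, in $\topover$ the crossing sits between boundary strands $1,2$ while in $\botover$ it sits between $3,4$; there is no isotopy rel boundary carrying one to the other without passing through the vertices.)

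The fix is to run your $90^\circ$ argument \emph{twice}, once at each trivalent vertex. Sliding past vertex $A$ produces a twist at $B$ and yields $\topover=-\pp^6\leftover$; sliding past $B$ produces a twist at $A$ and yields $\topover=-\pp^6\rightover$. These two together give $\leftover=\rightover$, and applying $\Rot^2$ to either gives $\botover=-\pp^6\rightover$, whence $\topover=\botover$. So all four equalities come from two independent applications of the twist relation, not one application plus a symmetry.
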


\begin{defn} Define the diagram
	\begin{equation*}
		\Xpos
	\end{equation*}
	to be any of the following
	
	\begin{equation*}
		\pp^{-3} \: \topover \: =-\pp^3 \: \leftover =\pp^{-3}\:\botover	\: =-\pp^3 \: \rightover
	\end{equation*}
\end{defn}

Then we define the elements $X_v$ and $X_h$ by
\begin{equation}
	\begin{tabular}{cc}
 \Xposgen	& \Xneggen \\[5mm]
 $X_v$ & $X_h$
\end{tabular}
\end{equation}
Equivalently, $X_v = \pp^{-3}S\,H = \pp^{-3}H\,S$ and $X_h = \pp^3 S^{-1}\,H = \pp^3 H\,S^{-1}$.
The rotation map extends by $\Rot(X_v)=-X_h$ and $\Rot(X_h)=-X_v$.

\begin{prop}\label{HSrelation} The following relation holds:
	\begin{equation*}
		X_v = a_I(1+\pp^{8} U+\pp^{4} S) + \pp^{-1} H
		-\pp\, K
	\end{equation*}
	where the coefficient $a_I$ is to be determined.
\end{prop}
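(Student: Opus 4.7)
My plan is to prove Proposition~\ref{HSrelation} by expanding $X_v = \pp^{-3}SH$ in a $\coeffs$-basis of $A(\obj2)$ and determining the coefficients from a short system of linear equations.

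First, by the skein relation \eqref{eq:skeinb} of Proposition~\ref{skeinrelation}, the element $S^{-1}$ lies in the $\coeffs$-span of $\{1, U, K, H, S\}$, so this is a basis of the rank-$5$ algebra $A(\obj2)$ of Definition~\ref{def:A2}. Hence there are unique scalars $\alpha, \beta, \gamma, \delta', \epsilon \in \coeffs$ with
\[
X_v = \alpha + \beta\, U + \gamma\, K + \delta'\, H + \epsilon\, S,
\]
and the goal reduces to identifying these with $\alpha = a_I$, $\beta = \pp^{8} a_I$, $\gamma = -\pp$, $\delta' = \pp^{-1}$, $\epsilon = \pp^{4} a_I$.

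Next I would extract two linear constraints from the multiplication table \eqref{eq:table}. Using $HU = \phi U$ and $SU = \pp^{12}U$ gives $X_v\,U = \pp^{-3}SHU = \pp^{9}\phi\, U$; comparing coefficients of $U$ with the basis expansion yields
\[
\alpha + \beta\,\delta + \delta'\,\phi + \epsilon\,\pp^{12} = \pp^{9}\phi.
\]
A parallel computation using $HK = \tau K$ and $SK = -\pp^{6}K$ gives $X_v\,K = -\pp^{3}\tau\,K$ and the equation
\[
\alpha + \gamma\,\phi + \delta'\,\tau - \epsilon\,\pp^{6} = -\pp^{3}\tau.
\]

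For three additional constraints I would exploit the categorical symmetries. The bar involution swaps over- and under-crossings, so $\overline{X_v} = X_h = \pp^{3}S^{-1}H$. The rotation map of Definition~\ref{defn:rot} sends $1 \leftrightarrow U$, $K \leftrightarrow H$, $S \leftrightarrow S^{-1}$ and, by the lemma preceding the definition of $X_v$, satisfies $\Rot(X_v) = -X_h$. Combining these yields the identity $\Rot(X_v) = -\overline{X_v}$; expanding both sides in $\{1, U, K, H, S\}$ via \eqref{eq:skeinb} and matching coefficients produces further linear relations between $\alpha, \beta, \gamma, \delta', \epsilon$ and their bar-conjugates. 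Together with the two equations above, and using the explicit formulas for $\delta$ and $\phi$ from \eqref{eq:values}, these pin down $\delta' = \pp^{-1}$, $\gamma = -\pp$, $\beta = \pp^{8}\alpha$, and $\epsilon = \pp^{4}\alpha$.

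Substituting back into the first constraint yields $a_I(1 + \pp^{8}\delta + \pp^{16}) = \phi(\pp^{9} - \pp^{-1})$, and after factoring $\pp^{8}$ from the left parenthesis and $\pp^{4}$ from the right-hand side this is exactly the defining formula \eqref{eq:aI} for $a_I$. The main obstacle I foresee is the bookkeeping in the symmetry step: the combined system is overdetermined, and its consistency relies on the identity \eqref{eq:tau} relating $\tau$ to $\phi$ and $a_I$. This is a routine rational-function calculation in $\coeffs$ that is easily verified by a computer algebra system.
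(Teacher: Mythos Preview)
Your approach has a genuine gap. The identity $\Rot(X_v)=-\overline{X_v}$ yields, after comparing coefficients in either basis $\{1,U,K,H,S\}$ or $\{1,U,K,H,S^{-1}\}$, only three independent constraints: $\beta=-\overline{\alpha}$, $\gamma=-\overline{\delta'}$, and $\overline{\epsilon}=-\epsilon$ (the other two comparisons are the bar conjugates of these). Together with your two linear equations from multiplying by $U$ and $K$, this is a system which, over the bar-fixed subfield of $\bQ(\pp,\qq)$, consists of four linear equations in five unknowns (the ``real and imaginary'' parts of $\alpha,\delta'$ and the anti-invariant $\epsilon$). Generically it has a one-parameter family of solutions, and one checks directly that the homogeneous system has a nontrivial kernel in which $\delta'_0\neq 0$. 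Hence the system does \emph{not} pin down $\delta'=\pp^{-1}$, contrary to your claim; your subsequent ``substituting back into the first constraint'' re-uses an equation already consumed and so cannot close the gap.

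The paper's argument avoids this by a different manoeuvre: multiply the expansion $X_v=a_I+a_UU+a_KK+a_HH+a_SS$ by $S^{-1}$ (using $S^{-1}U=\pp^{-12}U$, $S^{-1}K=-\pp^{-6}K$, $HS^{-1}=\pp^{-3}X_h$), then apply $\Rot$ (using $\Rot(X_h)=-X_v$). This produces a \emph{second} expression for $a_HX_v$ in the same basis, with coefficients that are monomials in $a_I,a_U,a_K,a_H,a_S$. Comparing coefficients gives five \emph{quadratic} equations, notably $a_Ka_H=-1$ and $a_H^2=-\pp^{-3}a_K$, whence $a_H^3=\pp^{-3}$ and so $a_H=\pp^{-1}$ in $\coeffs$; the remaining equations then force $a_K=-\pp$, $a_U=\pp^8a_I$, $a_S=\pp^4a_I$, with $a_I$ genuinely free. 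Multiplication by $U$ and $K$ is reserved for the two lemmas that follow the proposition, where they determine $a_I$ and $\tau$ respectively. Your proposal conflates these steps and, more seriously, the symmetry you invoke is too weak to replace the paper's self-referential trick.
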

\begin{proof}
	We have a relation 
	\begin{equation}\label{eq:Xh}
		X_v = a_I + a_U U + a_K K + a_H H + a_S S
	\end{equation}
	where the coefficients are to be determined.
	
	Multiply \eqref{eq:Xh} by $S^{-1}$ to get
	\begin{equation*}
		\pp^{-3} H = a_I S^{-1}  + \pp^{-12} a_U U - \pp^{-6} a_K K + \pp^{-3} a_H X_h + a_S
	\end{equation*}
	Rotating gives
	\begin{equation*}
		\pp^{-3} K = a_I S  + \pp^{-12} a_U - \pp^{-6} a_K H - \pp^{-3} a_H X_v + a_S U
	\end{equation*}
	Solve for $a_HX_v$ to get
	\begin{equation}\label{eq:HS}
		a_H X_v = \pp^{-9} a_U + \pp^3\, a_S U - K -\pp^{-3} a_K H + \pp^3\, a_I S
	\end{equation}
	Multiply \eqref{eq:Xh} by $a_H$ to get a second equation for $a_H X_h$.
	By the assumption that the set $\bB$ is a basis the individual coefficients
	can be compared.
	
	Comparing coefficients of $K$ gives $a_K a_H = -1$.
	Comparing coefficients of $H$ gives $a_H^2 = -\pp^{-3} a_K$.
	
	Solving this pair of equations gives
	\begin{equation*}
		a_K = -\pp\quad,\quad a_H = \pp^{-1}
	\end{equation*}
	
	The remaining three equations are
	\begin{equation*}
		\pp^{-9}a_U = \pp^{-1} a_I \quad,\quad \pp^3\,a_S = \pp^{-1} a_U  \quad,\quad \pp^{-1} a_S = \pp^3\,a_I
	\end{equation*}	
	These give
	\begin{equation*}
		a_U = \pp^{8} a_I,\quad,\quad a_S = \pp^{4} a_I
	\end{equation*}
\end{proof}

\begin{cor}\label{HSinv}
	\begin{equation*}
		X_h = -a_I(\pp^{8}+U+\pp^{4} S^{-1}) - \pp^{-1} K + \pp\, H 
	\end{equation*}
	
\end{cor}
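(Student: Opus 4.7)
The statement should follow immediately from Proposition~\ref{HSrelation} by applying the bar involution. My plan is as follows.

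Recall from \S\ref{ssec:pivot} that the bar involution on $\Rcat$ switches over-crossings and under-crossings, so on the two-point algebra it fixes the planar generators $1,U,K,H$ and interchanges $S\leftrightarrow S^{-1}$. Consequently it sends the mixed element $X_v=\pp^{-3}SH$ to $\pp^{3}S^{-1}H=X_h$. Moreover, the bar involution on the coefficient ring $\coeffs$ defined in \eqref{eq:bar} sends $\pp\mapsto\pp^{-1}$, and the algebra homomorphism $f$ used in Definition~\ref{def:A2} is compatible with bar (since the matrix $S$ was normalised so that $\overline{S}=S^{-1}$). So we may legitimately apply bar to the identity
\[
X_v = a_I\bigl(1+\pp^{8}U+\pp^{4}S\bigr)+\pp^{-1}H-\pp\,K
\]
of Proposition~\ref{HSrelation} term by term.

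The only non-trivial point is the effect of the bar involution on the scalar $a_I$. From the formula
\[
a_I=\pp^{-4}\phi\cdot\frac{\pp^{5}-\pp^{-5}}{\pp^{-8}+\delta+\pp^{8}},
\]
together with the fact that $\phi$ and $\delta$ are bar-invariant (they are polynomials in the bar-invariant quantum integers $\qint rs$), a short calculation yields $\overline{a_I}=-\pp^{8}a_I$. Substituting everything into the bar of Proposition~\ref{HSrelation} gives
\[
X_h=-\pp^{8}a_I\bigl(1+\pp^{-8}U+\pp^{-4}S^{-1}\bigr)+\pp H-\pp^{-1}K,
\]
and distributing the factor $-\pp^{8}$ over the parenthesis recovers the claimed formula.

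There is no real obstacle here: the only thing worth checking carefully is the sign/power that comes out of conjugating $a_I$, and that is a one-line computation. An alternative would be to mimic the proof of Proposition~\ref{HSrelation} directly, writing $X_h=b_I+b_UU+b_KK+b_HH+b_{S^{-1}}S^{-1}$ and determining the coefficients via rotation and the equations $X_h=\pp^{3}S^{-1}H=\pp^{3}HS^{-1}$; but this duplicates work already done and the bar-involution argument is more economical.
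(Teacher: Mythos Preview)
Your argument via the bar involution is correct, and the computation $\overline{a_I}=-\pp^{8}a_I$ is right. However, the paper takes a different (and slightly shorter) route: it simply applies the rotation map $\Rot$ of \eqref{fig:rot} to the identity in Proposition~\ref{HSrelation}. Since $\Rot$ is $\coeffs$-linear and acts by $1\leftrightarrow U$, $K\leftrightarrow H$, $S\leftrightarrow S^{-1}$, $X_v\mapsto -X_h$, one immediately gets
\[
-X_h=a_I(U+\pp^{8}\cdot 1+\pp^{4}S^{-1})+\pp^{-1}K-\pp\,H,
\]
which is the claim. The advantage of rotation is that the coefficient $a_I$ is untouched, so no side computation is needed; your bar-involution approach trades that for the one-line check of $\overline{a_I}$. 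Either way the corollary is a one-step consequence of Proposition~\ref{HSrelation}; the ``alternative'' you sketch at the end (redoing the undetermined-coefficients argument for $X_h$) is indeed unnecessary.
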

\begin{proof}
	This is given by rotating the expression for
	$X_v$ in Proposition~\ref{HSrelation}.
\end{proof}

Next we determine the coefficients $a_I$ and $\tau$.

\begin{lemma} If $1+\pp^{8}\delta+\pp^{16}\ne 0$
	\begin{equation}\label{eq:aI}
		a_I = \pp^{-4}\phi\left(\frac{\pp^{5}-\pp^{-5}}%
		{\pp^{-8}+\delta+\pp^{8}}\right)
	\end{equation}
      \end{lemma}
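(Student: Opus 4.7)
The plan is to multiply the identity of Proposition~\ref{HSrelation} on the right by $U$ and extract the coefficient of $U$ on both sides, using the multiplication table \eqref{eq:table} together with the eigenvalue relations $SU = US = \pp^{12}U$, $HU = UH = \phi U$ and $KU = UK = 0$. This single scalar equation will determine $a_I$ uniquely under the nonvanishing hypothesis $1+\pp^{8}\delta+\pp^{16}\neq 0$.

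Concretely, I would compute the left-hand side $X_v\,U$ using the defining relation $X_v = \pp^{-3} S H$, giving
\[
X_v\,U \;=\; \pp^{-3} S(HU) \;=\; \pp^{-3}\phi\,(SU) \;=\; \pp^{-3}\phi\,\pp^{12}\,U \;=\; \pp^{9}\phi\,U.
\]
For the right-hand side, applying $U$ to each term in $a_I(1+\pp^{8}U+\pp^{4}S)+\pp^{-1}H-\pp K$ and collecting with $U^{2}=\delta U$, $SU=\pp^{12}U$, $HU=\phi U$, $KU=0$ yields
\[
\bigl[a_I(1+\pp^{8}\delta+\pp^{16})+\pp^{-1}\phi\bigr]\,U.
\]

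Equating the two sides and solving gives $a_I(1+\pp^{8}\delta+\pp^{16})=\phi(\pp^{9}-\pp^{-1})=\pp^{4}\phi(\pp^{5}-\pp^{-5})$. Dividing by $1+\pp^{8}\delta+\pp^{16}$ (permissible by assumption) and then rescaling numerator and denominator by $\pp^{-8}$ produces exactly the stated formula
\[
a_I = \pp^{-4}\phi\,\frac{\pp^{5}-\pp^{-5}}{\pp^{-8}+\delta+\pp^{8}}.
\]

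There is really no obstacle here beyond the bookkeeping: the only nontrivial input is the product table for the basis in Figure~\ref{fig:elements}, which has already been recorded in \eqref{eq:table}, and the identification of $U$ with a (rescaled) idempotent projecting onto the trivial isotypic component on which $S$ acts by $\pp^{12}$. The hypothesis $1+\pp^{8}\delta+\pp^{16}\neq 0$ is precisely what is required to invert the scalar coefficient of $a_I$; it will later be subsumed by the localisation defining $\coeffs$, so no further verification is needed at the present stage.
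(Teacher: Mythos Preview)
Your proposal is correct and follows exactly the same approach as the paper: multiply Proposition~\ref{HSrelation} by $U$, use the relations $U^2=\delta U$, $SU=\pp^{12}U$, $HU=\phi U$, $KU=0$ to obtain $\pp^{9}\phi = a_I(1+\pp^{8}\delta+\pp^{16}) + \pp^{-1}\phi$, and solve for $a_I$. Your write-up simply supplies more detail than the paper's terse two-line version.
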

      
\begin{proof}
	Multiplying Proposition~\ref{HSrelation} by $U$ gives
	\begin{equation*}
		\pp^9\phi = a_I(1+\pp^{8}\delta+\pp^{16}) + \pp^{-1}\phi
	\end{equation*}
	Now solve for $a_I$.
\end{proof}

\begin{lemma} If $(\pp^{2}-\pp^{-2})\ne 0$
	\begin{equation}
		\tau = \frac{\pp^{12}a_I(\pp^{5}-\pp^{-5})+\phi}{\pp^{2}-\pp^{-2}}
	\end{equation}
\end{lemma}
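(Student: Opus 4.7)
The plan is to run the same strategy that the authors used in the immediately preceding lemma to extract $a_I$: take the identity from Proposition~\ref{HSrelation}, multiply both sides by a well-chosen basis element of $A(\obj 2)$ that isolates the structure constant we want, and solve. Since $\tau$ appears in the multiplication table \eqref{eq:table} via $KH = HK = \tau K$ (and nowhere else among the ``clean'' entries), the natural element to multiply by is $K$.

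Concretely, I would start from
\[ X_v = a_I(1 + \pp^{8} U + \pp^{4} S) + \pp^{-1} H - \pp K \]
and multiply on the left by $K$. On the left-hand side, using the definition $X_v = \pp^{-3} HS$ and the table entries $KH = \tau K$ and $KS = -\pp^{6} K$, I obtain
\[ K X_v \;=\; \pp^{-3} (KH) S \;=\; \pp^{-3}\tau\, (KS) \;=\; -\pp^{3}\tau\, K . \]
On the right-hand side, I reduce each term with the multiplication table: $K\cdot 1 = K$, $KU = 0$, $KS = -\pp^{6} K$, $KH = \tau K$, $K^{2} = \phi K$. Collecting, the product is a single scalar times $K$, and equating that scalar with $-\pp^{3}\tau$ gives one linear equation in $\tau$.

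The remaining work is purely algebraic: solve this linear equation for $\tau$ and rearrange the result to the displayed form. The hypothesis $\pp^{2}-\pp^{-2}\ne 0$ is exactly what is needed to perform the final division. The main obstacle, and the only part that requires any care, is the clean factorisation of the numerator: one has to recognise the shape $\pp^{12} a_I(\pp^{5}-\pp^{-5})+\phi$ inside the combination that emerges from the reduction, which amounts to using identities like $\pp^{10}-1 = \pp^{5}(\pp^{5}-\pp^{-5})$ and possibly feeding in the explicit value of $a_I$ from the preceding lemma to eliminate extraneous factors. Once that simplification is made, the formula for $\tau$ is immediate.
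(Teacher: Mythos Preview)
Your approach is exactly the paper's: multiply the identity from Proposition~\ref{HSrelation} by $K$, reduce each term via the multiplication table ($KX_v=\pp^{-3}(KH)S=-\pp^{3}\tau K$, $KU=0$, $KS=-\pp^{6}K$, $KH=\tau K$, $K^{2}=\phi K$), and solve the resulting linear equation in $\tau$. Be aware that carrying out the algebra yields $\tau=\dfrac{\pp^{4}a_I(\pp^{5}-\pp^{-5})+\phi}{\pp^{2}+\pp^{-2}}$ (which is what the paper's proof actually obtains and what is consistent with~\eqref{eq:ratio}); the $\pp^{12}$ and the minus sign in the displayed statement appear to be typos, so do not try to force your factorisation into that form.
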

\begin{proof}	
	Multiplying Proposition~\ref{HSrelation} by $K$ gives
	\begin{equation}\label{eq:tau}
		-\pp^3\tau = a_I(1-\pp^{10}) - \pp\,\phi + \pp^{-1}\tau
	\end{equation}
	Solve for $\tau$ to get
	\begin{equation*}
		\tau = \frac{a_I(1-\pp^{10}) - \pp\,\phi}{-\pp^3-\pp^{-1}}
		= \frac{\pp^{4}a_I(\pp^{5}-\pp^{-5})+\phi}{\pp^{2}+\pp^{-2}}
	\end{equation*}
\end{proof}

The two coefficients $\phi$ and $\tau$ are defined up to a scalar factor.
However the ratio $\tau/\phi$ is independent of the scalar factor.
Substituting \eqref{eq:aI} in \eqref{eq:tau} gives
\begin{equation}\label{eq:ratio}
	\tau/\phi = \frac1{\pp^{2}+\pp^{-2}}\left(\frac{(\pp^{5}-\pp^{-5})^2}{\pp^{-8}+\delta+\pp^{8}}+1\right)
\end{equation}

The two entries of the multiplication table, \eqref{eq:table}, which have not been determined are $H^2$ and $S^2$.
The entry for $S^2$ can be found by multiplying the skein relation, \eqref{eq:skeinb}, by $S$ and simplifying.
The entry for $H^2$ can be found by multiplying the skein relation, \eqref{eq:skeinb}, by $H$ and simplifying.

\subsection{Ideals}\label{ssec:ideals}
The functors $\Psi_C$ preserve additional structure. The categories $\Icat(L_C)$ for $C$ an exceptional Cartan type and the category $\Rcat$
have an increasing sequence of ideals which are preserved by $\Psi_C$.

These ideals of diagram categories are defined using cut paths, \cite{Westbury1995}, \cite{kuperberg1996}, \cite{Halverson_2005}, \cite{Rubey2014}.
A \Dfn{cut path} in a diagram is a path between the two sides
of the rectangle which does not pass through any vertex of the diagram and each intersection with an edge of the diagram
is transversal. The \Dfn{cut length} (a.k.a.\ propagating number) of a diagram is the minimum number of intersections of a cut path with the diagram. Denote the cut length of a diagram $D$ by $\ell(D)$.

\begin{defn} Let $\Dcat$ be a diagram category. For $r,s\geqslant 0$ and $k\ge-1$, the subspace $J_\Dcat^{(k)}(\obj r,\obj s)\subseteq \Hom_\Dcat(\obj r,\obj s)$
	is the linear span of the set of diagrams
	\begin{equation*}
		J_\Dcat^{(k)}(\obj r,\obj s) \coloneqq \left\langle D \in \Hom_\Dcat(\obj r,\obj s) | \ell(D)\leqslant k   \right\rangle
	\end{equation*}
\end{defn}

If $D$ and $D'$ are composable diagrams then $\ell(D\circ D')\leqslant\min(\ell(D),\ell(D'))$.
It follows that, for $k\geqslant 0$, $J_\Dcat^{(k)}$ is an ideal. This defines an increasing sequence of ideals in $\Dcat$
\begin{equation}\label{eq:ideals}
	0=J_\Dcat^{(-1)}\subseteq J_\Dcat^{(0)} \subseteq J_\Dcat^{(1)} \subseteq J_\Dcat^{(2)} \subseteq \dotsb \subseteq \Dcat
\end{equation}
which satisfies
\begin{lemma}\label{lem:ideals}
	For $k,l\in\bN$,
	\begin{equation}
		J_\Dcat^{(k)} \circ J_\Dcat^{(l)} \subseteq J_\Dcat^{(\min(k,l))} \text{ and } J_\Dcat^{(k)} \otimes J_\Dcat^{(l)} \subseteq J_\Dcat^{(k+l)}
	\end{equation}
\end{lemma}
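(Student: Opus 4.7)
The plan is to reduce both inclusions to the corresponding statements about individual diagrams, since by definition $J_\Dcat^{(k)}(\obj r, \obj s)$ is the linear span of diagrams $D$ with $\ell(D) \leq k$, and both composition and tensor product are bilinear. So it suffices to check: if $D$ and $D'$ are diagrams with $\ell(D) \leq k$ and $\ell(D') \leq l$, then $\ell(D \circ D') \leq \min(k,l)$ (when composable) and $\ell(D \otimes D') \leq k + l$.

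For the composition inclusion, I would observe that since morphisms are drawn in a rectangle with the $r$ input boundary points on the left edge and $s$ output boundary points on the right edge, horizontal juxtaposition places $D$ and $D'$ side by side, and the top (resp.\ bottom) edge of the composite rectangle is the union of the top (resp.\ bottom) edges of the two sub-rectangles. Thus a cut path realising $\ell(D)$ inside $D$'s sub-rectangle is still a valid cut path in the composite rectangle, with exactly the same number of transversal intersections; it simply does not enter $D'$ at all. This gives $\ell(D \circ D') \leq \ell(D)$, and by the symmetric argument in $D'$ we also get $\ell(D \circ D') \leq \ell(D')$, whence $\ell(D \circ D') \leq \min(k,l)$.

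For the tensor inclusion, vertical stacking places $D$ on top and $D'$ on bottom, and the shared horizontal edge separates them. A cut path in $D \otimes D'$ from top edge to bottom edge can be constructed by concatenating an optimal cut path in $D$ with an optimal cut path in $D'$, joined at a generic point of the shared edge (chosen away from the endpoints of strands so as to keep the concatenated path transversal and vertex-avoiding). The total number of transversal intersections of this composite cut path with the diagram is $\ell(D) + \ell(D')$, so $\ell(D \otimes D') \leq k + l$.

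The only nuance is to verify that generic choices make the concatenated cut path admissible — avoiding vertices and intersecting edges transversally — but this is automatic since the set of bad cut paths is of positive codimension in the space of paths, so a generic perturbation will suffice. No step is substantively difficult; the whole lemma is a direct geometric unpacking of the definition, and the only point worth flagging is this genericity argument for the tensor case.
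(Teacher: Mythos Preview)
Your proof is correct and follows the same approach as the paper, which in fact gives no explicit proof of the lemma at all: it merely states the key diagram-level inequality $\ell(D\circ D')\leqslant\min(\ell(D),\ell(D'))$ just before the lemma and leaves the tensor case entirely implicit. Your argument supplies exactly the geometric details (cut paths staying in one sub-rectangle for composition, concatenation across the shared edge for tensor) that the paper omits as obvious.
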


For $V$ any representation,  the ideals of  $\Icat(V)$, are defined by:
\begin{defn}
	For $\obj r,\obj s \in \bN$ and $0\leqslant k\leqslant\min(r,s)$, let $J_{\Icat(V)}^{(k)}(\obj r,\obj s)$ be the subspace of $ \Hom_{\Icat(V)}(\obj r,\obj s)$ spanned by the image of the compositions
	\[
	\Hom_{\Icat(V)} \left( \otimes^{\obj r}V, \otimes^{\obj t}V \right) \otimes \Hom_{\Icat(V)} \left( \otimes^{\obj t}V, \otimes^{\obj s}V \right)
	\to \Hom_{\Icat(V)}\left( \otimes^{\obj r}V, \otimes^{\obj s}V \right)
	\]
	for $0\leqslant t\leqslant k$.
\end{defn}

\begin{ex} The algebra $A(\obj2)$ has an increasing sequence of ideals	
	\begin{equation}
		0=J^{(-1)}(\obj2,\obj2)\subseteq J^{(0)}(\obj2,\obj2) \subseteq J^{(1)}(\obj2,\obj2) \subseteq J^{(2)}(\obj 2,\obj2) = A(\obj2)
	\end{equation}
	The ideal $J^{(k)}(\obj2,\obj2)$ is defined as $\Phi^{-1}(J^{(k)}_{\coeffs\Rcat}(\obj2,\obj2)\cap \Phi(A(\obj2)))$
        where $\Phi$ is the map making $A(\obj2)$ a subquotient of $\End_{\coeffs\Rcat}(\obj2)$.

	The diagram $\Ugen$ has cut length 0, the diagram $\Kgen$ has cut length 1 and the diagrams $\onegen,\ \Hgen,\ \overgen,\ \undergen$ have cut length 2.
	
	Then $\left\{\Ugen\right\}$ is a basis of $J^{(0)}(\obj2,\obj2)$, $\left\{\Ugen,\ \Kgen\right\}$ is a basis of $J^{(1)}(\obj2,\obj2)$ and $\left\{\onegen,\ \Ugen,\ \Kgen,\ \Hgen,\ \overgen\right\}$ is a basis of $J^{(2)}(\obj2,\obj2)= A(\obj2)$.
\end{ex}

Define $\hat{A}(\obj2)$ to be the quotient algebra $A(\obj2)/J^{(1)}(\obj2,\obj2)$. It has a basis 
\begin{equation*}
	\left\{\onegen,\ \Hgen,\ \overgen\right\}
\end{equation*}

\subsection{Trivalent diagrams}\label{ssec:A2diag}
Instead of taking $A(\obj2)$ to be the quotient of the free module with basis in Figure~\ref{fig:elements} by the skein relation, \eqref{eq:skein},
we can equivalently view $A(\obj2)$ as the free $\coeffs$-module with basis $\{1,U,K,H,H^2\}$.
(The expansion of $S$ in the latter basis is the subject of Proposition~\ref{prop:crossing}, and one can see that
the coefficients \eqref{eq:coeffsS}--\eqref{eq:coeffsSb} are in $\coeffs$.)
The multiplication table with respect to this basis is given by the table
\eqref{eq:table} (with $S$ omitted) and the relation which expresses $H^3$ in the new basis.
The latter is found as follows.

The eigenvalues of $H$, $K$ and $U$ are:
\begin{equation}\label{eq:Heigenvalues}
	\begin{array}{c|ccccc}
		\text{eigenvalues}\ \backslash\ V & I & L & X_2 & Y_2 & Y_2^* \\ \hline
		H & \phi & \tau & \xi=(\qq-\qq^{-1})^2[\nv+1] & \zeta=-\frac{[4\,\nv-2]}{[2\,\nv-1][\nv]} & \zeta^*=\frac{[6\,\nv+2][\nv+1]}{[3\,\nv+1][\nv]}
		\\
		K & 0 & \phi & 0 &0 &0
		\\
		U & \delta & 0 &0 &0 &0
	\end{array}
\end{equation}

The relation for $H$ is therefore
\begin{equation}
	\label{eq:Hcubed}
	(H-\xi)(H-\zeta)(H-\zeta^*)
	= \frac1\delta (\phi-\xi)(\phi-\zeta)(\phi-\zeta^{*})\,U 
	+ \frac1\phi (\tau-\xi)(\tau-\zeta)(\tau-\zeta^{*})\,K
\end{equation}
Note that $\delta$ and $\phi$ are not invertible in $\coeffs$, but the ratios above are well-defined; see \S\ref{ssec:squaresquare}
for explicit values of the coefficients in \eqref{eq:Hcubed}.

\begin{prop}\label{prop:A2}
	$A(\obj2)$ is isomorphic to the $\coeffs$-algebra with generators $H$, $K$, $U$ and relations given by \eqref{eq:Hcubed}
	and
	\begin{equation}\label{eq:A2rels}
		U^2=\delta U \quad UK=KU=0\quad K^2=\phi K\quad HU=UH=\phi U\quad HK=KH=\tau K
	\end{equation}
\end{prop}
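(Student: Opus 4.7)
The plan is to exhibit a surjective $\coeffs$-algebra homomorphism from the presented algebra $B$ onto $A(\obj2)$ and then match ranks. Write $\Phi\colon B\to A(\obj2)$ for the map sending the generators $U$, $K$, $H$ of $B$ to the corresponding elements of $A(\obj2)$.

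First I would verify that all defining relations of $B$ hold in $A(\obj2)$, so that $\Phi$ is well-defined. The relations \eqref{eq:A2rels} are immediate from the multiplication table \eqref{eq:table}. For \eqref{eq:Hcubed}, work in $A(\obj2)\otimes_\coeffs\bQ(\pp,\qq)$, which decomposes via the idempotents $\pi_V$ of \eqref{eq:A2idem} into five copies of the ground field indexed by $V\in\Irr^{\le 2}$. Table~\eqref{eq:Heigenvalues} shows that $H$ acts on these summands by the scalars $\phi,\tau,\xi,\zeta,\zeta^*$. Combined with the relations $(H-\phi)U=0$ and $(H-\tau)K=0$ from \eqref{eq:A2rels}, this shows that $P(H)\coloneqq (H-\xi)(H-\zeta)(H-\zeta^*)$ annihilates every idempotent except $\pi_I$ and $\pi_L$, hence lies in the two-dimensional ideal $\coeffs U+\coeffs K$. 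Multiplying $P(H)$ on the right by $U$ (using $HU=\phi U$, $U^2=\delta U$) extracts the coefficient of $U$ as $\delta^{-1}(\phi-\xi)(\phi-\zeta)(\phi-\zeta^*)$, and multiplying by $K$ (using $HK=\tau K$, $K^2=\phi K$) extracts the coefficient of $K$ as $\phi^{-1}(\tau-\xi)(\tau-\zeta)(\tau-\zeta^*)$. This recovers the identity \eqref{eq:Hcubed}.

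Next I would show that $B$ is $\coeffs$-spanned by the five elements $\{1,U,K,H,H^2\}$. Any monomial in $U,K,H$ containing a factor of $U$ or $K$ reduces, by $UK=KU=0$, $U^2=\delta U$, $K^2=\phi K$, $HU=UH=\phi U$, $HK=KH=\tau K$, to a scalar multiple of $U$ or $K$; and \eqref{eq:Hcubed} eliminates $H^n$ for $n\ge 3$. On the other side, by Proposition~\ref{prop:crossing} the change of basis from $\{1,U,K,H,S\}$ (the standard $\coeffs$-basis of $A(\obj2)$ from Definition~\ref{def:A2}) to $\{1,U,K,H,H^2\}$ is triangular with leading coefficient $S_{H^2}=[\nv]$, which is invertible in $\coeffs$; hence $\{1,U,K,H,H^2\}$ is also a $\coeffs$-basis of $A(\obj2)$ and $\Phi$ is surjective.

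Finally, a surjection onto a free $\coeffs$-module of rank $5$ from a $\coeffs$-module generated by $5$ elements is forced to be an isomorphism (standard commutative algebra: any surjective endomorphism of a finitely generated module over a commutative ring is bijective, applied to the composite $\coeffs^5\twoheadrightarrow B\twoheadrightarrow A(\obj2)\cong\coeffs^5$). This identifies $B$ with $A(\obj2)$. The main obstacle is the integrality point hidden in the first paragraph: the coefficients $\delta^{-1}(\phi-\xi)(\phi-\zeta)(\phi-\zeta^*)$ and $\phi^{-1}(\tau-\xi)(\tau-\zeta)(\tau-\zeta^*)$ appearing in \eqref{eq:Hcubed} must be shown to lie in the blow-up ring $\coeffs$ itself, not merely in $\bQ(\pp,\qq)$. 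This reduces to an explicit divisibility check in terms of quantum integers, for which the computer algebra setup mentioned in the introduction is well-suited.
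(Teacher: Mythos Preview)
Your proof is correct and follows essentially the same path as the paper's argument surrounding the proposition: derive \eqref{eq:Hcubed} from the spectral decomposition \eqref{eq:A2idem} and the eigenvalue table \eqref{eq:Heigenvalues}, observe that $\{1,U,K,H,H^2\}$ is a $\coeffs$-basis of $A(\obj2)$ via the change of basis implicit in Proposition~\ref{prop:crossing}, and conclude by a rank count. One small imprecision: the determinant of the change-of-basis matrix from $\{1,U,K,H,S\}$ to $\{1,U,K,H,H^2\}$ is not literally $S_{H^2}=[\nv]$ but rather $\left(\frac{[6\nv]}{[3\nv]}\right)^2\frac{[\nv+1]}{[2\nv]}\big/[\nv]$; this is still a unit in $\coeffs$ since every factor $[\nv],[2\nv],[3\nv],[6\nv],[\nv+1]$ lies in $\Sigma$. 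As for the integrality obstacle you flag at the end, the paper handles it exactly as you suggest, by explicit computation: see Appendix~\ref{ssec:squaresquare}, where the coefficients $b_1=\delta^{-1}(\phi-\xi)(\phi-\zeta)(\phi-\zeta^*)$ and $b_2=\phi^{-1}(\tau-\xi)(\tau-\zeta)(\tau-\zeta^*)$ are written out as manifest elements of $\coeffs$.
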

This can be considered as an alternative definition of $A(\obj2)$.

\subsubsection{Top part of $A(\obj2)$}\label{ssec:A2top}
If we quotient $A(\obj2)$ by $J^{(1)}(\obj2,\obj2)$, we find that $\hat{A}(\obj2)$ is generated as a $\coeffs$-algebra by $H$, with sole relation
\begin{equation}\label{eq:Hcubedtop}
	(H-\xi)(H-\zeta)(H-\zeta^*)=0
\end{equation}
so that another basis of it is $\left\{{\tikzset{every picture/.style={line cap=round,sharp corners,scale=.3}}\input diag2_2_0.tex\relax},{\tikzset{every picture/.style={line cap=round,sharp corners,scale=.3}}\input diag2_2_3.tex\relax},{\tikzset{every picture/.style={line cap=round,sharp corners,scale=.3}}\input diag2_2_4.tex\relax}\right\}$.
The skein relation \eqref{eq:skein} becomes
\begin{equation}\label{eq:skeintop}
(q-q^{-1}) (H-\xi)= 
	\frac{\left[6\,\nv\right]}{\left[2\,\nv\right]\ \left[3\,\nv\right]}
	(S-S^{-1})
\end{equation}

\subsubsection{Bimodules}\label{ssec:A2bi}
For future purposes, we define, for $i=\obj0,\obj1$, left $A(\obj2)$-modules, $B(\obj2,\obj i)$ and right $A(\obj2)$-modules, $B(\obj i,\obj 2)$.
They are all free rank 1 modules over $\coeffs$.

\begin{itemize}
	\item $B(\obj0,\obj2)$, whose  generator is $\co=\caprect$.
	
	It is a right $A(\obj2)$-module with the obvious relations (cf Appendix~\ref{app:skein}):
	\[
	\co H = \phi \, \co \qquad \co K = 0 \qquad \co E = \delta\, \co
	\]
	\item $B(\obj2,\obj0)$, whose generator is $\ev=\cuprect$ and similar relations as left $A(\obj2)$-module.
	\item $B(\obj1,\obj2)$, whose generator is $\lambda={\tikzset{every picture/.style={line cap=round,sharp corners,scale=.3}}\input diag1_2_0.tex\relax}$.
	
	It is a right $A(\obj2)$-module with the obvious relations (cf Appendix~\ref{app:skein}):
	\[
	\lambda H = \tau\,\lambda \qquad \lambda K = \phi\,\lambda \qquad \lambda E = 0
	\]
	\item $B(\obj2,\obj1)$, whose generator is $\mu={\tikzset{every picture/.style={line cap=round,sharp corners,scale=.3}}\input diag2_1_0.tex\relax}$ and similar relations as left $A(\obj2)$-module.
\end{itemize}

\subsection{Gram determinant}
The representation \eqref{eq:braidmatrix}, $\rho$ of $B_3$, has an invariant symmetric inner product. 
The inner product is defined in \eqref{eq:inner}. The Gram matrix with respect to the basis above is
\begin{equation}\label{eq:gram2}
	G(\obj2) = \delta \begin{pmatrix}
		\delta & 1 & \phi & 0 & \phi^{2} \\
		1 & \delta & 0 & \phi & \phi^{2} \\
		\phi & 0 & \phi^2 & \tau\phi & \phi\tau^2 \\
		0 & \phi & \tau\phi & \phi^2 & \phi\tau^2 \\
		\phi^{2} & \phi^{2} & \phi\tau^2 & \phi\tau^2 & \phi(b_1+b_2\tau+b_3\phi+b_4\tau^2)
	\end{pmatrix}
\end{equation}
where the coefficients $b_1,\ldots,b_4$ are given in Appendix~\ref{ssec:squaresquare}.

\begin{prop}\label{prop:Gram2}
	The determinant of the Gram matrix $G(\obj2)$ is
	\begin{multline}\label{eq:Gram2}
		\det G(\obj2) = \frac{\left[6\,\nv+2\right]\left[4\,\nv+1\right]^{2}}%
		{\left[\nv\right]^{2}\left[\nv+1\right] \left[2\,\nv+1\right]}
		\left(\frac{\qint 61 [4\,\nv]}{[\nv+1][2\,\nv]}\right)^5
		{\color{red}
			[5\,\nv-1]^5
			\left[3\,\nv-1\right]^{2}
		} \\
		\times
		\left(\frac{\left[6\,\nv\right]}{\left[\nv\right]}\right)^4
		\left(\frac{\left[5\,\nv\right]}{\left[\nv\right]}\right)^3
		\left(\frac{\left[6\,\nv+2\right]}{\left[3\,\nv+1\right]}\right)^2
		\left(\frac{\left[6\,\nv+3\right]}{\left[3\,\nv+1\right]}\right)
		\left(\frac{\left[6\,\nv\right]\left[\nv\right]}{\left[3\,\nv\right]\left[2\,\nv\right]}\right)^4
		\left(\frac{\left[3\,\nv-3\right]}{\left[\nv-1\right]}\right)
		\left(\frac{\left[4\,\nv-2\right]}{\left[2\,\nv-1\right]}\right)^3
	\end{multline}
\end{prop}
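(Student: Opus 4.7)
The plan is to diagonalise the Gram form using the primitive idempotent decomposition of $A(\obj2)$ over $\bQ(\pp,\qq)$. Since the entries of $G(\obj2)$ lie in $\coeffs \subset \bQ(\pp,\qq)$, its determinant is unchanged by base change, so I work over the fraction field. There, $A(\obj2)$ is commutative semisimple with the five primitive orthogonal idempotents $\pi_V$, $V \in \Irr^{\le 2}$, appearing in the spectral decomposition \eqref{eq:A2idem} of $S$. By the discussion of \S\ref{sec:qdimsid}, the trace form $\langle a,b\rangle = \Tr(ab)$ is diagonal in the idempotent basis, with $\Tr(\pi_V\pi_W) = 0$ for $V\ne W$ and $\Tr(\pi_V) = \dim_q V$ as given in \eqref{eq:qdims2}. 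Hence the Gram matrix in the idempotent basis is $\mathrm{diag}(\dim_q V)_{V\in\Irr^{\le 2}}$, with determinant $\prod_{V\in\Irr^{\le 2}}\dim_q V$.

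Let $M$ be the change-of-basis matrix from the diagrammatic basis $\{\unit, U, K, H, H^2\}$ of Proposition~\ref{prop:A2} to the idempotent basis; its $(e,V)$ entry is the eigenvalue of the element $e$ on the idempotent $\pi_V$. Using $U = \delta\,\pi_I$ and $K = \phi\,\pi_L$ from \eqref{eq:A2rels}, together with the eigenvalues of $H$ on each idempotent recorded in \eqref{eq:Heigenvalues}, the rows of $M$ corresponding to $U$ and $K$ each contain a single nonzero entry. Cofactor expansion along these rows reduces the determinant to a $3\times 3$ Vandermonde in the eigenvalues $\xi, \zeta, \zeta^{*}$ of $H$ on $\Irr^2$, yielding
\begin{equation*}
\det M = \delta\,\phi\,(\zeta-\xi)(\zeta^{*}-\xi)(\zeta^{*}-\zeta).
\end{equation*}
The standard change-of-basis identity then gives
\begin{equation*}
\det G(\obj2) = (\det M)^{2}\prod_{V\in\Irr^{\le 2}}\dim_q V = \delta^{3}\,\phi^{2}\,\bigl((\zeta-\xi)(\zeta^{*}-\xi)(\zeta^{*}-\zeta)\bigr)^{2}\prod_{V\in\Irr^{2}}\dim_q V.
\end{equation*}

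What remains is to match this expression with the stated factorisation into quantum integers. Substituting the explicit values of $\phi, \xi, \zeta, \zeta^{*}$ from \eqref{eq:values} and \eqref{eq:Heigenvalues} together with the quantum dimensions from \eqref{eq:qdims2}, each of the differences $\zeta-\xi$, $\zeta^{*}-\xi$, $\zeta^{*}-\zeta$ factors into a product of quantum integers $[r\nv+s]$ via identities of the form $[a]-[b] = (\qq-\qq^{-1})^{-1}(\qq^{a}-\qq^{-a}-\qq^{b}+\qq^{-b})$ and their $(r\nv+s)$ analogues. The main obstacle is purely computational: the bookkeeping required to assemble these factors, cancel common terms, and recognise the precise product of quantum integers displayed on the right-hand side of \eqref{eq:Gram2}. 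This is routine but involved, and is most efficiently carried out and verified with Macaulay2, as indicated in the introduction.
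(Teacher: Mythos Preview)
Your argument is correct and is more informative than the paper's own proof, which consists of the single sentence ``This is a direct calculation, using computer algebra.'' The paper simply evaluates the $5\times 5$ determinant symbolically and factors the result.

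Your route via the idempotent basis is a genuine improvement. By writing each basis diagram as a sum of primitive idempotents you obtain $G(\obj2)=MDM^{T}$ with $D=\mathrm{diag}(\dim_q V)_{V\in\Irr^{\le 2}}$, and the sparsity of the $U$- and $K$-rows of $M$ collapses $\det M$ to a $3\times 3$ Vandermonde in $\xi,\zeta,\zeta^{*}$. This explains \emph{why} the determinant factors as a product of quantum dimensions times squared eigenvalue differences, and it makes the appearance of the $\dim_q V$ from \eqref{eq:qdims2} transparent rather than a miracle of symbolic cancellation. The paper's brute-force calculation establishes the identity but gives no such structural information.

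Both approaches defer the final bookkeeping step---rewriting $\delta^{3}\phi^{2}\bigl((\zeta-\xi)(\zeta^{*}-\xi)(\zeta^{*}-\zeta)\bigr)^{2}\prod_{V\in\Irr^{2}}\dim_q V$ in the precise form \eqref{eq:Gram2}---to computer algebra, so neither is fully self-contained. But your reduction makes that remaining step a matter of multiplying and simplifying known quantum-integer expressions rather than expanding a determinant from scratch.
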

The factors in red will be discussed in \S\ref{ssec:trunccat}.
\begin{proof}
	This is a direct calculation, using computer algebra.
\end{proof}

\subsection{Interpolation}
We introduce interpolation representations and show that \eqref{eq:braidmatrix} interpolates the representations
$\Hom_{\fU_{q}}(C)(L_C,\otimes^3L_C)$ of the three string braid group for $C$ an exceptional Cartan type.
Let $B_3$ be the three string braid group.

\begin{defn} An \Dfn{interpolating representation} is a $\coeffs$-linear representation, $\rho$, of $B_3$ with a symmetric invariant inner product
	such that:
	\begin{itemize}
		\item The underlying $\coeffs$-module is finitely generated and free.
		\item After the base change $\coeffs\to \bQ(\pp,\qq)$, the representation, $\rho\otimes \bQ(\pp,\qq)$, is irreducible and the inner product is non-degenerate.
		\item For $C$ an exceptional Cartan type, let $\rho_C=\rho\otimes_{\theta_C} \kk_{q^{1/s_C}}$ be the representation given by the base change $\theta_C\colon \coeffs\to \kk_{q^{1/s_C}}$.
		Then there is an isomorphism, preserving the inner product,
		\begin{equation*}
			\rho(\fg)/\mathcal{N}_C \cong \Hom_{\fU_{q}(C)}(L,\otimes^3L)
		\end{equation*}
		where $\mathcal{N}_C$ is the null space of the inner product.
	\end{itemize}
\end{defn}

\begin{prop}\label{prop:intrep} The representation  of $B_3$ given in \eqref{eq:braidmatrix} is an interpolating representation.
\end{prop}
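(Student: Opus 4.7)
The plan is to verify in turn the three conditions of the definition of an interpolating representation. The first is immediate from the construction: the matrix $S$ of \eqref{eq:braidmatrix} has entries in $\bZ[\pp^{\pm 1}, \qq^{\pm 1}] \subset \coeffs$, and the identities $\overline{S} = S^{-1}$ and $(SP)^3 = \pp^{12}$ listed in \S\ref{sec:representation} imply that $S_1 := S$ and $S_2 := PSP$ satisfy the braid relation, giving a well-defined representation $\rho$ of $B_3$ on the free $\coeffs$-module $\coeffs^5$. The symmetric invariant pairing is the one with Gram matrix \eqref{eq:gram2}.

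For the second condition, non-degeneracy of the inner product after base change to $\bQ(\pp, \qq)$ follows from Proposition~\ref{prop:Gram2}: the factorization of $\det G(\obj 2)$ makes it manifestly a nonzero element of $\bQ(\pp, \qq)$. Irreducibility of $\rho \otimes \bQ(\pp, \qq)$ can be deduced from the observation that the five eigenvalues of $S_1$ listed in \eqref{eq:eigenvalues} are pairwise distinct in $\bQ(\pp, \qq)^\times$, so every $B_3$-invariant subspace is a sum of $S_1$-eigenlines; a direct computation using the explicit form of $S$ and $P$ then rules out any proper nonempty such sum being preserved by $S_2 = PSP$. Alternatively, one may invoke the classification of five-dimensional representations of $B_3$ in \cite{malle2010} and \cite{Tuba2001}.

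For the third condition, fix an exceptional Cartan type $C$ and, using the self-duality of $L_C$, identify $\Hom_{\fU_q(C)}(L_C, \otimes^3 L_C)$ with $\End_{\fU_q(C)}(\otimes^2 L_C)$. The ribbon functor $\Psi_C \colon \kk_{q^{1/s_C}} \Rcat \to \Icat(L_C)$ of \S\ref{sec:functors} is full, so the six diagrams of Figure~\ref{fig:elements} map under $\Psi_C$ to a spanning set of $\End_{\fU_q(C)}(\otimes^2 L_C)$; this yields a surjective $\kk_{q^{1/s_C}}$-linear map $\Phi_C \colon \rho_C \twoheadrightarrow \End_{\fU_q(C)}(\otimes^2 L_C)$. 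The specialized eigenvalues $\theta_C(\lambda_V)$ match, via \eqref{eq:cas_values} and \cite[Theorem~(1.5)]{Reshetikhin1987}, the genuine eigenvalues of the braiding on the composition factors of $L_C \otimes L_C$, so $\Phi_C$ intertwines the $B_3$-actions. The trace pairing on $\End_{\fU_q(C)}(\otimes^2 L_C)$ is non-degenerate by semisimplicity of $\fU_q(C)$-mod for $q$ not a root of unity; since $\Psi_C$ sends closed diagrams to scalars compatibly with the chosen values of $\delta, \phi, \tau$, the map $\Phi_C$ pulls this trace pairing back to the specialized Gram form $\theta_C(G(\obj 2))$ on $\rho_C$. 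Consequently the kernel of $\Phi_C$ is precisely the null space $\mathcal{N}_C$ of the form on $\rho_C$, and $\Phi_C$ descends to the required isometric isomorphism $\rho_C / \mathcal{N}_C \xrightarrow{\sim} \End_{\fU_q(C)}(\otimes^2 L_C)$.

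The main obstacle is the compatibility invoked at the end of the third step: that $\Psi_C$ sends closed diagrams to the correct scalars in $\kk_{q^{1/s_C}}$. This reduces to checking that $\theta_C(\delta)$, $\theta_C(\phi)$, and $\theta_C(\tau)$ match respectively the quantum dimension of $L_C$, the evaluation of the Killing form against the squared Lie bracket, and the scalar produced by closing a triangle of three Lie brackets --- in other words, that Definition~\ref{def:basicrels} specializes correctly for each $C$. The explicit formulas \eqref{eq:values}, combined with the factorizations of the quantum dimensions in \eqref{eq:qdims2} and case-by-case verification for the Cartan types listed in Table~\ref{fig:nvalues}, supply the required compatibility, after which the remainder of the argument is formal.
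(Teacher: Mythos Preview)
Your argument is correct but follows a different route than the paper's. The paper splits into two cases: for the ``large'' types $C\in\{A_2,G_2,D_4,F_4,E_6,E_7,E_8\}$ it observes that both $\rho_C$ and $\Hom_{\fU_q(C)}(L_C,\otimes^3 L_C)$ are five-dimensional $B_3$-representations over $\bQ(q)$ with the eigenvalue list of \eqref{eq:eigenvalues}, and then invokes the uniqueness result of Tuba--Wenzl / Malle--Michel to conclude they are isomorphic. For the ``small'' types $C\in\{\varnothing,\OSp,A_1\}$ it simply checks that the rank of $\theta_C(G(\obj2))$ equals the rank of the target module.

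Your argument instead uses the ribbon functor $\Psi_C$ uniformly: you construct the surjection $\Phi_C$ directly, observe that it pulls back the (non-degenerate) categorical trace form to the specialised Gram form, and deduce $\ker\Phi_C=\mathcal{N}_C$ for every $C$ at once. This is cleaner in that no case split is needed and the form-preservation requirement of the definition is made explicit. The cost is the ``main obstacle'' you flag: you must know that $\theta_C(\delta),\theta_C(\phi),\theta_C(\tau)$ agree with the actual invariants of $L_C$ (up to rescaling $\mu_C$). The paper's classification shortcut for the large types sidesteps this --- only the eigenvalue interpolation \eqref{eq:eigenvalues} is needed --- at the price of not addressing form preservation directly. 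One small comment: your sentence ``the specialized eigenvalues match \dots\ so $\Phi_C$ intertwines the $B_3$-actions'' undersells the reason; the intertwining really comes from $\Psi_C$ being a ribbon functor (crossings go to crossings), not from eigenvalue matching alone.
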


\begin{proof}
The representation is irreducible if the Gram matrix $G(\obj2)$ defined in \eqref{eq:gram2} is non-degenerate.
We have $\det G(\obj2) 
 \ne 0$ so $\rho\otimes \bQ(\pp,\qq)$ is irreducible.
	
For the exceptional Cartan types, $C\in\{A_2,G_2,D_4,F_4,E_6,E_7,E_8\}$,
the representation of $B_3$ corresponding to the action on $\Hom_{\fU_{q}}(C)(L_C,\otimes^3L_C)$
is equivalent to the representation given by applying the homomorphism $\theta_C\colon\coeffs\to \kk_{q^{1/s_C}}$ to all entries of $S$.

There is a unique five dimensional representation of $B_3$ with these eigenvalues and entries in $\bQ(q)$.
Since both representations satisfy these conditions, they are isomorphic.

For the remaining exceptional types, $C\in\{\varnothing, \OSp, A_1\}$ we check that the rank of $\theta_C(G(\obj2))$
is the rank of the free $\kk_{q^{1/s_C}}$-module $\Hom_{\fU_{q}}(C)(L_C,\otimes^3L_C)$.
\end{proof}

\begin{defn}\label{def:interpalgebra} For $k\geqslant 0$, an \Dfn{interpolating algebra} is a $\coeffs$-linear algebra, $A(k)$, such that:
	\begin{itemize}
		\item The underlying $\coeffs$-module is finitely generated and free.
		\item After the base change $\coeffs\to \bQ(\pp,\qq)$, the algebra, $A(k)\otimes \bQ(\pp,\qq)$, is split semisimple (a.k.a.\ a multimatrix algebra) and the inner product is non-degenerate.
		\item For $C$ an exceptional Cartan type, let $A_C(k)=A(k)\otimes_{\theta_C} \kk_{q^{1/s_C}}$ be the $\kk_{q^{1/s_C}}$-algebra given by the base change $\theta_C\colon \coeffs\to \kk_{q^{1/s_C}}$.
		Then there is an isomorphism, preserving the ideals and the trace map,
		\begin{equation*}
			A_C(k)/\mathcal{N}_C \cong \End_{\fU_{q}}(C)(\otimes^k L)
		\end{equation*}
		where $\mathcal{N}_C$ is the null space	of the inner product.
	\end{itemize}
\end{defn}

\begin{prop}\label{prop:A2interp}
  The algebra $A(\obj2)$ is an interpolating algebra.
\end{prop}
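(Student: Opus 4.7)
The plan is to verify each of the three defining conditions of Definition~\ref{def:interpalgebra}, reducing everything possible to results already established.

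First, freeness of rank $5$ over $\coeffs$ is part of the definition of $A(\obj2)$: Proposition~\ref{prop:A2} (or equivalently Definition~\ref{def:A2}) exhibits $\{1,U,K,H,H^2\}$ (or alternatively $\{1,U,K,H,S\}$, using that $S$ is expressible in this basis via Proposition~\ref{prop:crossing} with coefficients in $\coeffs$) as a $\coeffs$-basis.

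Second, I would verify the generic condition. After base change to $\bQ(\pp,\qq)$, the spectral decomposition \eqref{eq:A2idem} gives five orthogonal idempotents $\pi_I,\pi_L,\pi_{X_2},\pi_{Y_2},\pi_{Y_2^\ast}$ summing to $1$, so that $A(\obj2)\otimes\bQ(\pp,\qq)\cong\bQ(\pp,\qq)^5$ is split semisimple with the five one-dimensional irreducibles labelled by $\Irr^{\le\obj2}$. Non-degeneracy of the inner product over $\bQ(\pp,\qq)$ follows from $\det G(\obj2)\ne 0$, which is a consequence of Proposition~\ref{prop:Gram2} (none of the factors appearing there vanish in $\bQ(\pp,\qq)$).

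Third, I would handle the specialisations. Fix an exceptional Cartan type $C$. The ribbon functor $\Psi_C\colon\coeffs\,\Rcat\to\Icat(L_C)$ of \S\ref{sec:functors} induces a $\kk_{q^{1/s_C}}$-algebra homomorphism $\bar\Psi_C\colon A_C(\obj2)\to\End_{\fU_q(C)}(L_C\otimes L_C)$, surjective by the fullness result of \cite{ETINGOF2006} (since the generating set $\{1,U,K,H,S\}$ maps to a spanning set). Using the self-duality of $L_C$ to identify $\End_{\fU_q(C)}(L_C\otimes L_C)\cong\Hom_{\fU_q(C)}(L_C,\otimes^3L_C)$ as $B_3$-modules, together with the fact that the trace form on $\End(L_C\otimes L_C)$ transports to the invariant inner product on the Hom space, Proposition~\ref{prop:intrep} tells us that $A_C(\obj2)/\mathcal{N}_C$ has the same dimension as $\End_{\fU_q(C)}(L_C\otimes L_C)$ and that the inner products match. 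Since $\bar\Psi_C$ kills $\mathcal{N}_C$ (as $\bar\Psi_C$ preserves the inner product and the target is non-degenerate), $\bar\Psi_C$ descends to a surjection $A_C(\obj2)/\mathcal{N}_C\twoheadrightarrow\End_{\fU_q(C)}(L_C\otimes L_C)$ between modules of equal rank, hence is an isomorphism. Compatibility with the trace and with the ideal filtration of \S\ref{ssec:ideals} is then automatic, since both sides have matching idempotent decompositions indexed by $\Irr^{\le\obj2}$ with the same quantum dimensions recorded in \S\ref{sec:qdimsid}.

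The main technical obstacle is the last step for the small Cartan types $C\in\{\varnothing,\OSp,A_1\}$, where $\otimes^2L_C$ has fewer than five composition factors and $\mathcal{N}_C$ is nontrivial. There the rank of $\theta_C(G(\obj2))$ must be matched with $\dim_{\kk_{q^{1/s_C}}}\End_{\fU_q(C)}(L_C\otimes L_C)$; this is precisely the rank check already carried out in the proof of Proposition~\ref{prop:intrep}, and it transfers to the algebra setting via the self-duality identification above.
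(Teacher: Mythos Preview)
Your proposal is correct and follows essentially the same route as the paper's proof: both verify freeness via the explicit basis, split semisimplicity over $\bQ(\pp,\qq)$ via the orthogonal idempotents of \eqref{eq:A2idem} and non-degeneracy from $\det G(\obj2)\ne 0$, and then handle the specialisations by reducing to the rank check of Proposition~\ref{prop:intrep} through the self-duality identification $\End_{\fU_q(C)}(\otimes^2 L_C)\cong\Hom_{\fU_q(C)}(L_C,\otimes^3 L_C)$. The paper's version is terser (it simply notes that for $C\in\{A_2,G_2,D_4,F_4,E_6,E_7,E_8\}$ the inner product is already non-degenerate, while for the remaining types the rank check is literally the one done in Proposition~\ref{prop:intrep}), but your additional remarks on why $\bar\Psi_C$ kills $\mathcal{N}_C$ and on compatibility with the trace and ideal filtration are useful elaborations rather than deviations.
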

\begin{proof} The underlying $\coeffs$-module and inner product are the same as for the representation in \eqref{eq:braidmatrix}.
After the base change $\coeffs\to \bQ(\pp,\qq)$, the algebra has a basis of orthogonal idempotents.
For each exceptional Cartan type, $C$, the functor $\theta_C$ gives a surjective algebra homomorphism 
$\theta_C(2)\colon A_C(2)\to \End_{\fU_{q}(C)}(\otimes^k L)$.
For the exceptional Cartan types, $C\in\{A_2,G_2,D_4,F_4,E_6,E_7,E_8\}$, the inner product is non-degenerate and the
homomorphism $\theta_C(2)$ is an isomorphism.

For the remaining exceptional types, $C\in\{\varnothing, \OSp, A_1\}$ we check that the rank of $\theta_C(G(\obj2))$
is the rank of the free $\kk_{q^{1/s_C}}$-module $\End_{\fU_{q}}(C)(\otimes^2L_C)$. Since we can identify
$\End_{\fU_{q}(C)}(\otimes^2L_C)$ and $\Hom_{\fU_{q}}(C)(L_C,\otimes^3L_C)$, this is the same as in the proof of
Proposition~\ref{prop:intrep}.
\end{proof}


\section{Three string relations}\label{sec:threestring}
In this section we construct a $\coeffs$-algebra, $A(\obj3)$, which is a free $\coeffs$-module of rank 80.
This algebra is constructed to interpolate the algebras $\End_{\Icat(L_C)}(\otimes^3 L_C)$ for $C$ in
the exceptional series.
We also construct an $A(\obj2)$-$A(\obj3)$ bimodule $B(\obj2,\obj3)$ which is a free $\coeffs$-module of rank 16.
This bimodule is constructed to interpolate the bimodules $\Hom_{\Icat(L_C)}(\otimes^2 L_C,\otimes^3 L_C)$.
All algebras and bimodules can be viewed sa subquotients of the corresponding $\Hom$ spaces of the category $\coeffs \Rcat$,
and we define them using diagrams.

\subsection{\texorpdfstring{The bimodules $B(\obj i,\obj3)$ and $B(\obj3,\obj i)$}{The bimodules B(i,3) and B(3,i)}}\label{ssec:bimodules}

Before working with the algebra $A(\obj3)$, we need to consider the interpolating bimodules $B(\obj i,\obj3)$ and $B(\obj3,\obj i)$ for $i=\obj0,\obj1,\obj2$.

The bimodules $B(\obj0,\obj3)$ are $B(\obj3,\obj0)$ are rank 1 free $\coeffs$-modules whose generators are the following diagrams:
\begin{align*}
  B(\obj0,\obj3)&=\coeffs {\tikzset{every picture/.style={line cap=round,sharp corners,scale=.3}}\input diag0_3_0.tex\relax}
  \\
  B(\obj3,\obj0)&=\coeffs {\tikzset{every picture/.style={line cap=round,sharp corners,scale=.3}}\input diag3_0_0.tex\relax}
\end{align*}

The bimodules $B(\obj1,\obj3)$ are $B(\obj3,\obj1)$ are rank 5 free $\coeffs$-modules
whose generators are the following diagrams:
\begin{align*}
  B(\obj1,\obj3)&=\coeffs\text{-span of }\left\{\foreach \x in {3,2,4,1,0} { {\tikzset{every picture/.style={line cap=round,sharp corners,scale=.3}}\input diag1_3_\x.tex\relax}\ifnum\x=0\else,\ \fi }\right\}
  \\
  B(\obj3,\obj1)&=\coeffs\text{-span of }\left\{\foreach \x in {3,2,4,1,0} { {\tikzset{every picture/.style={line cap=round,sharp corners,scale=.3}}\input diag3_1_\x.tex\relax}\ifnum\x=0\else,\ \fi }\right\}
\end{align*}

They have already appeared implicitly in \S\ref{sec:representation}; the connection will be made explicitly in \S\ref{ssec:braidA3}.

We now discuss the more interesting case of $B(\obj2,\obj3)$ ($B(\obj3,\obj2)$ will be treated similarly).
We learn from the Bratteli diagram that $B(\obj2,\obj3)$ should contain an increasing sequence of bimodules
\begin{equation}
	0\subset J^{(0)}(\obj2,\obj3) \subset J^{(1)}(\obj2,\obj3) \subset J^{(2)}(\obj2,\obj3)
\end{equation}
for $0\leqslant i\leqslant 2$,
such that the generic dimensions of the successive quotient bimodules $J^{(i)}(\obj2,\obj3)/J^{(i-1)}(\obj2,\obj3)$ are $1$, $5$, $10$, and we expect that they are spanned by diagrams of cut length $i$.

We shall build $B(\obj2,\obj3)$ diagrammatically.
The main new result here is the ``square-pentagon relation'' which is also discussed in \cite{morrison2024}.

Consider the following set, $\mathcal B(\obj5)$, of $16$ diagrams:
\begin{align*}
  &\foreach \x in {0,...,7} { {\tikzset{every picture/.style={line cap=round,sharp corners,scale=.3}}\input round5_\x.tex\relax},\ }
\\
  &\foreach \x [count=\i] in {8,...,15} { {\tikzset{every picture/.style={line cap=round,sharp corners,scale=.3}}\input round5_\x.tex\relax} \ifnum\i=8\else,\ \fi }
\end{align*}

A simple argument to convince ourselves that these diagrams should be considered linearly independent 
is to compute their Gram matrix; using the diagrammatic relations
that we already have at our disposal (see Appendix~\ref{ssec:basicrels}--\ref{ssec:squaresquare}),
this can be performed, and we only provide its determinant:

\begin{multline}\label{eq:Gram23}
\frac{\left[6\nv\right]^{42}\left[5\nv\right]^{13}\left[4\nv\right]^{16}\left[6\nv+2\right]^{16}\left[6\nv+4\right]\left[4\nv+1\right]^{10}\left[6\nv+3\right]^{6}\left[6\nv+1\right]^{16}}{\left[\nv\right]^{29}\left[3\nv\right]^{26}\left[2\nv\right]^{36}\left[\nv+1\right]^{16}\left[3\nv+1\right]^{16}\left[3\nv+2\right]\ \left[2\nv+1\right]^{6}} 
\\
\left(\frac{\left[4\nv-2\right]}{ \left[2\nv-1\right]}\right)^{16}
\left(\frac{\left[3\nv-3\right]}{ \left[\nv-1\right]}\right)^6
\frac{\left[2\nv-4\right]}{ \left[\nv-2\right]}
{\color{red}
  \left[5\nv-1\right]^{16}
  \left[3\nv-1\right]^{10}
  }
\end{multline}

The factors in red will be discussed in \S\ref{ssec:trunccat}.

We therefore declare $B(\obj2,\obj3)$ (resp.\ $B(\obj3,\obj2)$) to be the free module over $\coeffs$ with basis $\mathcal B(\obj2,\obj3)$ (resp.\ $\mathcal B(\obj3,\obj2)$), which is the set of diagrams obtained from those
of $\mathcal B(\obj5)$ by interpreting the endpoints as being
two (resp.\ three) left boundary points and three (resp.\ two) right boundary points.
For $B(\obj2,\obj3)$ we obtain the following $1+5+10$ diagrams:
\begin{align*}
  &{\tikzset{every picture/.style={line cap=round,sharp corners,scale=.3}}\input diag2_3_0.tex\relax},\\
  &\foreach \x in {2,3,6,9,10} {{\tikzset{every picture/.style={line cap=round,sharp corners,scale=.3}}\input diag2_3_\x.tex\relax},\ }\\
  &\foreach \x in {1,4,5,7,8,11,12,13,14,15} {{\tikzset{every picture/.style={line cap=round,sharp corners,scale=.3}}\input diag2_3_\x.tex\relax}\ifnum\x=15\else,\fi}
\end{align*}

Of course these are not the only diagrams with 5 external legs one can think of. The simplest example that is not in this list is
\[
{\tikzset{every picture/.style={line cap=round,sharp corners,scale=.5}}\input squarepenta.tex\relax}
\]
and all its rotations. Again, one can compute the Gram matrix of the 16 basis diagrams plus the one above: we obtain a degenerate matrix.
Since the Gram matrix is nondegenerate in every module category, its kernel is zero, which gives a linear relation among these 17 diagrams;
we therefore impose the same relation in our interpolating category, namely
\begin{equation}\label{eq:squarepenta}
{\tikzset{every picture/.style={line cap=round,sharp corners,scale=.3}}\input squarepenta.tex\relax} = \sum_{k=0}^{15} c_k\, \mathcal B(\obj5)_{k}
\end{equation}
where the coefficients $c_k\in \coeffs$ are listed in Appendix~\ref{ssec:squarepenta},
and the elements of $\mathcal B(\obj5)$ are numbered from $0$ to $15$ in the same order as listed above.
Note that this relation can be freely rotated since our basis $\mathcal B(\obj5)$ is invariant under rotation.

We expect $A(\obj2)$ to act on $B(\obj2,\obj3)$ by left concatenation of diagrams, and indeed
\begin{prop}\label{prop:A2diag5}
$B(\obj2,\obj3)$ is a left $A(\obj2)$-module.
$B(\obj3,\obj2)$ is a right $A(\obj2)$-module.
\end{prop}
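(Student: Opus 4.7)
The plan is to define the action of $A(\obj2)$ on $B(\obj2,\obj3)$ by left horizontal concatenation of diagrams, and verify that this is well-defined and satisfies the module axioms. The proof for $B(\obj3,\obj2)$ as a right $A(\obj2)$-module will be entirely parallel, using right concatenation, so I will focus on the first statement.

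By Proposition~\ref{prop:A2}, $A(\obj2)$ is generated as a $\coeffs$-algebra by $1,U,K,H$. It therefore suffices to define the action of each of these generators on each of the 16 basis diagrams of $B(\obj2,\obj3)$, extend $\coeffs$-linearly, and check that the resulting linear maps satisfy the defining relations of $A(\obj2)$ listed in Proposition~\ref{prop:A2}. For each pair $(g,b)$ with $g\in\{U,K,H\}$ and $b\in\mathcal{B}(\obj2,\obj3)$, the composite $g\cdot b$ is a diagram in $\Hom_{\coeffs\Rcat}(\obj2,\obj3)$ which I want to rewrite as a $\coeffs$-linear combination of elements of $\mathcal{B}(\obj2,\obj3)$.

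The key technical step is to verify \emph{closure} of this action: that every such composite can in fact be reduced into $\coeffs\cdot\mathcal{B}(\obj2,\obj3)$ using the relations already at our disposal, namely the basic relations of Definition~\ref{def:basicrels}, the skein relation \eqref{eq:skein}, the Jacobi-type relation of Proposition~\ref{prop:jacobi}, and, crucially, the square-pentagon relation \eqref{eq:squarepenta} (and its rotations). Concatenating $U$ or $K$ just glues on a short piece and produces diagrams whose internal structure is easily collapsed by the basic bigon/triangle/tadpole relations. Concatenating $H$ introduces a new trivalent vertex near the left boundary, which can create pentagonal subdiagrams; these are exactly what the square-pentagon relation is designed to eliminate, yielding linear combinations of diagrams lying in $\mathcal{B}(\obj2,\obj3)$ up to further application of the basic relations. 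The verification reduces to a finite case analysis ($3\times 16=48$ compositions), carried out by repeated rewriting and, in practice, checked with computer algebra.

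Once closure holds, the module axioms are automatic: associativity $(ab)\cdot v = a\cdot(b\cdot v)$ follows from associativity of diagram composition in $\coeffs\Rcat$, and the defining relations of $A(\obj2)$ listed in Proposition~\ref{prop:A2}, including the cubic relation \eqref{eq:Hcubed}, hold as identities between diagrams in $\Hom_{\coeffs\Rcat}(\obj2,\obj3)$ after concatenation with any $b$, for the same reason they hold as identities in $\End_{\coeffs\Rcat}(\obj2)$. Uniqueness of the expansion into the basis follows from the fact that $\mathcal{B}(\obj2,\obj3)$ is linearly independent, which is recorded in \eqref{eq:Gram23}: the Gram determinant is nonzero over $\coeffs$. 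The main obstacle is the finite but nontrivial combinatorial verification of closure; it is here, rather than in the axiom-checking, that the work of the square-pentagon relation is genuinely needed.
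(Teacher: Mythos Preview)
Your proposal is correct and follows essentially the same approach as the paper: define the action by diagram concatenation, check closure under the generators $U,K,H$ on each of the sixteen basis diagrams, and invoke the square-pentagon relation where needed. The paper is slightly more precise in that it observes only three of the $3\times 16$ products actually require the square-pentagon relation (namely $H$ acting on $\diag{diag2_3_11}$, $\diag{diag2_3_14}$, $\diag{diag2_3_15}$), the rest reducing via the older rules alone; also, the skein and Jacobi relations you mention are not needed here since the generators and basis diagrams are all planar.
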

\begin{proof}
  For the purposes of this Proposition, our bimodules must be viewed as subquotients of the corresponding $\Hom_{\coeffs\Rcat}$ spaces;
  in other words, the action is defined in the natural diagrammatic way:
  we concatenate the diagrams, and then simplify using the known rules including the new square-pentagon relation.

  Let us choose $B(\obj2,\obj3)$ ($B(\obj3,\obj2)$ is treated similarly).
  We need to check that upon multiplication by any of the generators of $A(\obj2)$, namely $H$, $K$ and $U$,
  the result is a linear combination of the 16 basis diagrams.

  This can be proved by direct inspection. In fact, in all but three cases,
  only the ``old'' rules of Appendix~\ref{ssec:basicrels}--\ref{ssec:squaresquare} are needed;
  the three exceptions are
  \begin{align*}
    {\tikzset{every picture/.style={line cap=round,sharp corners,scale=0.25}}\input diag2_2_3.tex\relax} \hspace{-2mm} {\tikzset{every picture/.style={line cap=round,sharp corners,scale=.3}}\input diag2_3_11.tex\relax}
    \\
    {\tikzset{every picture/.style={line cap=round,sharp corners,scale=0.25}}\input diag2_2_3.tex\relax} \hspace{-2mm} {\tikzset{every picture/.style={line cap=round,sharp corners,scale=.3}}\input diag2_3_14.tex\relax}
    \\
    {\tikzset{every picture/.style={line cap=round,sharp corners,scale=0.25}}\input diag2_2_3.tex\relax} \hspace{-2mm} {\tikzset{every picture/.style={line cap=round,sharp corners,scale=.3}}\input diag2_3_15.tex\relax}
  \end{align*}
  In each case, we obtain a particular rotation of the square pentagon, which can therefore be expanded as a linear combination
  of our 16 basis diagrams.
\end{proof}

Once we have defined $A(\obj3)$, we shall also be able to prove that 
$B(\obj2,\obj3)$ and $B(\obj3,\obj2)$ are right/left $A(\obj3)$-modules (Proposition~\ref{prop:A3bimodules});
we postpone until then the description of the representation content of $B(\obj2,\obj3)$ and $B(\obj3,\obj2)$.

\subsection{\texorpdfstring{Construction of $\hat{A}(\obj3)$}{Construction of hat A(3)}}\label{ssec:A3top}
As a warm-up, we shall first define the top part $\hat{A}(\obj3)$. For the purposes of this section,
$\hat{A}(\obj3)$ is a standalone algebra -- we shall show afterwards that it is indeed the quotient of the larger
algebra $A(\obj3)$ by the ideal $J^{(2)}(\obj3,\obj3)$ generated by diagrams of cut length $\leqslant 2$.

In the same way that $\hat{A}(\obj2)=A(\obj2)/J^{(1)}(\obj2,\obj2)$ is generated by $H$ (cf \S\ref{ssec:A2top}),
we expect $\hat{A}(\obj3)$ to be generated by 
\begin{align*}
H_1&:=H\otimes 1 = {\tikzset{every picture/.style={line cap=round,sharp corners,scale=.3}}\input H1.tex\relax}
\\
H_2&:=1\otimes H = {\tikzset{every picture/.style={line cap=round,sharp corners,scale=.3}}\input H2.tex\relax}
\end{align*}
Each $H_i$, $i=1,2$, must satisfy the same cubic equation \eqref{eq:Hcubedtop}
as $H$. Furthermore, we must take into account the square-pentagon
relation \eqref{eq:squarepenta}.

A typical example is the word $H_2^2H_1H_2={\tikzset{every picture/.style={line cap=round,sharp corners,scale=.3}}\input H22H1H2.tex\relax}$, which contains a square and a pentagon next to each other, and is therefore amenable to the rule
\eqref{eq:squarepenta}. However, substituting the r.h.s.\ of \eqref{eq:squarepenta} into
$H_2^2H_1H_2$ results in a linear combination of diagrams that includes the diagram
{\tikzset{every picture/.style={line cap=round,sharp corners,scale=.3}}\input A3notword_7.tex\relax}; the latter is a valid would-be element of $\hat{A}(\obj3)$ but is {\em not}\/
a word in $H_1$, $H_2$. Note that 
{\tikzset{every picture/.style={line cap=round,sharp corners,scale=.3}}\input A3notword_7.tex\relax} is invariant under the symmetries of the rectangle, thus appears in the expansion
of $H_2^2H_1H_2$, $H_2^2H_1H_2$, $H_2^2H_1H_2$, $H_2^2H_1H_2$ with the {\em same}\/ coefficient. We can therefore eliminate it by taking differences.

And so, applying \eqref{eq:squarepenta} to $H_2^2H_1H_2-H_2H_1H_2^2$
and keeping only diagrams of cut length $3$ results in a simple combination of words in $H_1$ and $H_2$, namely
\[
{\tikzset{every picture/.style={line cap=round,sharp corners,scale=.3}}\input H22H1H2.tex\relax}-{\tikzset{every picture/.style={line cap=round,sharp corners,scale=.3}}\input H2H1H22.tex\relax} = \xi\left({\tikzset{every picture/.style={line cap=round,sharp corners,scale=.3}}\input H22H1.tex\relax}-{\tikzset{every picture/.style={line cap=round,sharp corners,scale=.3}}\input H1H22.tex\relax}\right)+\xi^2\left({\tikzset{every picture/.style={line cap=round,sharp corners,scale=.3}}\input H1H2.tex\relax}-{\tikzset{every picture/.style={line cap=round,sharp corners,scale=.3}}\input H2H1.tex\relax}\right)
\]
where $\xi=\left(\qq-\qq^{-1}\right)\left(\pp\qq-\pp^{-1}\qq^{-1}\right)$.

\pcomments[gray]{note identities between $c$s
$
{\tikzset{every picture/.style={line cap=round,sharp corners,scale=.3}}\input H22H1H2.tex\relax}-{\tikzset{every picture/.style={line cap=round,sharp corners,scale=.3}}\input H2H1H22.tex\relax} = (c_{12}-c_{11})\left({\tikzset{every picture/.style={line cap=round,sharp corners,scale=.3}}\input H22H1.tex\relax}-{\tikzset{every picture/.style={line cap=round,sharp corners,scale=.3}}\input H1H22.tex\relax}\right)+(c_5-c_7)\left({\tikzset{every picture/.style={line cap=round,sharp corners,scale=.3}}\input H1H2.tex\relax}-{\tikzset{every picture/.style={line cap=round,sharp corners,scale=.3}}\input H2H1.tex\relax}\right)
$}

The exact same relation holds by applying a vertical mirror symmetry, which amounts to exchanging $H_1$ and $H_2$.
A similar but slightly different relation is
\begin{multline*}
{\tikzset{every picture/.style={line cap=round,sharp corners,scale=.3}}\input H2H1H22.tex\relax}-{\tikzset{every picture/.style={line cap=round,sharp corners,scale=.3}}\input H12H2H1.tex\relax}=c_{15}\left({\tikzset{every picture/.style={line cap=round,sharp corners,scale=.3}}\input H2H1H2.tex\relax}-{\tikzset{every picture/.style={line cap=round,sharp corners,scale=.3}}\input H1H2H1.tex\relax}\right)+c_{11}\left({\tikzset{every picture/.style={line cap=round,sharp corners,scale=.3}}\input H22H1.tex\relax}-{\tikzset{every picture/.style={line cap=round,sharp corners,scale=.3}}\input H2H12.tex\relax}\right)
\\
+c_{12}\left({\tikzset{every picture/.style={line cap=round,sharp corners,scale=.3}}\input H1H22.tex\relax}-{\tikzset{every picture/.style={line cap=round,sharp corners,scale=.3}}\input H12H2.tex\relax}\right)+c_1\left({\tikzset{every picture/.style={line cap=round,sharp corners,scale=.3}}\input H1.tex\relax}-{\tikzset{every picture/.style={line cap=round,sharp corners,scale=.3}}\input H2.tex\relax}\right)
\end{multline*}
where the coefficients are given in Appendix~\ref{ssec:squarepenta}.
There are more relations of the same kind, but they are obviously linear combinations of the previous three.

These considerations justify the following definition:
\begin{defn}
$\hat{A}(\obj3)$ is the $\coeffs$-algebra with generators $H_1$, $H_2$ and relations
\begin{gather}\label{eq:Hrels}
(H_i-\xi)(H_i-\zeta)(H_i-\zeta^*)
=0\qquad i=1,2
\\\label{eq:Hrelsa}
H_2^2H_1H_2
-
H_2H_1H_2^2
=\xi(
H_2^2H_1
-
H_1H_2^2
)
+\xi^2(
H_1H_2
-
H_2H_1
)
\\\label{eq:Hrelsaa}
H_1H_2H_1^2
-
H_1^2H_2H_1
=\xi(
H_2H_1^2
-
H_1^2H_2
)
+\xi^2(
H_1H_2
-
H_2H_1
)
\\
\label{eq:Hrelsb}
H_2H_1H_2^2
-
H_1^2H_2H_1
=
c_{15}(H_1H_2H_1-H_2H_1H_2)
+c_{11}(H_2^2H_1-H_2H_1^2)\\\notag
+c_{12}(H_1H_2^2-H_1^2H_2)
+c_1(H_1-H_2)
\end{gather}
where
$\xi=\left(\qq-\qq^{-1}\right)\left(\pp\qq-\pp^{-1}\qq^{-1}\right)$,
$\zeta=-\frac{\left[4\,\nv-2\right]}{\left[\nv\right]\left[2\,\nv-1\right]}$,
$\zeta^*=\frac{\left[\nv+1\right]\ \left[6\,\nv+2\right]}{\left[\nv\right]\ \left[3\,\nv+1\right]}$.
\end{defn}

\pcomments[gray]{I used to have the remark:
``These relations are not algebraically independent: \eqref{eq:Hrelsa} (or \eqref{eq:Hrelsaa})
is actually a consequence of the other four relations, as one can show by Gr\"obner computations.
It is however convenient to include both in this definition.''
However I'm no longer sure this remark is correct over $\coeffs$. In fact, over the fraction field,
the single equation $\eqref{eq:Hrelsaa}-\eqref{eq:Hrelsb}$ (or $\eqref{eq:Hrelsa}+\eqref{eq:Hrelsb}$) generates all quartic.}

\begin{lemma}\label{lem:A3toprep}
  $\hat{A}(\obj3)$ admits the following irreducible, pairwise distinct, representations:%
  \def\ha{\zeta}\def\hb{\zeta^*}
  \begin{align}\label{eq:A3irrep}
\rho_{\obj3,X_3}(H_1)&=\begin{pmatrix}\xi\end{pmatrix} & \rho_{\obj3,X_3}(H_2)&=\begin{pmatrix}\xi\end{pmatrix}
\\
\rho_{\obj3,Y_3}(H_1)&=\begin{pmatrix}\ha\end{pmatrix} & \rho_{\obj3,Y_3}(H_2)&=\begin{pmatrix}\ha\end{pmatrix}
\\
\rho_{\obj3,Y_3^*}(H_1)&=\begin{pmatrix}\hb\end{pmatrix} & \rho_{\obj3,Y_3^*}(H_2)&=\begin{pmatrix}\hb\end{pmatrix}
\\[1mm]
\rho_{\obj3,A}(H_1)&=\begin{pmatrix}\ha&0&0\\-\frac{\left[6\,\nv\right]}{\left[2\,\nv\right]\ \left[3\,\nv\right]}&\xi&0\\0&-\frac{\left[6\,\nv\right]\ \left[\nv+1\right]}{\left[3\,\nv\right]\ \left[\nv\right]}&\hb\end{pmatrix}\!\!\! & \rho_{\obj3,A}(H_2)&=\begin{pmatrix}\hb&-\frac{\left[6\,\nv\right]\ \left[\nv+1\right]}{\left[3\,\nv\right]\ \left[\nv\right]}&0\\0&\xi&-\frac{\left[6\,\nv\right]}{\left[2\,\nv\right]\ \left[3\,\nv\right]}\\0&0&\ha\end{pmatrix}\!\!\!
\\[1mm]
\rho_{\obj3,C}(H_1)&=\begin{pmatrix}\ha&0\\-\frac{\left[6\,\nv\right]}{\left[2\,\nv\right]\ \left[3\,\nv\right]}&\xi\end{pmatrix} & \rho_{\obj3,C}(H_2)&=\begin{pmatrix}\xi&-\frac{\left[6\,\nv\right]}{\left[2\,\nv\right]\ \left[3\,\nv\right]}\\0&\ha\end{pmatrix}
\\\label{eq:A3irrepb}
\rho_{\obj3,C^*}(H_1)&=\begin{pmatrix}\hb&0\\\frac{\left[6\,\nv\right]\ \left[\nv+1\right]}{\left[2\,\nv\right]\ \left[3\,\nv\right]}&\xi\end{pmatrix} & \rho_{\obj3,C^*}(H_2)&=\begin{pmatrix}\xi&\frac{\left[6\,\nv\right]\ \left[\nv+1\right]}{\left[2\,\nv\right]\ \left[3\,\nv\right]}\\0&\hb\end{pmatrix}
\end{align}
\end{lemma}
\begin{proof}
  One checks relations \eqref{eq:Hrels}--\eqref{eq:Hrelsb} in each representation by direct computation.

  We prove irreducibility over the fraction field $\bQ(\pp,\qq)$. It follows from the following two facts, which both amount to solving a linear system.

  Firstly, by direct calculation, one observes that given $R\in\Irr^3=\left\{X_3,Y_3,Y^*_3,A,C,C^*\right\}$,
  any matrix that commutes with both $\rho_R(H_1)$ and 
  $\rho_R(H_2)$ is proportional to the identity. (Indeed, such a matrix must be upper triangular
  because $\rho_R(H_1)$ is upper triangular with distinct diagonal entries, and similarly must be lower triangular
  because of $\rho_R(H_2)$, so must be diagonal; and since the entries of $\rho_R(H_1)$ right below the diagonal
  are nonzero, commutating with it forces this diagonal matrix to be proportional to the identity.)

  Secondly, note that $\hat{A}(\obj3)$ possesses an
  anti-involution that leaves $H_1$ and $H_2$ invariant. This means that given any module $V$, its dual
  $V^*$ is also a module.
  In the case of each of the modules $V_R$ given by the matrices above,
  one can show that the module $V_R^*$ is isomorphic to $V_R$, and the
  corresponding nondegenerate bilinear form on $V_R$ is symmetric; the associated quadratic form is ``definite positive'' in the sense 
  that it is definite positive for an infinite number of values of $\pp$, $\qq$. In particular, it only vanishes on the zero vector.
 For example, for $V_A$, the symmetric matrix $M=\left(\!\begin{smallmatrix}
\frac{\left[\nv\right]\ \left[2\,\nv+2\right]}{\left[2\,\nv\right]\ \left[\nv+1\right]}&-1&\frac{\left[\nv\right]}{\left[\nv+2\right]\ \left[2\,\nv\right]}\\
-1&\frac{\left[2\right]\ \left[\nv+2\right]+\left[\nv+1\right]}{\left[\nv+2\right]}&-1\\
\frac{\left[\nv\right]}{\left[\nv+2\right]\ \left[2\,\nv\right]}&-1&\frac{\left[\nv\right]\ \left[2\,\nv+2\right]}{\left[2\,\nv\right]\ \left[\nv+1\right]}
  \end{smallmatrix}\!\right)$ satisfies $M \rho_{\obj3,A}(H_i)=\rho_{\obj3,A}(H_i)^T M$, $i=1,2$, and $M=P^T D P$ 
with $D$ the diagonal matrix with diagonal entries
\begin{equation*}
\left\{\frac{\left[\nv\right]\ \left[2\,\nv+2\right]}{\left[2\,\nv\right]\ \left[\nv+1\right]},\,\frac{\left[2\right]\ \left[2\,\nv+3\right]\ \left[\nv+1\right]}{\left[\nv+2\right]\ \left[2\,\nv+2\right]},\,\frac{\left[\nv+3\right]\ \left[2\,\nv+3\right]\ \left[\nv\right]}{\left[2\right]\ \left[2\,\nv\right]}\right\}.
\end{equation*}
At this stage the reasoning is standard: any submodule $W\subseteq V_R$ has a complementary
submodule $W^\perp$ w.r.t.\ the definite positive quadratic form, the projector on the first summand of $W\oplus W^\perp$ commutes with $\rho_R(H_1)$
and $\rho_R(H_2)$ and is therefore proportional to the identity, hence $W=0$ or $W=V_R$, i.e., $V_R$ is irreducible.

Finally, the $V_R$ are clearly pairwise distinct since the eigenvalues of, say $H_1$, differ. 
\end{proof}

Define $\hat{A}(\obj3)_{\mathrm{loc}}:=\hat{A}(\obj3)  \otimes_{\coeffs} \bQ(\pp,\qq)$:
\begin{cor}\label{cor:dimA3}
  $\hat{A}(\obj3)_{\mathrm{loc}}$ is an algebra of dimension at least $20$.
\end{cor}
\begin{proof}
By summing the squares of sizes of the matrices in the Lemma above, we obtain the statement.
\end{proof}

One can now conclude
\begin{prop}\label{prop:A3top}
  $\hat{A}(\obj3)$ is a free $\coeffs$-module of rank $20$, with basis the set
  $\mathcal B^{(3)}_{\mathrm{orig}}(\obj3,\obj3)$ consisting of
\begin{align*}
	1 &= {\tikzset{every picture/.style={line cap=round,sharp corners,scale=.3}}\input 1.tex\relax} & H_1 &= {\tikzset{every picture/.style={line cap=round,sharp corners,scale=.3}}\input H1.tex\relax} & H_2 &= {\tikzset{every picture/.style={line cap=round,sharp corners,scale=.3}}\input H2.tex\relax} & H_1^2 &= {\tikzset{every picture/.style={line cap=round,sharp corners,scale=.3}}\input H12.tex\relax} \\
	H_1H_2 &= {\tikzset{every picture/.style={line cap=round,sharp corners,scale=.3}}\input H1H2.tex\relax} & H_2H_1 &= {\tikzset{every picture/.style={line cap=round,sharp corners,scale=.3}}\input H2H1.tex\relax} & H_2^2 &= {\tikzset{every picture/.style={line cap=round,sharp corners,scale=.3}}\input H22.tex\relax} & H_1^2H_2 &= {\tikzset{every picture/.style={line cap=round,sharp corners,scale=.3}}\input H12H2.tex\relax} \\
	H_1H_2H_1 &= {\tikzset{every picture/.style={line cap=round,sharp corners,scale=.3}}\input H1H2H1.tex\relax} & H_1H_2^2 &= {\tikzset{every picture/.style={line cap=round,sharp corners,scale=.3}}\input H1H22.tex\relax} & H_2H_1^2 &= {\tikzset{every picture/.style={line cap=round,sharp corners,scale=.3}}\input H2H12.tex\relax} & H_2H_1H_2 &= {\tikzset{every picture/.style={line cap=round,sharp corners,scale=.3}}\input H2H1H2.tex\relax} \\
	H_2^2H_1 &= {\tikzset{every picture/.style={line cap=round,sharp corners,scale=.3}}\input H22H1.tex\relax} & H_1^2H_2H_1 &= {\tikzset{every picture/.style={line cap=round,sharp corners,scale=.3}}\input H12H2H1.tex\relax} & H_1^2H_2^2 &= {\tikzset{every picture/.style={line cap=round,sharp corners,scale=.3}}\input H12H22.tex\relax} & H_1H_2H_1H_2 &= {\tikzset{every picture/.style={line cap=round,sharp corners,scale=.3}}\input H1H2H1H2.tex\relax} \\
	H_1H_2^2H_1 &= {\tikzset{every picture/.style={line cap=round,sharp corners,scale=.3}}\input H1H22H1.tex\relax} & H_2H_1H_2H_1 &= {\tikzset{every picture/.style={line cap=round,sharp corners,scale=.3}}\input H2H1H2H1.tex\relax} & H_2^2H_1^2 &= {\tikzset{every picture/.style={line cap=round,sharp corners,scale=.3}}\input H22H12.tex\relax} & H_1H_2H_1H_2H_1 &= {\tikzset{every picture/.style={line cap=round,sharp corners,scale=.3}}\input H1H2H1H2H1.tex\relax} 
\end{align*}
\end{prop}
\begin{proof}
Consider $\coeffs\left< H_1,H_2\right>$ with the graded lexicographic monomial order, with the convention $H_1<H_2$.
In all that follows we only write the leading term of each expression, for brevity.
We start from the four relations \eqref{eq:Hrels}--\eqref{eq:Hrelsb}
\begin{align*}
g_1&=H_1^3+\cdots
\\
g_2&=H_2^3+\cdots
\\
g_3&=H_2^2H_1H_2+\cdots
\\
g_4&=H_1H_2H_1^2+\cdots
\\
g_5&=H_2H_1H_2^2+\cdots
\end{align*}
We can then create new ones by {\em overlap} (see \cite{Sikora2007}).

It may be instructive to do an example of such a procedure diagrammatically.
Consider the following diagram:
\[
{\tikzset{every picture/.style={line cap=round,sharp corners,scale=.5}}\input overlap-squarehexa-mod.tex\relax}
\]
One can apply the square-pentagon relation \eqref{eq:squarepenta} to either left or right of the diagram.
This leads to a nontrivial diagrammatic identity where
no square-pentagons are involved, showing the subtlety in dealing with such diagrammatic rules. This ``new'' rule involves adjacent square-hexagons,
though it does not allow to remove them entirely, since they always appear in differences; in particular, in $A(\obj3)/J^{(2)}(\obj3,\obj3)$ (i.e., keeping only diagrams of cut length $3$), we expect an identity of the form
\begin{multline*}
{\tikzset{every picture/.style={line cap=round,sharp corners,scale=.3}}\input H2H12H2.tex\relax}-{\tikzset{every picture/.style={line cap=round,sharp corners,scale=.3}}\input H1H22H1.tex\relax}=d_0\left({\tikzset{every picture/.style={line cap=round,sharp corners,scale=.3}}\input H1H2H1.tex\relax}-{\tikzset{every picture/.style={line cap=round,sharp corners,scale=.3}}\input H2H1H2.tex\relax}\right)+d_1\left({\tikzset{every picture/.style={line cap=round,sharp corners,scale=.3}}\input H22H1.tex\relax}+{\tikzset{every picture/.style={line cap=round,sharp corners,scale=.3}}\input H1H22.tex\relax}-{\tikzset{every picture/.style={line cap=round,sharp corners,scale=.3}}\input H12H2.tex\relax}-{\tikzset{every picture/.style={line cap=round,sharp corners,scale=.3}}\input H2H12.tex\relax}\right)
\\
+d_2\left({\tikzset{every picture/.style={line cap=round,sharp corners,scale=.3}}\input H12.tex\relax}-{\tikzset{every picture/.style={line cap=round,sharp corners,scale=.3}}\input H22.tex\relax}\right)+d_3\left({\tikzset{every picture/.style={line cap=round,sharp corners,scale=.3}}\input H1.tex\relax}-{\tikzset{every picture/.style={line cap=round,sharp corners,scale=.3}}\input H2.tex\relax}\right)
\end{multline*}
where
\begin{align*}
d_0&=
2\frac{[\nv+1]}{\left[\nv\right]}
-\frac{\left[6\,\nv\right]}{\left[3\,\nv\right]} 
\\
d_1&=\frac{[\nv+1]}{\left[\nv\right]^2}
\\
d_2&=-\frac{\left[6\,\nv\right]\ [\nv+1]}{\left[\nv\right]^2 \left[3\,\nv\right]}
\\
d_3&=-\frac{\left[6\,\nv\right]\ [\nv+1]^2}{\left[\nv\right]^4 \left[3\,\nv\right]}
\end{align*}
\pcomments[gray]{a subtlety is that the reasoning above uses a product of $(3,2)$ and $(2,3)$, and we can't do that staying within $A(3)$}

One can search more systematically for overlaps in the relations of $\hat A(\obj3)$ (this is best performed by computer), and the result is as follows:
\begin{align*}
  R_{g_1,\ldots,g_5}(H_2H_1g_2-g_5H_2)&= g_6 & g_6&=H_1^2H_2H_1H_2+\cdots
  \\
  R_{g_1,\ldots,g_6}(H_1g_6-g_2H_2H_1H_2)&=\left(\frac{\left[6\,\nv\right]}{\left[2\,\nv\right]\ \left[3\,\nv\right]}\right)^2 g_7 & g_7&=H_2H_1^2H_2+\cdots
  \\
  R_{g_1,\ldots,g_7}(g_3H_2 - H_2 g_5)&= g_8 & g_8&=H_1H_2^2H_1^2+\cdots
  \\
  R_{g_1,\ldots,g_8}(g_4H_2-H_1g_6) &=   g_9 & g_9&=H_1^2H_2^2H_1+\cdots
  \\
R_{g_1,\ldots,g_9}(H_1H_2g_6-g_8H_2) &= \frac{\left[6\,\nv\right]\ \left[2\,\nv\right]\ [\nv+1]}{\left[3\,\nv\right]\ \left[\nv\right]^{3}}
 g_{10}  & g_{10}&=H_2H_1H_2H_1H_2+\cdots
\end{align*}
where $R_{g_1,\ldots,g_k}$ denotes remainder by the successive division by $g_1,\ldots,g_k$.
$g_7$ coincides with the diagrammatic identity above.
Note that the prefactors of $g_7$ and $g_{10}$ are invertible in $\coeffs$,
so all the coefficients of $g_6,\ldots,g_{10}$ live in $\coeffs$.
$\{g_1,\ldots,g_{10}\}$ is in fact a minimal Gr\"obner basis for the ideal generated by $g_1,\ldots,g_4$. \pcomments[gray]{is it reduced? i.e., did I eliminate
the leading term of every basis element from all other basis elements? apparently not. not that it matters}

Now consider the words as in Proposition~\ref{prop:A3top}:
\foreach\s/\t [count=\i] in 
{1/1,H_1/H1,H_2/H2,H_1^2/H12,H_1H_2/H1H2,H_2H_1/H2H1,H_2^2/H22,H_1^2H_2/H12H2,H_1H_2H_1/H1H2H1,H_1H_2^2/H1H22,H_2H_1^2/H2H12,H_2H_1H_2/H2H1H2,H_2^2H_1/H22H1,H_1^2H_2H_1/H12H2H1,H_1^2H_2^2/H12H22,H_1H_2H_1H_2/H1H2H1H2,H_1H_2^2H_1/H1H22H1,H_2H_1H_2H_1/H2H1H2H1,H_2^2H_1^2/H22H12,H_1H_2H_1H_2H_1/H1H2H1H2H1} 
{ $\s$\ifnum\i=20.\else,\fi }
\tikzset{every picture/.style={}}
Multiplying by either $H_1$ or $H_2$ (say, on the right) any of the 20 words above results in either
another word in the list, or a multiple of the leading term of one of the equations above:
\newcommand\LT[1]{\mathrm{LT}(#1)}
\begin{center}
  \begin{tikzpicture}[
    every node/.style={anchor=base west,node font=\footnotesize},
    growth parent anchor = west,
    level 1/.style={sibling distance=80mm,level distance=12mm},
    level 2/.style={sibling distance=40mm,level distance=16mm},
    level 3/.style={sibling distance=20mm,level distance=20mm},
    level 4/.style={sibling distance=10mm,level distance=24mm},
    level 5/.style={sibling distance=5mm,level distance=28mm},
    level 6/.style={sibling distance=4mm,level distance=32mm}]
    \node {1} [grow'=right]
    child {
      node{$H_1$}
      child {
        node {$H_1^2$}
        child {
          node[red] {$H_1^3=\LT{g_1}$}
        }
        child {
          node {$H_1^2H_2$}
          child {
            node {$H_1^2H_2H_1$}
            child {
              node[red] {$H_1^2H_2H_1^2=\LT{H_1g_4}$}
            }
            child {
              node[red] {$H_1^2H_2H_1H_2=\LT{g_6}$}
            }
          }
          child {
            node {$H_1^2H_2^2$}
            child {
              node[red] {$H_1^2H_2^2H_1=\LT{g_9}$}
            }
            child {
              node[red] {$H_1^2H_2^3=\LT{H_1^2g_2}$}
            }
          }
        }
      }
      child {
        node {$H_1H_2$}
        child {
          node {$H_1H_2H_1$}
          child {
            node[red] {$H_1H_2H_1^2=\LT{g_4}$}
          }
          child {
            node {$H_1H_2H_1H_2$}
            child {
              node {$H_1H_2H_1H_2H_1$}
              child {
                node[red] {$H_1H_2H_1H_2H_1^2=\LT{H_1H_2g_4}$}
              }
              child {
                node[red] {$H_1H_2H_1H_2H_1H_2=\LT{H_1g_{10}}$}
              }
            }
            child {
              node[red] {$H_1H_2H_1H_2^2=\LT{H_1g_5}$}
            }
          }
        }
        child {
          node {$H_1H_2^2$}
          child {
            node {$H_1H_2^2H_1$}
            child {
              node[red] {$H_1H_2^2H_1^2=\LT{g_{8}}$}
            }
            child {
              node[red] {$H_1H_2^2H_1H_2=\LT{H_1g_3}$}
            }
          }
          child {
            node[red] {$H_1H_2^3=\LT{H_1g_2}$}
          }
        }
      }
    }
    child {
      node{$H_2$}
      child {
        node {$H_2H_1$}
        child {
          node {$H_2H_1^2$}
          child {
            node[red] {$H_2H_1^3=\LT{H_2g_1}$}
          }
          child {
            node[red] {$H_2H_1^2H_2=\LT{g_7}$}
          }
        }
        child {
          node {$H_2H_1H_2$}
          child {
            node {$H_2H_1H_2H_1$}
            child {
              node[red] {$H_2H_1H_2H_1^2=\LT{H_2g_4}$}
            }
            child {
              node[red] {$H_2H_1H_2H_1H_2=\LT{g_{10}}$}
            }            
          }
          child {
            node[red] {$H_2H_1H_2^2=\LT{g_5}$}
          }
        }
      }
      child {
        node {$H_2^2$}
        child {
          node {$H_2^2H_1$}
          child {
            node {$H_2^2H_1^2$}
            child {
              node[red] {$H_2^2H_1^3=\LT{H_2^2g_1}$}
            }
            child {
              node[red] {$H_2^2H_1^2H_2=\LT{H_2g_7}$}
            }
          }
          child {
            node[red] {$H_2^2H_1H_2=\LT{g_3}$}
          }
        }
        child {
          node[red] {$H_2^3=\LT{g_2}$}
        }
      }
    };
  \end{tikzpicture}
\end{center}
One concludes by induction on the monomial order
that the span of these words is stable by multiplication by $H_1$, $H_2$
on the right.
Since it contains $1$, it is equal to the whole of $\hat{A}(\obj3)$,
so the latter is a quotient of a free $\coeffs$-module of rank $20$.
We conclude using Corollary~\ref{cor:dimA3}.
\end{proof}

In particular, we can strengthen Corollary~\ref{cor:dimA3} to
\begin{cor}\label{cor:A3topsemisimple}
  $\hat{A}(\obj3)_{\mathrm{loc}}$ is a direct sum of matrix
  algebras over $\bQ(\pp,\qq)$, \pcomments[gray]{no division algebra! connection to cellular algebras? each matrix algebra appearing as the quotient of an ideal
    by smaller ideals for some partial order refining cut length?}
    \bcomments[gray]{The "no division algebras" is a consequence of your proof of irreducubility.}
  where the sizes of the blocks are given by \eqref{eq:A3irrep}--\eqref{eq:A3irrepb}.

  The map from $\hat{A}(\obj3)$ to the direct sum of matrix algebras over $\coeffs$ given by \eqref{eq:A3irrep}--\eqref{eq:A3irrepb} is injective. \pcomments[gray]{but definitely not surjective!}
\end{cor}

  As observed at the start of \S\ref{ssec:A3top},
  one could apply the square-pentagon relation \eqref{eq:squarepenta} to
  $H_1^2H_2H_1={\tikzset{every picture/.style={line cap=round,sharp corners,scale=.3}}\input H12H2H1.tex\relax}$, resulting in a linear combination of other basis diagrams and of the diagram
  {\tikzset{every picture/.style={line cap=round,sharp corners,scale=.3}}\input A3notword_7.tex\relax}. The coefficient of the latter, $c_{11}=-\frac{\left[6\,\nv\right][\nv+1]}{[\nv]\ \left[2\,\nv\right]\ \left[3\nv\right]}$, is an invertible element of $\coeffs$, and so one could use it as a substitute for
  $H_1^2H_2H_1$ in the basis $\mathcal B^{(3)}(\obj3,\obj3)$.
  For practical purposes, this substitution is convenient, and we denote
  \begin{equation}\label{eq:B3subst}
    \mathcal B^{(3)}(\obj3,\obj3)=
    \left(\mathcal B^{(3)}_{\mathrm{orig}}(\obj3,\obj3)\backslash {\tikzset{every picture/.style={line cap=round,sharp corners,scale=.3}}\input H12H2H1.tex\relax}\right) \cup \left\{ {\tikzset{every picture/.style={line cap=round,sharp corners,scale=.3}}\input A3notword_7.tex\relax}\right\}
  \end{equation}




\subsection{\texorpdfstring{The braiding of $\hat{A}(\obj3)$}{The braiding of hat A(3)}}
Define the elements
\begin{align}\label{eq:topHtoS}
  S_i &=
        \frac{[2\,\nv][3\,\nv]^2}{[\nv+1][6\,\nv]^2}\left(
        S_{H^2}\, H_i^2+S_H\, H_i+S_I
        \right)\qquad i=1,2
\end{align}
where the coefficients are given in \eqref{eq:coeffsS}--\eqref{eq:coeffsSb}. This is nothing but the top part of
the expression in Proposition~\ref{prop:crossing}, viewed as an element of $A(\obj2)$.

Diagramatically, $S_1=\undergena$, $S_2=\undergenb$.

We have the following
\begin{prop}\label{prop:A3topS}
  The $S_i$ satisfy
  \begin{align}\label{eq:braidtop}
0&=S_1S_2S_1-S_2S_1S_2
\\
0&=  (S_i+1)(S_i-\qq^{-2})(S_i-\qq^2\pp^2)\qquad i=1,2 \label{eq:Scubic}
    \\\label{eq:addrel}
    0&=S_{1}-S_{2}
+S_{1}^{2}-S_{2}^{2}
+S_{1}^{2}S_{2}-S_{2}^{2}S_{1}
\\\notag
&+S_{2}S_{1}^{2}-S_{1}S_{2}^{2}
+S_{2}S_{1}^{2}S_{2}-S_{1}S_{2}^{2}S_{1}
+S_{2}^{2}S_{1}S_{2}^{2}-S_{1}^2S_{2}S_{1}^2
  \end{align}
\end{prop}
\begin{proof}
In principle, one can derive these equations directly from
the relations \eqref{eq:Hrels}--\eqref{eq:Hrelsb}; this can be done
systematically by using a Gr\"obner basis, cf.\ the proof of Proposition~\ref{prop:A3top}.
Here we provide an alternative proof.
First, we find the image of $S_1$ and $S_2$ in the various representations
\eqref{eq:A3irrep}--\eqref{eq:A3irrepb}:
\begin{align}\label{eq:Sirrep}
\rho_{\obj3,X_3}(S_1)&=\begin{pmatrix}-1\end{pmatrix} & \rho_{\obj3,X_3}(S_2)&=\begin{pmatrix}-1\end{pmatrix}
\\
\rho_{\obj3,Y_3}(S_1)&=\begin{pmatrix}\qq^{-2}\end{pmatrix} & \rho_{\obj3,Y_3}(S_2)&=\begin{pmatrix}\qq^{-2}\end{pmatrix}          
\\
\rho_{\obj3,Y_3^*}(S_1)&=\begin{pmatrix}\qq^2\pp^2\end{pmatrix} & \rho_{\obj3,Y_3^*}(S_2)&=\begin{pmatrix}\qq^2\pp^2\end{pmatrix}
\\
\rho_{\obj3,A}(S_1)&=\begin{pmatrix}\qq^{-2}&0&0\\\qq^{-1}&-1&0\\1&-\qq(1+\pp^2)&\qq^2\pp^2\end{pmatrix} & \rho_{\obj3,A}(S_2)&=\begin{pmatrix}\qq^2\pp^2&-\qq(1+\pp^2)&1\\0&-1&\qq^{-1}\\0&0&\qq^{-2}\end{pmatrix}
\\[1mm]
\rho_{\obj3,C}(S_1)&=\begin{pmatrix}\qq^{-2}&0\\\qq^{-1}&-1\end{pmatrix} & \rho_{\obj3,C}(S_2)&=\begin{pmatrix}-1&\qq^{-1}\\0&\qq^{-2}\end{pmatrix}
\\\label{eq:Sirrepb}
\rho_{\obj3,C^*}(S_1)&=\begin{pmatrix}\qq^2\pp^2&0\\\qq\pp&-1\end{pmatrix} & \rho_{\obj3,C^*}(S_2)&=\begin{pmatrix}-1&\qq\pp\\0&\qq^2\pp^2\end{pmatrix}
\end{align}
We then check the relations in these representations, and conclude
using Corollary~\ref{cor:A3topsemisimple}.
\end{proof}

\begin{rem}\label{rem:A3top}
In the generic case (i.e., tensoring with the fraction field),
the relation \eqref{eq:topHtoS} can be inverted, cf.\ \eqref{eq:skeintop}
\begin{equation}\label{eq:topStoH}
H_i = \xi +
\frac{\left[6\,\nv\right]}{\left[2\,\nv\right]\ \left[3\,\nv\right]}
\frac{S_i-S_i^{-1}}{\qq-\qq^{-1}}
\qquad i=1,2
\end{equation}
so that $\hat{A}(\obj3)_{\mathrm{loc}}$ is also the algebra generated by $S_1,S_2$. 

The latter is, according to \eqref{eq:braidtop}, a quotient of the three-string braid group algebra;
and more precisely, of a cubic Hecke algebra, cf.\ \eqref{eq:Scubic}. This algebra is generically a multimatrix algebra of dimension 24 with irreducible representations of dimensions 3,2,2,2,1,1,1. They are given by \eqref{eq:Sirrep}--\eqref{eq:Sirrepb}, plus
the additional
\begin{equation}\label{eq:intruder}
S_1=\begin{pmatrix}\qq^{-2}&0\\\pp&\qq^2\pp^2\end{pmatrix} \qquad S_2=\begin{pmatrix}\qq^2\pp^2&-\pp\\0&\qq^{-2}\end{pmatrix}
\end{equation}
The square of the r.h.s.\ of \eqref{eq:addrel} is up to normalisation the central idempotent associated to
the irreducible representation \eqref{eq:intruder}.
Therefore,
$\hat{A}(\obj3)_{\mathrm{loc}}$ is isomorphic
to the algebra with generators $S_1$, $S_2$ and relations as in
Proposition~\ref{prop:A3topS}.
\end{rem}


\subsection{\texorpdfstring{Construction of $A(\obj3)$}{Construction of A(3)}}\label{ssec:A3}
We now define the whole of $A(\obj3)$.
The construction is considerably more complicated than that of $\hat{A}(\obj3)$, but there is no
significant conceptual difference, and we shall skip many of the technical
details; in particular, the Gr\"obner basis calculations, which are best
performed by computer, will be omitted.

In the same way that $A(\obj2)$ can be considered as an algebra generated
by $H$, $K$, $U$, we expect $A(\obj3)$ to have generators
$H_1$, $H_2$, $K_1$, $K_2$, $U_1$, $U_2$; this is correct over $\bQ(\pp,\qq)$. The situation over $\coeffs$ is more complicated,
and will be discussed later.
The relations are all the diagrammatic relations that were already encountered in \S\ref{sec:twostring} in the study of $A(\obj2)$, including the skein relation \eqref{eq:skein},
as well as the new square-pentagon relation \eqref{eq:squarepenta}. See Appendix~\ref{app:skein} for a list
of all relations written explicitly.

We encounter the same difficulty as for $\hat{A}(\obj3)$
when trying to implement \eqref{eq:squarepenta} as a relation:
when we try to simplify say $H_2^2H_1H_2={\tikzset{every picture/.style={line cap=round,sharp corners,scale=.3}}\input H22H1H2.tex\relax}$
by replacing the square-pentagon with the r.h.s.\ of \eqref{eq:squarepenta},
several diagrams that appear are not words in the generators.
We first have {\tikzset{every picture/.style={line cap=round,sharp corners,scale=.3}}\input A3notword_7.tex\relax}, which already appeared in $\hat{A}(\obj3)$,
and which can be cancelled by taking differences of two such quartic
words, for example $H_2^2H_1H_2-H_1^2H_2H_1$
(this particular choice is convenient because over the fraction field,
it allows to write a single quartic equation)
Secondly, four more diagrams appear, namely
$Z_1:={\tikzset{every picture/.style={line cap=round,sharp corners,scale=.3}}\input A3notword_1.tex\relax}$,
$Z_2:={\tikzset{every picture/.style={line cap=round,sharp corners,scale=.3}}\input A3notword_2.tex\relax}$,
$Z_3:={\tikzset{every picture/.style={line cap=round,sharp corners,scale=.3}}\input A3notword_3.tex\relax}$,
$Z_4:={\tikzset{every picture/.style={line cap=round,sharp corners,scale=.3}}\input A3notword_4.tex\relax}$.

$Z_1$ and $Z_2$ can be easily taken care of by noting that
$K_1H_2K_1={\tikzset{every picture/.style={line cap=round,sharp corners,scale=.3}}\input K1H2K1.tex\relax}=\tau Z_1$, and similarly
$K_2H_1K_2={\tikzset{every picture/.style={line cap=round,sharp corners,scale=.3}}\input K2H1K2.tex\relax}=\tau Z_2$. However, $\tau$ is not invertible in $\coeffs$.
This is one of the reasons that we work over the fraction field for now.
We can then invert $\tau$ and express $Z_1$ and $Z_2$ in terms of words.

In order to deal with $Z_3$ and $Z_4$, we consider
$H_1^2H_2K_1={\tikzset{every picture/.style={line cap=round,sharp corners,scale=.3}}\input H12H2K1.tex\relax}$ and note that after applying
the square-square relation \eqref{eq:squaresquare}, one has
$H_1^2H_2K_1=b_0 Z_3+\cdots$ where the dots are diagrams that are expressible
as words.
\pcomments[gray]{incidentally why did I choose $H_1^2H_2K_1-H_2^2H_1K_2$ rather than $H_1^2H_2K_1-K_1H_2H_1^2$, which has way fewer terms?}

This allows to express $Z_3$ (and similarly $Z_4$) as a linear
combination of words on condition that one invert $b_0$.

In the end, we are led to the following:
\begin{defn}\label{def:tildeA3}
  $A(\obj3)_{\mathrm{loc}}$ is the $\bQ(\pp,\qq)$-algebra with generators $H_1$, $H_2$, $K_1$, $K_2$, $U_1$, $U_2$ and relations
  {\footnotesize
\begin{gather}\label{eq:A3rels}
  H_i K_i = K_i H_i = \tau\, K_i
  \\
  H_i U_i = U_i H_i = \phi\, U_i
  \\
  K_i^2 = \phi\, K_i
  \\
  K_i U_i = U_i K_i = 0
  \\
  U_i^2 = \delta\, U_i
  \\
  H_1 K_2 H_1 = H_2 K_1 H_2
  \\
  K_i U_j K_i = H_j U_i H_j
  \\
  K_i U_j H_i = H_j U_i K_j
  \\
  K_i U_j U_i = H_j U_i
  \\
  U_i U_j K_i = U_i H_j
  \\
  U_i U_j U_i = U_i
  \\
(H_i-\xi)
(H_i-\zeta)
(H_i-\zeta^*)
= b_2 K_i + b_1 U_i
\\\label{eq:A3relsb}
  H_2^2H_1H_2-H_1^2H_2H_1
  =
  (c_2-c_1)b_0^{-1}(H_1^2H_2K_1-H_2^2H_1K_2)
  \\\notag
  +c_{15}(H_{2}H_{1}H_{2}-H_{1}H_{2}H_{1})
  +c_{11}(H_{1}H_{2}^2-H_{2}H_{1}^2)
  +c_{12}(H_{2}^2H_{1}-H_{1}^2H_{2})
  +c_{10}{}(K_{2}H_{1}H_{2}-K_{1}H_{2}H_{1})
  \\\notag
  +(c_{12}+(c_2-c_1)b_0^{-1}b_4)(H_{2}H_{1}K_{2}-H_{1}H_{2}K_{1})
  +(c_{6}+(c_2-c_1)b_0^{-1}b_2)\tau^{-1}(K_2H_1K_2-K_1H_2K_1)
  \\\notag
  +c_{6}(K_{2}H_{1}-K_{1}H_{2})
  +(c_{7}-(c_2-c_1)b_0^{-1}b_3)(H_{1}K_{2}-H_{2}K_{1})
  +\left(c_{0}+(c_2-c_1)b_0^{-1}b_1\right)(U_{2}H_{1}-U_{1}H_{2})
  \\\notag
  +\left(c_{5}{}-c_{7}{}\right)(H_{1}H_{2}-H_{2}H_{1})
  +c_{2}(H_1U_2-H_{2}U_{1})
  +c_{1}(H_{1}-H_{2})
\end{gather}
}
where $i=1,2$, $j=3-i$, and the coefficients $b_k$ and $c_k$ are given in Appendix~\ref{ssec:squarepenta}.
\pcomments[gray]{the $UUH=UK$ relations are consequences of TL and $UUK=UH$. check if some relations above are redundant?}
\pcomments[gray]{one could split the quartic by introducing some of the troublesome diagrams as part of this
  def; then, the equation is literally the square-pentagon relations.
  I could also give 3 quartic as in $\hat{A}(\obj3)$ but it's a bit too cumbersome, maybe just in comment}
\end{defn}

\begin{rem}\label{rem:A3def}
Using the relations above, it is possible to express $U_i$ and $K_i$ in terms of $H_i$:
indeed one easily shows that $H_i$ satisfies the quintic equation
\[
(H_i-\xi)(H_i-\zeta)(H_i-\zeta^*)(H_i-\phi)(H_i-\tau)=0
\]
and then $U_i$ and $K_i$ can be obtained by Lagrange interpolation:
\begin{align*}
U_i&=\delta \frac{(H-\xi)(H-\zeta)(H-\zeta^*)(H-\tau)}{(\phi-\xi)(\phi-\zeta)(\phi-\zeta^*)(\phi-\tau)}
\\
K_i&=\phi \frac{(H-\xi)(H-\zeta)(H-\zeta^*)(H-\phi)}{(\tau-\xi)(\tau-\zeta)(\tau-\zeta^*)(\tau-\phi)}
\end{align*}
\pcomments[gray]{this part of the remark might be moved to the two-string section except there we don't really discuss explicitly the version of $A(\obj2)$ over the fraction field}
However, it is convenient to include these generators in order to define a diagrammatic basis of $A(\obj3)$.
\end{rem}

\pcomments[gray]{for the record, the corresponding definition of $A(\obj3)$ over $\coeffs$ is probably too optimistic: it can be too small -- some diagrams are not $\coeffs$-linear combinations of words -- or too big -- the relations could be enough only in the fraction field;
for example, expanding $H_2H_1H_2^2-H_1^2H_2H_1$ results in the diagram 
{\tikzset{every picture/.style={line cap=round,sharp corners,scale=.3}}\input A3notword_2.tex\relax}
which isn't a word, so unless one inverts the weight of a triangle $\tau$, this relation cannot be written simply.
in both cases (generators as a module/relations possibly obtained by overlap) the problem is the same: we can't divide by leading coefficient, so potentially need to look for clever linear combinations that produce an invertible element.
but how to do this systematically?

also, over the fraction field, one might be tempted to ``cheat'' and replace the quartic relation with the braid relation -- but that's not enough, one gets a 84-dimensional algebra, same extra top 2d irrep.
one should probably put 3 quartic relations anyway.
}

\begin{prop}\label{prop:tildeA3} The algebra
$A(\obj3)_{\mathrm{loc}}$ possesses the following set of generators as a vector space over $\bQ(\pp,\qq)$:
the same words of $\mathcal B^{(3)}_{\mathrm{orig}}(\obj3,\obj3)$ in Proposition~\ref{prop:A3top} (which we now view as elements of $A(\obj3)_{\mathrm{loc}}$), as well as

{\scriptsize
\begin{align*}
H_2^2K_1 &= {\tikzset{every picture/.style={line cap=round,sharp corners,scale=.3}}\input H22K1.tex\relax} & H_2H_1K_2 &= {\tikzset{every picture/.style={line cap=round,sharp corners,scale=.3}}\input H2H1K2.tex\relax} & H_2K_1 &= {\tikzset{every picture/.style={line cap=round,sharp corners,scale=.3}}\input H2K1.tex\relax} & H_2K_1K_2 &= {\tikzset{every picture/.style={line cap=round,sharp corners,scale=.3}}\input H2K1K2.tex\relax} \\
H_1H_2K_1 &= {\tikzset{every picture/.style={line cap=round,sharp corners,scale=.3}}\input H1H2K1.tex\relax} & H_1^2K_2 &= {\tikzset{every picture/.style={line cap=round,sharp corners,scale=.3}}\input H12K2.tex\relax} & H_1^2K_2H_1 &= {\tikzset{every picture/.style={line cap=round,sharp corners,scale=.3}}\input H12K2H1.tex\relax} & H_1^2K_2H_1^2 &= {\tikzset{every picture/.style={line cap=round,sharp corners,scale=.3}}\input H12K2H12.tex\relax} \\
H_1K_2 &= {\tikzset{every picture/.style={line cap=round,sharp corners,scale=.3}}\input H1K2.tex\relax} & H_1K_2H_1 &= {\tikzset{every picture/.style={line cap=round,sharp corners,scale=.3}}\input H1K2H1.tex\relax} & H_1K_2H_1H_2 &= {\tikzset{every picture/.style={line cap=round,sharp corners,scale=.3}}\input H1K2H1H2.tex\relax} & H_1K_2H_1^2 &= {\tikzset{every picture/.style={line cap=round,sharp corners,scale=.3}}\input H1K2H12.tex\relax} \\
H_1K_2K_1 &= {\tikzset{every picture/.style={line cap=round,sharp corners,scale=.3}}\input H1K2K1.tex\relax} & H_1K_2K_1H_2 &= {\tikzset{every picture/.style={line cap=round,sharp corners,scale=.3}}\input H1K2K1H2.tex\relax} & K_2 &= {\tikzset{every picture/.style={line cap=round,sharp corners,scale=.3}}\input K2.tex\relax} & K_2H_1 &= {\tikzset{every picture/.style={line cap=round,sharp corners,scale=.3}}\input K2H1.tex\relax} \\
K_2H_1H_2 &= {\tikzset{every picture/.style={line cap=round,sharp corners,scale=.3}}\input K2H1H2.tex\relax} & K_2H_1^2 &= {\tikzset{every picture/.style={line cap=round,sharp corners,scale=.3}}\input K2H12.tex\relax} & K_2H_1K_2 &= {\tikzset{every picture/.style={line cap=round,sharp corners,scale=.3}}\input K2H1K2.tex\relax} & K_2K_1 &= {\tikzset{every picture/.style={line cap=round,sharp corners,scale=.3}}\input K2K1.tex\relax} \\
K_2K_1H_2 &= {\tikzset{every picture/.style={line cap=round,sharp corners,scale=.3}}\input K2K1H2.tex\relax} & K_2K_1K_2 &= {\tikzset{every picture/.style={line cap=round,sharp corners,scale=.3}}\input K2K1K2.tex\relax} & K_2K_1K_2H_1 &= {\tikzset{every picture/.style={line cap=round,sharp corners,scale=.3}}\input K2K1K2H1.tex\relax} & K_2K_1K_2K_1 &= {\tikzset{every picture/.style={line cap=round,sharp corners,scale=.3}}\input K2K1K2K1.tex\relax} \\
K_1 &= {\tikzset{every picture/.style={line cap=round,sharp corners,scale=.3}}\input K1.tex\relax} & K_1H_2 &= {\tikzset{every picture/.style={line cap=round,sharp corners,scale=.3}}\input K1H2.tex\relax} & K_1H_2^2 &= {\tikzset{every picture/.style={line cap=round,sharp corners,scale=.3}}\input K1H22.tex\relax} & K_1H_2H_1 &= {\tikzset{every picture/.style={line cap=round,sharp corners,scale=.3}}\input K1H2H1.tex\relax} \\
K_1H_2K_1 &= {\tikzset{every picture/.style={line cap=round,sharp corners,scale=.3}}\input K1K2K1.tex\relax} & K_1K_2 &= {\tikzset{every picture/.style={line cap=round,sharp corners,scale=.3}}\input K1K2.tex\relax} & K_1K_2H_1 &= {\tikzset{every picture/.style={line cap=round,sharp corners,scale=.3}}\input K1K2H1.tex\relax}  & K_1K_2H_1K_2 &= {\tikzset{every picture/.style={line cap=round,sharp corners,scale=.3}}\input K1K2H1K2.tex\relax} \\
K_1K_2K_1 &= {\tikzset{every picture/.style={line cap=round,sharp corners,scale=.3}}\input K1K2K1.tex\relax} & K_1K_2K_1H_2 &= {\tikzset{every picture/.style={line cap=round,sharp corners,scale=.3}}\input K1K2K1H2.tex\relax} & K_1K_2K_1K_2 &= {\tikzset{every picture/.style={line cap=round,sharp corners,scale=.3}}\input K1K2K1K2.tex\relax}
\end{align*}
}


and the set $\mathcal B^{(1)}(\obj3,\obj3)$ consists of the 25 elements
{\scriptsize
\begin{alignat*}{5}
U_1&={\tikzset{every picture/.style={line cap=round,sharp corners,scale=.3}}\input E1.tex\relax},\  & U_1H_2&={\tikzset{every picture/.style={line cap=round,sharp corners,scale=.3}}\input E1H2.tex\relax},\  & U_1U_2&={\tikzset{every picture/.style={line cap=round,sharp corners,scale=.3}}\input E1E2.tex\relax},\  & U_1K_2&={\tikzset{every picture/.style={line cap=round,sharp corners,scale=.3}}\input E1K2.tex\relax},\  & U_1K_2H_1&={\tikzset{every picture/.style={line cap=round,sharp corners,scale=.3}}\input E1K2H1.tex\relax},\  \\
H_2U_1&={\tikzset{every picture/.style={line cap=round,sharp corners,scale=.3}}\input H2E1.tex\relax},\  & K_1U_2K_1&={\tikzset{every picture/.style={line cap=round,sharp corners,scale=.3}}\input K1E2K1.tex\relax},\  & K_1U_2&={\tikzset{every picture/.style={line cap=round,sharp corners,scale=.3}}\input K1E2.tex\relax},\  & K_1U_2H_1&={\tikzset{every picture/.style={line cap=round,sharp corners,scale=.3}}\input K1E2H1.tex\relax},\  & K_1U_2K_1H_2&={\tikzset{every picture/.style={line cap=round,sharp corners,scale=.3}}\input K1E2K1H2.tex\relax},\  \\
U_2U_1&={\tikzset{every picture/.style={line cap=round,sharp corners,scale=.3}}\input E2E1.tex\relax},\  & U_2K_1&={\tikzset{every picture/.style={line cap=round,sharp corners,scale=.3}}\input E2K1.tex\relax},\  & U_2&={\tikzset{every picture/.style={line cap=round,sharp corners,scale=.3}}\input E2.tex\relax},\  & U_2H_1&={\tikzset{every picture/.style={line cap=round,sharp corners,scale=.3}}\input E2H1.tex\relax},\  & U_2K_1H_2&={\tikzset{every picture/.style={line cap=round,sharp corners,scale=.3}}\input E2K1H2.tex\relax} \\
K_2U_1&={\tikzset{every picture/.style={line cap=round,sharp corners,scale=.3}}\input K2E1.tex\relax},\  & K_2U_1H_2&={\tikzset{every picture/.style={line cap=round,sharp corners,scale=.3}}\input K2E1H2.tex\relax},\  & H_1U_2&={\tikzset{every picture/.style={line cap=round,sharp corners,scale=.3}}\input H1E2.tex\relax},\  & K_2U_1K_2&={\tikzset{every picture/.style={line cap=round,sharp corners,scale=.3}}\input K2E1K2.tex\relax},\  & K_2U_1K_2H_1&={\tikzset{every picture/.style={line cap=round,sharp corners,scale=.3}}\input K2E1K2H1.tex\relax},\  \\
H_1K_2U_1&={\tikzset{every picture/.style={line cap=round,sharp corners,scale=.3}}\input H1K2E1.tex\relax},\  & H_1K_2U_1H_2&={\tikzset{every picture/.style={line cap=round,sharp corners,scale=.3}}\input H1K2E1H2.tex\relax},\  & H_1^2U_2&={\tikzset{every picture/.style={line cap=round,sharp corners,scale=.3}}\input H12E2.tex\relax},\  & H_1K_2U_1K_2&={\tikzset{every picture/.style={line cap=round,sharp corners,scale=.3}}\input H1K2E1K2.tex\relax},\  & H_1K_2U_1K_2H_1&={\tikzset{every picture/.style={line cap=round,sharp corners,scale=.3}}\input H1K2E1K2H1.tex\relax}
\end{alignat*}
}
\end{prop}
\begin{proof}
  Consider $\bQ(\pp,\qq)\left<H_1,H_2,K_1,K_2,U_1,U_2\right>$ with the graded lexicographic monomial order, with
  $U_1<U_2<K_1<K_2<H_1<H_2$. A computer calculation produces a Gr\"obner basis of the ideal
  generated by the relations of $A(\obj3)_{\mathrm{loc}}$, with 111 elements;
  the basis elements above are exactly all the words that do not divide the leading monomial of any
  element of that Gr\"obner basis.
\end{proof}

Among the diagrams listed above, we note that 8 of them can be simplified using our diagrammatic relations:
\[
  {\tikzset{every picture/.style={line cap=round,sharp corners,scale=.3}}\input K1H2K1.tex\relax}
  \
  {\tikzset{every picture/.style={line cap=round,sharp corners,scale=.3}}\input K2H1K2.tex\relax}
  \
  {\tikzset{every picture/.style={line cap=round,sharp corners,scale=.3}}\input K2K1K2K1.tex\relax}
  \
  {\tikzset{every picture/.style={line cap=round,sharp corners,scale=.3}}\input K1K2K1K2.tex\relax}
  \
  {\tikzset{every picture/.style={line cap=round,sharp corners,scale=.3}}\input K1K2H1K2.tex\relax}
  \
  {\tikzset{every picture/.style={line cap=round,sharp corners,scale=.3}}\input K1K2K1H2.tex\relax}
  \
  {\tikzset{every picture/.style={line cap=round,sharp corners,scale=.3}}\input K2K1K2H1.tex\relax}
  \
  {\tikzset{every picture/.style={line cap=round,sharp corners,scale=.3}}\input H12H2H1.tex\relax}
  \
\]

Conversely, here are 8 diagrams that are missing in the list:

\[
\foreach\i in {1,2,3,4,5,0,6,7} {{\tikzset{every picture/.style={line cap=round,sharp corners,scale=.3}}\input A3notword_\i.tex\relax}\ }
\]

The first bottom four have already been discussed at the start of this section.
In particular it is obvious that the first top two can be substituted with the bottom top two, and that
this will not change their $\bQ(\pp,\qq)$-span; however their $\coeffs$-span will be different.
More generally, one can compute explicity the $8\times 8$ matrix of change of basis (expansion of the top diagrams into the bottom ones);
it is $\coeffs$-valued and is invertible in $\bQ(\pp,\qq)$ but not invertible in $\coeffs$.

We therefore obtain the following improved generating sets:
$\mathcal{B}^{(3)}(\obj3,\obj3)$ is given by \eqref{eq:B3subst},
$\mathcal{B}^{(1)}(\obj3,\obj3)$ is unchanged, and
\begin{align}\label{eq:A3subst2}
  \mathcal B^{(2)}(\obj3,\obj3) &=\left(\mathcal B_{\mathrm{orig}}^{(2)}(\obj3,\obj3) \backslash \left\{
                          {\tikzset{every picture/.style={line cap=round,sharp corners,scale=.3}}\input K1H2K1.tex\relax},  {\tikzset{every picture/.style={line cap=round,sharp corners,scale=.3}}\input K2H1K2.tex\relax},  {\tikzset{every picture/.style={line cap=round,sharp corners,scale=.3}}\input K2K1K2K1.tex\relax},  {\tikzset{every picture/.style={line cap=round,sharp corners,scale=.3}}\input K1K2K1K2.tex\relax},  {\tikzset{every picture/.style={line cap=round,sharp corners,scale=.3}}\input K1K2H1K2.tex\relax},  {\tikzset{every picture/.style={line cap=round,sharp corners,scale=.3}}\input K1K2K1H2.tex\relax},  {\tikzset{every picture/.style={line cap=round,sharp corners,scale=.3}}\input K2K1K2H1.tex\relax}\right\}\right)
                          \\\notag
                        &\cup \left\{
                          \foreach\i in {1,...,6} {{\tikzset{every picture/.style={line cap=round,sharp corners,scale=.3}}\input A3notword_\i.tex\relax}\ifnum\i=6\else,\fi}
                          \right\}
  \\\label{eq:A3subst0}
  \mathcal B^{(0)}(\obj3,\obj3) &=\left\{ {\tikzset{every picture/.style={line cap=round,sharp corners,scale=.3}}\input A3notword_0.tex\relax} \right\}
\end{align}
Their $\bQ(\pp,\qq)$-span is the same as the original generating sets, but their $\coeffs$-span is larger.
\pcomments[gray]{one could ask, why do we care? and technically we don't for YBE. it's just more natural to have this bigger space.}

At this stage, we can finally introduce $A(\obj3)$:
\begin{defn}\label{def:A3}
Define $J^{(i)}(\obj3,\obj3)$ to be the $\coeffs$-linear span of $\bigsqcup_{j=0}^i \mathcal{B}^{(j)}(\obj3,\obj3)$ inside $A(\obj3)_{\mathrm{loc}}$;
and $A(\obj3)=J^{(3)}(\obj3,\obj3)$. (Conventionally, $J^{(-1)}(\obj3,\obj3)=0$.)
\end{defn}
\pcomments[gray]{there used to be:
``Note that by definition, the $J^{(i)}(\obj3,\obj3)$ are free $\coeffs$-modules, and $A(\obj3)_{\mathrm{loc}}=A(\obj3)\otimes_{\coeffs} \bQ(\pp,\qq)$. The quotient $\coeffs$-modules $J^{(i)}(\obj3,\obj3)/J^{(i-1)}(\obj3,\obj3)$ have basis $\mathcal B^{(i)}(\obj3,\obj3)$.''
but why are they free modules? we only know at this stage that the original ``basis'' is a generating set... instead I moved this to \ref{cor:A3semisimple}}
Note that by definition $A(\obj3)_{\mathrm{loc}}=A(\obj3)\otimes_{\coeffs} \bQ(\pp,\qq)$.

\begin{lemma}
  $A(\obj3)$ is a subring of $A(\obj3)_{\mathrm{loc}}$, and $J^{(i)}(\obj3,\obj3)$ are ideals of it, $i=0,\ldots,3$.
\end{lemma}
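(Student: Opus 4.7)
The plan is to reduce both claims to a finite check on products of basis diagrams. Since $A(\obj3)$ and each $J^{(i)}(\obj3,\obj3)$ are by definition $\coeffs$-linear spans of the explicit finite sets $\bigsqcup_{j=0}^{3}\mathcal B^{(j)}(\obj3,\obj3)$ and $\bigsqcup_{j=0}^{i}\mathcal B^{(j)}(\obj3,\obj3)$ respectively, it suffices to show that for any $a\in \mathcal B^{(i)}(\obj3,\obj3)$ and $b\in \mathcal B^{(j)}(\obj3,\obj3)$, the product $ab$, computed in $A(\obj3)_{\mathrm{loc}}$, lies in $J^{(\min(i,j))}(\obj3,\obj3)$. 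This simultaneously gives closure of $A(\obj3)$ under multiplication and the ideal property of the filtration.

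The bound on the cut length is the easy part. Since all basis elements are diagrams, the product $ab$ is represented by the horizontal concatenation of two diagrams, and concatenation can never produce a diagram of cut length greater than the minimum of the two factors' cut lengths; this is exactly the diagrammatic principle behind Lemma~\ref{lem:ideals}. So after reducing $ab$ to a combination of basis diagrams using the relations in Appendix~\ref{app:skein}, only basis diagrams of cut length at most $\min(i,j)$ can occur.

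The real content is that the coefficients of the expansion lie in $\coeffs$ rather than merely in $\bQ(\pp,\qq)$. Recall from the discussion preceding Definition~\ref{def:tildeA3} that the passage from words in the generators to an expansion required inverting $\tau$ (to eliminate the diagrams $Z_1,Z_2$) and $b_0$ (to eliminate $Z_3,Z_4$), neither of which is invertible in $\coeffs$. The point of enlarging the generating sets $\mathcal B^{(2)}(\obj3,\obj3)$ and $\mathcal B^{(0)}(\obj3,\obj3)$ in \eqref{eq:A3subst2} and \eqref{eq:A3subst0} to include these non-word diagrams (together with the substitutions of trivalent forms already made in \eqref{eq:B3subst}) is precisely to absorb the offending denominators into basis elements. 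Thus any intermediate reduction that would have required dividing by $\tau$ or $b_0$ terminates instead on one of the new basis diagrams.

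The main obstacle, therefore, is a finite computer verification that, after applying the diagrammatic relations systematically (basic relations, skein, square-square, square-pentagon), every product of two basis elements rewrites into a $\coeffs$-linear combination of elements of $\bigsqcup_{j=0}^{3}\mathcal B^{(j)}(\obj3,\obj3)$. Practically, one sets up a rewriting procedure analogous to the Gr\"obner basis calculation used in Proposition~\ref{prop:tildeA3}, but taking the enlarged generating sets \eqref{eq:A3subst2}, \eqref{eq:A3subst0} as normal forms, and checks directly that all structure constants that arise lie in $\coeffs$ and that every non-zero term has cut length respecting the filtration. Once this finite check is performed, both assertions follow immediately.
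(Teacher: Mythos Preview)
Your proposal is correct and follows essentially the same strategy as the paper's proof: both reduce the subring claim to a finite computer check that structure constants lie in $\coeffs$, and both deduce the ideal property from the cut-length behaviour of concatenation (Lemma~\ref{lem:ideals}). The one refinement the paper makes is that, rather than checking all $80\times 80$ products of basis diagrams, it observes that it suffices to take the left factor from a set of $\coeffs$-algebra generators of $A(\obj3)$, namely $H_1,H_2,K_1,K_2,U_1,U_2$ together with the added diagrams $Z_0,\dots,Z_4$; this cuts the verification down substantially but does not change the logic.
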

\begin{proof}
  Consider two elements $x$ and $y$ in $\mathcal B(\obj3,\obj3):=\bigsqcup_{j=0}^3 \mathcal B^{(j)}(\obj3,\obj3)$;
  because $\mathcal B(\obj3,\obj3)$ is a generating set over $\bQ(\pp,\qq)$, one can express
  the product $x\,y$ as a linear combination of elements of $\mathcal B(\obj3,\obj3)$. (In fact,
  we shall soon prove that $\mathcal B(\obj3,\obj3)$ is a basis, so that linear combination is unique.)
  The first part of the Lemma says that the coefficients of the expansion of $x\,y$ belong to $\coeffs$.
  This check is best performed by computer; for practical purposes, note that one only needs to check it
  for $x$ in a set of generators as $\coeffs$-algebra; that means, for $x$ equal to either
  $H_1$, $H_2$, $K_1$, $K_2$, $U_1$, $U_2$, or the first four additions in \eqref{eq:A3subst2}, previously denoted
  $Z_1$, $\ldots$, $Z_4$, or the addition in \eqref{eq:A3subst0}, which we denote $Z_0={\tikzset{every picture/.style={line cap=round,sharp corners,scale=.3}}\input A3notword_0.tex\relax}$
  (the other substitutions did not change the $\coeffs$-span).
  
   \pcomments[gray]{note that I've checked this with my code, but it uses a slightly different basis;
    but I've checked that the change of basis is invertible in $\coeffs$}

  The fact that the $J^{(i)}(\obj3,\obj3)$ are ideals of $A(\obj3)$ follows immediately
  from the fact that $\mathcal B^{(j)}(\obj3,\obj3)$ is the subset of $\mathcal B(\obj3,\obj3)$ of diagrams of cut length $j$,
  and the discussion of \S\ref{ssec:ideals} applies, cf the first part of Lemma~\ref{lem:ideals}.
  \pcomments[gray]{there's some implicit stuff going on here: we can apply the discussion of ideals because
    we can think of our category as some {\em subcategory} of some diagrammatic category...}
\end{proof}

\begin{lemma}\label{lem:A3rep}
$A(\obj3)_{\mathrm{loc}}$ admits the following irreducible, pairwise distinct, representations, where either $H_1$, $H_2$ are mapped to \eqref{eq:A3irrep}--\eqref{eq:A3irrepb}, or $H_1$ is mapped to the following matrices
    \def\ha{\zeta}\def\hb{\zeta^*}
\begin{align}
\rho_{\obj3,X_2}(H_1)  &=\left(\!\begin{smallmatrix}
\xi&0&0&0\\
\frac{\left[6\nv\right]}{\left[2\nv\right]\left[3\nv\right]}&\zeta&0&0\\
\frac{\left[6\nv\right]}{\left[2\nv\right]\left[3\nv\right]}&\frac{\left[6\nv\right]}{\left[3\nv\right]}&\zeta^*&0\\
\xi&\frac{\left[6\nv\right]\left(\left[2\nv\right]\left[2\right]+1\right)}{\left[2\nv\right]\left[3\nv\right]}&
-\frac{\left[6\nv\right]\left(\left[6\nv+2\right]\left[\nv\right]-\left[3\nv+1\right]\right)}{\left[2\nv\right]\left[3\nv\right]\left[3\nv+1\right]} 
&\tau
\end{smallmatrix}\!\right)
\\
\rho_{\obj3,Y_2}(H_1)  &=\left(\!\begin{smallmatrix}
    \ha&0&0\\
\frac{\left[6\nv\right]}{\left[3\nv\right]\left[2\nv\right]}&\xi&0\\
\ha&\frac{\left[3\nv-1\right]\left(\qq^{2}\pp+\pp^{3}-\pp-\pp^{-1}+\pp^{-3}+\qq^{-2}\pp^{-1}\right)}{\left[\nv\right]}&\tau
\end{smallmatrix}\!\right)
\\
\rho_{\obj3,Y_2^*}(H_1)&=\left(\!\begin{smallmatrix}
\hb&0&0\\
-\frac{\left[6\nv\right]\left[\nv+1\right]}{\left[3\nv\right]\left[2\nv\right]}&\xi&0\\
\hb&\frac{\left[4\nv+1\right]\left(\qq^{2}\pp+\pp^{3}-\pp-\pp^{-1}+\pp^{-3}+\qq^{-2}\pp^{-1}\right)}{\left[\nv\right]}&\tau
\end{smallmatrix}\!\right)
\\
\rho_{\obj3,L}(H_1)&=\left(\!\begin{smallmatrix}
\zeta
&0&0&0&0\\
-\frac{\left[2\nv-2\right]\left[2\nv\right]-\left[\nv-1\right]^{2}}{\left[\nv-1\right]\left[\nv\right]}&\tau&0&0&0\\
\frac{\left[4\nv-2\right]}{\left[\nv\right]\left[2\nv-1\right]}&-\frac{\qq^{2}\pp+\pp^{3}-\pp-\pp^{-1}+\pp^{-3}+\qq^{-2}\pp^{-1}}{\left[\nv\right]}&\xi&0&0\\
-\frac{\left[3\nv\right]\left[4\nv-2\right]}{\left[\nv\right]\left[2\nv-1\right]}&\scriptscriptstyle\qq^{2}\pp^{3}+\qq^{2}\pp+\qq^{2}\pp^{-1}-2\pp-2\pp^{-1}+\qq^{-2}\pp+\qq^{-2}\pp^{-1}+\qq^{-2}\pp^{-3}&\frac{\left[\nv+1\right]\left[12\nv\right]}{\left[3\nv\right]\left[4\nv\right]}&\zeta^*&0\\
0&0&\frac{\left[4\nv-2\right]\left[6\nv+1\right]}{\left[\nv\right]\left[2\nv-1\right]}&\frac{\left[4\nv-2\right]\left[6\nv+2\right]}{\left[\nv\right]\left[2\nv-1\right]\left[3\nv+1\right]}&\phi
\end{smallmatrix}\!\right)\!\!\!\!\!\!\!\!\!\! \\
\rho_{\obj3,I}(H_1)&=\left(\!\begin{smallmatrix}
\tau
\end{smallmatrix}\!\right)
\end{align}%
and $H_2$ is mapped to the same matrices rotated 180 degrees.

These restrict to representations of $A(\obj3)$ (over $\coeffs$).
\end{lemma}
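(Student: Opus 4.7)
The plan is to treat the five new irreducible representations one at a time, and for each one carry out four steps: verify the defining relations of $A(\obj3)_{\mathrm{loc}}$, verify irreducibility, check distinctness from the previously constructed representations, and finally verify that the matrix entries lie in $\coeffs$ so that the action restricts to $A(\obj3)$. The guiding principle is that the labels $I,L,Y_2,Y_2^*,X_2$ correspond to the composition factors of $L\otimes L$ in \eqref{eq:tensorsquare}, and the diagonal entries of $\rho_{\obj3,V}(H_1)$ are exactly the eigenvalues of $H$ on the subfactors of $V\otimes L$ read off from the Bratteli diagram \eqref{eq:bratteli}, with values taken from the list $\{\phi,\tau,\xi,\zeta,\zeta^*\}$ in \eqref{eq:Heigenvalues}.

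For the first step I would exploit Remark~\ref{rem:A3def}: once the quintic equation $(H_i-\xi)(H_i-\zeta)(H_i-\zeta^*)(H_i-\phi)(H_i-\tau)=0$ is verified for $i=1$ (and automatically for $i=2$ by the rotational symmetry of the construction), the matrices $\rho_{\obj3,V}(U_i)$ and $\rho_{\obj3,V}(K_i)$ are determined by Lagrange interpolation. Thus most relations of Definition~\ref{def:tildeA3} involving only one strand (the $H_iK_i$, $H_iU_i$, $K_i^2$, $K_iU_i$, $U_i^2$ relations) become automatic. The remaining mixed relations $H_1K_2H_1=H_2K_1H_2$, $K_iU_jK_i=H_jU_iH_j$, etc., and the quartic \eqref{eq:A3relsb}, reduce to explicit $d\times d$ matrix identities with $d\le 5$, which a short computer algebra computation confirms. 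The square-pentagon relation \eqref{eq:squarepenta} has already been absorbed into the definition of $A(\obj3)_{\mathrm{loc}}$ via \eqref{eq:A3relsb}, so no separate check is needed.

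For irreducibility, in each case $\rho_{\obj3,V}(H_1)$ is lower triangular with pairwise distinct diagonal entries (as elements of $\bQ(\pp,\qq)$), so $\rho_{\obj3,V}(H_1)$ is diagonalisable with one-dimensional eigenspaces. Any $H_1$-invariant subspace is therefore spanned by a subset of eigenvectors. Inspecting $\rho_{\obj3,V}(H_2)$ (obtained by 180-degree rotation) and using that its off-diagonal entries are generically nonzero, one checks directly that no proper $H_1$-eigenspace-aggregate is stable under $H_2$; hence the representation is irreducible. Pairwise distinctness is then immediate: the representations have different dimensions $1,3,3,4,5$ respectively, and the two three-dimensional ones $\rho_{\obj3,Y_2}$ and $\rho_{\obj3,Y_2^*}$ are distinguished by the spectrum of $H_1$, namely $\{\xi,\zeta,\tau\}$ versus $\{\xi,\zeta^*,\tau\}$; that these spectra differ follows from $\zeta\neq\zeta^*$ in $\bQ(\pp,\qq)$. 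One must also confirm that none of these five representations is isomorphic to any of those listed in \eqref{eq:A3irrep}--\eqref{eq:A3irrepb}, which again follows from a spectral comparison for $H_1$.

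For the final assertion that these representations restrict to $A(\obj3)$, I would simply observe that every matrix entry written above is manifestly an element of $\coeffs$, and the same holds for the derived matrices $\rho_{\obj3,V}(U_i)$, $\rho_{\obj3,V}(K_i)$; since $A(\obj3)$ is by Definition~\ref{def:A3} the $\coeffs$-linear span of the basis diagrams, and every generator acts by a $\coeffs$-valued matrix, the restriction is well-defined. I expect the main obstacle to be the bookkeeping in verifying the quartic relation \eqref{eq:A3relsb} and the mixed $K$-$U$-$H$ relations in each of the five representations, since each involves the explicit coefficients $b_k,c_k$ of Appendix~\ref{ssec:squarepenta} and a nontrivial Lagrange-interpolation step; this is best delegated to a computer algebra system.
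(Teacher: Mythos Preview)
Your overall strategy for the first three steps is sound and close to the paper's own argument: use Remark~\ref{rem:A3def} to reduce everything to checking identities in $H_1,H_2$, delegate the matrix identities to computer algebra, and distinguish representations by spectrum. One difference worth noting is that the paper establishes irreducibility not by your direct eigenspace-stability analysis but by exhibiting, as in Lemma~\ref{lem:A3toprep}, a nondegenerate invariant bilinear form $M$ with $M\rho(H_i)=\rho(H_i)^T M$ and a positivity property of its eigenvalues; your approach is more elementary and works equally well here.

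However, there is a genuine gap in your last step. Showing that $\rho(H_i)$, $\rho(K_i)$, $\rho(U_i)$ have entries in $\coeffs$ is \emph{not} enough to conclude that the representations restrict to $A(\obj3)$. By Definition~\ref{def:A3}, $A(\obj3)$ is the $\coeffs$-span of the basis $\mathcal B(\obj3,\obj3)$, and that basis contains the non-word diagrams $Z_0,\ldots,Z_4$ introduced in \eqref{eq:A3subst2}--\eqref{eq:A3subst0}; these are additional generators of $A(\obj3)$ as a $\coeffs$-algebra beyond $H_i,K_i,U_i$. For instance $Z_1=\tau^{-1}K_1H_2K_1$, and since $\tau$ is not invertible in $\coeffs$, knowing that $\rho(K_1H_2K_1)$ has $\coeffs$-entries says nothing; one must check that $\rho(K_1H_2K_1)$ actually has entries in $\tau\,\coeffs$, so that $\rho(Z_1)$ is $\coeffs$-valued. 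Analogous divisibility checks are required for $Z_2,Z_3,Z_4$ (with $\tau$ or $b_0$) and for $Z_0$. The paper's proof makes exactly this point and carries out these checks; your argument as written would only establish restriction to the strictly smaller $\coeffs$-subalgebra generated by $H_i,K_i,U_i$.
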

\begin{proof}
As mentioned in Remark~\ref{rem:A3def}, the $H_1$, $H_2$ generate $A(\obj3)_{\mathrm{loc}}$, so fixing $H_1$ and $H_2$ as above suffices to define representations.
Checking relations \eqref{eq:A3rels}--\eqref{eq:A3relsb} is a tedious but elementary exercise.

Irreducibility and pairwise distinctness follow from the same arguments as in the proof of Lemma~\ref{lem:A3toprep},
and we shall not repeat them here. For example, for the most complicated case of $L$, we find that there exists $M$ such that
$M\rho_{\obj3,L}(H_i)=\rho_{\obj3,L}(H_i)^TM$, and $M=P^TDP$ with $D$ diagonal with eigenvalues
{\tiny
\begin{multline*}
 1,\,\frac{\left[2\,\nv-1\right]\left[2\right]\left[4\,\nv\right]\left[6\,\nv\right]\left[2\,\nv\right]\left[6\,\nv+2\right]\left[3\,\nv-3\right]}{\left[\nv+2\right]\left[4\,\nv-2\right]\left[4\,\nv+1\right]\left[5\,\nv-1\right]\left[\nv\right]\left[3\,\nv+1\right]\left[\nv-1\right]},\,\frac{\left[6\,\nv+3\right]\left[3\,\nv-1\right]\left[6\,\nv+1\right]\left[3\,\nv-3\right]}{\left[2\,\nv+1\right]\left[2\,\nv+2\right]\left[4\,\nv+1\right]\left[5\,\nv-1\right]\left[2\right]\left[\nv-1\right]},\\\frac{\left[5\,\nv\right]\left[3\,\nv-1\right]\left[6\,\nv\right]^{3}\left[3\,\nv-3\right]}{\left[3\,\nv\right]^{4}\left[2\,\nv+2\right]\left[\nv+2\right]\left[5\,\nv-1\right]\left[\nv-1\right]},\,\frac{\left[5\,\nv\right]\left[3\,\nv-1\right]\left[\nv+1\right]\left[6\,\nv\right]\left[2\,\nv\right]\left[3\,\nv-3\right]}{\left[6\,\nv+1\right]\left[2\,\nv+2\right]\left[\nv+2\right]\left[4\,\nv\right]\left[\nv-1\right]}
\end{multline*}
}
which has the right positivity requirement.

\pcomments[gray]{contrary to \ref{lem:A3toprep},
  nondegeneracy of the bilinear form fails at some of the specialisations.
  specifically, $Y_2^*$ and $L$ fail for $a_1$ [though maybe $Y_2^*$ works? not clear], $Y_2$ fails for trivial (?), not sure what it means.
also, should we point out that an alternative prove using the functor to the module categories exists? seems a bit unnatural}

The matrices above have coefficients in $\coeffs$. One checks that the same is true of the matrices of
all the generators of $A(\obj3)$ as a $\coeffs$-algebra, namely,
$K_1$, $K_2$, $U_1$, $U_2$, and $Z_0$, $\ldots$, $Z_4$; e.g., for $Z_1$,
one checks that $K_1H_2K_1$ is sent to a matrix with entries in $\tau\coeffs$. This shows the restriction property
to $A(\obj3)$.
\end{proof}

Putting together Proposition~\ref{prop:tildeA3} and Lemma~\ref{lem:A3rep} allows to conclude:
\begin{cor}\label{cor:A3semisimple}
  $A(\obj3)_{\mathrm{loc}}$ is of dimension $80$, and a direct sum of matrix
  algebras over $\bQ(\pp,\qq)$,
  where the sizes of the blocks are given by the representations of Lemma~\ref{lem:A3rep}.

  $A(\obj3)$ is a rank $80$ free $\coeffs$-module, and the map from $A(\obj3)$ to the direct sum of matrix algebras over $\coeffs$ given by the same representations is injective.

  More generally, the $J^{(i)}(\obj3,\obj3)$ are free $\coeffs$-modules; the quotient $\coeffs$-modules 
  \[ J^{(i)}(\obj3,\obj3)/J^{(i-1)}(\obj3,\obj3) \]
   have basis $\mathcal B^{(i)}(\obj3,\obj3)$.
\end{cor}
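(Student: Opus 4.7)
The strategy is to sandwich $\dim_{\bQ(\pp,\qq)} A(\obj3)_{\mathrm{loc}}$ between matching upper and lower bounds of $80$, and then deduce the $\coeffs$-level structure results from this.

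For the upper bound, after the substitutions of~\eqref{eq:B3subst},~\eqref{eq:A3subst2}, and~\eqref{eq:A3subst0}, Proposition~\ref{prop:tildeA3} (together with the subsequent discussion) produces a $\bQ(\pp,\qq)$-spanning set $\mathcal B(\obj3,\obj3) = \bigsqcup_{i=0}^{3} \mathcal B^{(i)}(\obj3,\obj3)$ of cardinality $20 + 34 + 25 + 1 = 80$ for $A(\obj3)_{\mathrm{loc}}$, so $\dim A(\obj3)_{\mathrm{loc}} \leq 80$.

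For the matching lower bound, using the Bratteli diagram~\eqref{eq:bratteli} I compute the multiplicities of each $V \in \Irr^{\le 3}$ in $\otimes^3 L$, obtaining $(d_I,d_L,d_{X_2},d_{Y_2},d_{Y_2^*},d_A,d_{Y_3},d_{Y_3^*},d_C,d_{C^*},d_{X_3}) = (1,5,4,3,3,3,1,1,2,2,1)$ with $\sum_V d_V^2 = 80$; these match the sizes of the representations in Lemma~\ref{lem:A3rep} (those from~\eqref{eq:A3irrep}--\eqref{eq:A3irrepb} for the six top composition factors, and the explicit matrices given afterwards for the bottom five). The joint map $\rho := \bigoplus_V \rho_{\obj3,V}\colon A(\obj3)_{\mathrm{loc}} \to \bigoplus_V M_{d_V}(\bQ(\pp,\qq))$ has target of dimension $80$; the core step is to prove that $\rho$ is surjective. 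Irreducibility and pairwise non-isomorphism are already in Lemma~\ref{lem:A3rep}, and absolute irreducibility of each $\rho_{\obj3,V}$ I would verify by specialisation under $\theta_{E_8}$: the fullness of $\Psi_{E_8}$ (\S\ref{sec:functors}) gives a surjection $A(\obj3) \otimes_{\theta_{E_8}} \kk_q \twoheadrightarrow \End_{\fU_q(E_8)}(\otimes^3 L_{E_8})$, whose right-hand side is a split semisimple $\kk_q$-algebra of dimension $80$ with simple summands matching the specialisations of the $\rho_{\obj3,V}$; absolute irreducibility of each specialised summand ascends to absolute irreducibility of the generic $\rho_{\obj3,V}$, and then Jacobson density yields surjectivity of $\rho$, giving $\dim A(\obj3)_{\mathrm{loc}} \geq 80$.

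Combining the two bounds forces $\dim A(\obj3)_{\mathrm{loc}} = 80$, so $\mathcal B(\obj3,\obj3)$ is a $\bQ(\pp,\qq)$-basis of $A(\obj3)_{\mathrm{loc}}$ and $\rho$ is an algebra isomorphism, proving assertion~(1). For assertion~(2), by Definition~\ref{def:A3} the module $A(\obj3)$ is the $\coeffs$-span of $\mathcal B(\obj3,\obj3)$ inside $A(\obj3)_{\mathrm{loc}}$; being a $\bQ(\pp,\qq)$-basis, these $80$ elements are $\coeffs$-linearly independent, so $A(\obj3)$ is free of rank $80$ over $\coeffs$. Because the matrices of Lemma~\ref{lem:A3rep} have entries in $\coeffs$, $\rho$ restricts to an algebra map $A(\obj3) \hookrightarrow \bigoplus_V M_{d_V}(\coeffs)$, injective because it is injective after base-change to $\bQ(\pp,\qq)$. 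For assertion~(3), each $J^{(i)}(\obj3,\obj3)$ is by definition the $\coeffs$-span of $\bigsqcup_{j \leq i} \mathcal B^{(j)}(\obj3,\obj3)$, a subset of the above basis, hence free over $\coeffs$ with that subset as basis, and the quotient $J^{(i)}/J^{(i-1)}$ is free with basis $\mathcal B^{(i)}(\obj3,\obj3)$.

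The main obstacle is establishing that $\rho$ surjects onto the full product of matrix algebras, equivalently the absolute irreducibility of the $\rho_{\obj3,V}$; the specialisation-at-$E_8$ argument sketched above handles this using the Wedderburn structure of $\End_{\fU_q(E_8)}(\otimes^3 L_{E_8})$, with the only nontrivial input being the fullness of $\Psi_{E_8}$ coming from the first fundamental theorem of tensor invariant theory.
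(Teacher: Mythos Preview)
Your sandwich strategy is exactly the paper's: Proposition~\ref{prop:tildeA3} gives the $80$-element spanning set (upper bound), Lemma~\ref{lem:A3rep} gives pairwise non-isomorphic irreducibles whose dimension-squares sum to $80$ (lower bound), and the rest of your deductions for parts~(2) and~(3) are correct.

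The gap is in your treatment of absolute irreducibility. You claim that fullness of $\Psi_{E_8}$ yields a surjection $A(\obj3)\otimes_{\theta_{E_8}}\kk_q \twoheadrightarrow \End_{\fU_q(E_8)}(\otimes^3 L_{E_8})$, but it does not. Fullness of $\Psi_{E_8}$ (from \S\ref{sec:functors}) says that every intertwiner is the image of \emph{some} diagram in $\kk_q\Rcat$; it says nothing about whether the $80$ specific diagrams spanning $A(\obj3)$ suffice. That stronger statement is precisely Theorem~\ref{thm:full}, which is proved later and whose Gram-determinant argument uses the basis $\mathcal B(\obj3,\obj3)$ --- i.e.\ uses the freeness part of this very corollary. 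So your detour through $E_8$ is circular as written.

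The paper avoids this: Lemma~\ref{lem:A3rep} (via the method of Lemma~\ref{lem:A3toprep}) establishes irreducibility directly from the explicit matrices. In fact absolute irreducibility is immediate once you notice that in each $\rho_{\obj3,V}$ the matrix of $H_1$ is lower-triangular with \emph{distinct} diagonal entries lying in $\bQ(\pp,\qq)$; hence the commutant of $H_1$ alone is already the algebra of diagonal matrices in the $H_1$-eigenbasis, a product of copies of $\bQ(\pp,\qq)$. The Schur commutant $\End_{A}(V)$ is a division algebra inside this product, hence equals $\bQ(\pp,\qq)$. With absolute irreducibility in hand, Jacobson density gives surjectivity of $\rho$ and the lower bound, and your argument goes through.
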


Recall the notation \eqref{eq:irr} for labels of irreps.
  \begin{prop}\label{prop:A3bimodules2}
    For each $j=0,\ldots,3$,
    the natural $A(\obj3)$-$A(\obj3)$ bimodule map from $J^{(j)}(\obj3,\obj3)$
    to the matrix algebra over $\coeffs$ given by the irreps $\Irr^j$, defined in \eqref{eq:irr},
    has kernel $J^{(j-1)}(\obj3,\obj3)$, and after quotienting by the kernel and tensoring with $\bQ(\pp,\qq)$,
    becomes an isomorphism of $A(\obj3)_{\mathrm{loc}}$-$A(\obj3)_{\mathrm{loc}}$ bimodules.

 In particular, $\hat{A}(\obj3)$ is isomorphic to the quotient $A(\obj3)/J^{(2)}(\obj3,\obj3)$ under the natural
 map $H_k\mapsto H_k$, $k=1,2$.
\end{prop}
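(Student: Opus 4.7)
The plan is to combine the semisimple decomposition of $A(\obj3)_{\mathrm{loc}}$ from Corollary~\ref{cor:A3semisimple} with a cut-length vanishing argument. The collection $\{\rho_{\obj3,V}\}_{V\in\Irr^{\le 3}}$ produces an injection $A(\obj3)\hookrightarrow\bigoplus_{V\in\Irr^{\le 3}}M_{n_V}(\coeffs)$ which becomes an isomorphism after tensoring with $\bQ(\pp,\qq)$; the projection of this onto the $V\in\Irr^j$ summands is the bimodule map of the statement, so it suffices to show its kernel on $J^{(j)}(\obj3,\obj3)$ is exactly $J^{(j-1)}(\obj3,\obj3)$.

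The heart of the argument is the claim that $J^{(j-1)}(\obj3,\obj3)$ is annihilated by every $\rho_{\obj3,V}$ with $V\in\Irr^j$. By construction each basis element of $J^{(j-1)}$ has cut length at most $j-1$ and so factors in $\coeffs\Rcat$ as a composite $b_2\circ b_1$ with $b_1\in B(\obj3,\obj k)$ and $b_2\in B(\obj k,\obj3)$ for some $k<j$. Under $\rho_{\obj3,V}$ such a product factors through the $V$-isotypic component of $B(\obj k,\obj 3)$. For $k=0$ and $k=1$ this component can be read off the explicit rank-one and rank-five bases of $B(\obj 0,\obj 3)$ and $B(\obj 1,\obj 3)$ introduced in \S\ref{ssec:bimodules}, and for $k=2$ it follows from the $1+5+10$ filtration of $\mathcal B(\obj 5)$ under the $A(\obj2)$-action of Proposition~\ref{prop:A2diag5}, which matches the cumulative multiplicity count of $\Irr^0\sqcup\Irr^1\sqcup\Irr^2$ and leaves no room for a new irrep in $\Irr^j$ with $j\ge 3$. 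A more pedestrian alternative is to substitute the explicit matrices of Lemma~\ref{lem:A3rep} into each diagram of $\mathcal B^{(i)}(\obj3,\obj3)$ with $i<j$ and verify the vanishing diagram by diagram, which is a finite computer-algebra check.

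Once the vanishing step is in hand, the induced map $J^{(j)}/J^{(j-1)}\to\bigoplus_{V\in\Irr^j}M_{n_V}(\coeffs)$ is a morphism of finitely generated free $\coeffs$-modules whose ranks on both sides equal $\#\mathcal B^{(j)}(\obj3,\obj3)=\sum_{V\in\Irr^j}n_V^2$, by Corollary~\ref{cor:A3semisimple} and the multiplicities of composition factors of $\otimes^3L$ recorded in the Bratteli diagram \eqref{eq:bratteli}. Tensoring with $\bQ(\pp,\qq)$, the injectivity inherited from $A(\obj3)\hookrightarrow\bigoplus_V M_{n_V}(\coeffs)$ together with the matching dimensions upgrade this to an isomorphism of $A(\obj3)_{\mathrm{loc}}$-bimodules. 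The special case $j=3$ identifies $A(\obj3)/J^{(2)}(\obj3,\obj3)$ with the matrix algebra attached to $\Irr^3$, which by construction of $\hat{A}(\obj3)$ (the quotient killing the blocks of $\Irr^{<3}$) is precisely $\hat{A}(\obj3)$, with the natural map $H_k\mapsto H_k$ in view of the quintic relation analogous to \eqref{eq:Hcubedtop}.

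The main difficulty is the cut-length vanishing. Because the interpolating category is not yet known to embed faithfully into any module category $\Icat(L_C)$, one cannot simply invoke the obvious version of the statement there. The bimodule route I sketched is conceptually natural but demands a full Wedderburn-style analysis of each $B(\obj k,\obj 3)$ as an $A(\obj k)$-$A(\obj 3)$ bimodule, in the spirit of Proposition~\ref{prop:A3bimodules}; the direct matrix evaluation on the generating set of Proposition~\ref{prop:tildeA3} is less conceptual but more efficient and fits the computer-algebra methodology advertised in the introduction. In either case the computation is finite and explicit once the matrices of Lemma~\ref{lem:A3rep} (and the analogous matrices for $V\in\Irr^3$) are available.
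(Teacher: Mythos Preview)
Your overall architecture---use Corollary~\ref{cor:A3semisimple} to get injectivity, show $J^{(j-1)}$ lies in the kernel of the $\Irr^j$-projection, then count ranks---is exactly what the paper does. The difference is in how the vanishing step is carried out, and there your primary route has a gap.

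Your bimodule argument factors a cut-length $\le j-1$ element through $B(\obj k,\obj3)$ and then appeals to the $V$-isotypic decomposition of $B(\obj k,\obj3)$ as an $A(\obj3)$-module. But that decomposition is precisely the content of Proposition~\ref{prop:A3bimodules}, which comes \emph{after} the present proposition and in fact uses it. So the bimodule route, as stated, is circular. Your fallback (plug the basis diagrams into the matrices of Lemma~\ref{lem:A3rep}) is valid but opaque.

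The paper's proof of the vanishing is much simpler and worth knowing. By Remark~\ref{rem:A3def}, $U_i$ and $K_i$ are Lagrange interpolation polynomials in $H_i$, picking out the eigenvalues $\phi$ and $\tau$ respectively. Inspecting the explicit matrices in Lemma~\ref{lem:A3rep}, $\rho_R(H_i)$ has no eigenvalue $\phi$ when $R\in\Irr^2\cup\Irr^3$ and no eigenvalue $\tau$ when $R\in\Irr^3$; hence $\rho_R(U_i)=0$ for $R\in\Irr^{\ge 2}$ and $\rho_R(K_i)=0$ for $R\in\Irr^3$. Since every element of $\mathcal B^{(1)}(\obj3,\obj3)$ contains a $U$ and every element of $\mathcal B^{(2)}(\obj3,\obj3)$ (or its $\bQ(\pp,\qq)$-span, which is what matters) contains a $K$, this settles $i=1,2$ at once. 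The case $i=0$ needs a separate short argument: $Z_0$ is central and spans a rank-one ideal, so over $\bQ(\pp,\qq)$ it is proportional to the primitive central idempotent of a one-dimensional irrep; computing $H_1Z_0=\tau Z_0$ identifies that irrep as $I\in\Irr^0$, so $\rho_R(Z_0)=0$ for $R\in\Irr^{\ge 1}$.

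For the final assertion about $\hat A(\obj3)$, your sketch is close but imprecise: $\hat A(\obj3)$ is not \emph{defined} as the quotient killing the $\Irr^{<3}$ blocks. The paper instead observes that both $\hat A(\obj3)$ and $A(\obj3)/J^{(2)}(\obj3,\obj3)$ are generated as $\coeffs$-algebras by $H_1,H_2$ (for the quotient this holds because $U_i,K_i,Z_0,\dots,Z_4$ all lie in $J^{(2)}$), and both inject into the $\Irr^3$ matrix algebra sending $H_1,H_2$ to the same matrices; hence their images coincide and composing one injection with the inverse of the other gives the isomorphism $H_k\mapsto H_k$.
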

\begin{proof}
  Let us first show that every element of $\mathcal B^{(i)}(\obj3,\obj3)$ is in the kernel of the map to the matrix algebra of irreps $\Irr^j$ for $i<j$, defined in \eqref{eq:irr}.
  One can use Remark~\ref{rem:A3def}, which gives the correspondence between the eigenvalues of $H_k$ and those
  of $K_k$ and $U_k$, $k=1,2$. We immediately find that $\rho_R(U_1)=\rho_R(U_2)=0$ for all $R\in \Irr^j$, $j=2,3$,
  defined in \eqref{eq:irr}, because $\rho_R(H_k)$ doesn't have the eigenvalue
  $\phi$. Similarly, $\rho_R(K_1)=\rho_R(K_2)=0$ for all $R\in \Irr^3$ because $\rho_R(H_k)$ doesn't have the eigenvalue $\tau$.
  This implies the statement at $i=1,2$. Remains $i=0$, that is, one must show that
  $\rho_R(Z_0)=0$ for all $R\in \Irr^j$, $j\ge 1$. Now $Z_0$ is obviously central and the module that it generates is of rank $1$; this implies
  that it is proportional (in $\bQ(\pp,\qq)$) to the primitive central idempotent of an irrep of dimension $1$. We compute $H_1Z_0= \tau Z_0$ and recognize the eigenvalue
  of $\rho_1(H_1)$.  So the kernel contains $J^{(j-1)}(\obj 3,\obj3)$.

  One can then check that $J^{(i)}(\obj3,\obj3)/J^{(i-1)}(\obj3,\obj3)$ has the same rank as the matrix algebra it is sent to:
  \[
    1=1^2,\qquad 25=5^2,\qquad 34=4^2+3^2+3^2,\qquad 20=3^2+2^2+2^2+1^2+1^2+1^2
  \]
  so that after tensoring with the fraction field, the map is an isomorphism. Furthermore, because $J^{(i-1)}(\obj3,\obj3)$ is generated by a subset of the basis of $J^{(i)}(\obj 3,\obj3)$ as a free $\coeffs$-module, this also implies that the kernel of the map is $J^{(i-1)}(\obj3,\obj3)$.

  We now have two injective maps into the matrix algebra of $\Irr^3$, defined in \eqref{eq:irr}:
  one, from $\hat A(\obj3)$, was defined in Lemma~\ref{lem:A3toprep}; the second map from $A(\obj3)/J^{(2)}(\obj3,\obj3)$ was just obtained now.
  They manifestly have the same image because both $\hat A(\obj 3)$ and $A(\obj3)/J^{(2)}(\obj3,\obj3)$ are generated as $\coeffs$-algebras
  by the two generators $H_1$, $H_2$ and these are sent to the same matrices by their respective maps.
  Composing one map with the inverse of the other leads to an isomorphism where
  $H_k\mapsto H_k$.

  \pcomments[gray]{the last part of the proof carefully avoids comparing the defining equations of $\hat A(\obj3)$ and $A(\obj3)$...}
  
\end{proof}

\begin{prop}\label{prop:A3bimodules}
For each $\obj i=\obj0,\ldots,\obj3$,
$B(\obj 3,\obj i)$ is a left $A(\obj3)$-module,
$B(\obj i,\obj 3)$ is a right $A(\obj3)$-module,
 and $J^{(i)}(\obj3,\obj3) \cong B(\obj3,\obj i)\otimes_{A(\obj i)} B(\obj i,\obj3)$.
\end{prop}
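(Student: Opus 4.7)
The plan combines a direct diagrammatic verification of the module structures with a representation-theoretic argument for the isomorphism, and then descent from the fraction field down to $\coeffs$.

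First, I would show that $B(\obj 3, \obj i)$ is a left and $B(\obj i, \obj 3)$ is a right $A(\obj 3)$-module. The argument mirrors Proposition~\ref{prop:A2diag5}: by Definition~\ref{def:A3}, $A(\obj 3)$ is generated as a $\coeffs$-algebra by $H_1, H_2, K_1, K_2, U_1, U_2$ together with the auxiliary generators $Z_0,\dots,Z_4$ introduced in \eqref{eq:A3subst2}--\eqref{eq:A3subst0}. For each such generator and each basis diagram of $B(\obj 3,\obj i)$ (respectively $B(\obj i,\obj 3)$), I would reduce the concatenated diagram using the relations of Appendix~\ref{app:skein} together with the square-pentagon relation \eqref{eq:squarepenta}, and verify that the result lies in the $\coeffs$-span of the chosen basis. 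The only genuinely nontrivial reductions are those that produce a square-pentagon subdiagram; everything else is local and mechanical, best carried out with computer assistance.

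Next, I would define the composition map
\[
\mu_i\colon B(\obj 3, \obj i)\otimes_{A(\obj i)} B(\obj i, \obj 3)\longrightarrow J^{(i)}(\obj 3, \obj 3),\qquad a\otimes b\longmapsto a\cdot b.
\]
Balancedness over $A(\obj i)$ is automatic from associativity of diagrammatic composition, and the image lies in $J^{(i)}$ because every product $a\cdot b$ factors through $\obj i$ and hence admits a cut path of length at most $i$. The map is an $A(\obj 3)$-bimodule homomorphism by construction.

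The core of the argument is to show that $\mu_i$ is an isomorphism. Tensoring with $\bQ(\pp,\qq)$, both sides become bimodules over the semisimple multimatrix algebra $A(\obj 3)_{\mathrm{loc}}$ of Corollary~\ref{cor:A3semisimple}. By Proposition~\ref{prop:A3bimodules2}, $J^{(i)}(\obj 3,\obj 3)_{\mathrm{loc}}$ is a sum of matrix blocks controlled by the relevant composition factors, while $B(\obj 3,\obj i)_{\mathrm{loc}}$ decomposes as a rectangular sum of blocks whose multiplicities are read off from the Bratteli diagram \eqref{eq:bratteli}. A Morita-type computation (using $M_{a\times b}\otimes_{M_b} M_{b\times a}\cong M_a$ block by block) then shows that $\mu_i$, after tensoring with $\bQ(\pp,\qq)$, collapses the source onto the expected sub-bimodule of $J^{(i)}_{\mathrm{loc}}$; a dimension count against the data of the Bratteli diagram concludes this part.

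The hard part will be the descent from $\bQ(\pp,\qq)$ to $\coeffs$. What must be established is that $B(\obj 3,\obj i)\otimes_{A(\obj i)} B(\obj i,\obj 3)$ is a free $\coeffs$-module of the expected rank, and that $\mu_i$ remains injective there. This relies on the non-degeneracy of the diagrammatic Gram pairing: the explicit factorisation \eqref{eq:Gram23} shows that the Gram determinant, after discarding the red factors (which are handled separately in \S\ref{ssec:trunccat}), is a product of elements of $\Sigma$ and is therefore invertible in $\coeffs$. Once source and target are both identified as free $\coeffs$-modules of matching rank and $\mu_i$ is known to be injective over the fraction field, a standard base-change argument promotes it to an isomorphism over $\coeffs$.
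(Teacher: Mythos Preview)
Your treatment of the module structure and the definition of the composition map $\mu_i$ follows the paper closely. The difference appears in how you handle surjectivity and how you descend from $\bQ(\pp,\qq)$ to $\coeffs$.

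The paper proves surjectivity directly over $\coeffs$: for each basis diagram in $\mathcal B^{(i)}(\obj3,\obj3)$ it exhibits an explicit factorisation as a product of a basis element of $B(\obj3,\obj i)$ with one of $B(\obj i,\obj3)$ (for $i=1$ the square array in Proposition~\ref{prop:tildeA3} makes this transparent). This is elementary and sidesteps any base-change issue. For injectivity, the paper works over the fraction field and uses a cyclicity/commutant argument: $B(\obj3,\obj i)$ has a cyclic vector for both the $A(\obj3)$- and $A(\obj i)$-actions, and since $A(\obj i)$ is commutative for $i\le 2$, each $A(\obj3)$-irreducible can occur at most once; combined with the surjectivity already established, this pins down the representation content and forces injectivity. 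Your Morita/Bratteli route over the fraction field is a reasonable alternative to that last step, though the paper's argument is more self-contained.

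Your descent argument, however, does not work as written. The Gram determinant \eqref{eq:Gram23} is \emph{not} a unit in $\coeffs$: the red factors $[5\nv-1]^{16}[3\nv-1]^{10}$ vanish at exceptional points and are therefore excluded from $\Sigma$ by construction. One cannot ``discard'' factors of a determinant, and the passage in \S\ref{ssec:trunccat} you cite uses the red factors for a quite different purpose (bounding the corank of the specialised Gram matrix in the proof of Theorem~\ref{thm:full}), not to manufacture invertibility in $\coeffs$. Moreover, \eqref{eq:Gram23} concerns only $B(\obj2,\obj3)$ and says nothing about $i=0,1,3$. Finally, your claim that $B(\obj3,\obj i)\otimes_{A(\obj i)} B(\obj i,\obj3)$ is free over $\coeffs$ is unsupported for $i=2$, where the tensor is over the rank-five algebra $A(\obj2)$ rather than over $\coeffs$. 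The paper avoids all of this by establishing surjectivity over $\coeffs$ diagrammatically and arguing injectivity only over the fraction field.
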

\begin{proof}
  We assume in the proof $i\le 2$, otherwise the statement is trivial.

  The proof of the first part of the Proposition is an explicit computation, similar to that of Proposition~\ref{prop:A2diag5};
  e.g., for $B(\obj 3,\obj 0)$, one computes, using the rules of Appendix~\ref{app:skein},
  \begin{align*}
    {\tikzset{every picture/.style={line cap=round,sharp corners,scale=.3}}\input H1.tex\relax}{\tikzset{every picture/.style={line cap=round,sharp corners,scale=.3}}\input diag3_0_0.tex\relax}&=\tau {\tikzset{every picture/.style={line cap=round,sharp corners,scale=.3}}\input diag3_0_0.tex\relax}
    &
    {\tikzset{every picture/.style={line cap=round,sharp corners,scale=.3}}\input H2.tex\relax}{\tikzset{every picture/.style={line cap=round,sharp corners,scale=.3}}\input diag3_0_0.tex\relax}&=\tau {\tikzset{every picture/.style={line cap=round,sharp corners,scale=.3}}\input diag3_0_0.tex\relax}
    \\
    {\tikzset{every picture/.style={line cap=round,sharp corners,scale=.3}}\input K1.tex\relax}{\tikzset{every picture/.style={line cap=round,sharp corners,scale=.3}}\input diag3_0_0.tex\relax}&=\phi {\tikzset{every picture/.style={line cap=round,sharp corners,scale=.3}}\input diag3_0_0.tex\relax}
    &
    {\tikzset{every picture/.style={line cap=round,sharp corners,scale=.3}}\input K2.tex\relax}{\tikzset{every picture/.style={line cap=round,sharp corners,scale=.3}}\input diag3_0_0.tex\relax}&=\phi {\tikzset{every picture/.style={line cap=round,sharp corners,scale=.3}}\input diag3_0_0.tex\relax}
    \\
    {\tikzset{every picture/.style={line cap=round,sharp corners,scale=.3}}\input K1.tex\relax}{\tikzset{every picture/.style={line cap=round,sharp corners,scale=.3}}\input diag3_0_0.tex\relax}&=0
    &
    {\tikzset{every picture/.style={line cap=round,sharp corners,scale=.3}}\input K2.tex\relax}{\tikzset{every picture/.style={line cap=round,sharp corners,scale=.3}}\input diag3_0_0.tex\relax}&=0
    \\
    {\tikzset{every picture/.style={line cap=round,sharp corners,scale=.3}}\input A3notword_1.tex\relax}{\tikzset{every picture/.style={line cap=round,sharp corners,scale=.3}}\input diag3_0_0.tex\relax} &=\phi^2 {\tikzset{every picture/.style={line cap=round,sharp corners,scale=.3}}\input diag3_0_0.tex\relax}
    &
    {\tikzset{every picture/.style={line cap=round,sharp corners,scale=.3}}\input A3notword_2.tex\relax}{\tikzset{every picture/.style={line cap=round,sharp corners,scale=.3}}\input diag3_0_0.tex\relax} &=\phi^2 {\tikzset{every picture/.style={line cap=round,sharp corners,scale=.3}}\input diag3_0_0.tex\relax}
    \\
    {\tikzset{every picture/.style={line cap=round,sharp corners,scale=.3}}\input A3notword_3.tex\relax}{\tikzset{every picture/.style={line cap=round,sharp corners,scale=.3}}\input diag3_0_0.tex\relax} &=\phi {\tikzset{every picture/.style={line cap=round,sharp corners,scale=.3}}\input diag3_0_0.tex\relax}
    &
    {\tikzset{every picture/.style={line cap=round,sharp corners,scale=.3}}\input A3notword_4.tex\relax}{\tikzset{every picture/.style={line cap=round,sharp corners,scale=.3}}\input diag3_0_0.tex\relax} &=\phi {\tikzset{every picture/.style={line cap=round,sharp corners,scale=.3}}\input diag3_0_0.tex\relax}
    \\
    {\tikzset{every picture/.style={line cap=round,sharp corners,scale=.3}}\input A3notword_0.tex\relax}{\tikzset{every picture/.style={line cap=round,sharp corners,scale=.3}}\input diag3_0_0.tex\relax} &=\delta\phi{\tikzset{every picture/.style={line cap=round,sharp corners,scale=.3}}\input diag3_0_0.tex\relax}
  \end{align*}
  \pcomments[gray]{this is slightly redundant with the analysis of $Z_0$ in the previous proposition}
  Since the r.h.s.\ of every rule of Appendix~\ref{app:skein} involves expressions with coefficients in $\coeffs$,
  all one needs to check if that for each diagram obtained by concatenation of one of the generators of $A(3)$ with a basis diagram,
  one can apply repeatedly substitutions of the form ``l.h.s. $\mapsto$ r.h.s.'' to reduce it to a linear combination of basis diagrams.
  This is not significantly harder for $i=1,2$, though best performed by computer.\footnote{This is in contradistinction with the analysis that
    was performed for $A(\obj3)$ itself (and more precisely, $\hat{A}(\obj3)$, cf the proof of Proposition~\ref{prop:A3top}),
    which involved some nontrivial overlap between various equations derived from those rules.}

  Now consider the natural composition map from $B(\obj 3,\obj i)\otimes_{A(\obj i)} B(\obj i,\obj 3)$ to
  $A(\obj3)$. It is clear that its image sits inside $J^{(i)}(\obj3,\obj3)$ (by the cut length property, cf first part of
  Lemma~\ref{lem:ideals}). It is then a simple exercise, which is left to the reader, to show that any element of $\mathcal B^{(i)}(\obj3,\obj3)$
  can be written as a product of a basis element of $B(\obj 3,\obj i)$ and a basis element of $B(\obj i,\obj 3)$.
  For $i=1$, note that the way $\mathcal B^{(1)}(\obj3,\obj3)$ was presented in Proposition~\ref{prop:tildeA3}
  forms a square array such that the diagram at location $(i,j)$ is the product of the $i^{\rm th}$ basis element of $B(\obj3,\obj 1)$ times the $j^{\rm th}$ basis
  element of $B(\obj 1,\obj 3)$ as provided in \S\ref{ssec:bimodules}.

  Finally, we analyse the representation content of $B(\obj3,\obj i)$ as a bimodule (and similarly for $B(\obj i,\obj3)$).
  We work over the fraction field once more, though for simplicity of notation we omit the subscript ``loc'' used elsewhere, which is implicit in the rest of this proof.

  The actions of $A(\obj3)$ and $A(\obj i)$ are commutants of each other, because $B(\obj3,\obj i)$
  possesses a cyclic element for both actions
  (e.g., for $i=2$, pick {\tikzset{every picture/.style={line cap=round,sharp corners,scale=.3}}\input diag3_2_1.tex\relax}); $A(\obj i)$ being commutative for $i\le 2$, this implies that $B(\obj3,\obj i)$ contains each irreducible representation
  of $A(\obj3)$ at most once. Conversely, because of the surjectivity of the composition map above to $J^{(i)}(\obj3,\obj3)$,
  it must contain at least once each irreducible representation
  contained in the latter. These two constraints combined with the known dimension of $B(\obj 3,\obj i)$ determine uniquely its content, namely
  $B(\obj 3,\obj i)$ contains exactly the irreducible representation of $\Irr^{i}$, defined in \eqref{eq:irr}, once for $i=0,1$,
  and $B(\obj 3,\obj 2)$ contains exactly each irreducible representation of $\Irr^{\le2}=\bigcup_{j=0}^2 \Irr^{j}$ once; more precisely, one can write for the latter
\[
B(\obj 2,\obj 3)\cong \bigoplus_{R\in \Irr^{\le 2}} V_{\obj2,R}\otimes V_{\obj3,R}
\]
as a $A(\obj2)$-$A(\obj3)$ bimodule, where $V_{\obj2,R}$ is the $A(\obj2)$-module given by the eigenvalues in \eqref{eq:Heigenvalues},
and $V_{\obj3,R}$ is the $A(\obj3)$-module defined in Lemma~\ref{lem:A3rep}; and similarly for $B(\obj3,\obj2)$.

This is easily seen to imply injectivity of the composition map.
\end{proof}

\subsection{Gram determinant}\label{sec:Gram3}
We give without proof the determinant of the Gram matrix of the basis
$\mathcal B(\obj3,\obj3)$, obtained by computer:
\begin{multline}\label{eq:Gram3}
	\scriptstyle  \frac{\left[4\nv\right]^{80}\left[6\nv\right]^{361}\left[4\nv+2\right]^{5}\left[6\nv+2\right]^{75}\left[6\nv+4\right]^{10}\left[4\nv+1\right]^{65}\left[5\nv\right]^{75}\left[6\nv+3\right]^{45}\left[5\nv+1\right]^{15}\left[6\nv+1\right]^{80}\left[6\nv+5\right]}{\left[\nv\right]^{175}\left[\nv+1\right]^{60}\left[2\nv\right]^{267}\left[3\nv\right]^{286}\left[3\nv+1\right]^{70}\left[3\nv+2\right]^{10}\left[2\nv+1\right]^{50}}
	\\\scriptstyle
	\frac{\left[2\nv-2\right]^{5}\left[3\nv-3\right]^{45}}{\left[\nv-1\right]^{50}}
	\frac{\left[2\nv-4\right]^{10}}{\left[\nv-2\right]^{10}}
	{\color{red}\left[5\nv-1\right]^{80}\left[4\nv-1\right]^{15}\left[3\nv-1\right]^{65}
		\frac{\left[4\nv-2\right]^{75}}{\left[2\nv-1\right]^{70}}
		\left[\nv-5\right]}
\end{multline}
The factors in red will be discussed below.

\subsection{\texorpdfstring{The braiding of $A(\obj3)$}{The braiding of A(3)}}\label{ssec:braidA3}
Define
\begin{equation}\label{eq:HtoSA3}
  S_i
  =
  \frac{[2\,\nv][3\,\nv]^2}{[\nv+1][6\,\nv]^2}\left(
    S_{H^2}\, H_i^2+S_H\, H_i+S_I+S_K\, K_i + S_U\, U_i
  \right)\qquad i=1,2
\end{equation}
where the coefficients are given in \eqref{eq:coeffsS}--\eqref{eq:coeffsSb}.
Note that the image of $S_i$ in $\hat{A}(\obj3)$ reproduces \eqref{eq:topHtoS}.

\begin{prop}\label{prop:A3S}
The $S_i$ satisfy
  \begin{align}\label{eq:braidA3}
0&=S_1S_2S_1-S_2S_1S_2
\\
0&=  (S_i+1)(S_i-\qq^{-2})(S_i-\qq^2\pp^2)(S_i+\pp^6)(S_i-\pp^{12})\qquad i=1,2 \label{eq:Squintic}
  \end{align}
\end{prop}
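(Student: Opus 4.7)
By Corollary~\ref{cor:A3semisimple}, $A(\obj3)$ embeds into the direct sum of matrix algebras over $\coeffs$ corresponding to the $11$ irreducible representations $\rho_R$ of $A(\obj3)_{\mathrm{loc}}$ listed in Lemma~\ref{lem:A3rep}; it therefore suffices to verify both identities inside each of these representations.

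For the quintic \eqref{eq:Squintic}, the plan is to reduce to $A(\obj2)$. For $i\in\{1,2\}$, the relations in \eqref{eq:A3rels} that involve only the single index $i$ (in particular the cubic-type relation for $H_i$) are exactly the defining relations of $A(\obj2)$ as described in Proposition~\ref{prop:A2}. Hence there is a $\coeffs$-algebra homomorphism $\iota_i\colon A(\obj2)\to A(\obj3)$ sending the generators $H,K,U$ of $A(\obj2)$ to $H_i,K_i,U_i$. By Proposition~\ref{prop:crossing}, the right-hand side of \eqref{eq:HtoSA3} is precisely the image under $\iota_i$ of the braiding element $S\in A(\obj2)$; in particular $\iota_i(S)=S_i$. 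Since $A(\obj2)$ is a free $\coeffs$-module of rank $5$, it embeds into $A(\obj2)\otimes_{\coeffs}\bQ(\pp,\qq)$, which splits as a product of five one-dimensional blocks on which $S$ acts by the five scalars of \eqref{eq:eigenvalues}. Thus $S$ satisfies the quintic \eqref{eq:Squintic} in $A(\obj2)$, and consequently $S_i=\iota_i(S)$ satisfies it in $A(\obj3)$.

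For the braid relation \eqref{eq:braidA3}, I would verify it directly in each of the $11$ irreducible representations $\rho_R$. In each one, $\rho_R(H_1)$ and $\rho_R(H_2)$ are given explicitly in Lemma~\ref{lem:A3rep}; the matrices $\rho_R(K_i)$ and $\rho_R(U_i)$ are then obtained as polynomials in $\rho_R(H_i)$ via the Lagrange interpolation formulas of Remark~\ref{rem:A3def}, and $\rho_R(S_1),\rho_R(S_2)$ follow from \eqref{eq:HtoSA3}. The identity $\rho_R(S_1)\rho_R(S_2)\rho_R(S_1) = \rho_R(S_2)\rho_R(S_1)\rho_R(S_2)$ is then confirmed by direct matrix multiplication; it is trivial for the one-dimensional representation $\rho_{\obj3,I}$, and routine in the other ten cases.

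The main obstacle is the computational burden of this braid verification in the larger blocks, especially the five-dimensional representation $\rho_{\obj3,L}$, where both sides are $5\times 5$ matrices whose entries are rational expressions of moderate complexity in $\pp$ and $\qq$. The calculations are mechanical and best automated in SageMath or Macaulay2.
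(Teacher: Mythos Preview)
Your proposal is correct and follows essentially the same strategy as the paper: both reduce to checking the identities in the eleven irreducible representations of Lemma~\ref{lem:A3rep}, invoking the injectivity of Corollary~\ref{cor:A3semisimple}. The paper actually writes down the matrices $\rho_{\obj3,R}(S_1)$ explicitly for the non-top representations (equations \eqref{eq:Sirrepc}--\eqref{eq:Sirrepf}), notes that they are lower triangular with diagonal entries drawn from $\{-1,\qq^{-2},\qq^2\pp^2,-\pp^6,\pp^{12}\}$ (making the quintic immediate), and that with $S_2=PS_1P$ the product $\rho_{\obj3,R}(S_1S_2S_1)$ is antidiagonal with constant entry---the same structural shortcut used after \eqref{eq:braidmatrix}---so the braid relation follows.

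Your argument for the quintic via the homomorphism $\iota_i\colon A(\obj2)\to A(\obj3)$ is a clean conceptual alternative that avoids inspecting matrices at all; the paper does not make this reduction explicit, though it is of course implicit in the construction. For the braid relation, your route through Lagrange interpolation to recover $\rho_R(K_i),\rho_R(U_i)$ and then $\rho_R(S_i)$ is equivalent to the paper's direct listing of the $S_i$-matrices; the paper's triangular/antidiagonal observation is a minor efficiency gain over raw matrix multiplication, but both amount to the same computer-assisted verification.
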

\begin{proof}
We check the relations explicitly in the various representations of Lemma~\ref{lem:A3rep}.
The expressions of $S_1$ and $S_2$ in
the first six is given in \eqref{eq:Sirrep}--\eqref{eq:Sirrepb}; the remaining ones are:
\begin{align}\label{eq:Sirrepc}
\rho_{\obj3,X_2}(S_1)  &=
\left(\!\begin{smallmatrix}
-1&0&0&0\\
-\qq^{-1}&\qq^{-2}&0&0\\
\qq&\left(\pp-1\right)\left(\pp+1\right)&\qq^{2}\pp^{2}&0\\
1&-\qq^{-1}\pp^{2}\left(\pp^{2}-1+\pp^{-2}\right)&\qq\pp^{4}\left(\pp^{2}-1+\pp^{-2}\right)&-\pp^{6}
\end{smallmatrix}\!\right)
\\\label{eq:Sirrepdd}
\rho_{\obj3,Y_2}(S_1)  &=
\left(\!\begin{smallmatrix}
\qq^{-2}&0&0\\
\qq^{-1}&-1&0\\
1&\qq^{-1}\pp^{2}\left(\pp^{2}-\qq\pp^{-1}\right)\left(\pp^{2}+\qq\pp^{-1}\right)&-\pp^{6}
\end{smallmatrix}\!\right)
\\\label{eq:Sirrepd}
\rho_{\obj3,Y_2^*}(S_1)&=
\left(\!\begin{smallmatrix}
\qq^{2}\pp^{2}&0&0\\
-\qq\pp&-1&0\\
1&-\qq^{-1}\pp^{3}\left(\qq\pp^{2}-\pp^{-2}\right)\left(\qq\pp^{2}+\pp^{-2}\right)&-\pp^{6}
\end{smallmatrix}\!\right)
\\\label{eq:Sirrepe}
\rho_{\obj3,L}(S_1)&=
\left(\!\begin{smallmatrix}
\qq^{-2}&0&0&0&0\\
\qq^{-1}\pp^{3}&-\pp^{6}&0&0&0\\
-1&\pp^{3}\left(\qq-\qq^{-1}\right)&-1&0&0\\
\qq\pp^{-3}&-\pp^{4}-\qq^{2}+1&\qq\pp\left(\pp^{4}-1+\pp^{-4}\right)&\qq^{2}\pp^{2}&0\\
\qq^{2}&-\pp^{5}\qq\left(\qq-\qq^{-1}\right)\left(\qq^{-1}\pp^{2}+\qq\pp^{-2}\right)&\qq\pp^{6}\left(\qq^{-1}\pp^{2}+\qq\pp^{-2}\right)\left(\pp^{4}-1+\pp^{-4}\right)&\pp^{7}\qq\left(\qq-\qq^{-1}\right)\left(\qq^{-1}\pp^{2}+\qq\pp^{-2}\right)&\pp^{12}
\end{smallmatrix}\!\right)\!\!\!\!\!\!\!\!\!\!\!\!\!\!\!\!\!\!\!\!\!\!\!\!\!\!
\\\label{eq:Sirrepf}
\rho_{\obj3,I}(S_1)&=\left(\!\begin{smallmatrix}
-\pp^6
\end{smallmatrix}\!\right)
\end{align}%
and $S_2$ is mapped to the same matrices rotated 180 degrees. $\rho_{\obj3,L}(S_1)$ was already given in
\eqref{eq:braidmatrix}; furthermore, all the matrices above satisfy the same properties given after \eqref{eq:braidmatrix},
namely they have entries in $\bZ[\pp^{\pm1},\qq^{\pm1}]$, they are lower triangular, their inverses are their conjugate under \eqref{eq:bar}, and $\rho_{\obj3,R}(S_1S_2S_1)$ is antidiagonal
with constant entries on the antidiagonal. \pcomments[gray]{annoying sign issues.
in the YBE proof, I fix these signs by identifying $P$ with the vertical flip of diagrams}
\end{proof}

\begin{rem}
Let us go back to the generic case, i.e., work in $\bQ(\pp,\qq)$.
Note that $A(\obj3)_{\mathrm{loc}}$ is generated by $S_1,S_2$; it is therefore according to \eqref{eq:braidA3}
a quotient of the three-string braid group algebra by the quintic relation \eqref{eq:Squintic}.
The latter is an algebra of dimension $600$;
this means that the relations of Proposition~\ref{prop:A3S} are {\em not} all the defining relations of $A(\obj3)_{\mathrm{loc}}$.
(Compare with Remark~\ref{rem:A3top}.)
To generate the whole ideal of relations of $A(\obj3)_{\mathrm{loc}}$,
one needs one more relation of degree $7$, which is too complicated to write down explicitly.
\end{rem}

\subsection{The truncated category}\label{ssec:trunccat}
We are now in a position to define $\threecat$, as advertised in the introduction. Its objects are $\obj0,\obj1,\obj2,\obj3$.
With the usual identification of morphisms with diagrams:
firstly, the endomorphism algebras $\End_{\threecat}(\obj k)$ are the algebras $A(\obj k)$, which are:
\begin{itemize}
	\item $A(\obj0)\cong \coeffs$, where we identify the unit with the empty diagram.
	\item $A(\obj1)\cong \coeffs$, where we identify the unit with the trivial diagram {\tikzset{every picture/.style={line cap=round,sharp corners,scale=.3}}\input diag1_1_0.tex\relax}.
	\item $A(\obj2)$ was defined in \S\ref{sec:twostring}, cf Definition~\ref{def:A2} and Proposition~\ref{prop:A2}.
	\item $A(\obj3)$ was defined in \S\ref{sec:threestring}, cf Definitions~\ref{def:tildeA3} and \ref{def:A3}.
\end{itemize}

The $\Hom_{\threecat}(\obj r,\obj s)$, $r\ne s$, are the bimodules $B(\obj r,\obj s)$, which are:
\begin{itemize}
	\item $B(\obj0,\obj1)=B(\obj1,\obj0)=0$.
	\item $B(\obj i,\obj 2)$ and $B(\obj 2,\obj i)$, $\obj i=\obj 0,\obj 1$ were defined in \S\ref{ssec:A2bi}.
	\item Finally, $B(\obj i,\obj3)$ and $B(\obj3,\obj i)$, $\obj i=\obj0,\obj1,\obj2$, were defined in \S\ref{ssec:bimodules}, see Proposition~\ref{prop:A2diag5} and \ref{prop:A3bimodules} for their bimodule structure.
\end{itemize}

Let $\Ideal$ be the tensor ideal generated by the various relations that we have imposed on our diagrammatic algebra $\coeffs\Rcat$, and which are
summarised for the reader's convenience in Appendix~\ref{app:skein}.
The construction of $\threecat$ was based on imposing these relations; this means that
we have a functor $\Phi$ from $\threecat$ to $\coeffs\Rcat/\Ideal$. As discussed in 
\S\ref{sec:functors}, we also have a full functor $\Psi_C$ from $\kk_q\Rcat$ to $\Icat(L_C)$ 
for each $C$ in our list of 10 points of the exceptional series,
which factors through $\kk_q\Rcat/\Ideal$ because the relations of $\Ideal$ interpolate those of the exceptional series;
we still call the resulting full functor $\Psi_C$.
\pcomments[gray]{yes, technically we don't even care that the functor is full,
	that has no impact on our proof of fullness} 

It is natural to conjecture that $\Phi$ is also a full functor, and that it makes $\threecat$ a full subcategory of $\coeffs\Rcat/\Ideal$.
Such conjectures are typically very difficult to prove. More generally, one may hope that the full interpolating category
$\Intcat$ is nothing but $\coeffs\Rcat/\Ideal$.


We prove here a more modest result. Let us denote $\Xi_C$ the composition of $\Phi$, of the base change $\theta_C: \coeffs \to \kk_{q^{1/s_C}}$, and of $\Psi_C$.
\begin{thm}\label{thm:full}
	For each exceptional Cartan type $C$, $\Xi_C$ is a full functor
	from $\threecat$ to $\Icat(L_C)$.
\end{thm}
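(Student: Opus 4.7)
The plan is to reduce fullness to a rank identity for the Gram pairing on each Hom space. By \cite[Proposition~3.4]{ETINGOF2006} the functor $\Psi_C$ is full, and since the tensor ideal $\Ideal$ maps to zero in $\Icat(L_C)$ by construction, $\Xi_C$ factors through $\coeffs\Rcat/\Ideal \to \Icat(L_C)$. The task therefore reduces to showing that for each $0\le r,s\le 3$, the diagrammatic basis of $B(\obj r,\obj s)$ maps under $\Xi_C$ to a spanning set of $\Hom_{\Icat(L_C)}(\otimes^r L_C,\otimes^s L_C)$.

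The main tool will be the symmetric trace pairing (Definition~\ref{defn:trace}), which is preserved by $\Xi_C$ because it is built out of the ribbon structure, and which is non-degenerate on the target since $\Icat(L_C)\otimes\bQ(q^{1/s_C})$ is a semisimple ribbon category with nonzero quantum dimensions. A kernel comparison (if $\Xi_{C,r,s}(x)=0$ then $\langle x,y\rangle_{\mathrm{src}}=\langle\Xi_{C,r,s}(x),\Xi_{C,r,s}(y)\rangle_{\mathrm{tgt}}=0$ for all $y$) then yields
\[
\mathrm{rank}\bigl(\theta_C(G(\obj r,\obj s))\bigr) \,\le\, \mathrm{rank}(\Xi_{C,r,s}) \,\le\, \dim \Hom_{\Icat(L_C)}(\otimes^r L_C,\otimes^s L_C),
\]
so it suffices to show equality of the leftmost and rightmost quantities for each choice of $\obj r,\obj s,C$; surjectivity over $\kk_{q^{1/s_C}}$ itself (as opposed to over the fraction field) will then follow along the same lines as in the proof of Proposition~\ref{prop:A2interp}.

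The small cases $r+s\le 3$ are immediate, being spanned by the generators $\id,\ev,\co,\mu,\lambda$, all of which are in the image of $\Xi_C$; and the case $r=s=2$ is Proposition~\ref{prop:A2interp}. For the remaining cases $r+s\in\{5,6\}$, I would use the explicit Gram determinants \eqref{eq:Gram23} and \eqref{eq:Gram3} (with the corresponding determinants for $B(\obj i,\obj 3)$, $\obj i<\obj 3$, deduced from the block decomposition of Lemma~\ref{lem:A3rep} and Proposition~\ref{prop:A3bimodules}). For the seven generic types $C\in\{A_2,G_2,D_4,F_4,E_6,E_7,E_8\}$, no factor vanishes under $\theta_C$ --- in particular the red factors of \eqref{eq:Gram2}, \eqref{eq:Gram23}, \eqref{eq:Gram3} survive --- so the source pairing remains non-degenerate of rank equal to $\mathrm{rank}\,B(\obj r,\obj s)$, which matches the target dimension read off the Bratteli diagram of \cite{cohen1996}.

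The main obstacle is the three degenerate cases $C\in\{\varnothing,\OSp,A_1\}$, where the Bratteli diagram collapses (as witnessed by the dashes in \eqref{eq:weights}) and the Gram determinant partially vanishes: the match between the source rank drop (controlled by the red factors) and the target dimension drop (controlled by the missing composition factors) must be checked case by case. I would carry this out by going through the red factors of \eqref{eq:Gram2}, \eqref{eq:Gram23}, \eqref{eq:Gram3} one at a time, using the explicit irreducible representations of $A(\obj3)$ in Lemma~\ref{lem:A3rep} and the bimodule decompositions of Proposition~\ref{prop:A3bimodules} to identify, for each vanishing factor, the specific composition factor of $\otimes^r L_C\otimes(\otimes^s L_C)^*$ that disappears at this specialization; in the $\osp(1|2)$ case the identification is aided by the Hopf algebra isomorphism of Proposition~\ref{prop:osp12}.
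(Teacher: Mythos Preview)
Your overall strategy --- comparing the rank of the specialized Gram matrix to the target dimension via the trace pairing --- is exactly the paper's approach. However, there are two issues worth flagging.

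First, a factual error: you claim that for the seven types $A_2, G_2, D_4, F_4, E_6, E_7, E_8$ no factor of the Gram determinants vanishes under $\theta_C$. This is false at $r+s=6$: the factor $[4\nv-2]^{75}/[2\nv-1]^{70}$ in \eqref{eq:Gram3} has a zero of order $5$ at $n=1/2$ (type $A_2$), and $[\nv-5]$ vanishes at $n=5$ (type $E_8$). Correspondingly, $\dim\End_{\fU_q(A_2)}(\otimes^3 L_{A_2}) = 75$ and $\dim\End_{\fU_q(E_8)}(\otimes^3 L_{E_8}) = 79$, not $80$. So $A_2$ and $E_8$ are also ``degenerate'' at this level and cannot be lumped with the generic cases.

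Second, your handling of the degenerate cases --- tracking which composition factors disappear via the explicit irreducibles of Lemma~\ref{lem:A3rep} --- is more laborious than necessary and not clearly sufficient (you would need to control how the irreducible blocks degenerate under specialization, not just which representations vanish). The paper instead uses a short linear-algebra lemma: writing $G = G_0 + x G_1$ with $x = [b\nv - a]$, the cofactor expansion of $\det G$ shows that the order of vanishing of $\det G$ at $x=0$ is at least the corank of $G_0$. One then simply tabulates the target dimensions (including the $\OSp$ row, obtained via Proposition~\ref{prop:osp12} as invariants of tensor powers of the $5$-dimensional $U_{q'}(\mathfrak{sl}_2)$-module) and observes that in every case the exponent of the relevant red factor equals the target corank, so the bound is saturated and your chain of inequalities collapses. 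No representation-by-representation analysis is needed.

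Finally, your reference to ``the same lines as Proposition~\ref{prop:A2interp}'' for surjectivity over $\kk_{q^{1/s_C}}$ (rather than just over the fraction field) is too vague; that proposition only checks ranks. The paper supplies a Smith-normal-form argument: writing $G_0 = P^t G_0' P$ with $G_0'$ the Gram matrix of a target basis and $P$ in Smith form with diagonal $\alpha_1,\dots,\alpha_r$, the coefficient of $x^{d-r}$ in $\det G$ is (up to a unit) $\prod_i \alpha_i^2 \cdot \det G_0'$, and since this coefficient is a unit in $\kk_{q^{1/s_C}}$ by inspection of the explicit determinant, each $\alpha_i$ is invertible, giving surjectivity integrally.
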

Note that this result is {\em not}\/ a consequence of the first fundamental theorem of tensor invariant theory, as we have selected a particular subset of diagrams
in each $\Hom$ space.
\begin{proof}
	Under the functors $\Psi_C$, $\theta_C$ and $\Phi$, the trace operations defined in Definition~\ref{defn:trace} on the various diagram categories are mapped to each other.
	In turn, this means that the inner product of the images of morphisms under $\Xi_C$ can be computed
	inside $\threecat$; the determinants of the Gram matrices of the bases of $A(\obj2)$, $B(\obj2,\obj3)$, $A(\obj3)$ have been computed in \eqref{eq:Gram2},
	\eqref{eq:Gram23}, \eqref{eq:Gram3}, respectively. Because the $\coeffs$-modules $\Hom_{\threecat}(\obj i,\obj j)$ are isomorphic to each other as long as $i+j$
	is kept constant, this covers all the cases $i+j=\obj4,\obj5,\obj6$; the cases $i+j\le \obj3$ are trivial and left as an exercise.
	
	The rank of the Gram matrix is the rank of the image of $\Hom_{\threecat}(\obj i,\obj j)$ inside $\Hom_{\Icat(L_C)}(\obj i,\obj j)$ as a free $\kk_{q^{1/s_C}}$-module.
	A simple linear algebra lemma states that the order of the zero of the determinant of the Gram matrix is an upper bound for the
	corank of the Gram matrix. Here, we need a slight refinement (because $\kk_{q^{1/s_C}}$ is not a field), so we go through the proof.
	Write $G$ for the Gram matrix, $G_0$ for its specialisation at $n=a/b$ corresponding to the point $C$ of the exceptional line, and $G=G_0+xG_1$ where
	$x=[b\nv-a]$. One has:
	\begin{equation}\label{eq:funnylemma}
		\det G = \det (G_0+xG_1)=\sum_{\substack{I\subseteq\{1,\ldots,d\}\\J\subseteq\{1,\ldots,d\}\\ |I|=|J|}}(-1)^{\sum_{i\in I}i+\sum_{j\in J}j}
		x^{|I|}
		\det (G_0)_{\bar I,\bar J} \det (G_1)_{I,J}
	\end{equation}
	where $\bar I$, $\bar J$ denote the complements of $I$, $J$, and $d$ is the size of $G$.
	
	If the rank of $G$ is $r$, then all minors of size $>r$ vanish, which means $\det G$ has a zero of order at least $d-r$.
	
	For reference, we show now the ranks of $\Hom_{\Icat(L_C)}(i,j)$ for $k=i+j\le 6$; marked in red are the places
	where the rank differs from the generic rank $d$, i.e., that of $\Hom_{\threecat}(i,j)$:
	\begin{equation}
		\begin{array}{r|c|cccccccccc}
			$C$ & n\backslash k & \obj0 & \obj1 & \obj2 & \obj3 & \obj4 & \obj5 & \obj6 \\ \hline
			\text{generic}& & 1 & 0 & 1 & 1 & 5 & 16 & 80  \\ \hline
			\varnothing& 1/5 & 1 & 0 & \color{red} 0 & \color{red} 0 & \color{red} 0 & \color{red} 0  & \color{red} 0  \\
			\OSp & 1/4 & 1 & 0 & 1 & 1 & 5 & 16 & \color{red} 65  \\
			A_1& 1/3 & 1 & 0 & 1 & 1 & \color{red} 3 & \color{red} 6  & \color{red} 15 \\
			A_2& 1/2 & 1 & 0 & 1 & 1 & 5 & 16 & \color{red} 75 \\
			G_2& 2/3 & 1 & 0 & 1 & 1 & 5 & 16 & 80  \\ 
			D_4& 1 & 1 & 0 & 1 & 1 & 5 & 16 & 80\\
			F_4& 3/2 & 1 & 0 & 1 & 1 & 5 & 16 & 80\\
			E_6& 2 & 1 & 0 & 1 & 1 & 5 & 16 & 80\\
			E_7& 3 & 1 & 0 & 1 & 1 & 5 & 16 & 80\\
			E_8& 5 & 1 & 0 & 1 & 1 & 5 & 16 & \color{red}79\\
		\end{array}
	\end{equation}
	The content of this table can for example be extracted from the data of \cite{cohen1996}, except for the $\OSp$ row; 
	for the latter we refer to Proposition~\ref{prop:osp12} whose isomorphism to $U_{q'}(\mathfrak{sl}(2))$ implies that
	the $\OSp$ row is nothing but the series of dimensions of invariant subspaces of tensor powers of the five-dimensional irreducible
	representation of $U_{q'}(\mathfrak{sl}(2))$, which can be easily computed.
	
	For $i+j=\obj4$, according to \eqref{eq:Gram2}, the Gram matrix has full rank $5$ for at exceptional points $C$ except for the two cases
	$C=\varnothing$, $C=A_1$, corresponding to the factors marked in red, where it is bounded from below by $0$ and $3$ respectively.
	Comparing with the table above, we find that these bounds must be saturated, so
	the rank of the image is the rank of the target space. In particular, this implies that over the
	fraction field $\bQ(q^{1/s_C})$, the map of morphisms is surjective.
	The exact same reasoning works for $i+j=\obj 5$ \eqref{eq:Gram23} and $i+j=\obj 6$ \eqref{eq:Gram3}: in each case, the factors in red provide us with
	the order of the zero of the determinant of the Gram matrix, thus providing a lower bound for the rank of the Gram matrix, which
	turns out to be saturated.
	
	In order to conclude over $\kk_{q^{1/s_C}}$,
	pick an $\kk_{q^{1/s_C}}$-basis of the target space as a free $\kk_{q^{1/s_C}}$-module. Then the image of the $d$ basis diagrams can be expressed as a linear
	combination of them; encode this into an $d\times r$ matrix $P$. $\kk_{q^{1/s_C}}$ is a PID, so without loss of generality, we may assume
	that $P$ takes the Smith normal form
	\[
	P=\begin{pmatrix}
		\alpha_1 & 0 & \cdots & 0 \\
		0 & \alpha_2 & & 0  \\
		0 & 0 & \ddots & \vdots \\
		\vdots & & & \alpha_r \\
		0 & & \cdots & 0 \\
		\vdots & & & \vdots\\
		0 & & \cdots & 0
	\end{pmatrix}
	\]
	Now rewrite the Gram matrix as $G_0=P^t G_0' P$ where $G_0'$ is the Gram matrix of the chosen basis.
	By construction $G_0$ is zero outside the top left $r\times r$ square, so all its $r\times r$ minors are zero except
	(possibly) the one involving the first $r$ rows and columns. Referring to \eqref{eq:funnylemma}, this means that the coefficient
	of $x^{n-r}$ is up to a sign that minor times $\det(G_1)_{I,J}$, where $I=J=\{r+1,\ldots,d\}$. More explicitly that coefficient is
	\[
	\prod_{i=1}^r \alpha_i^2\ \det G_0'\ \det(G_1)_{I,J}
	\]
	Now by inspection of the explicit expression of the determinant of the Gram matrix, it is an invertible element of $\kk_{q^{1/s_C}}$.
	Therefore all its divisors are, which means the $\alpha_i$ are invertible. This proves surjectivity.
	
	We conclude that $\Xi_C$ is a full functor.
\end{proof}

Recall, Definition~\ref{def:interpalgebra} which defines an interpolating algebra. As a corollary of Theorem~\ref{thm:full}, we have:
\begin{prop}\label{prop:A3interp} The algebra $A(\obj3)$ is an interpolating algebra.
\end{prop}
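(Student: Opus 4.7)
The plan is to verify the three conditions of Definition~\ref{def:interpalgebra} in turn, modelling the argument on the proof of Proposition~\ref{prop:A2interp}. The first two properties are essentially already established.

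First, Corollary~\ref{cor:A3semisimple} tells us that $A(\obj3)$ is a rank $80$ free $\coeffs$-module (the underlying freeness and finite generation), and that $A(\obj3)_{\mathrm{loc}} = A(\obj3)\otimes_\coeffs \bQ(\pp,\qq)$ is a direct sum of matrix algebras indexed by the irreducible representations listed in Lemma~\ref{lem:A3rep}. This directly gives split semisimplicity over $\bQ(\pp,\qq)$. Non-degeneracy of the bilinear form over $\bQ(\pp,\qq)$ follows from the explicit Gram determinant formula \eqref{eq:Gram3}, which is a non-zero element of $\coeffs$, hence invertible in $\bQ(\pp,\qq)$. The ideal structure required by the definition (i.e., the chain $J^{(k)}(\obj3,\obj3)$) has already been described in Proposition~\ref{prop:A3bimodules2}.

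Second, for each exceptional Cartan type $C$ I invoke Theorem~\ref{thm:full}, which asserts that the composed functor $\Xi_C \colon \threecat \to \Icat(L_C)$ is full. Applied to endomorphisms of $\obj3$, this means the induced homomorphism
\[
\theta_C(3)\colon A_C(\obj3) = A(\obj3)\otimes_{\theta_C}\kk_{q^{1/s_C}} \longrightarrow \End_{\fU_q(C)}(\otimes^3 L_C)
\]
is surjective. What remains is to identify its kernel with the null space $\mathcal N_C$ of the inner product on $A_C(\obj3)$, and to check that this identification preserves the ideal filtration and the trace.

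Third, for the identification of the kernel, the point is that under $\Xi_C$ the trace and inner product of $\threecat$ are mapped to those of $\Icat(L_C)$, as observed in the proof of Theorem~\ref{thm:full}. Hence the kernel of $\theta_C(3)$ is contained in $\mathcal N_C$. Conversely, $\Icat(L_C)$ is a semisimple category whose inner product is non-degenerate, so the pullback of the null space along a full (hence surjective onto its image) $\kk_{q^{1/s_C}}$-linear map is exactly the kernel. Equivalently, the rank of $\theta_C(G(\obj3))$ coincides with the rank of the free $\kk_{q^{1/s_C}}$-module $\End_{\fU_q(C)}(\otimes^3 L_C)$ (this comparison of ranks is precisely what is carried out via the Smith normal form argument in the proof of Theorem~\ref{thm:full}, exploiting the explicit factorisation of \eqref{eq:Gram3}). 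So $A_C(\obj3)/\mathcal N_C \cong \End_{\fU_q(C)}(\otimes^3 L_C)$ as required.

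Finally, compatibility with the ideal structure is inherited from Proposition~\ref{prop:A3bimodules2} combined with the fact that the images of the $J^{(k)}(\obj3,\obj3)$ under $\Xi_C$ coincide with the corresponding cut-length ideals $J^{(k)}_{\Icat(L_C)}(\obj3,\obj3)$ (the ideals generated by morphisms factoring through $\otimes^j L_C$, $j\le k$), since the generating diagrams have matching cut length. The main obstacle in this entire proof is not the current step but the input results it depends on: the rank comparison using the Gram determinant in Theorem~\ref{thm:full}, which hinges on the explicit factorisation of \eqref{eq:Gram3} and the saturation of the dimension bounds at each exceptional point; granted those inputs, Proposition~\ref{prop:A3interp} follows by a direct assembly as above.
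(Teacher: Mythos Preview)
Your proof is correct and follows essentially the same approach as the paper's own proof: freeness via the explicit basis (you cite Corollary~\ref{cor:A3semisimple}, the paper cites Proposition~\ref{prop:tildeA3}, which amounts to the same thing), split semisimplicity from Lemma~\ref{lem:A3rep}, and the surjection onto $\End_{\fU_q(C)}(\otimes^3 L_C)$ from Theorem~\ref{thm:full}. If anything, you are more careful than the paper in spelling out why the kernel of $\theta_C(3)$ coincides with the null space $\mathcal N_C$ and why the ideal filtration is preserved.
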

\begin{proof} The underlying $\coeffs$-module is finitely generated and free since we have exhibited a basis in Proposition~\ref{prop:tildeA3}.
	After the base change $\coeffs\to \bQ(\pp,\qq)$, the algebra, $A(k)\otimes \bQ(\pp,\qq)$ is the direct sum of the matrix algebras given in Lemma~\ref{lem:A3rep}.
	t is non-degenerate.
	For $C$ an exceptional Cartan type, let $A_C(k)=A(k)\otimes_{\theta_C} \kk_{q^{1/s_C}}$ be the $\kk_{q^{1/s_C}}$-algebra given by the base change $\theta_C\colon \coeffs\to \kk_{q^{1/s_C}}$.
	Then there is a surjective homomorphism preserving the ideals and the trace map,
	\begin{equation*}
		A_C(k)/\mathcal{N}_C \cong \End_{\fU_{q}}(C)(\otimes^k L)
	\end{equation*}
	given in Theorem~\ref{thm:full}.
\end{proof}

\section{Dilute categories}\label{sec:dilute}

\subsection{The dilute category}\label{ssec:dilute}

We introduce a notion of ``dilute'' category attached to each of our diagrammatic categories.

Given a linear category $\Dcat$ whose objects are elements of $\bN$, we define a new linear category
$\dilute\Dcat$ whose objects are the same as those of $\Dcat$;
the morphisms are
\[
  \Hom_{\dilute\Dcat}(\obj k,\obj l)=\bigoplus_{\substack{a\subseteq \{1,\ldots,k\}\\ b\subseteq\{1,\ldots,l\}}}
  \Hom_{\dilutep\Dcat}(a,b)
\]
where $\Hom_{\dilutep\Dcat}(a,b)$ is simply a copy of $\Hom_{\Dcat}(|a|,|b|)$; we denote $\varphi_{\Dcat,a,b}$ the isomorphism
from the latter to the former.

The composition rule is
    \[
      \varphi_{\Dcat,a_1,b_1}(u) \circ \varphi_{\Dcat,a_2,b_2}(v) = \begin{cases} \varphi_{\Dcat,a_1,b_2}(u\circ v) & b_1=a_2 \\ 0 & b_1\ne a_2\end{cases} \qquad
      \begin{aligned}
        &a_1\subseteq \{1,\ldots,k\}
        \\
        &a_2,b_1\subseteq\{1,\ldots,l\}
        \\
        &b_2\subseteq\{1,\ldots,m\}
      \end{aligned}
    \]
The identity morphisms are $\dilute{\id}_k:=\sum_{a\subseteq\{1,\ldots,k\}} \varphi_{\Dcat,a,a}(\id_{|a|})$.

    If $\Dcat$ is monoidal, so is $\dilute\Dcat$, where
    \[
      \varphi_{\Dcat,a_1,b_1}(u) \otimes \varphi_{\Dcat,a_2,b_2}(v) = \varphi_{\Dcat,a_1\cup(a_2+k_1),b_1\cup(b_2+l_1)}(u\otimes v)\qquad
      \begin{aligned}
        &a_i \subseteq \{1,\ldots,k_i\}
        \\
        & b_i\subseteq \{1,\ldots,l_i\}
      \end{aligned}
    \]

 If $\Dcat$ is braided (resp.\ ribbon), so is $\dilute\Dcat$, where
 \begin{equation}\label{eq:Sdilute}
   \dilute S = S + \varphi_{\Dcat,\{1\},\{2\}}(\id_{\obj1})
   + \varphi_{\Dcat,\{2\},\{1\}}(\id_{\obj1})
   + \varphi_{\Dcat,\varnothing,\varnothing}(\id_{\obj0})
 \end{equation}

 There is a faithful functor from $\Dcat$ to $\dilute\Dcat$ which
 acts as the identity on objects and $\varphi_{\Dcat,\{1,\ldots,k\},\{1,\ldots,l\}}$ on $\Hom_\Dcat(k,l)$.
   
    Whenever we have a functor $\mathcal F$ between two such categories $\Dcat_1$ and $\Dcat_2$, we have a corresponding functor
    \[
      \dilute{\mathcal F}(\varphi_{\Dcat_1,a,b}(u)) = \varphi_{\Dcat_2,a,b}(\mathcal F(u))
    \]
    between $\dilute \Dcat_1$ and $\dilute \Dcat_2$ such that $\dilute{(\mathcal F\circ\mathcal G)} = \dilute{\mathcal F} \circ \dilute{\mathcal G}$.

      \begin{rem}
        We could have defined the category $\dilutep\Dcat$, whose objects are subsets of $\{1,\ldots,k\}$ for any $k$,
        and whose morphisms are the $\Hom_{\dilutep\Dcat}(a,b)$ introduced above; however, this definition is neither
        necessary nor convenient for our purposes.
      \end{rem}

      The category $\dilute\Pcat$ has a natural interpretation in terms of diagrams which are identical to the ones
      defined in \S\ref{ssec:pivot}, with the added possibility of marks along either boundary:
      \[
	\begin{tikzpicture}[bwwpicture]
          \begin{scope}
	\clip (1,0) -- (-1,0) arc(180:360:1); \fill[bgdfill] (1,0) -- (-1,0) arc(180:360:1);
        \draw[bgddraw]  (1,0) -- (-1,0);
  \end{scope}
  \node[fill=white,rectangle,inner sep=2pt,draw=black,scale=.99114,line width=.00281cm] {};
	\end{tikzpicture}
        \qquad
	\begin{tikzpicture}[bwwpicture]
          \begin{scope}
	\clip (-1,0) -- (1,0) arc(0:180:1); \fill[bgdfill] (-1,0) -- (1,0) arc(0:180:1);
        \draw[bgddraw] (-1,0) -- (1,0);
      \end{scope}
  \node[fill=white,rectangle,inner sep=2pt,draw=black,scale=.99114,line width=.00281cm] {};
    \end{tikzpicture}
      \]
      The points of contact of the (red) lines with the boundary correspond to the elements of the subset
      $a\subseteq \{1,\ldots,k\}$, whereas the new marks correspond to its complement, reading the boundary top to bottom;
      for instance,
      \[
        {\tikzset{every picture/.style={line cap=round,sharp corners,scale=.5}}\input dilute-ex.tex\relax}
      \]
      belongs to $\Hom_{\dilute\Pcat}(\obj4,\obj3)$, and more precisely to the summand indexed by subsets $a=\{1,2,3\}$ and $b=\{1,3\}$.

      The composition rule simply means that locations of marks should agree on the common boundary when concatenating, in which case these marks are erased; otherwise the result is zero.

      Also, the dilute braiding \eqref{eq:Sdilute} is nothing but
      \[
        \dilute S
   = \overrect + {\tikzset{every picture/.style={line cap=round,sharp corners,scale=.337}}\input dilute2_2_7.tex\relax} + {\tikzset{every picture/.style={line cap=round,sharp corners,scale=.337}}\input dilute2_2_6.tex\relax} + {\tikzset{every picture/.style={line cap=round,sharp corners,scale=.337}}\input dilute2_2_15.tex\relax}
 \]

      The same interpretation applies to our other categories, namely $\dilute\Rcat$ and $\dilute\threecat$; the latter will be discussed in detail in the next
      section.
      
      The set of objects of the category $\Icat(L_C)$, namely modules $L_C^{\otimes k}$, $k\in\bN$, is in natural
      bijection with $\bN$, and therefore $\Icat(L_C)$ has a dilute counterpart $\dilute{\Icat(L_C)}$ as well, which is the full subcategory of $\fU_{\kk_{q^{1/s_C}}}(C)$-mod
      whose objects are the modules $(L_C\oplus I_C)^{\otimes k}$, $k\in\bN$.
      An immediate corollary of Theorem~\ref{thm:full} is
      \begin{cor}
  For each exceptional Cartan type $C$, $\dilute{\Xi_C}$ is a full functor
  from $\dilute\threecat$ to $\dilute{\Icat(L_C)}$.
      \end{cor}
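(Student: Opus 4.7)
The plan is to reduce the fullness of $\dilute{\Xi_C}$ to the fullness of $\Xi_C$ itself (Theorem~\ref{thm:full}) by exploiting the manifest direct-sum decomposition of the dilute $\Hom$-spaces.

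First, I would unpack what fullness of $\dilute{\Xi_C}$ means on each $\Hom$-space. By construction,
\[
  \Hom_{\dilute\threecat}(\obj k,\obj l)=\bigoplus_{\substack{a\subseteq \{1,\ldots,k\}\\ b\subseteq\{1,\ldots,l\}}}\varphi_{\threecat,a,b}\bigl(\Hom_{\threecat}(\obj{|a|},\obj{|b|})\bigr)
\]
for $k,l\in\{\obj0,\obj1,\obj2,\obj3\}$. On the target side, $(L_C\oplus I_C)^{\otimes k}$ distributes over the direct sum, giving
\[
  (L_C\oplus I_C)^{\otimes k}\cong\bigoplus_{a\subseteq\{1,\ldots,k\}} L_C^{\otimes|a|},
\]
and hence an analogous decomposition of $\Hom_{\dilute{\Icat(L_C)}}((L_C\oplus I_C)^{\otimes k},(L_C\oplus I_C)^{\otimes l})$ indexed by pairs $(a,b)$, where the $(a,b)$ summand is $\Hom_{\Icat(L_C)}(L_C^{\otimes|a|},L_C^{\otimes|b|})$.

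Next, I would observe that $\dilute{\Xi_C}$ respects this decomposition summand by summand. Indeed, from the definition $\dilute{\mathcal F}(\varphi_{\Dcat_1,a,b}(u))=\varphi_{\Dcat_2,a,b}(\mathcal F(u))$, the restriction of $\dilute{\Xi_C}$ to the $(a,b)$-summand is, up to the isomorphisms $\varphi$, exactly $\Xi_C\colon\Hom_{\threecat}(\obj{|a|},\obj{|b|})\to \Hom_{\Icat(L_C)}(L_C^{\otimes|a|},L_C^{\otimes|b|})$. Since $k,l\le \obj3$, we have $|a|,|b|\le \obj 3$, so each such restriction is well-defined as a morphism in $\threecat$ and lands in the appropriate block.

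Finally, by Theorem~\ref{thm:full}, $\Xi_C$ is full, so each of these restrictions is surjective. A direct sum of surjective linear maps is surjective, and therefore the map $\dilute{\Xi_C}\colon \Hom_{\dilute\threecat}(\obj k,\obj l)\to \Hom_{\dilute{\Icat(L_C)}}((L_C\oplus I_C)^{\otimes k},(L_C\oplus I_C)^{\otimes l})$ is surjective for all $k,l\in\{\obj0,\obj1,\obj2,\obj3\}$. There is essentially no obstacle here; the only thing to verify carefully is that the $(a,b)$-indexed decomposition on the target really matches the one built into $\dilute{\Icat(L_C)}$, which follows from distributivity of $\otimes$ over $\oplus$ in $\fU_q(C)$-mod and the fact that $I_C^{\otimes m}\cong I_C$ acts as a unit.
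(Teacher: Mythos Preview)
Your argument is correct and is precisely the reason the paper calls this an ``immediate corollary'' of Theorem~\ref{thm:full}: the dilute $\Hom$-spaces on both sides decompose as direct sums indexed by pairs of subsets, $\dilute{\Xi_C}$ acts blockwise as $\Xi_C$, and surjectivity on each block gives surjectivity on the whole. There is nothing to add.
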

      
\subsection{The dilute truncated category}\label{ssec:diltrunc}
We provide for illustration a few bases of $\dilute B(r,s):= \Hom_{\dilute\threecat}(r,s)$:
\def\littleleftbrace{\left\{\vphantom{{\tikzset{every picture/.style={line cap=round,sharp corners,scale=.3}}\input dilute0_1_0.tex\relax}}\right.}
\def\littlerightbrace{\left.\vphantom{{\tikzset{every picture/.style={line cap=round,sharp corners,scale=.3}}\input dilute0_1_0.tex\relax}}\right\}}
\def\bigleftbrace{\left\{\vphantom{{\tikzset{every picture/.style={line cap=round,sharp corners,scale=.3}}\input dilute2_2_0.tex\relax}}\right.}
\def\bigrightbrace{\left.\vphantom{{\tikzset{every picture/.style={line cap=round,sharp corners,scale=.3}}\input dilute2_2_0.tex\relax}}\right\}}
\begin{align*}
  \dilute B(\obj0,\obj1)
  =\coeffs\text{-span of }\littleleftbrace&{\tikzset{every picture/.style={line cap=round,sharp corners,scale=.3}}\input dilute0_1_0.tex\relax}\littlerightbrace
  \\
  \dilute B(\obj1,\obj1)
  =\coeffs\text{-span of }\littleleftbrace&
                            {\tikzset{every picture/.style={line cap=round,sharp corners,scale=.3}}\input dilute1_1_0.tex\relax},\ {\tikzset{every picture/.style={line cap=round,sharp corners,scale=.3}}\input dilute1_1_1.tex\relax}
                            \littlerightbrace
  \\
  \dilute B(\obj1,\obj2)
  =\coeffs\text{-span of }\bigleftbrace&
                            \foreach\i in {0,...,4} { {\tikzset{every picture/.style={line cap=round,sharp corners,scale=.3}}\input dilute1_2_\i.tex\relax}\ifnum\i=4\else,\ \fi }
                            \bigrightbrace
  \\
  \dilute B(\obj2,\obj2)
  =\coeffs\text{-span of }\bigleftbrace&
                            \foreach\i in {0,...,4} { {\tikzset{every picture/.style={line cap=round,sharp corners,scale=.3}}\input dilute2_2_\i.tex\relax},\ }
  \\
                          &\foreach\i in {5,...,9} { {\tikzset{every picture/.style={line cap=round,sharp corners,scale=.3}}\input dilute2_2_\i.tex\relax},\ }
  \\
                          &\foreach\i in {10,...,15} { {\tikzset{every picture/.style={line cap=round,sharp corners,scale=.3}}\input dilute2_2_\i.tex\relax}\ifnum\i=15\else,\ \fi}\bigrightbrace
\end{align*}

      The representation content of these algebras and bimodules is easy to obtain from that of the nondilute one: they are related by the binomial transform
      \[
        \dilute V_{k,R} = \bigoplus_{j=0}^k \binom kj V_{j,R}
      \]
      where $V_{k,R}$ (resp.\ $\dilute V_{k,R}$) is the space of the irreducible representation of $A(k)$ (resp.\ $\dilute A(k)$) indexed by $R$.

      In terms of dimensions, we have
      \[
	\begin{array}{r|c|c|c|ccc|ccccccc}
          &{k\backslash R} 
          & I & L & X_2 & Y_2 & Y_2^\ast & X_3 & Y_3 & Y_3^\ast & A & C & C^\ast  \\ \hline
          &\obj0 & 1 &&&&&&&&&\\
          \llap{$\dim V_{n,R}$}&\obj1 & 0 & 1 &&&&&&&\\
          &\obj2 & 1 & 1 & 1 & 1 & 1 \\
          &\obj3 & 1 & 5 & 4 & 3 & 3 & 1 & 1 & 1 & 3 & 2 &2 \\\hline
          &\obj0 & 1 &&&&&&&&&\\
          \llap{$\dim \dilute V_{n,R}$}&1 & 1 & 1 &&&&&&\\
          &\obj2 & 2 & 3 & 1 & 1 & 1 \\
          &\obj3 & 5 & 11 & 7 & 6 & 6 &  1 & 1 & 1 & 3 & 2 &2 
        \end{array}
      \]
      The rank of $\dilute B(r,s)$ as a free $\coeffs$-module is $\sum_R \dim \dilute V_{r,R}\dim \dilute V_{s,R}$.
      As expected, it only depends on $r+s$ and forms the sequence $(1,1,2,5,16,62,287)$ for $r+s=0,\ldots,6$.

      The category $\dilute\threecat$ possesses an increasing sequence of ideals $\dilute J{}^{(i)}$ given by cut length,
      cf \S\ref{ssec:ideals}. 
      The rank of $\dilute J{}^{(i)}(\obj r,\obj s)/\dilute J{}^{(i-1)}(\obj r,\obj s)$ is $\sum_{R\in\mathit{Irr}^i} \dim \dilute V_{r,R} \dim \dilute V_{s,R}$.
      In particular, for the algebras $\dilute A(\obj k):=\End_{\dilute\threecat}(\obj k)$, we find:
      \[
        \begin{array} {c | c c c c}
          k \backslash i & 0 & 1 & 2 & 3 \\\hline
          \obj0 & 1 \\
          \obj1 & 1 & 1\\
          \obj2 & 4 & 9 & 3\\
          \obj3 & 25 & 121 & 121 &20
        \end{array}
      \]
      As is obvious from the diagrams, the top part of $\dilute A(k)$ is the same algebra $\hat{A}(k)$ as in $\threecat$.

We have the dilute analogue of Propositions~\ref{prop:A2interp} and \ref{prop:A3interp}, which follows directly from them:
\begin{prop}\label{prop:dilA3interp} The algebras $\dilute{A}(\obj2)$ and $\dilute{A}(\obj3)$  are interpolating algebras.
\end{prop}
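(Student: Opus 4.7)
The plan is to verify the three conditions of Definition~\ref{def:interpalgebra} for $\dilute{A}(\obj k)$, $k=\obj2,\obj3$, by reducing each to the corresponding statement already proved for the non-dilute algebras $A(\obj k)$ (Propositions~\ref{prop:A2interp} and \ref{prop:A3interp}) and the bimodules $B(\obj r,\obj s)$. The starting observation is that unfolding the definition of $\dilute\threecat$ in \S\ref{ssec:dilute} yields the decomposition
\[
\dilute{A}(\obj k)\;=\;\End_{\dilute\threecat}(\obj k)\;=\;\bigoplus_{a,b\subseteq\{1,\ldots,k\}}\varphi_{\threecat,a,b}\bigl(\Hom_\threecat(|a|,|b|)\bigr),
\]
with the multiplication $\varphi_{a_1,b_1}(u)\cdot\varphi_{a_2,b_2}(v)=\delta_{b_1,a_2}\varphi_{a_1,b_2}(u\circ v)$. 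Thus $\dilute{A}(\obj k)$ is a sort of ``generalised matrix algebra'' whose blocks are the bimodules $B(\obj r,\obj s)$ of $\threecat$.

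For the first condition (finitely generated and free over $\coeffs$), one simply observes that each block $B(|a|,|b|)$ is a finitely generated free $\coeffs$-module of known rank (computed in Proposition~\ref{prop:tildeA3} for $A(\obj3)$ and in \S\ref{ssec:A2diag}, \S\ref{ssec:bimodules} for the bimodules), so the direct sum is free of rank $\sum_{a,b}\mathrm{rank}\,B(|a|,|b|)$, matching the sequence $(1,1,2,5,16,62,287)$ recalled in \S\ref{ssec:diltrunc}.

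For the second condition, after base change to $\bQ(\pp,\qq)$ I will argue that $\dilute{A}(\obj k)$ is split semisimple with irreducible modules $\dilute V_{k,R}=\bigoplus_{j=0}^k\binom{k}{j}V_{j,R}$. The simplest way is to exhibit an explicit algebra isomorphism $\dilute{A}(\obj k)\otimes\bQ(\pp,\qq)\cong\bigoplus_R\End_{\bQ(\pp,\qq)}(\dilute V_{k,R})$: the $R$-isotypic projector decomposes on each block $B(|a|,|b|)$ according to the semisimple decomposition already available from Propositions~\ref{prop:A2interp}, \ref{prop:A3interp} and the bimodule structure of Proposition~\ref{prop:A3bimodules}, and the gluing across different $(a,b)$ is governed by the same ``matrix-like'' multiplication rule. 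Non-degeneracy of the inner product reduces to non-degeneracy on each block, which is the content of the Gram determinant calculations \eqref{eq:Gram2}, \eqref{eq:Gram23}, \eqref{eq:Gram3}.

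For the third condition, fix an exceptional Cartan type $C$ and write $V_C:=L_C\oplus I_C$. Since $I_C$ is the trivial module, $V_C^{\otimes k}\cong\bigoplus_{a\subseteq\{1,\ldots,k\}} L_C^{\otimes|a|}$, and hence
\[
\End_{\fU_q(C)}(V_C^{\otimes k})\;\cong\;\bigoplus_{a,b\subseteq\{1,\ldots,k\}}\Hom_{\fU_q(C)}\bigl(L_C^{\otimes|a|},L_C^{\otimes|b|}\bigr).
\]
The block-wise surjections $\theta_C\colon B(|a|,|b|)\otimes_{\theta_C}\kk_{q^{1/s_C}}\twoheadrightarrow\Hom_{\fU_q(C)}(L_C^{\otimes|a|},L_C^{\otimes|b|})$ come from Theorem~\ref{thm:full} (the fullness result for $\threecat$), and they are manifestly compatible with the multiplication rule on both sides (composition in $\threecat$ on the left and composition in $\Icat(L_C)$ on the right, gated by the Kronecker delta $\delta_{b_1,a_2}$ which corresponds to the projection onto matching summands of $V_C^{\otimes k}$). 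They also respect the cut-length ideals and the trace, again block-wise. Passing to the quotient by the null space $\mathcal N_C$ of the trace form yields the required isomorphism.

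The only step that is not pure bookkeeping is the block-wise fullness of Theorem~\ref{thm:full}, which we have already established; assuming that result, everything else assembles tautologically from the dilute construction. Hence the main obstacle is conceptual rather than technical: ensuring that the formal ``gating'' structure on morphisms in $\dilute\threecat$ matches the natural idempotent decomposition of $V_C^{\otimes k}$ into summands $L_C^{\otimes|a|}$, which is what the choice of definition of $\dilute{(\cdot)}$ in \S\ref{ssec:dilute} was designed to achieve.
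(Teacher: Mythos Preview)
Your proof is correct and is precisely the detailed unpacking of what the paper states in one line: the proposition ``follows directly from'' Propositions~\ref{prop:A2interp} and \ref{prop:A3interp}. The paper gives no further argument, so your block-by-block verification of the three conditions via the decomposition $\dilute{A}(\obj k)=\bigoplus_{a,b}\varphi_{a,b}(B(|a|,|b|))$ is exactly the intended reasoning made explicit.
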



\newcommand\coeffsu{{\kk_{\pp,\qq,\uu}}\index{coefficients}}%
\newcommand\coeffsuv{{\kk_{\pp,\qq,\uu,\vv}}\index{coefficients}}%

\section{The Yang--Baxter equation}\label{sec:yangbaxter}
We now proceed to define the $R$-matrix. As advertised in the introduction, 
it will interpolate the $R$-matrices of $U_q(C^{(1)})$ mapping $V_{\uu_1}\otimes V_{\uu_2}$ to $V_{\uu_2}\otimes V_{\uu_1}$,
where the $\uu_1$ and $\uu_2$ are ``spectral parameters'', affecting the action of $U_q(C^{(1)})$ but not that of the subalgebra $U_q(C)$, so 
that as a representation of $U_q(C)$, $V_{\uu_1}\cong V_{\uu_2}\cong V=L\oplus I$.
In \S\ref{ssec:specparam} we introduce more precisely extra variables $\uu$ and $\vv$ corresponding to ratios
of spectral parameters, and in \S\ref{ssec:norm} we change to a more
convenient convention of normalisation of our diagrams, which makes the expression of the $R$-matrix to be given in \S\ref{ssec:Rmat} more symmetric.
The rest of this section is devoted to studying (and proving) the various properties of the $R$-matrix.

\subsection{The spectral parameters}\label{ssec:specparam}
In this section we introduce additional variables $\uu$, $\vv$ (corresponding to ratios of spectral parameters, e.g., 
in the Yang--Baxter equation \eqref{eq:YB}, we shall substitute $\uu=\uu_1/\uu_2$, $\vv=\uu_2/\uu_3$),
following the prescription of \S\ref{ssec:blowups}.
We extend the notation of Section~\ref{sec:blowup} and define
\begin{equation*}
	[a_n\nv + a_x\xx +a_y\yy + a] \coloneqq \frac{\pp^{a_n}\uu^{a_x}\vv^{a_y}\qq^{a}-\pp^{-a_n}\uu^{-a_x}\vv^{-a_y}\qq^{-a}}{\qq-\qq^{-1}}
\end{equation*}
for $a_n,a_x,a_y,a\in\bZ$. Heuristically, $\uu=q^\xx$ and $\vv=q^\yy$.

The ring $A_{\pp,\qq,\uu,\vv} \subset \bQ(\pp,\qq,\uu,\vv)$ is then defined to be the subring generated by $\pp^\pm,\uu^\pm,\vv^\pm,\qq^\pm$
and $[a_n\nv + a_x\xx +a_y\yy + a]$ for $a_n,a_x,a_y,a\in\bZ$.

The homomorphism $\psi\colon A_{\pp,\qq,\uu,\vv}\to \bQ[\nv,\xx,\yy]$ is defined by $\pp,\qq,\uu,\vv\mapsto 1$ and
\begin{equation}\label{eq:psispectral}
	[a_n\nv + a_x\xx +a_y\yy + a]\mapsto a_n\nv + a_x\xx +a_y\yy + a \qquad\text{for $a_n,a_x,a_y,a\in\bZ$}
\end{equation}

$A_{\pp,\qq,\uu,\vv}$ possesses subrings $A_{\pp,\qq,\uu}$ and $A_{\pp,\qq}$ corresponding respectively
to elements with no dependence on $\vv$, or $\uu$ and $\vv$.

Finally, we allow some localisation: we define $\coeffsuv$ to be the subring of $\bQ(\pp,\qq,\uu,\vv)$ containing $A_{\pp,\qq,\uu,\vv}$,
$\coeffs$ (cf \S\ref{sec:spec}), and the inverses of elements of the form $[\xx+k\,\nv]$, $[\yy+k\,\nv]$ and $[\xx+\yy+k\,\nv]$, $k\in\bZ-\{0\}$.
It contains obvious subrings $\coeffsu$ and $\coeffs$. This localisation corresponds to the well-known fact that $R$-matrix of $U_q(C^{(1)})$
is not well-defined for all values of $\uu_1$ and $\uu_2$ (and correspondingly, there are specialisations of $\uu=\uu_1/\uu_2$ for which
$V_{\uu_1} \otimes V_{\uu_2}$ $V_{\uu_2}\otimes V_{\uu_1}$ are {\em not}\/ isomorphic).

\subsection{The change of normalisation}\label{ssec:norm}
We come back to the issue of normalisation of $\phi$ and $\tau$; the latter was fixed in \eqref{eq:values}, and we now discuss how to relax this normalisation condition.

Given an invertible element $\alpha\in \coeffs^\times$, consider the following map from $B(r,s)$ to $B(r,s)$
viewed as a free $\coeffs$-module with basis of diagrams $D$:
\[
D\in B(r,s)\mapsto \alpha^{(-|D|+r-s)/2}D
\]
where $|D|$ denotes the number of trivalent vertices of $D$, and we have used the easy lemma that $|D|\equiv r-s\pmod 2$.

In particular, note the action on the generators:
\begin{alignat*}{3}
  \cuprect &\mapsto&\ \alpha\ &\cuprect
  &\qquad
  \overrect&\mapsto \overrect
  \\
  \caprect &\mapsto&\ \alpha^{-1}&\caprect
  &\qquad
  \trivalentrect &\mapsto \trivalentrect
\end{alignat*}
Put together, these maps define an equivalence of categories between $\threecat$ and $\threecat(\alpha)$, where the latter
differs from $\threecat$ only in that:
\begin{itemize}
\item All the parameters stay the same (e.g., $\delta=
	\begin{tikzpicture}[bwwrect={(-.8,-.8)}{(.8,.8)},scale=.7]
		\draw  (0,0) circle [radius=0.5];		
	\end{tikzpicture}
$ is manifestly invariant) except $\phi$ and $\tau$ take their general form 
\begin{align*}
\phi &= \alpha\,\frac{\left[3\,\nv\right]\ \left[6\,\nv+2\right]\ \left[4\,\nv-2\right]}{\left[3\,\nv+1\right]\ \left[2\,\nv-1\right]} \\
	\tau &= \alpha\,\frac{[4\,\nv]}{[2\,\nv]}\left(\frac{[6\,\nv+2]}{[3\,\nv+1]}\frac{[4\,\nv-2]}{[2\nv-1]}\frac{[3\,\nv]}{[\nv]}+(\qq-\qq^{-1})^2[\nv+1]\frac{[5\nv]}{[\nv]}\right)
\end{align*}
To see this, apply the map above to the relation $\phi\,\delta=
\begin{tikzpicture}[bwwrect={(-1.5,-1)}{(1.25,1)},rotate=-90,scale=.9]
\draw (0,.5) -- (.5,.5) arc [start angle=90,end angle=-90,radius=0.5] -- (0,-.5);
\draw (0,.5) .. controls (-.4,.25) .. (-.5,.25) arc [start angle=90,end angle=270,radius=0.25] -- (-.5,-.25) .. controls (-.4,-.25) .. (0,-.5);
\draw (0,.5) .. controls (-.4,.75) .. (-.5,.75) arc [start angle=90,end angle=270,radius=0.75] -- (-.5,-.75) .. controls (-.4,-.75) .. (0,-.5);
\end{tikzpicture}
$, and similarly for $\tau$.
\item All the relations of its algebras and bimodules are transformed in the obvious way: given a relation
$
r = \sum_{i=1}^k \kappa_i D_i \in B(r,s)
$
where the $\kappa_i\in\coeffs$ and the $D_i$ are diagrams, replace it with
$
r(\alpha) = \sum_{i=1}^k \kappa_i \alpha^{(-|D_i|+r-s)/2}D_i
$.
\end{itemize}
The same argument works for the corresponding dilute categories, where diagrams are rescaled by a factor
of $\alpha^{(-|D|+r-s)/2}D$ where $r$ (resp.\ $s$) is now the number of left (resp.\ right) external legs.

In this section, we shall use a different (and perhaps less natural) value of $\alpha$ than
in the rest of the paper, where it was set to $1$; namely, we shall work in $\threecat(1/[\nv+1])$.
For instance, the skein relation \eqref{eq:skein} now takes the form
\begin{multline}\label{eq:skeinc}
	\frac{[6\nv]}{[3\nv][2\nv]}\:
	\left(\overcirc \ -\ 
          \undercirc\right)
        \ =
\\
	-(\qq-\qq^{-1})^3[\nv+1]\: \left(\Icirc
          \ - \ 
        \Ucirc	
	+\Kcirc 
          \ - \
          \Hcirc \right)
\end{multline}
This convention will also make the expression of the $R$-matrix more symmetric.

\subsection{The \texorpdfstring{$R$}{R}-matrix}\label{ssec:Rmat}
We define the $R$-matrix, denoted $\check R(\uu)$ by:
\begin{multline}\label{eq:Rmat}
  \check R(\uu) = \frac{1}{[\xx+\nv+1]\ [\xx-1]}
  \left(
    [\nv+1]-\frac{\pp^{-1}\uu^{-1} \dilute S - \pp\, \uu\, \dilute S{}^{-1}}{\qq-\qq^{-1}}[\xx]
\right.
\\
+\frac{\left[2\,\nv\right]\ \left[3\,\nv\right]\ \left[\nv+1\right]\ [\xx]}{\left[6\,\nv\right]\ \left[\xx+2\,\nv\right]}
\left(
-[\nv]K+T-\frac{1}{[\nv]}B-\frac{[2\xx]}{[\xx]\ [\xx+3\,\nv]}E\right.
\\
\left.\left.
+\frac{1}{[\nv]}\left(\frac{[6\,\nv]\ [\xx+2\,\nv]}{[3\,\nv]\ [\xx+3\,\nv]}-1\right)U
-\frac{1}{[\nv]}\left(\frac{[2\,\xx+12\,\nv]}{[\xx+6\,\nv]\ [\xx+3\,\nv]}+2\right)O\right)\right)
\end{multline}
where
\begin{align*}
K&={\tikzset{every picture/.style={line cap=round,sharp corners,scale=.3}}\input dilute2_2_2.tex\relax}
&
T&={\tikzset{every picture/.style={line cap=round,sharp corners,scale=.3}}\input dilute2_2_9.tex\relax}+{\tikzset{every picture/.style={line cap=round,sharp corners,scale=.3}}\input dilute2_2_10.tex\relax}+{\tikzset{every picture/.style={line cap=round,sharp corners,scale=.3}}\input dilute2_2_11.tex\relax}+{\tikzset{every picture/.style={line cap=round,sharp corners,scale=.3}}\input dilute2_2_12.tex\relax}
\\
U&={\tikzset{every picture/.style={line cap=round,sharp corners,scale=.3}}\input dilute2_2_1.tex\relax}
&
B&={\tikzset{every picture/.style={line cap=round,sharp corners,scale=.3}}\input dilute2_2_6.tex\relax}+{\tikzset{every picture/.style={line cap=round,sharp corners,scale=.3}}\input dilute2_2_7.tex\relax}+{\tikzset{every picture/.style={line cap=round,sharp corners,scale=.3}}\input dilute2_2_5.tex\relax}+{\tikzset{every picture/.style={line cap=round,sharp corners,scale=.3}}\input dilute2_2_8.tex\relax}
\\
O&={\tikzset{every picture/.style={line cap=round,sharp corners,scale=.3}}\input dilute2_2_15.tex\relax}
&
E&={\tikzset{every picture/.style={line cap=round,sharp corners,scale=.3}}\input dilute2_2_13.tex\relax}+{\tikzset{every picture/.style={line cap=round,sharp corners,scale=.3}}\input dilute2_2_14.tex\relax}
\end{align*}
and we also recall from \S\ref{ssec:dilute}
\begin{align*}
1=\dilute{\id}_{\obj2}&={\tikzset{every picture/.style={line cap=round,sharp corners,scale=.3}}\input dilute2_2_0.tex\relax}+{\tikzset{every picture/.style={line cap=round,sharp corners,scale=.3}}\input dilute2_2_5.tex\relax}+{\tikzset{every picture/.style={line cap=round,sharp corners,scale=.3}}\input dilute2_2_8.tex\relax}+{\tikzset{every picture/.style={line cap=round,sharp corners,scale=.3}}\input dilute2_2_15.tex\relax}
\\
   \dilute S &= \overrect[.89] + {\tikzset{every picture/.style={line cap=round,sharp corners,scale=.3}}\input dilute2_2_7.tex\relax} + {\tikzset{every picture/.style={line cap=round,sharp corners,scale=.3}}\input dilute2_2_6.tex\relax} + {\tikzset{every picture/.style={line cap=round,sharp corners,scale=.3}}\input dilute2_2_15.tex\relax}
\\
   \dilute S{}^{-1} &= \underrect[.89] + {\tikzset{every picture/.style={line cap=round,sharp corners,scale=.3}}\input dilute2_2_7.tex\relax} + {\tikzset{every picture/.style={line cap=round,sharp corners,scale=.3}}\input dilute2_2_6.tex\relax} + {\tikzset{every picture/.style={line cap=round,sharp corners,scale=.3}}\input dilute2_2_15.tex\relax}
\end{align*}

We claim that $\check R(\uu)\in\dilute A(\obj2)\otimes_{\coeffs} \coeffsu$.
As defined in \eqref{eq:Rmat}, $\check R(\uu)$ naively only makes sense after tensoring with the fraction field.
Most coefficients obviously live in $\coeffsu$, except the part
$\frac{\pp^{-1}\uu^{-1} \dilute S - \pp\, \uu\, \dilute S{}^{-1}}{\qq-\qq^{-1}}$ which requires discussion.
First, the ``dilute'' parts of $\dilute S$ and $\dilute S{}^{-1}$ are identical and will contribute a coefficient of $[\xx+\nv]$, so we can simplify
this expression to $\frac{\pp^{-1}\uu^{-1} S - \pp\, \uu\, S{}^{-1}}{\qq-\qq^{-1}}$. Now write
\[
\frac{\pp^{-1}\uu^{-1} S - \pp\, \uu\, S{}^{-1}}{\qq-\qq^{-1}}
=
\frac{\pp^{-1}\uu^{-1} - \pp\, \uu}{\qq-\qq^{-1}}S^{-1}
+
\frac{\pp^{-1}\uu^{-1} (S - S{}^{-1})}{\qq-\qq^{-1}}
\]
and apply the skein relation \eqref{eq:skeinc}.

\bcomments[gray]{I don't think it is confusing but $B$ is in use.}
For future purposes, we note that the combinations of diagrams in \eqref{eq:Rmat} are not algebraically independent; for example, we have
\begin{align}
  \label{eq:genrelsa}
T^2&=2K+\phi\, B
\\
E^2&=U+\delta\,O
\\  \label{eq:genrelsb}
E^2\dilute S&=p^{12}U+\delta\,O
\end{align}

\subsection{Main theorem}
Define $\check R_1(\uu)=\check R(\uu)\otimes \dilute{\id}_{\obj1}$, $\check R_2(\uu)=\dilute{\id}_{\obj1}\otimes\check R(\uu)$.
\begin{thm}\label{thm:main}
The $R$-matrix \eqref{eq:Rmat} satisfies the following identities:
\begin{itemize}
\item Special values:
\begin{align}\label{eq:specvals}
\lim_{\uu\to0}\check R(\uu) &= \dilute S
\\
\check R(1) &= -1
\\\label{eq:specvalsb}
\lim_{\uu\to\infty}\check R(\uu) &= \dilute S{}^{-1}
\end{align}
\item Invariance under bar involution:
\begin{equation}\label{eq:conj}
\overline{\check R(\uu)}=\check R(\uu)
\end{equation}
\item The unitarity equation:
\begin{equation}\label{eq:unit}
\check R(\uu) \check R(\uu^{-1}) = 1
\end{equation}
\item The Yang--Baxter equation:
\begin{equation}\label{eq:YBE}
\check R_1(\uu) \check R_2(\uu\,\vv) \check R_1(\vv) = \check R_2(\vv) \check R_1(\uu\,\vv) \check R_2(\uu)
\end{equation}
\item The crossing symmetry:
\begin{equation}\label{eq:crossing}
\Rot(\check R(\uu)) = \frac{\left[\xx\right]\ \left[\nv+\xx\right]\ \left[3\,\nv+\xx+1\right]\ \left[2\,\nv+\xx-1\right]}{\left[\nv+\xx+1\right]\ \left[\xx-1\right]\ \left[2\,\nv+\xx\right]\ \left[3\,\nv+\xx\right]} 
\ \check R(\pp^{-3}\uu^{-1})
\end{equation}
\end{itemize}
\end{thm}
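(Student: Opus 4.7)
The strategy is to reduce each identity to a finite-rank computation in either $\dilute A(\obj 2)\otimes_\coeffs \coeffsuv$ (for the special values, bar invariance, unitarity, and crossing symmetry) or $\dilute A(\obj 3)\otimes_\coeffs \coeffsuv$ (for the Yang--Baxter equation), both of which are free modules of rank $16$ and $287$ respectively. The special values \eqref{eq:specvals}--\eqref{eq:specvalsb} follow from direct manipulation of the rational prefactor: at $\uu=1$ we have $[\xx]=0$ which kills every term of \eqref{eq:Rmat} except the leading $[\nv+1]$, and the remaining scalar simplifies to $-1$ using $[-1]=-1$; the two limits $\uu\to 0,\infty$ are then obtained by tracking leading powers of $\uu$, so that either $\dilute S$ or $\dilute S^{-1}$ dominates as appropriate. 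Bar invariance \eqref{eq:conj} is immediate once one checks that every rational coefficient appearing in \eqref{eq:Rmat} lies in $\coeffsuv$ and is manifestly invariant under \eqref{eq:bar}, that $K,T,U,B,O,E$ are crossing-free and so bar-invariant, and that the bar involution exchanges $\dilute S \leftrightarrow \dilute S^{-1}$ and $\pp\,\uu \leftrightarrow \pp^{-1}\uu^{-1}$, preserving the combination $\pp^{-1}\uu^{-1}\dilute S - \pp\,\uu\,\dilute S^{-1}$.

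For unitarity \eqref{eq:unit}, one forms $\check R(\uu)\check R(\uu^{-1})$ and reduces the resulting expression using the multiplication table of $\dilute A(\obj 2)$, the auxiliary identities \eqref{eq:genrelsa}--\eqref{eq:genrelsb}, the skein relation \eqref{eq:skeinc}, and the relations of Appendix~\ref{app:skein}; after collecting diagrammatic terms, the rational functions in $\pp,\qq,\uu$ simplify to yield $1$. For the crossing symmetry \eqref{eq:crossing}, we compute the rotation map $\Rot$ term by term on \eqref{eq:Rmat}: $U$ and $O$ are fixed, $K$ is exchanged with $H$ (which must first be re-expressed in the working basis via \eqref{eq:Hcubed}), the quadruples $T$ and $B$ and the pair $E$ are permuted internally, and $\dilute S^{\pm 1}$ rotates to $\pp^{\pm 6}\dilute S^{\mp 1}$ up to framing corrections handled by \eqref{eq:ribbon}. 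Comparing the result with $\check R(\pp^{-3}\uu^{-1})$ multiplied by the prefactor in \eqref{eq:crossing} gives an equality in $\dilute A(\obj 2)\otimes_\coeffs \coeffsuv$ which is verified coefficient-by-coefficient in a basis.

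The Yang--Baxter equation is the principal difficulty. We interpret $\check R_1(\uu)$, $\check R_2(\uu\,\vv)$, $\check R_1(\vv)$ as elements of $\dilute A(\obj 3)\otimes_\coeffs \coeffsuv$ via the obvious embeddings $\dilute A(\obj 2)\otimes \dilute A(\obj 1)\hookrightarrow \dilute A(\obj 3)$ and $\dilute A(\obj 1)\otimes \dilute A(\obj 2)\hookrightarrow \dilute A(\obj 3)$. By Proposition~\ref{prop:dilA3interp} the algebra $\dilute A(\obj 3)$ is an interpolating algebra, so after tensoring with $\bQ(\pp,\qq)$ it becomes a direct sum of matrix algebras, and this decomposition remains faithful after the flat base change to $\bQ(\pp,\qq,\uu,\vv)$. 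Consequently it suffices to verify \eqref{eq:YBE} in each irreducible representation. The relevant representations of $\dilute A(\obj 3)$ are built from those of $A(\obj 3)$ listed in Lemma~\ref{lem:A3rep} by the dilute construction of \S\ref{ssec:dilute}, with the braiding on the dilute summands given by \eqref{eq:Sdilute}. In each block both sides of \eqref{eq:YBE} become explicit matrices with entries in $\coeffsuv$, reducing the equation to a finite set of scalar identities in the rational function field $\bQ(\pp,\qq,\uu,\vv)$. The main obstacle is not conceptual but computational: the total dimension of the blocks and the intricate dependence on the four parameters make a hand verification impractical, and we carry out the check using SageMath and Macaulay2, following the tutorial referenced in the introduction.
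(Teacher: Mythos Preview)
Your overall strategy matches the paper: special values and bar invariance are disposed of as elementary, and the Yang--Baxter equation is verified by passing to the irreducible representations of $\dilute A(\obj3)$ (the paper carries this out in \S\ref{ssec:YBEproof}, block by block according to cut length, exactly as you propose).

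Two points deserve comment.

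\textbf{Unitarity.} You propose to verify $\check R(\uu)\check R(\uu^{-1})=1$ by direct multiplication in $\dilute A(\obj2)$. The paper instead works representation-by-representation already at this stage (\S\ref{sec:unit}): it writes down the image of $\check R(\uu)$ in each of the five irreducible blocks of $\dilute A(\obj2)\otimes\bQ(\pp,\qq)$ (the one-dimensional blocks $X_2,Y_2,Y_2^*$, the three-dimensional block for $L$, the two-dimensional block for $I$) and checks unitarity as a matrix identity in each. Your direct approach would also work and is not wrong, but the paper's method has the side benefit of producing the spectral decomposition $\check R(\uu)=-\pi_{X_2}+\frac{[\xx+1]}{[\xx-1]}\pi_{Y_2}+\frac{[\xx-\nv-1]}{[\xx+\nv+1]}\pi_{Y_2^*}+\check R_L(\uu)+\check R_I(\uu)$, which is of independent interest.

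\textbf{Crossing symmetry.} Here your description of how $\Rot$ acts contains several errors that would derail the computation. You claim $U$ and $O$ are fixed, but in fact rotation swaps $U$ with $I$ (the non-dilute identity $\diag{dilute2_2_0}$); only $O$ is genuinely fixed. You say $B$ is ``permuted internally'', but writing $B=D+D^2$ one has $\Rot(D)=D$ while $\Rot(D^2)=E$, so $\Rot(B)=D+E\ne B$. Most seriously, you assert that $\dilute S^{\pm1}$ rotates to $\pp^{\pm6}\dilute S^{\mp1}$ ``up to framing corrections''; this is a confusion between rotation and the ribbon twist. The rotation map of Definition~\ref{defn:rot} sends $S\mapsto S^{-1}$ with no scalar factor (see the table in \S\ref{sec:diagrams}), and the dilute pieces $D$ and $O$ are rotation-invariant, so $\Rot(\dilute S)=\dilute S^{-1}$ exactly. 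The paper's organisation of the proof (\S\ref{sec:crossing}) is to list the correct two-cycles $(I,U)$, $(S,S^{-1})$, $(K,H)$, $(E,D^2)$ and fixed points $T,D,O$, then compare $\Rot(\check R^{\mathrm{poly}}(\uu))$ with $\check R^{\mathrm{poly}}(\pp^{-3}\uu^{-1})$ coefficient by coefficient; the difference turns out to be a multiple of the skein relation \eqref{eq:skeinc}, hence zero. Your strategy is the right one, but you must first correct the action of $\Rot$ before the comparison will close.
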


The proof of the equalities \eqref{eq:specvals}--\eqref{eq:conj} is elementary and left as an exercise.

In what follows, we provide proofs of the remaining identities of Theorem~\ref{thm:main}.

\begin{rem}
Applying the homomorphism $\psi$ in \eqref{eq:psispectral} to $\check R(\uu)$ results in the rational $R$-matrix $\check R(\xx)$, which was anticipated in
\cite{Westbury2003}, and which interpolates the $R$-matrices associated to {\em Yangians}\/ of exceptional Lie algebras that appear in \cite{chari1991a}.
\end{rem}

\subsection{Proof of the Yang--Baxter equation for the top part}\label{sec:YBEtop}
In this section, we show how one can prove the Yang--Baxter equation
\eqref{eq:YBE} directly, though we only give the details for the top part
${\dilute A}(\obj3)/{\dilute J}^{(2)}(\obj3,\obj3)\cong\hat{A}(\obj 3)$.
The $R$-matrix simplifies there to
\[
  \check R(\uu) = \frac{1}{[\xx+\nv+1]\ [\xx-1]}
  \left(
    [\nv+1]-\frac{\pp^{-1}\uu^{-1} S - \pp\, \uu\, S^{-1}}{\qq-\qq^{-1}}[\xx]
  \right)
\]
For practical purposes, one can get rid of the denominator before
substituting this expression into \eqref{eq:YBE}. Computing the difference
between the l.h.s.\ and the r.h.s.\ produces a Laurent polynomial in $\uu$
and $\vv$, all the coefficients of which must vanish. By direct calculation,
one finds that these coefficients give rise to 8 linearly independent equations:
\begin{align*}
  0&=S_1S_2S_1-S_2S_1S_2
  \\
  0&=S_2S_1S_2^{-1}-S_1^{-1}S_2S_1
  \\
  0&=S_2S_1^{-1}S_2^{-1}-S_1^{-1}S_2^{-1}S_1
  \\
  0&=S_1S_2S_1^{-1}-S_2^{-1}S_1S_2
  \\
  0&=S_1S_2^{-1}S_1^{-1}-S_2^{-1}S_1^{-1}S_2
  \\
  0&=S_1^{-1}S_2^{-1}S_1^{-1}-S_2^{-1}S_1^{-1}S_2^{-1}
  \\
  0&=  S_1^{-2}(S_1+ss^*)(S_1+1)(S_1-s)(S_1-s^*)-  S_2^{-2}(S_2+ss^*)(S_2+1)(S_2-s)(S_2-s^*)
  \\
  0&=
ss^*(S_2S_1^{-1}S_2-S_1S_2^{-1}S_1)
+(ss^*)^{2}(S_1^{-1}S_2S_1^{-1}-S_2^{-1}S_1S_2^{-1})
\\
&+s(s-1)s^*(s^*-1)(S_2S_1^{-1}+S_1^{-1}S_2-S_2^{-1}S_1-S_1S_2^{-1})
\\
&
+(s-1)(s^*-1)(S_2^{2}-S_1^{2})
+(s-1)^{2}(s^*-1)^{2}(S_2-S_1) 
\end{align*}
where for compactness we have written $s=\qq^{-2}$, $s^*=\qq^2\pp^2$.
The first 6 equations are equivalent to the braid relation \eqref{eq:braidtop}.
The seventh equation is a direct consequence of the cubic relation \eqref{eq:Scubic} satisfied by $S_1$ and $S_2$.
Finally, substituting $S_i^{-1}=-1+s^{-1}+s^*{}^{-1}+(s^{-1}+s^*{}^{-1}+(ss^*)^{-1})S_i-(ss^*)^{-1}S_i^2$ (from the cubic relation again) in the last equation results in
\begin{multline*}
S_1^{2}S_2S_1^{2}-S_2^{2}S_1S_2^{2}
+(s^*+s-1)(S_2S_1S_2^{2}+S_2^{2}S_1S_2-S_1^{2}S_2S_1-S_1S_2S_1^{2})
+S_1S_2^{2}S_1-S_2S_1^{2}S_2
\\
+(s^*+s-2)(s^*+s-1)(S_1S_2S_1-S_2S_1S_2)
-S_1^{2}S_2+S_1S_2^{2}-S_2S_1^{2}+S_2^{2}S_1
-S_1^{2}+S_2^{2}-S_1+S_2
\end{multline*}
which up to the braid relation \eqref{eq:braidtop} is nothing but the additional relation \eqref{eq:addrel} satisfied by $S_1$ and $S_2$.

In principle, one can use the same strategy to prove the Yang--Baxter equation
in the whole of $\dilute A(\obj3)$, either by using a noncommutative Gr\"obner
basis as in the proof of Proposition~\ref{prop:tildeA3},
or in fact by simple application
of the diagrammatic rules of Appendix~\ref{app:skein} --
as has been noted before, only the top part involves nontrivial overlap,
so once the top part has been shown to be zero using the reasoning above,
the remaining expression in ${\dilute J}^{(2)}(\obj3,\obj3)$ is amenable to direct
simplification using diagrammatic substitutions. We have succesfully
implemented both methods by computer, but neither proofs are human-readable.

Instead, we propose in what follows a proof of both unitarity equation
\eqref{eq:unit} and Yang--Baxter equation \eqref{eq:YBE} by going over
to representations, in the same spirit as some of the proofs of
\S\ref{sec:threestring}.

\subsection{Proof of unitarity}\label{sec:unit}
We refer to \S\ref{ssec:diltrunc} for the representation content of $\dilute A(2)$.

The top representations are one-dimensional, sending $S$ to $-1$, $\qq^{-2}$, $\qq^2\pp^2$, respectively,
and all other generators in \eqref{eq:Rmat} to $0$. Explicitly, one finds
\begin{align*}
\dilute\rho_{2,X_2}(\check R(\uu)) &= -1
\\
\dilute\rho_{2,Y_2}(\check R(\uu)) &= \frac{[\xx+1]}{[\xx-1]}
\\
\dilute\rho_{2,Y_2^*}(\check R(\uu)) &=\frac{[\xx-\nv-1]}{[\xx+\nv+1]}
\end{align*}
which obviously satisfies \eqref{eq:unit}.

There is a single cut length $1$ representation, of dimension $3$,
which one can think of as the action on the top part of $\dilute B(\obj2,\obj1)$, which is spanned by
\[
{\tikzset{every picture/.style={line cap=round,sharp corners,scale=.3}}\input dilute2_1_2.tex\relax}
\qquad
{\tikzset{every picture/.style={line cap=round,sharp corners,scale=.3}}\input dilute2_1_0.tex\relax}
\qquad
{\tikzset{every picture/.style={line cap=round,sharp corners,scale=.3}}\input dilute2_1_1.tex\relax}
\]

In this basis, the matrices of the generators occurring in \eqref{eq:Rmat} are
\begin{align*}
\dilute\rho_{2,L}(\dilute S) &= 
\begin{pmatrix}
-\pp^6 & 0 & 0
\\
0 & 0 & 1
\\
  0 & 1 & 0
\end{pmatrix}
&
\dilute\rho_{2,L}(T)&=\begin{pmatrix}
0 & 1 & 1
\\
\phi & 0 & 0
\\
\phi & 0 & 0
\end{pmatrix}
\\
\dilute\rho_{2,L}(B)&=\begin{pmatrix}
0 & 0 & 0
\\
0 & 1 & 1
\\
0 & 1 & 1
\end{pmatrix}
&
\dilute\rho_{2,L}(K)&=\begin{pmatrix}
\phi & 0 & 0
\\
0 & 0 & 0
\\
0 & 0 & 0
\end{pmatrix}
\end{align*}
all other matrices being zero.

It is natural to perform the change of basis $A=\left(\begin{smallmatrix}1&0&0\\0&1&1\\0&1&-1\end{smallmatrix}\right)$ to separate the trivial
eigenvector of all these matrices; then
\[
A\dilute\rho_{2,L}(\check R(\uu))A^{-1} = \left(\begin{smallmatrix}
-\frac{\left[\nv-\xx+1\right]\left[\xx+1\right]\left[2\nv-\xx\right]}{\left[\nv+\xx+1\right]\left[\xx-1\right]\left[2\nv+\xx\right]} 
\left(
1
-\frac{2\left[6\nv-2\xx\right]\left[2\nv\right]\left[3\nv\right]\left[\nv+1\right]}{\left[\nv-\xx+1\right]\left[\xx+1\right]\left[2\nv-\xx\right]\left[3\nv-\xx\right]\left[6\nv\right]} 
\right)
&\frac{\left[2\nv\right]\left[3\nv\right]\left[\xx\right]\left[\nv+1\right]}{\left[\nv+\xx+1\right]\left[\xx-1\right]\left[6\nv\right]\left[2\nv+\xx\right]} &0
\\
\frac{2\left[6\nv+2\right]\left[4\nv-2\right]\left[2\nv\right]\left[3\nv\right]^{2}\left[\xx\right]}{\left[3\nv+1\right]\left[2\nv-1\right]\left[\nv+\xx+1\right]\left[\xx-1\right]\left[6\nv\right]\left[2\nv+\xx\right]\left[\nv\right]} 
&1+\frac{2\left[6\nv+2\xx\right]\left[2\nv\right]\left[3\nv\right]\left[\nv+1\right]}{\left[\nv+\xx+1\right]\left[\xx-1\right]\left[6\nv\right]\left[2\nv+\xx\right]\left[3\nv+\xx\right]}&0
\\
0&0&-1
\end{smallmatrix}
\right)
\]

There is a single cut length $0$ representation, of dimension $2$,
which one can think of as the action on $\dilute B(\obj2,\obj0)$, which is spanned by
\[
{\tikzset{every picture/.style={line cap=round,sharp corners,scale=.3}}\input dilute2_0_1.tex\relax}
\quad
{\tikzset{every picture/.style={line cap=round,sharp corners,scale=.3}}\input dilute2_0_0.tex\relax}
\]
The corresponding matrices are
\begin{gather*}
\dilute\rho_{2,I}(\dilute S)=\begin{pmatrix}
\pp^{12} & 0
\\
0 & 1
\end{pmatrix}
\quad
\dilute\rho_{2,I}(H)=\begin{pmatrix}
\phi & 0
\\
0 & 0
\end{pmatrix}
\\
\dilute\rho_{2,I}(E)=\begin{pmatrix}
0 & 1
\\
\delta & 0
\end{pmatrix}
\quad
\dilute\rho_{2,I}(U)=\begin{pmatrix}
\delta & 0
\\
0 & 0
\end{pmatrix}
\quad
\dilute\rho_{2,I}(O)=\begin{pmatrix}
0 & 0
\\
0 & 1
\end{pmatrix}
\end{gather*}
all other matrices being zero.

Computing, one finds:
\[
\dilute\rho_{2,I}(\check R(\uu)) = \left(\begin{smallmatrix}
-\frac{\left[\nv-\xx+1\right]\left[\xx+1\right]\left[2\nv-\xx\right]\left[3\nv-\xx\right]}{\left[\nv+\xx+1\right]\left[\xx-1\right]\left[2\nv+\xx\right]\left[3\nv+\xx\right]} 
\left(
1
-\frac{\left[\nv+1\right]\left[2\nv\right]\left[3\nv\right]\left[2\xx\right]\left[6\nv-\xx\right]}{\left[6\nv\right]\left[\nv-\xx+1\right]\left[\xx+1\right]\left[2\nv-\xx\right]\left[3\nv-\xx\right]\left[\xx\right]} 
\right)
&-\frac{\left[2\nv\right]\left[3\nv\right]\left[2\xx\right]\left[\nv+1\right]}{\left[\nv+\xx+1\right]\left[\xx-1\right]\left[6\nv\right]\left[2\nv+\xx\right]\left[3\nv+\xx\right]} 
\\
-\frac{\left[4\nv\right]\left[6\nv+1\right]\left[5\nv-1\right]\left[3\nv\right]\left[2\xx\right]}{\left[\nv+\xx+1\right]\left[\xx-1\right]\left[6\nv\right]\left[2\nv+\xx\right]\left[3\nv+\xx\right]} 
& 1+\frac{\left[\nv+1\right]\left[2\nv\right]\left[3\nv\right]\left[2\xx\right]\left[6\nv+\xx\right]}{\left[6\nv\right]\left[\nv+\xx+1\right]\left[\xx-1\right]\left[2\nv+\xx\right]\left[3\nv+\xx\right]\left[\xx\right]} 
\end{smallmatrix}\right)
\]

In both representations above, one can check by direct calculation that the unitarity equation \eqref{eq:unit} is satisfied.

As a byproduct, we've found an alternative description (in $\bQ(\pp,\qq,\uu)$) of the $R$-matrix.
Recall the idempotents $\pi_R$ of $A(\obj2)$ with $R\in \mathit{Irr}^{\le2}=\{I,L,X_2,Y_2,Y_2^*\}$, cf \eqref{eq:A2idem}.
Because the top part of $\dilute A(\obj2)$ coincides with that of $A(\obj2)$, the idempotents $\pi_R$, $R\in
\mathit{Irr}^{2}=\{X_2,Y_2,Y_2^*\}$, are still central idempotents inside $\dilute A(\obj2)$. Then
\[
\check R(\uu) = -\pi_{X_2} + \frac{[\xx+1]}{[\xx-1]} \pi_{Y_2} + \frac{[\xx-\nv-1]}{[\xx+\nv+1]} \pi_{Y_2^*} + \check R_L(\uu) + \check R_{I}(\uu)
\]
where the last two terms are obtained by multiplying $\check R(\uu)$ by the central idempotents corresponding to irreps $\dilute V_{2,L}$
and $\dilute V_{2,I}$ respectively; their matrix representations are given above.

\subsection{Proof of Yang--Baxter equation}\label{ssec:YBEproof}
We now prove \eqref{eq:YBE} in each representation of $\dilute A(\obj3)$, cf. \S\ref{ssec:diltrunc}, grouping them by cut length as in \eqref{eq:irr}.

\subsubsection{Cut length three}
This is the top part of $\dilute A(\obj3)$, which is just $\hat{A}(\obj 3)$.
In these representations, all generators are sent to zero except $S_1$ and $S_2$ which are sent to \eqref{eq:Sirrep}--\eqref{eq:Sirrepb}.
One can easily compute the image of l.h.s.\ and r.h.s.\ of \eqref{eq:YBE} under such representations, and check that they are equal: in the same order of representations, one finds
\begin{gather*}
\left(\!\begin{array}{c}
-1
\end{array}\!\right)
\\
\left(\!\begin{array}{c}
\frac{\left[\xx+1\right]\ \left[\yy+1\right]\ \left[\xx+\yy+1\right]}{\left[\xx-1\right]\ \left[\yy-1\right]\ \left[\xx+\yy-1\right]}
\end{array}\!\right)
\\
\left(\!\begin{array}{c}
\frac{\left[-\nv+\xx-1\right]\ \left[-\nv+\yy-1\right]\ \left[-\nv+\xx+\yy-1\right]}{\left[\nv+\xx+1\right]\ \left[\nv+\yy+1\right]\ \left[\nv+\xx+\yy+1\right]}
\end{array}\!\right)
  \\
\left(\!\begin{array}{ccc}
  \frac{\left[\nv+1\right]\ \left[-\nv+\xx-1\right]\ \left[\xx+1\right]}{\left[\yy-1\right]\ \left[\nv+\yy+1\right]\ \left[\xx+\yy-1\right]\ \left[\nv+\xx+\yy+1\right]}&\frac{\left[\nv+1\right]\ \left[2\,\nv\right]\ \left[\xx+1\right]\ \left[\xx+\yy\right]}{\left[\nv\right]\ \left[\nv+\yy+1\right]\ \left[\xx+\yy-1\right]\ \left[\nv+\xx+\yy+1\right]}&\frac{\left[\xx+1\right]\ \left[-\nv+\yy-1\right]\ \left[\xx+\yy\right]\ \left[\nv+\xx+\yy\right]}{\left[\xx-1\right]\ \left[\nv+\yy+1\right]\ \left[\xx+\yy-1\right]\ \left[\nv+\xx+\yy+1\right]}\\
\frac{\left[-\nv+\xx-1\right]\ \left[\xx+\yy\right]}{\left[\yy-1\right]\ \left[\xx+\yy-1\right]\ \left[\nv+\xx+\yy+1\right]}&\frac{\left[\xx+\yy\right]\ \left[-\nv+\xx+\yy\right]+\left[\nv+1\right]}{\left[\xx+\yy-1\right]\ \left[\nv+\xx+\yy+1\right]}&\frac{\left[-\nv+\yy-1\right]\ \left[\xx+\yy\right]}{\left[\xx-1\right]\ \left[\xx+\yy-1\right]\ \left[\nv+\xx+\yy+1\right]}\\
\frac{\left[-\nv+\xx-1\right]\ \left[\yy+1\right]\ \left[\xx+\yy\right]\ \left[\nv+\xx+\yy\right]}{\left[\nv+\xx+1\right]\ \left[\yy-1\right]\ \left[\xx+\yy-1\right]\ \left[\nv+\xx+\yy+1\right]}&\frac{\left[\nv+1\right]\ \left[2\,\nv\right]\ \left[\yy+1\right]\ \left[\xx+\yy\right]}{\left[\nv\right]\ \left[\nv+\xx+1\right]\ \left[\xx+\yy-1\right]\ \left[\nv+\xx+\yy+1\right]}&\frac{\left[\nv+1\right]\ \left[-\nv+\yy-1\right]\ \left[\yy+1\right]}{\left[\xx-1\right]\ \left[\nv+\xx+1\right]\ \left[\xx+\yy-1\right]\ \left[\nv+\xx+\yy+1\right]}
\end{array}\!\right)
\\
\left(\!\begin{array}{cc}
-\frac{\left[\xx+1\right]}{\left[\yy-1\right]\ \left[\xx+\yy-1\right]}&-\frac{\left[\xx+1\right]\ \left[\xx+\yy\right]}{\left[\xx-1\right]\ \left[\xx+\yy-1\right]}\\
-\frac{\left[\yy+1\right]\ \left[\xx+\yy\right]}{\left[\yy-1\right]\ \left[\xx+\yy-1\right]}&-\frac{\left[\yy+1\right]}{\left[\xx-1\right]\ \left[\xx+\yy-1\right]}
\end{array}\!\right)
\\
\left(\!\begin{array}{cc}
\frac{\left[\nv+1\right]\ \left[-\nv+\xx-1\right]}{\left[\nv+\yy+1\right]\ \left[\nv+\xx+\yy+1\right]}&-\frac{\left[-\nv+\xx-1\right]\ \left[\xx+\yy\right]}{\left[\nv+\xx+1\right]\ \left[\nv+\xx+\yy+1\right]}\\
-\frac{\left[-\nv+\yy-1\right]\ \left[\xx+\yy\right]}{\left[\nv+\yy+1\right]\ \left[\nv+\xx+\yy+1\right]}&\frac{\left[\nv+1\right]\ \left[-\nv+\yy-1\right]}{\left[\nv+\xx+1\right]\ \left[\nv+\xx+\yy+1\right]}
\end{array}\!\right)
\end{gather*}

\subsubsection{Cut length two}\label{ssec:ybe2}
In this section we implicitly work over the fraction field in some intermediate steps, noting that this is harmless for the purpose of proving identities such as \eqref{eq:YBE}.

We can think of the cut length $2$ representations as arising from the action of $\dilute A(\obj3)$ on $\dilute B(\obj3,\obj2)/{\dilute J}^{(1)}(\obj3,\obj2)$.
Here's a basis of the latter:
\begin{gather*}
\foreach \i in {18, 17, 16, 15, 14, 13, 12, 11, 10, 9} { {\tikzset{every picture/.style={line cap=round,sharp corners,scale=.3}}\input dilute3_2_\i.tex\relax} }
\\
\foreach \i in { 8, 7, 6, 5, 4, 3, 2, 1, 0} { {\tikzset{every picture/.style={line cap=round,sharp corners,scale=.3}}\input dilute3_2_\i.tex\relax} }
\end{gather*}
The three representations can be obtained by multiplying on the right by $\pi_R$, $R\in\Irr^2$, see \eqref{eq:irr}. In fact, it is clear that in the second row we only need to consider
the first in the list of three diagrams with the same location of the mark; i.e.,
\begin{equation}\label{eq:dilutecutlength2}
\dilute V_{3,R} = V_{3,R} \oplus \text{span}\left\{
{\tikzset{every picture/.style={line cap=round,sharp corners,scale=.3}}\input dilute3_2_proj1.tex\relax},{\tikzset{every picture/.style={line cap=round,sharp corners,scale=.3}}\input dilute3_2_proj2.tex\relax},{\tikzset{every picture/.style={line cap=round,sharp corners,scale=.3}}\input dilute3_2_proj3.tex\relax}
\right\}
\qquad R\in \Irr^2
\end{equation}

The nondilute part $V_{3,R}$ has already been investigated as an $A(\obj3)$ representation; in particular, the action of $S_1$ and $S_2$ is known,
cf. \eqref{eq:Sirrepc}--\eqref{eq:Sirrepd}. Let us recover these matrices diagrammatically. 

First note that there is an easy eigenspace of $S_1$, namely $\text{span}\left\{{\tikzset{every picture/.style={line cap=round,sharp corners,scale=.3}}\input dilute3_2_18.tex\relax},{\tikzset{every picture/.style={line cap=round,sharp corners,scale=.3}}\input dilute3_2_15.tex\relax},{\tikzset{every picture/.style={line cap=round,sharp corners,scale=.3}}\input dilute3_2_13.tex\relax}\right\}$, with eigenvalue $-\pp^6$
(cf. Definition~\ref{def:basicrels}). This gives us an eigenvector for each $R\in \Irr^2$, see \eqref{eq:irr}:
\[
\omega_R= {\tikzset{every picture/.style={line cap=round,sharp corners,scale=.3}}\input diag3_2_proj1.tex\relax}\qquad R\in\Irr^2
\]
In diagrammatic terms, the antidiagonal matrix $P$ with $1$s on the antidiagonal is interpreted as the operation of vertical mirror image of diagrams
of $\dilute B(\obj3,\obj2)/{\dilute J}^{(1)}(\obj3,\obj2)$. Therefore, we set
\[
\alpha_R=P \omega_R = {\tikzset{every picture/.style={line cap=round,sharp corners,scale=.3}}\input diag3_2_proj2.tex\relax}\qquad R\in \Irr^2
\]
which is an eigenvector of $S_2$ with eigenvalue $-\pp^6$.

Define
\begin{align*}
  \beta_{Y_2}&=\qq\, S_1\alpha_{Y_2}-\qq^{-1}\alpha_{Y_2}-\qq\, \omega_{Y_2}
  \\
  \beta_{Y_2^*}&=-\qq^{-1}\pp^{-1}S_1\alpha_{Y_2^*}+\qq\pp\,\alpha_{Y_2^*}+\qq^{-1}\pp^{-1}\omega_{Y_2^*}
\end{align*}
Then the bases $(\alpha_R,\beta_R,\omega_R)$, $R=Y_2,Y_2^*$,  give rise to the matrices \eqref{eq:Sirrepdd}--\eqref{eq:Sirrepd}.

Finally, for the irrep $X_2$, we set
\begin{align*}
\gamma_{X_2}&=\qq^{-2}\pp^{-6}\frac{\left[2\,\nv-1\right]}{\left(q-q^{-1}\right)\left[2\right]\ \left[4\,\nv-2\right]}(S_1+1)(S_1-\qq^{-2})(S_1+\pp^6)\alpha_{X_2}-\frac{\left[\nv\right]\ \left[2\,\nv-1\right]\ \left[6\,\nv\right]}{\left[2\,\nv\right]\ \left[3\,\nv\right]\ \left[4\,\nv-2\right]}\omega_{X_2}
\\
\beta_{X_2}&=P \gamma_{X_2}
\end{align*}
The basis $(\alpha_{X_2},\beta_{X_2},\gamma_{X_2},\omega_{X_2})$ gives rise to the matrix \eqref{eq:Sirrepc}.

We now write the matrices for the various generators appearing in the definition \eqref{eq:Rmat} of the $R$-matrix.
For the second summand of \eqref{eq:dilutecutlength2}, we shall use the diagrammatic basis -- one could change bases so that $S_1$ is lower triangular in every block,
but there is no particular reason to do so.
We find:
\begin{align*}
  \rho_{\obj3,X_2}(\dilute S_1)&=\left(\!\begin{array}{c|c}
\begin{matrix}
-1&0&0&0\\
-\qq^{-1}&\qq^{-2}&0\\
\qq&-\left(\pp-1\right)\left(\pp+1\right)&\qq^{2}\pp^{2}&0\\
-1&-\qq^{-1}\pp^{2}\left(\pp^{2}-1+\pp^{-2}\right)&-\qq\pp^{4}\left(\pp^{2}-1+\pp^{-2}\right)&-\pp^{6}
\end{matrix}&0\\\hline
0&\begin{matrix}0&1&0\\1&0&0\\0&0&-1\end{matrix}
\end{array}\!\right)
\\
  \rho_{\obj3,X_2}(T_1)&=\left(\!\begin{array}{c|c}
    0&\begin{matrix}0&0&0 \\ 0&0&0 \\ 0&0&0 \\ 1&1&0\end{matrix}
    \\\hline
    \begin{matrix}\left(\qq-\qq^{-1}\right)^{2}&\frac{\left[2\nv-2\right]\left[6\nv\right]}{\left[\nv-1\right]\left[\nv+1\right]\left[2\nv\right]}&-\frac{\left[6\nv\right]\left[6\nv+2\right]}{\left[\nv+1\right]\left[2\nv\right]\left[3\nv+1\right]}&\frac{\left[3\nv\right]\left[4\nv-2\right]\left[6\nv+2\right]}{\left[\nv\right]\left[\nv+1\right]\left[2\nv-1\right]\left[3\nv+1\right]}\\
\left(\qq-\qq^{-1}\right)^{2}&\frac{\left[2\nv-2\right]\left[6\nv\right]}{\left[\nv-1\right]\left[\nv+1\right]\left[2\nv\right]}&-\frac{\left[6\nv\right]\left[6\nv+2\right]}{\left[\nv+1\right]\left[2\nv\right]\left[3\nv+1\right]}&\frac{\left[3\nv\right]\left[4\nv-2\right]\left[6\nv+2\right]}{\left[\nv\right]\left[\nv+1\right]\left[2\nv-1\right]\left[3\nv+1\right]}\\
0&0&0&0
\end{matrix}&0
\end{array}\!\right)
\\
\rho_{\obj3,X_2}(B_1)&=\left(\!
                   \begin{array}{c|c}
                     0&0\\\hline
                     0&\begin{matrix}1&1&0\\1&1&0\\0&0&0\end{matrix}
\end{array}\!\right)
\\
\rho_{\obj3,X_2}(E_1)&=0
\end{align*}

\begin{align*}
  \rho_{\obj3,Y_2}(\dilute S_1)&=\left(\!\begin{array}{c|c}
\begin{matrix}
\qq^{-2}&0&0\\
\qq^{-1}&-1&0\\
1&\qq^{-1}\pp^{2}\left(\pp^{2}-\qq\pp^{-1}\right)\left(\pp^{2}+\qq\pp^{-1}\right)&-\pp^{6}
\end{matrix}&0\\\hline
0&\begin{matrix}0&1&0\\1&0&0\\0&0&\qq^{-2}\end{matrix}
\end{array}\!\right)
\\
  \rho_{\obj3,Y_2}(T_1)&=\left(\!\begin{array}{c|c}
    0&\begin{matrix}0&0&0 \\ 0&0&0 \\ 1&1&0 \end{matrix}
    \\\hline
    \begin{matrix}-\frac{\left[4\nv-2\right]}{\left[\nv\right]\left[\nv+1\right]\left[2\nv-1\right]}&-\frac{\left[3\nv-1\right]\left[4\nv-2\right]\left[6\nv+2\right]}{\left[\nv\right]\left[\nv+1\right]\left[2\nv-1\right]\left[3\nv+1\right]}&\frac{\left[3\nv\right]\left[4\nv-2\right]\left[6\nv+2\right]}{\left[\nv\right]\left[\nv+1\right]\left[2\nv-1\right]\left[3\nv+1\right]}\\
-\frac{\left[4\nv-2\right]}{\left[\nv\right]\left[\nv+1\right]\left[2\nv-1\right]}&-\frac{\left[3\nv-1\right]\left[4\nv-2\right]\left[6\nv+2\right]}{\left[\nv\right]\left[\nv+1\right]\left[2\nv-1\right]\left[3\nv+1\right]}&\frac{\left[3\nv\right]\left[4\nv-2\right]\left[6\nv+2\right]}{\left[\nv\right]\left[\nv+1\right]\left[2\nv-1\right]\left[3\nv+1\right]}\\
0&0&0 \end{matrix}&0
\end{array}\!\right)
\\
\rho_{\obj3,Y_2}(B_1)&=\left(\!
                   \begin{array}{c|c}
                     0&0\\\hline
                     0&\begin{matrix}1&1&0\\1&1&0\\0&0&0\end{matrix}
\end{array}\!\right)
\\
  \rho_{\obj3,Y_2}(E_1)&=0
\end{align*}

\begin{align*}
  \rho_{\obj3,Y_2^*}(\dilute S_1)&=\left(\!\begin{array}{c|c}
    \begin{matrix}
      \qq^{2}\pp^{2}&0&0\\
      -\qq\pp&-1&0\\
      1&-\qq^{-1}\pp^{3}\left(\qq\pp^{2}-\pp^{-2}\right)\left(\qq\pp^{2}+\pp^{-2}\right)&-\pp^{6}
    \end{matrix}&0\\\hline
    0&\begin{matrix}0&1&0\\1&0&0\\0&0&\qq^2\pp^{2}\end{matrix}
  \end{array}\!\right)
\\
  \rho_{\obj3,Y^*_2}(T_1)&=\left(\!\begin{array}{c|c}
    0&\begin{matrix}0&0&0 \\ 0&0&0 \\ 1&1&0 \end{matrix}
    \\\hline
    \begin{matrix}
\frac{\left[6\nv+2\right]}{\left[\nv\right]\left[3\nv+1\right]}&\frac{\left[4\nv-2\right]\left[4\nv+1\right]\left[6\nv+2\right]}{\left[\nv\right]\left[\nv+1\right]\left[2\nv-1\right]\left[3\nv+1\right]}&\frac{\left[3\nv\right]\left[4\nv-2\right]\left[6\nv+2\right]}{\left[\nv\right]\left[\nv+1\right]\left[2\nv-1\right]\left[3\nv+1\right]}\\
\frac{\left[6\nv+2\right]}{\left[\nv\right]\left[3\nv+1\right]}&\frac{\left[4\nv-2\right]\left[4\nv+1\right]\left[6\nv+2\right]}{\left[\nv\right]\left[\nv+1\right]\left[2\nv-1\right]\left[3\nv+1\right]}&\frac{\left[3\nv\right]\left[4\nv-2\right]\left[6\nv+2\right]}{\left[\nv\right]\left[\nv+1\right]\left[2\nv-1\right]\left[3\nv+1\right]}\\
0&0&0\end{matrix}&0
\end{array}\!\right)
\\
\rho_{\obj3,Y_2^*}(B_1)&=\left(\!
                   \begin{array}{c|c}
                     0&0\\\hline
                     0&\begin{matrix}1&1&0\\1&1&0\\0&0&0\end{matrix}
\end{array}\!\right)
\\
\rho_{\obj3,Y_2^*}(E_1)&=0
\end{align*}
With the chosen bases, $P$ is {\em block}\/ antidiagonal,
so that the generators $S_2$, $T_2$, $B_2$, $E_2$, are obtained from $S_1$, $T_1$, $B_1$, $E_1$, by rotation of each block by 180 degrees. The other generators, namely the $K_i$, $U_i$ and $O_i$, can be derived from the algebraic
relations \eqref{eq:genrelsa}--\eqref{eq:genrelsb}.

It is now simple, in principle, to check \eqref{eq:YBE} by substituting the matrices above and performing the matrix multiplications. In practice, a computer is still needed
to check equality of left- and right-hand-sides.

Cut lengths $1$ and $0$ are very similar to cut length $2$, and we skip the intermediate steps.

\subsubsection{Cut length one}
We can think of the cut length $1$ representations as arising from the action of $\dilute A(\obj3)$ on $\dilute B(\obj3,\obj1)/J^{(0)}(\obj3,\obj1)$. Here's a basis of the latter:
\begin{gather*}
\foreach \i in {10, 8, 6, 7, 9} { {\tikzset{every picture/.style={line cap=round,sharp corners,scale=.3}}\input dilute3_1_\i.tex\relax} }
\\
\foreach \i in {3,2,1} { {\tikzset{every picture/.style={line cap=round,sharp corners,scale=.3}}\input dilute3_1_\i.tex\relax} }
\\
\foreach \i in {0,4,5} { {\tikzset{every picture/.style={line cap=round,sharp corners,scale=.3}}\input dilute3_1_\i.tex\relax} }
\end{gather*}

The matrices are then given by
{\tiny
\begin{align*}
  \rho_{\obj3,L}(\dilute S_1)&=\left(\!\begin{array}{c|c|c}
\begin{smallmatrix}
\qq^{-2}&0&0&0&0\\
\qq^{-1}\pp^{3}&-\pp^{6}&0&0&0\\
-1&\qq^{-1}\pp^{3}\left(\qq-1\right)\left(\qq+1\right)&-1&0&0\\
\qq\pp^{-3}&-\pp\left(\pp^{3}+\qq^{2}\pp^{-1}-\pp^{-1}\right)&\qq\pp\left(\pp^{4}-1+\pp^{-4}\right)&\qq^{2}\pp^{2}&0\\
\qq^{2}&-\pp^{5}\left(\qq-1\right)\left(\qq+1\right)\left(\qq^{-1}\pp^{2}+\qq\pp^{-2}\right)&\qq\pp^{6}\left(\qq^{-1}\pp^{2}+\qq\pp^{-2}\right)\left(\pp^{4}-1+\pp^{-4}\right)&\pp^{7}\left(\qq-1\right)\left(\qq+1\right)\left(\qq^{-1}\pp^{2}+\qq\pp^{-2}\right)&\pp^{12}
\end{smallmatrix}&0&0
\\\hline
0&\begin{matrix}0&1&0\\
1&0&0\\
0&0&-\pp^{6}\end{matrix}&0
\\\hline
0&0&\begin{matrix}1&0&0\\
0&0&1\\
0&1&0\end{matrix}
\end{array}\!\right)
\\
  \rho_{\obj3,L}(\dilute T_1)&=\left(\!\begin{array}{c|c|c}
0&\begin{matrix}0&0&0\\
-\frac{\left[3\nv\right]\left[4\nv-2\right]}{\left[\nv\right]\left[\nv+1\right]\left[2\nv-1\right]}&-\frac{\left[3\nv\right]\left[4\nv-2\right]}{\left[\nv\right]\left[\nv+1\right]\left[2\nv-1\right]}&0\\
\frac{\left[4\nv-2\right]}{\left[\nv\right]\left[\nv+1\right]\left[2\nv-1\right]}&\frac{\left[4\nv-2\right]}{\left[\nv\right]\left[\nv+1\right]\left[2\nv-1\right]}&0\\
-\frac{\left[2\nv\right]\left[2\nv-2\right]-\left[\nv-1\right]^{2}}{\left[\nv-1\right]\left[\nv\right]\left[\nv+1\right]}&-\frac{\left[2\nv\right]\left[2\nv-2\right]-\left[\nv-1\right]^{2}}{\left[\nv-1\right]\left[\nv\right]\left[\nv+1\right]}&0\\
-\frac{\left[4\nv-2\right]}{\left[\nv\right]\left[\nv+1\right]\left[2\nv-1\right]}&-\frac{\left[4\nv-2\right]}{\left[\nv\right]\left[\nv+1\right]\left[2\nv-1\right]}&0
\end{matrix}&0
\\\hline
\begin{matrix}
1&-\frac{\left[6\nv+2\right]}{\left[3\nv+1\right]}&0&0&0\\
1&-\frac{\left[6\nv+2\right]}{\left[3\nv+1\right]}&0&0&0\\
0&0&0&0&0
\end{matrix}&0&\begin{matrix}0&0&0\\0&0&0\\0&1&1\end{matrix}
\\\hline
0&\begin{matrix}0&0&0\\
0&0&\frac{\left[3\nv\right]\left[4\nv-2\right]\left[6\nv+2\right]}{\left[\nv\right]\left[\nv+1\right]\left[2\nv-1\right]\left[3\nv+1\right]}\\
0&0&\frac{\left[3\nv\right]\left[4\nv-2\right]\left[6\nv+2\right]}{\left[\nv\right]\left[\nv+1\right]\left[2\nv-1\right]\left[3\nv+1\right]}
\end{matrix}&0
\end{array}\!\right)
\\
  \rho_{\obj3,L}(\dilute B_1)&=\left(\!\begin{array}{c|c|c}
0&0&0
\\\hline
0&\begin{matrix}
1&1&0
\\
1&1&0
\\
0&0&0\end{matrix}&0
\\\hline
0&0&\begin{matrix}
0&0&0
\\
0&1&1
\\
0&1&1\end{matrix}
\end{array}\!\right)
\\
  \rho_{\obj3,L}(\dilute E_1)&=\left(\!\begin{array}{c|c|c}
0&0&\begin{matrix}
0&0&0
\\
0&0&0
\\
0&0&0
\\
0&0&0
\\
1&0&0
\end{matrix}
\\\hline
0&0&0
\\\hline
\begin{matrix}
1&-\frac{\left[4\nv-2\right]\left[6\nv+1\right]}{\left[\nv+1\right]\left[2\nv-1\right]}&\frac{\left[4\nv-2\right]\left[5\nv\right]\left[6\nv+1\right]}{\left[\nv+1\right]\left[2\nv-1\right]}&\frac{\left[4\nv-2\right]\left[4\nv\right]\left[6\nv+1\right]}{\left[\nv+1\right]\left[2\nv-1\right]\left[2\nv\right]}&\frac{\left[4\nv\right]\left[5\nv-1\right]\left[6\nv+1\right]}{\left[\nv+1\right]\left[2\nv\right]}\\
0&0&0&0&0\\
0&0&0&0&0
\end{matrix}
&0&0                                   
\end{array}\!\right)
\end{align*}
}

\subsubsection{Cut length zero}
Finally, we can think of the cut length $0$ representations as arising from the action of $\dilute A(\obj3)$ on $\dilute B(\obj3,\obj0)$. Here's a basis of the latter:
\[
  {\tikzset{every picture/.style={line cap=round,sharp corners,scale=.3}}\input dilute3_0_4.tex\relax}\qquad \foreach \i in {1,2,3}  { {\tikzset{every picture/.style={line cap=round,sharp corners,scale=.3}}\input dilute3_0_\i.tex\relax} } \qquad {\tikzset{every picture/.style={line cap=round,sharp corners,scale=.3}}\input dilute3_0_0.tex\relax}
\]

\begin{align*}
  \rho_{\obj3,I}(\dilute S_1)&=\left(\!\begin{array}{c|c|c}
    -\pp^6&0&0
    \\\hline
    0&\begin{matrix}
      0&1&0\\
      1&0&0\\
      0&0&\pp^{12}
    \end{matrix}&0\\
       \hline                            
0&0&1
\end{array}\!\right)
\\
\rho_{\obj3,I}(T_1)&=\left(\!
\begin{array}{c|c|c}
0&\begin{matrix}1&1&0\end{matrix}&0
  \\\hline
\begin{matrix}
\phi\\
\phi\\
0
\end{matrix}&0&0
  \\\hline
0&0&0
\end{array}\!\right)
\\
\rho_{\obj3,I}(B_1)&=\left(\!
\begin{array}{c|c|c}
0&0&0\\\hline
0&\begin{matrix}1&1&0\\1&1&0\\0&0&0\end{matrix}&0\\\hline
0&0&0
\end{array}\!\right)
\\
\rho_{\obj3,I}(E_1)&=\left(\!
\begin{array}{c|c|c}
0&0&0\\\hline
0&0&\begin{matrix}0\\0\\1\end{matrix}\\\hline
0&\begin{matrix}0&0&\delta\end{matrix}&0
\end{array}\!\right)
\end{align*}
and the generators indexed $2$ are obtained from those indexed $1$ by rotation of each block by 180 degrees.

\subsection{Proof of crossing symmetry}\label{sec:crossing}
The crossing relation, \eqref{eq:crossing}, can be reformulated as follows. Define 
\[
  \check R^{\mathrm{poly}}(\uu)=
  \frac{\left[6\,\nv\right]^{2}\left[\nv\right]}{\left[3\,\nv\right]^{2}\left[2\,\nv\right]}
  \left[\nv+\xx+1\right]\ \left[\xx-1\right]\ \left[2\,\nv+\xx\right]\ \left[3\,\nv+\xx\right]\ \check R(\uu)
\]
Then the coefficients of $\check R^{\mathrm{poly}}(\uu)$ as a linear combination of trivalent diagrams are in $A_{\pp,\qq,\uu}$ and coprime (i.e., up to
a power of $\qq-\qq^{-1}$, they are coprime Laurent polynomials in $\pp,\qq,\uu$), and
\begin{equation}\label{eq:crossingb}
\Rot(\check R^{\mathrm{poly}}(\uu)) = \check R^{\mathrm{poly}}(\pp^{-3}\uu^{-1})
\end{equation}

Separate further the diagrams as follows: introduce
\begin{align*}
I&={\tikzset{every picture/.style={line cap=round,sharp corners,scale=.3}}\input dilute2_2_0.tex\relax}
\\
D&={\tikzset{every picture/.style={line cap=round,sharp corners,scale=.3}}\input dilute2_2_7.tex\relax}+{\tikzset{every picture/.style={line cap=round,sharp corners,scale=.3}}\input dilute2_2_6.tex\relax}
\end{align*}
so that
\begin{align*}
1&=I+D^2+O
\\
\dilute S&=S+D+O
\\
B&=D+D^2
\end{align*}

Under rotation $\Rot$, these combinations of diagrams are either fixed points or two-cycles, according to
\[
(I,U)\qquad (S,S^{-1})\qquad (K,H)\qquad (E,D^2)\qquad T\qquad D\qquad O
\]
where we have been forced to introduce $H={\tikzset{every picture/.style={line cap=round,sharp corners,scale=.3}}\input diag2_2_3.tex\relax}$ as the rotation of $K$.

Now compare
\begin{multline*}
  \frac{\left[3\,\nv\right]^{2}\left[2\,\nv\right]}{\left[6\,\nv\right]^{2}\left[\nv\right]}
  \Rot(\check R^{\mathrm{poly}}(\uu))=
    [\nv+1][2\,\nv+\xx][3\,\nv+\xx](U+E+O)
\\
-\frac{\pp^{-1}\uu^{-1} S{}^{-1} - \pp\, \uu\, S}{\qq-\qq^{-1}}[\xx][2\,\nv+\xx][3\,\nv+\xx]
+[\xx][\nv+\xx][2\,\nv+\xx][3\,\nv+\xx](D+O)
\\
+\frac{\left[2\,\nv\right]\ \left[3\,\nv\right]\ \left[\nv+1\right]\ [\xx]}{\left[6\,\nv\right]}
\left(
-[\nv][\xx+3\,\nv]H+[\xx+3\,\nv]T-\frac{[\xx+3\,\nv]}{[\nv]}(D+E)-\frac{[2\xx]}{[\xx]}D^2\right.
\\
\left.
+\frac{1}{[\nv]}\left(\frac{[6\,\nv]}{[3\,\nv]}[\xx+2\,\nv]-[\xx+3\,\nv]\right)I
-\frac{1}{[\nv]}\left(\frac{[2\,\xx+12\,\nv]}{[\xx+6\,\nv]}+2[\xx+3\,\nv]\right)O\right)
\end{multline*}

with
\begin{multline*}
  \frac{\left[3\,\nv\right]^{2}\left[2\,\nv\right]}{\left[6\,\nv\right]^{2}\left[\nv\right]}
  \check R^{\mathrm{poly}}(\pp^{-3}\uu^{-1})
  =
    [\nv+1][\nv+\xx][\xx](I+D^2+O)\\
    -\frac{\pp^{2}\uu  S - \pp^{-2} \uu^{-1}  S{}^{-1}}{\qq-\qq^{-1}}[3\,\nv+\xx][\nv+\xx][\xx]
    +[\xx][\nv+\xx][2\,\nv+\xx][3\,\nv+\xx](D+O)
\\
+\frac{\left[2\,\nv\right]\ \left[3\,\nv\right]\ \left[\nv+1\right]\ [3\,\nv+\xx]}{\left[6\,\nv\right]}
\left(
-[\nv][\xx]K+[\xx]T-\frac{[\xx]}{[\nv]}(D+D^2)-\frac{[2(3\,\nv+\xx)]}{[3\,\nv+\xx]}E\right.
\\
\left.
+\frac{1}{[\nv]}\left(\frac{[6\,\nv]}{[3\,\nv]}[\xx+\nv]-[\xx]\right)U
-\frac{1}{[\nv]}\left(\frac{[2\,\xx-6\,\nv]}{[\xx-3\,\nv]}+2[\xx]\right)O\right)
\end{multline*}

We find that the coefficients of $T$ and $D$, which are explicitly
invariant under $\uu\mapsto \pp^{-3}\uu^{-1}$, are the same
in both expressions. Then comes $D^2\leftrightarrow E$: the coefficient of $D^2$ in the first expression
is $-\frac{[2\,\nv][3\,\nv][\nv+1]}{[6\,\nv]}[2\xx]$; in the second,
$[\nv+1][\nv+\xx][\xx]-\frac{[2\,\nv][3\,\nv][\nv+1]}{[\nv][6\,\nv]}[3\,\nv+\xx][\xx]$.
These are indeed equal.
Therefore the same is true of the coefficient of $E$.

With a bit more effort, one checks that the coefficient of $O$ is also invariant under $\uu\mapsto \pp^{-3}\uu^{-1}$.
In the end, taking the difference of the two sides of \eqref{eq:crossingb} leads to
\[
\frac{[6\,\nv][\nv][\nv+1]}{[3\,\nv]}
  [\xx][\xx+3\,\nv]
  \left(  \frac{S-S^{-1}}{\qq-\qq^{-1}}-\frac{[\nv+1][2\,\nv][3\,\nv]}{[6\,\nv]}(H-K+U-I)  \right)
\]
which is nothing but (a multiple of) the skein relation \eqref{eq:skeinc}.

\begin{rem}
An alternative proof goes as follows. Define
$\dilute U = U + E + O$. Then it is not hard to see that \eqref{eq:crossingb} is equivalent to the following identity in $\dilute A(\obj3)\otimes \coeffsu$:
\[
\dilute U_1 \dilute U_2 \check R^{\mathrm{poly}}_1(\uu) = \dilute U_1 \check R^{\mathrm{poly}}_2(\pp^{-3}\uu^{-1})
\]
The latter can be checked representation by representation, similarly to the proof of the Yang--Baxter equation in \S\ref{ssec:YBEproof}
(and $\dilute U_1$ is sent to zero in all representations except $\dilute \rho_{\obj3,L}$ and $\dilute \rho_{\obj3,I}$).
\end{rem}

\begin{rem}
In the course of the proof, we have introduced another form of the crossing symmetry relation, namely \eqref{eq:crossingb}, which is more aesthetically pleasing.
However, a drawback is that the form of the $R$-matrix that appears, $\check R^{\mathrm{poly}}(\uu)$, no longer satisfies the unitarity equation
\eqref{eq:unit}. One may wonder if it is possible to normalise the $R$-matrix in such a way that it satisfies {\em both}\/ unitary and crossing symmetry relations
in this improved form.
This is not possible if we stay within the realm of rational functions. If however we work in the {\em analytic}\/ setting, that is, we specialise $\pp$ and $\qq$
to complex numbers, and allow $\check R(\uu)$ to be a meromorphic function of $\uu$, then it is possible to find such a normalisation.

Assume $|\pp|<1$ (analogous statements can be made for $|\pp|>1$). Define
\[
f(\uu)=\prod_{i=0}^\infty(1-\pp^{12i}\qq^{-2}\uu^2)(1-\pp^{12i+2}\qq^2 \uu^2)(1-\pp^{12i+4}\uu)(1-\pp^{12i+6}\uu)
\]
and
\[
\kappa(\uu)=\uu\frac{f(\uu^{-1})f(\pp^6\uu)}{f(\uu)f(\pp^6\uu^{-1})}
\]
Then it is not hard to check that $\kappa(\uu)^{-1}\check R(\uu)$ satisfies both unitarity equation \eqref{eq:unit} and crossing relation in the form \eqref{eq:crossingb}.
\end{rem}

\subsection{Specialisations}
In all exceptional cases except $A_1$ and $A_2$, the minimal affinisation of the $q$-deformed adjoint representation
$L_C$ is of the form $L_C\oplus I_C$, and one expects that the $R$-matrix $\check R(\uu)$ is mapped 
(up to normalisation) under the functor $\Xi_C$ to the universal $R$-matrix of the corresponding quantised loop algebra in that
minimal affinisation. This can be checked by explicit computation
(in the most complicated case of $C=E_8$, this can also be seen by comparing
with the results of \cite{ZinnJustin2020}).

If $C\in\{A_1,A_2\}$, the minimal affinisation of $L_C$ is simply $L_C$, and so the interpretation
of our solution of the Yang--Baxter equation must be different.
We investigate these two cases below.

\pcomments[gray]{note that the affine $\OSp$ is nothing but $A_2^{(2)}$ which has reps of dimension $1$, $3$, $6$, ...
unrelated: somewhere, should point out the distinction between dimension and super dimension for Osp}

\pcomments{comment about $\Gamma$ invariance of the $R$-matrix?}

\subsubsection{\texorpdfstring{$A_1$}{A(1)}}
In this section $C=A_1$.
Because the tensor square of the fundamental representation of $U_q(\mathfrak{sl}(2))$ is $L_C\oplus I_C$, we expect a connection between our dilute diagrammatic category
and the Temperley--Lieb category. We briefly sketch this connection here.

We attach to the fundamental (two-dimensional) representation another type of line (depicted in green), with no vertices and the single rule that
\[
	\begin{tikzpicture}[bwwcircle]
		\draw[green,thick] (0,0) circle [radius=0.5];		
	\end{tikzpicture}
 = \varepsilon := -(q+q^{-1})
\]
The correspondence between the two diagrammatic approaches is that
\def\tlid#1{	\begin{tikzpicture}[bwwrect={(0,0)}{(2,1)},rotate=90,green,thick,yscale=#1]
          \draw (0,0.25) -- (2,0.25);
          \draw (0,0.75) -- (2,0.75);
        \end{tikzpicture}        
}
\def\tle#1{	\begin{tikzpicture}[bwwrect={(0,0)}{(2,1)},rotate=90,green,thick,yscale=#1]
          \draw (0,0.25) .. controls (0.8,0.25) and (0.8,0.75) .. (0,0.75);
          \draw (2,0.25) .. controls (1.2,0.25) and (1.2,0.75) .. (2,0.75);
        \end{tikzpicture}        
}
\[
\tlid1
= {\tikzset{every picture/.style={line cap=round,sharp corners,scale=.3}}\input dilute1_1_0.tex\relax}+{\tikzset{every picture/.style={line cap=round,sharp corners,scale=.3}}\input dilute1_1_1.tex\relax}
\]
and conversely one can write
\begin{align*}
{\tikzset{every picture/.style={line cap=round,sharp corners,scale=.3}}\input dilute1_1_1.tex\relax}&=\frac{1}{\varepsilon} \tle1
\\
{\tikzset{every picture/.style={line cap=round,sharp corners,scale=.3}}\input dilute1_1_0.tex\relax}&=
	\begin{tikzpicture}[bwwrect={(0,0)}{(2,1)},rotate=90,green,thick]
          \draw (0,0.25) -- (2,0.25);
          \draw (0,0.75) -- (2,0.75);
          \draw[fill=sienna,thin,draw=black] (.75,.15) rectangle (1.25,.85);
        \end{tikzpicture}        
:=
\tlid1 - \frac{1}{\varepsilon} \tle1
\end{align*}
which are the usual Jones projectors. In particular, one has $\delta=\varepsilon^2-1=q^2+1+q^{-2}$, which matches with our formula \eqref{eq:values} at $\pp=q^{1/3}$, $\qq=q$.

If we now define the trivalent vertex as
\[
{\tikzset{every picture/.style={line cap=round,sharp corners,scale=.5}}\input diag2_1_0.tex\relax}=\alpha
\begin{tikzpicture}[execute at begin picture={\clip (-1.24,-1.616) rectangle ++(2.48,2.232);},x={(2.2204cm,0cm)},y={(0cm,-2.2204cm)},baseline={([yshift=-\the\dimexpr\fontdimen22\textfont2\relax]current  bounding  box.center)},line join=round,scale=.5]
\begin{scope}
\begin{scope}
\path[clip] (-1,.4) -- (-1,-1.4) -- (1,-1.4) -- (1,.4) -- cycle;
\path[fill={rgb,255:red,204;green,204;blue,255},draw=none] (-1,.4) -- (-1,-1.4) -- (1,-1.4) -- (1,.4) -- cycle;
\begin{scope}[every path/.append style={draw=green,thick}]
\draw (-1,.1) -- (.1,-.4) -- (1,-.4);
\draw (-1,-1.1) -- (.1,-.6) -- (1,-.6);
\draw (-1,-.1) -- (-.1,-.5) -- (-1,-.9);
\draw[fill=sienna,thin,draw=black] (.45,-.3) rectangle (.65,-.7);
\draw[fill=sienna,thin,draw=black,rotate=-30] (-1,-.65) rectangle (-.8,-.25);
\draw[fill=sienna,thin,draw=black,rotate=30] (-.5,-.25) rectangle (-.3,-.65);
\end{scope}
\path[draw=blue,line width=.01405cm] (-1,.4) -- (-1,-1.4) -- (1,-1.4) -- (1,.4) -- cycle;
\end{scope}
\end{scope}
\end{tikzpicture}
\]
where $\alpha$ is an irrelevant normalisation, cf \S\ref{ssec:norm}, then all the identities of Appendix~\ref{app:skein} are automatically satisfied.

Now introduce the fundamental $R$-matrix
\[
\check r(\uu)= \tlid{1.2} + \frac{\uu-\uu^{-1}}{q\,\uu-q^{-1}\uu^{-1}} \tle{1.2}
\]
Then it is not hard to show that setting $\pp=q^{1/3}$, $\qq=q$, in the definition \eqref{eq:Rmat} of $\check R(\uu)$
and choosing $\alpha$ appropriately, one has
\[
\frac{[\xx-1]}{[\xx+1]} \check R(\uu) = \check r_2(q^{1/3}\uu)\check r_1(\uu)\check r_3(\uu)\check r_2(q^{-1/3}\uu)
\]
where $\check r_i$, $i=1,2,3$, are defined as morphisms in $\Hom(\obj4,\obj4)$ of the Temperley--Lieb category.

The interpretation is that our $R$-matrix $\check R(\uu)$ is the $R$-matrix between representations of the quantised loop algebra $U_q(\mathfrak{sl}(2)[z^\pm])$
of the form of a tensor product of evaluation representations, namely, $V(z)\otimes V(q^{1/3}z)$, where $V(z)$ is the affinisation of the fundamental representation.
Note that this tensor product {\em is}\/ irreducible for the quantised loop algebra.

\subsubsection{\texorpdfstring{$A_2$}{A(2)}}
In this section $C=A_2$.
Because the tensor product of the two fundamental representations of $U_q(\mathfrak{sl}(3))$ is $V\otimes V^*\cong L_C\oplus I_C$, we expect a connection between our dilute diagrammatic category
and \cite[\S4]{kuperberg1996}. The situation is very similar to the $A_1$ case.

We introduce {\em oriented}\/ green lines, with
\[
	\begin{tikzpicture}[bwwcircle]
		\draw[green,thick,->] (.5,0) arc [radius=0.5,start angle=0, delta angle=360];
	\end{tikzpicture}
 = \varepsilon := q^2+1+q^{-2}
\]
and the same formula for the opposite orientation. We have
\tikzset{arrow/.style={postaction={decorate,thick,decoration={markings,mark = at position #1 with {\arrow{>}}}}},arrow/.default=0.5}
\tikzset{invarrow/.style={postaction={decorate,thick,decoration={markings,mark = at position #1 with {\arrow{<}}}}},invarrow/.default=0.5}
\def\tlid{	\begin{tikzpicture}[bwwrect={(0,0)}{(2,1)},rotate=90,green,thick]
          \draw[invarrow] (0,0.25) -- (2,0.25);
          \draw[arrow] (0,0.75) -- (2,0.75);
        \end{tikzpicture}        
}
\def\tle{	\begin{tikzpicture}[bwwrect={(0,0)}{(2,1)},rotate=90,green,thick]
          \draw[invarrow=0.3] (0,0.25) .. controls (0.8,0.25) and (0.8,0.75) .. (0,0.75);
          \draw[arrow=0.3] (2,0.25) .. controls (1.2,0.25) and (1.2,0.75) .. (2,0.75);
        \end{tikzpicture}        
}
\[
\tlid
= {\tikzset{every picture/.style={line cap=round,sharp corners,scale=.3}}\input dilute1_1_0.tex\relax}+{\tikzset{every picture/.style={line cap=round,sharp corners,scale=.3}}\input dilute1_1_1.tex\relax}
\]
and conversely one can write
\begin{align*}
{\tikzset{every picture/.style={line cap=round,sharp corners,scale=.3}}\input dilute1_1_1.tex\relax}&=\frac{1}{\varepsilon} \tle
\\
{\tikzset{every picture/.style={line cap=round,sharp corners,scale=.3}}\input dilute1_1_0.tex\relax}&=
	\begin{tikzpicture}[bwwrect={(0,0)}{(2,1)},rotate=90,green,thick]
          \draw[invarrow=0.2,invarrow=0.8] (0,0.25) -- (2,0.25);
          \draw[arrow=0.2,arrow=0.8] (0,0.75) -- (2,0.75);
          \draw[fill=sienna,thin,draw=black] (.75,.15) rectangle (1.25,.85);
        \end{tikzpicture}        
:=
\tlid - \frac{1}{\varepsilon} \tle
\end{align*}
Once again $\delta=\varepsilon^2-1=(q+q^{-1})^2(q^2+q^{-2})$, which matches \eqref{eq:values} at $\pp=q^{1/2}$, $\qq=q$.

The trivalent vertex is
\[
{\tikzset{every picture/.style={line cap=round,sharp corners,scale=.5}}\input diag2_1_0.tex\relax}=\alpha
\begin{tikzpicture}[execute at begin picture={\clip (-1.24,-1.616) rectangle ++(2.48,2.232);},x={(2.2204cm,0cm)},y={(0cm,-2.2204cm)},baseline={([yshift=-\the\dimexpr\fontdimen22\textfont2\relax]current  bounding  box.center)},line join=round,scale=.5]
\begin{scope}
\begin{scope}
\path[clip] (-1,.4) -- (-1,-1.4) -- (1,-1.4) -- (1,.4) -- cycle;
\path[fill={rgb,255:red,204;green,204;blue,255},draw=none] (-1,.4) -- (-1,-1.4) -- (1,-1.4) -- (1,.4) -- cycle;
\begin{scope}[every path/.append style={draw=green,thick}]
\draw[invarrow=0.15,invarrow=0.55,invarrow=0.95] (-1,.1) -- (.1,-.4) -- (1,-.4);
\draw[arrow=0.15,arrow=0.55,arrow=0.95] (-1,-1.1) -- (.1,-.6) -- (1,-.6);
\draw[arrow=0.1,arrow=0.93] (-1,-.1) -- (-.1,-.5) -- (-1,-.9);
\draw[fill=sienna,thin,draw=black] (.45,-.3) rectangle (.65,-.7);
\draw[fill=sienna,thin,draw=black,rotate=-30] (-1,-.65) rectangle (-.8,-.25);
\draw[fill=sienna,thin,draw=black,rotate=30] (-.5,-.25) rectangle (-.3,-.65);
\end{scope}
\path[draw=blue,line width=.01405cm] (-1,.4) -- (-1,-1.4) -- (1,-1.4) -- (1,.4) -- cycle;
\end{scope}
\end{scope}
\end{tikzpicture}
\]
The various $R$-matrices between fundamental representations are:
\begin{align*}
\check r_{V,V}(\uu)&= 
	\begin{tikzpicture}[bwwrect={(0,0)}{(2,1)},rotate=90,green,thick,yscale=1.2]
          \draw[invarrow] (0,0.25) -- (2,0.25);
          \draw[invarrow] (0,0.75) -- (2,0.75);
        \end{tikzpicture}
+ \frac{\uu-\uu^{-1}}{q\,\uu-q^{-1}\uu^{-1}}
	\begin{tikzpicture}[bwwrect={(0,0)}{(2,1)},rotate=90,green,thick,yscale=1.2]
          \draw[invarrow=0.2,invarrow=0.8,arrow=0.5] (0,0.25) -- (0.7,0.5) -- (1.3,0.5) -- (2,0.25);
          \draw[invarrow=0.4] (0,0.75) -- (0.7,0.5);
          \draw[invarrow] (1.3,0.5) -- (2,0.75);
        \end{tikzpicture}
\\
\check r_{V^*,V^*}(\uu)&=
	\begin{tikzpicture}[bwwrect={(0,0)}{(2,1)},rotate=90,green,thick,yscale=1.2]
          \draw[arrow] (0,0.25) -- (2,0.25);
          \draw[arrow] (0,0.75) -- (2,0.75);
        \end{tikzpicture}
+ \frac{\uu-\uu^{-1}}{q\,\uu-q^{-1}\uu^{-1}}
	\begin{tikzpicture}[bwwrect={(0,0)}{(2,1)},rotate=90,green,thick,yscale=1.2]
          \draw[arrow=0.2,arrow=0.8,invarrow=0.5] (0,0.25) -- (0.7,0.5) -- (1.3,0.5) -- (2,0.25);
          \draw[arrow=0.4] (0,0.75) -- (0.7,0.5);
          \draw[arrow=0.6] (1.3,0.5) -- (2,0.75);
        \end{tikzpicture}
\\
\check r_{V,V^*}(\uu)&=
	\begin{tikzpicture}[bwwrect={(0,0)}{(2,1)},rotate=90,green,thick,yscale=1.2]
          \draw[arrow=0.25,invarrow=0.75] (0,0.25) -- (2,0.25);
          \draw[invarrow=0.25,arrow=0.75] (0,0.75) -- (2,0.75);
          \draw[invarrow=0.5] (1,0.25) -- (1,0.75);
        \end{tikzpicture}
+ \frac{q^{-1}\uu-q\,\uu^{-1}}{\uu-\uu^{-1}}
	\begin{tikzpicture}[bwwrect={(0,0)}{(2,1)},rotate=90,green,thick,yscale=1.2]
          \draw[arrow=0.3] (0,0.25) .. controls (0.8,0.25) and (0.8,0.75) .. (0,0.75);
          \draw[arrow=0.3] (2,0.25) .. controls (1.2,0.25) and (1.2,0.75) .. (2,0.75);
        \end{tikzpicture}
\\
\check r_{V^*,V}(\uu)&=
	\begin{tikzpicture}[bwwrect={(0,0)}{(2,1)},rotate=90,green,thick,yscale=1.2]
          \draw[invarrow=0.25,arrow=0.75] (0,0.25) -- (2,0.25);
          \draw[arrow=0.25,invarrow=0.75] (0,0.75) -- (2,0.75);
          \draw[arrow=0.5] (1,0.25) -- (1,0.75);
        \end{tikzpicture}
+ \frac{q^{-1}\uu-q\,\uu^{-1}}{\uu-\uu^{-1}}
	\begin{tikzpicture}[bwwrect={(0,0)}{(2,1)},rotate=90,green,thick,yscale=1.2]
          \draw[invarrow=0.3] (0,0.25) .. controls (0.8,0.25) and (0.8,0.75) .. (0,0.75);
          \draw[invarrow=0.3] (2,0.25) .. controls (1.2,0.25) and (1.2,0.75) .. (2,0.75);
        \end{tikzpicture}        
\end{align*}
where we have introduced trivalent vertices, with additional relations
\[
\begin{tikzpicture}[bwwcircle,green,thick,rotate=-90]
\draw[arrow] (-1,0) -- (-.3,0);
\draw[invarrow] (-.3,0) .. controls (-.3,.6) and (.3,.6) .. (.3,0);
\draw[invarrow] (-.3,0) .. controls (-.3,-.6) and (.3,-.6) .. (.3,0);
\draw[arrow] (.3,0) -- (1,0);
\end{tikzpicture}
=
-(q+q^{-1})\begin{tikzpicture}[bwwcircle,green,thick]
\draw[arrow] (-1,0) -- (1,0);
\end{tikzpicture}
\qquad
\begin{tikzpicture}[bwwcircle,green,thick]
\draw[arrow] (45:1) -- (45:.5);
\draw[invarrow] (135:1) -- (135:.5);
\draw[arrow] (225:1) -- (225:.5);
\draw[invarrow] (315:1) -- (315:.5);
\draw[invarrow=.125,invarrow=.375,invarrow=.625,invarrow=.875] (45:.5) -- (135:.5) -- (225:.5) -- (315:.5) -- cycle;
\end{tikzpicture}
=
\begin{tikzpicture}[bwwcircle,green,thick]
\draw[arrow,bend left] (45:1) to (135:1);
\draw[arrow,bend left] (225:1) to (315:1);
\end{tikzpicture}
+
\begin{tikzpicture}[bwwcircle,green,thick]
\draw[invarrow,bend left] (135:1) to (225:1);
\draw[invarrow,bend left] (315:1) to (45:1);
\end{tikzpicture}
\]

We then have
\[
\frac{[\xx-1]}{[\xx+1]} \check R(\uu) = \check r_{V^*,V;2}(q^{3/2}\uu)\check r_{V,V;1}(\uu)\check r_{V^*,V^*;3}(\uu)\check r_{V,V^*;2}(q^{-3/2}\uu)
\]
The interpretation is that our $R$-matrix $\check R(\uu)$ is the $R$-matrix between (irreducible) representations of the quantised loop algebra $U_q(\mathfrak{sl}(3)[z^\pm])$
of the form of a tensor product of evaluation representations, namely, $V(z)\otimes V^*(q^{3/2}z)$.


\appendix
\section{The diagrammatic relations}\label{app:skein}
In this appendix, we summarise the diagrammatic relations
that were found as we were building the various spaces of morphisms
$\Hom_{\threecat}(\obj r,\obj s)$.
  
\subsection{The basic relations}\label{ssec:basicrels}
The relations below are based on the first few Hom spaces, though the parameters
appearing in them can only be fixed via
the study of the two-string algebra in \S\ref{sec:twostring},
see in particular \eqref{eq:tadpole} and Definition~\ref{def:basicrels}.

In order for $\End_{\threecat}(\obj 0)$ to be of rank $1$,
we have to impose the relation
\[
  \circle = \delta \empty
\]
with
\[
  \delta = \frac{\qint 61 [5\,\nv-1][4\,\nv]}{[\nv+1][2\,\nv]}
\]

In order for $\Hom_{\threecat}(\obj 0,\obj 1)$ to be zero, we must have
\[
  	\tadpole = 0
\]

In order for $\Hom_{\threecat}(\obj 1,\obj 1)$ to be of rank $1$, we must have
\[
  {\tikzset{every picture/.style={line cap=round,sharp corners,scale=.42}}\input bubble.tex\relax} = \phi {\tikzset{every picture/.style={line cap=round,sharp corners,scale=.42}}\input round2_0.tex\relax}
\]
where we write $\phi$ under the form
\begin{align*}
  \phi &= \alpha\,\frac{[6\nv+2]}{[3\nv+1]}\frac{[4\nv-2]}{[2\nv-1]}\frac{[3\nv]}{[\nv]}
  \\
  &=\alpha\, (\pp^2+1+\pp^{-2})(\pp^2\qq^{-1}+\pp^{-2}\qq)(\pp^3\qq+\pp^{-3}\qq^{-1})
\end{align*}

In order for $\Hom_{\threecat}(\obj 1,\obj 2)$ to be of rank $1$, we must have
\[
  {\tikzset{every picture/.style={line cap=round,sharp corners,scale=.42}}\input triangle.tex\relax} = \tau {\tikzset{every picture/.style={line cap=round,sharp corners,scale=.42}}\input round3_0.tex\relax}
\]
with
\begin{align*}
  \tau &= \alpha\,\frac{[4\,\nv]}{[2\,\nv]}\left(\frac{[6\,\nv+2]}{[3\,\nv+1]}\frac{[4\,\nv-2]}{[2\nv-1]}\frac{[3\,\nv]}{[\nv]}+(\qq-\qq^{-1})^2[\nv+1]\frac{[5\nv]}{[\nv]}\right)
  \\
  &=\alpha\,(\pp+\pp^{-1})\left(\qq^{2}\pp^{2}+\pp^{4}+\qq^{2}-\pp^{2}-1-\pp^{-2}+\qq^{-2}+\pp^{-4}+\qq^{-2}\pp^{-2}\right)
\end{align*}

There is an arbitrary parameter left $\alpha$ in the definition of $\phi$ and $\tau$, which
can be chosen freely, only the ratio $\tau/\phi$ being fixed, cf \eqref{eq:ratio}. In \S\ref{sec:twostring}, \S\ref{sec:threestring}, as well as in the rest
of this appendix,
$\alpha$ is set to $1$ (cf Definition~\ref{def:basicrels}), whereas in \S\ref{sec:yangbaxter},
$\alpha=1/[\nv+1]$ (cf \S\ref{ssec:norm}).

  \subsection{The square-square relation}\label{ssec:squaresquare}
  From further study of $A(\obj2)=\End_{\threecat}(\obj 2)$, one finds
  the following relation:
\begin{equation}\label{eq:squaresquare}
  {\tikzset{every picture/.style={line cap=round,sharp corners,scale=.3}}\input squaresquare.tex\relax}=
    \foreach\k in {0,...,4} {\ifnum\k=0\else +\fi b_{\k} {\tikzset{every picture/.style={line cap=round,sharp corners,scale=.3}}\input round4_\k.tex\relax} }
  \end{equation}
  with
{\tiny\begin{align*}
  b_0&=-\frac{\left[\nv+1\right]^2 \left[6\,\nv+2\right]\ \left[4\,\nv-2\right]\ \left(\qq-\qq^{-1}\right)^{2}}{\left[\nv+1\right]\ \left[3\,\nv\right]^2\left[2\,\nv-1\right]}
  \\
  b_1&=\frac{\left[\nv+1\right]\ \left[6\,\nv+2\right]\ \left[4\,\nv-2\right]\left(\pp^{5}+\qq^{2}\pp+\pp^{3}+\pp^{-3}+\qq^{-2}\pp^{-1}+\pp^{-5}\right)}{\left[\nv\right]^{2}\left[3\,\nv+1\right]\ \left[2\,\nv-1\right]}
  \\
  b_2&=\frac{\left[3\,\nv\right]^{4}\left(\left[\nv\right]\ \left[3\,\nv-1\right]+\left[2\,\nv\right]\right)\left(\left[6\,\nv+2\right]\ \left[\nv\right]-\left[3\,\nv+1\right]\right)\left(\qq^{2}\pp+\pp^{3}-\pp-\pp^{-1}+\pp^{-3}+\qq^{-2}\pp^{-1}\right)}{\left[\nv\right]^{2}\left[\nv\right]^{3}\left[3\,\nv+1\right]}
  \\
  b_3&=-\frac{[\nv+1]\left(\qq^{2}\pp^{5}-\qq^{2}\pp^{3}-\pp^{5}-\qq^{2}\pp-2\,\pp^{3}-\qq^{2}\pp^{-1}+2\,\pp+\qq^{-2}\pp^{3}+\qq^{2}\pp^{-3}+2\,\pp^{-1}-\qq^{-2}\pp-2\,\pp^{-3}-\qq^{-2}\pp^{-1}-\pp^{-5}-\qq^{-2}\pp^{-3}+\qq^{-2}\pp^{-5}\right)}{\left[\nv\right]^2}
  \\
  b_4&=\frac{\left[2\,\nv\right]\left(\qq^{2}\pp^{2}+\qq^{2}-3+\qq^{-2}+\qq^{-2}\pp^{-2}\right)}{\left[\nv\right]}
\end{align*}}

Alternatively, the coefficients can be written in terms of
the eigenvalues of $H$ and of the basic parameters above as:
\begin{align*}
  b_4&=\xi+\zeta+\zeta^*
  \\
  b_3&=\xi\zeta+\xi\zeta^*+\zeta\zeta^*
  \\
  b_0&=\xi\zeta\zeta^*
  \\
  b_1&=\delta^{-1}(\phi-\xi)(\phi-\zeta)(\phi-\zeta^*)
  \\
  b_2&=\phi^{-1}(\tau-\xi)(\tau-\zeta)(\tau-\zeta^*)
\end{align*}
where
\begin{align*}
  \xi&=\left(\qq-\qq^{-1}\right)\left(\pp\qq-\pp^{-1}\qq^{-1}\right)
  \\
  \zeta&=-\frac{\left[4\,\nv-2\right]}{\left[\nv\right]\left[2\,\nv-1\right]}
  \\
  \zeta^*&=\frac{\left[\nv+1\right]\ \left[6\,\nv+2\right]}{\left[\nv\right]\ \left[3\,\nv+1\right]}
\end{align*}

\subsection{The square-pentagon relation}\label{ssec:squarepenta}
From the study in \S\ref{ssec:bimodules} of $B(\obj2,\obj3)=\Hom_{\threecat}(\obj2,\obj3)$, one finds the following relation:
\begin{align*}
  {\tikzset{every picture/.style={line cap=round,sharp corners,scale=.3}}\input squarepenta.tex\relax}&=
  \foreach\k in {0,...,4} {\ifnum\k=0\else +\fi c_{\k} {\tikzset{every picture/.style={line cap=round,sharp corners,scale=.3}}\input round5_\k.tex\relax} }
  \\
  \quad &
  \foreach\k in {5,...,9} {+c_{\k} {\tikzset{every picture/.style={line cap=round,sharp corners,scale=.3}}\input round5_\k.tex\relax} } 
  \\
  \quad &
  \foreach\k in {10,...,14} {+c_{\k} {\tikzset{every picture/.style={line cap=round,sharp corners,scale=.3}}\input round5_\k.tex\relax} }
  \\
  \quad &
  +c_{15} {\tikzset{every picture/.style={line cap=round,sharp corners,scale=.3}}\input round5_15.tex\relax}
\end{align*}
where
\begin{align*}
  c_1&=c_4=
       \frac{[\nv+1]^2}{[\nv]^4}\left(\qq^{2}\pp^{3}-2\,\qq^{2}\pp-2\,\pp^{3}+\qq^{2}\pp^{-1}+\pp+\pp^{-1}+\qq^{-2}\pp-2\,\pp^{-3}-2\,\qq^{-2}\pp^{-1}+\qq^{-2}\pp^{-3}\right)
  \\
  c_2&=c_3=\frac{\left[6\,\nv\right]^{2}[\nv+1]^2}{\left[\nv\right]^3\left[2\,\nv\right]\ \left[3\,\nv\right]^{2}}
  \\
  c_5&=\frac{[\nv+1]}{[\nv]^2}\left(2\,\qq^{2}\pp^{3}+\pp^{5}-\qq^{2}\pp-2\,\pp^{3}+2\,\qq^{2}\pp^{-1}+2\,\qq^{-2}\pp-2\,\pp^{-3}-\qq^{-2}\pp^{-1}+\pp^{-5}+2\,\qq^{-2}\pp^{-3}\right)
  \\
  c_6&=c_9=\frac{[\nv+1]}{[\nv]^2}\left(2\,\qq^{2}\pp+\pp^{3}-2\,\pp-2\,\pp^{-1}+\pp^{-3}+2\,\qq^{-2}\pp^{-1}\right)
  \\
  c_7&=c_8=\frac{[\nv+1]}{[\nv]^2}\left(\qq^{2}\pp^{3}+\pp^{5}+\qq^{2}\pp-\pp^{3}+\qq^{2}\pp^{-1}-\pp-\pp^{-1}+\qq^{-2}\pp-\pp^{-3}+\qq^{-2}\pp^{-1}+\pp^{-5}+\qq^{-2}\pp^{-3}\right)
  \\
  c_{10}&=\frac{\pp^{3}\left[\nv\right]^{2}+\qq\,\pp^{2}\left[\nv\right]-2\,\left[\nv+1\right]-\qq^{-1}\left[\nv\right]+\pp^{-3}\left[\nv\right]^{2}+\pp^{-2}\left[\nv+1\right]}{\left[\nv\right]^2}
  \\
  c_{11}&=c_{14}=-\frac{\left[6\,\nv\right]\ [\nv+1]}{[\nv]\ \left[2\,\nv\right]\ \left[3\,\nv\right]}
  \\
  c_{12}&=c_{13}=-\frac{[\nv+1]}{\left[\nv\right]^2}
  \\
  c_{15}&=
          2\frac{[\nv+1]}{[\nv]^2}-\frac{\left[6\,\nv\right]}{[\nv]^2\left[3\,\nv\right]}\left([\nv+1]^2+1\right)
\end{align*}
and we postpone the definition of $c_0$.

This relation can be verified by checking that l.h.s.\ and r.h.s.\ have the same inner product with every
element of the basis $\mathcal B(\obj 5)$ whose diagrams are on the r.h.s.
For example, one such inner product is obtained by closing the diagrams in the following fashion:
\[
  {\tikzset{every picture/.style={line cap=round,sharp corners,scale=.3}}\input squarepenta-closed.tex\relax}
  = \delta \phi^2 \tau^2 
\]
where we have computed the resulting diagram by using the existing rules of \S\ref{ssec:basicrels}.
On the other hand, performing the same closure on the r.h.s.\ leads to  
\[
   \delta\phi c_2+\delta^2\phi c_3 + \delta\phi c_4 + \delta\phi^2 c_5 + \delta\phi^2 c_6 + \delta\phi\tau c_8 + \delta \phi^3 c_{10} + \delta\phi^3 c_{11} +  \delta\phi\tau^2 c_{12} + \delta\phi\tau^2 c_{14}+\delta\phi^2\tau c_{15} 
\]
One can check that these two expressions are equal.

The exact same computation, but with the external connectivity rotated one step clockwise, namely,
\[
  {\tikzset{every picture/.style={line cap=round,sharp corners,scale=.3}}\input squarepenta-closed2.tex\relax}
  = \delta \phi \tau^3
\]
leads to the following expression for $c_0$:
\[
c_0 = \tau^3-\left(c_3+\delta c_4+\phi (c_6+c_7)+\tau c_9 +\phi^2(c_{11}+c_{12})+\tau^2(c_{10}+c_{13})+\tau\phi c_{15}\right)
\]

\bibliographystyle{amsalphahyper}
\bibliography{references}

\end{document}